\numberwithin{equation}{section}
\newtheorem{theorem}{\bf{Theorem}}[section]
\newtheorem{proposition}[theorem]{\bf{Proposition}}
\newtheorem{corollary}[theorem]{\bf{Corollary}}
\newtheorem{lemma}[theorem]{\bf{Lemma}}
\newtheorem{remark}[theorem]{\bf{Remark}}
\newcommand{\mrgl}{\mathrm{GL}}
\newcommand{\mrm}{\mathrm{M}}
\newcommand{\mrn}{\mathrm{N}}
\newcommand{\mrdet}{\mathrm{det}}
\newcommand{\mrhom}{\mathrm{Hom}}
\newcommand{\mfo}{\mathfrak{o}}
\newcommand{\mfa}{\mathfrak{a}}
\newcommand{\mfb}{\mathfrak{b}}
\newcommand{\mfc}{\mathfrak{c}}
\newcommand{\mcc}{\mathcal{C}}
\newcommand{\bql}{\overline{\mathbb{Q}_{l}}}
\newcommand{\bzl}{\overline{\mathbb{Z}_{l}}}
\newcommand{\bfl}{\overline{\mathbb{F}_{l}}}
\newcommand{\bs}[1]{\boldsymbol{#1}}
\begin{document}
	\title{Supercuspidal representations of $\mathrm{GL}_{n}(F)$ distinguished by a unitary involution}
	\author{Jiandi Zou}
	
	\address{Universit\'e Paris-Saclay, UVSQ, CNRS, Laboratoire de Math\'ematiques de Versailles, 78000, Versailles, France.}	
	\email{jiandi.zou@ens.uvsq.fr}
    
    \keywords{Supercuspidal representation, distinguished representation, unitary group, $l$-modular representation.}
    \subjclass[2020]{Primary 22E50, 11F70; Secondary 11E39, 11E57, 20C20.}
	
	\begin{abstract}
		
		Let $F/F_{0}$ be a quadratic extension of non-archimedean locally compact fields of residue characteristic $p\neq 2$. Let $R$ be an algebraically closed field of characteristic different from $p$. For $\pi$ a supercuspidal representation of $G=\mathrm{GL}_{n}(F)$ over $R$ and $G^{\tau}$ a unitary subgroup of $G$ with respect to $F/F_{0}$, we prove that $\pi$ is distinguished by $G^{\tau}$ if and only if $\pi$ is Galois invariant. When $R=\mathbb{C}$ and $F$ is a $p$-adic field, this result first as a conjecture proposed by Jacquet was proved in 2010's by Feigon-Lapid-Offen by using global methods. Our proof is local and works for both complex representations and $l$-modular representations with $l\neq p$. We further study the dimension of $\mathrm{Hom}_{G^{\tau}}(\pi,1)$ and show that it is at most one.
		
	\end{abstract}
    
    \maketitle
	
	\tableofcontents
	
	\section{Introduction}
	
	Let $F/F_{0}$ be a quadratic extension of $p$-adic fields of residue characteristic $p$, and let $\sigma$ denote its non-trivial automorphism. For $G=\mathrm{GL}_{n}(F)$, let $\varepsilon$ be a \emph{hermitian matrix} in $G$, that is, $\sigma(\,^t\varepsilon)=\varepsilon$ with $\,^{t}$ denoting the transpose of matrices. We define $$\tau_{\varepsilon}(x)=\varepsilon\sigma(\,^{t}x^{-1})\varepsilon^{-1}$$ for any $x\in G$, called a \emph{unitary involution} on $G$. We fix $\tau=\tau_{\varepsilon}$ and we denote by $G^{\tau}$ the subgroup of $G$ consisting of the elements fixed by $\tau$, called the \emph{unitary subgroup} of $G$ with respect to $\tau$. For $\pi$ an irreducible smooth representation of $G$ over $\mathbb{C}$, Jacquet proposed to study the space of $G^{\tau}$-invariant linear forms on $\pi$, that is, the space $$\mathrm{Hom}_{G^{\tau}}(\pi,1).$$
	When the space is non-zero, he called $\pi$ \emph{distinguished by} $G^{\tau}$. For $n=3$ and $\pi$ supercuspidal, he proved in \cite{jacquet2001factorization} by using global argument, that $\pi$ is distinguished by $G^{\tau}$ if and only if $\pi$ is $\sigma$-invariant, that is, $\pi^{\sigma}\cong\pi$, where $\pi^{\sigma}:=\pi\circ\sigma$.
	Moreover he showed that this space is of dimension one as a complex vector space when the condition above is satisfied. Besides in \emph{ibid.}, he also sketched a similar proof when $n=2$ and $\pi$ is supercuspidal, to give the same criterion of being distinguished and the same dimension one theorem. Based on these results, he conjectured that in general, $\pi$ is distinguished by $G^{\tau}$ if and only if $\pi$ is $\sigma$-invariant. Moreover, it is also interesting to determine the dimension of the space of $G^{\tau}$-invariant linear forms which is not necessarily one in general. Under the assumption that $\pi$ is $\sigma$-invariant and supercuspidal, Jacquet further conjectured that the dimension is one.
	
	In addition, an irreducible representation $\pi$ of $G$ is contained in the image of quadratic base change with respect to $F/F_{0}$ if and only if it is $\sigma$-invariant (\cite{arthur1989simple}). Thus for irreducible representations, the conjecture of Jacquet gives a connection between quadratic base change and $G^{\tau}$-distinction.
	
	Besides the special case mentioned above, the following two evidences also support the conjecture. First we consider the analogue of the conjecture in the finite field case. For $\overline{\rho}$ an irreducible complex representation of $\mathrm{GL}_{n}(\mathbb{F}_{q^{2}})$, Gow \cite{gow1984two} proved that $\overline{\rho}$ is distinguished by the unitary subgroup $\mathrm{U}_{n}(\mathbb{F}_{q})$ if and only if $\overline{\rho}$ is isomorphic to its twist under the non-trivial element of $\mathrm{Gal}(\mathbb{F}_{q^{2}}/\mathbb{F}_{q})$. Under this condition, he also showed that the space of $\mathrm{U}_{n}(\mathbb{F}_{q})$-invariant linear forms is of dimension one as a complex vector space. In addition, Shintani \cite{shintani1976two} showed that there is a one-to-one correspondence between the set of irreducible representations of $\mathrm{GL}_{n}(\mathbb{F}_{q})$ and that of Galois invariant irreducible representations of $\mathrm{GL}_{n}(\mathbb{F}_{q^{2}})$, where the correspondence, called the \emph{base change map}, is characterized by a trace identity. Thus
	these two results relate the $\mathrm{U}_{n}(\mathbb{F}_{q})$-distinction to the base change map. Finally, when $\overline{\rho}$ is generic and Galois-invariant, Anandavardhanan and Matringe \cite{anandavardhanan2018test} recently showed that the $\mathrm{U}_{n}(\mathbb{F}_{q})$-average of Bessel function of $\overline{\rho}$ on the Whittaker model as a $\mathrm{U}_{n}(\mathbb{F}_{q})$-invariant linear form is non-zero. Since the space of $\mathrm{U}_{n}(\mathbb{F}_{q})$-invariant linear forms is of dimension one, their result gives us a concrete characterization of the space.
	
	The other evidence for the Jacquet conjecture is its global analogue. We assume $\mathcal{K}/\mathcal{K}_{0}$ to be a quadratic extension of number fields and we denote by $\sigma$ its non-trivial automorphism. We choose $\tau$ to be a unitary involution on $\mathrm{GL}_{n}(\mathcal{K})$, which also gives us an involution on $\mathrm{GL}_{n}(\mathbb{A}_{\mathcal{K}})$, still denoted by $\tau$ by abuse of notation, where $\mathbb{A}_{\mathcal{K}}$ denotes the ring of ad\`eles of $\mathcal{K}$. We denote by $\mathrm{GL}_{n}(\mathcal{K})^{\tau}$ (resp. $\mathrm{GL}_{n}(\mathbb{A}_{\mathcal{K}})^{\tau}$) the unitary subgroup of $\mathrm{GL}_{n}(\mathcal{K})$ (resp. $\mathrm{GL}_{n}(\mathbb{A}_{\mathcal{K}})$) with respect to $\tau$. For $\phi$ a cusp form of $\mathrm{GL}_{n}(\mathbb{A}_{\mathcal{K}})$, we define
	$$\mathcal{P}_{\tau}(\phi)=\int_{\mathrm{GL}_{n}(\mathcal{K})^{\tau}\backslash\mathrm{GL}_{n}(\mathbb{A}_{\mathcal{K}})^{\tau}}\phi(h)dh$$
	to be the \emph{unitary period integral} of $\phi$ with respect to $\tau$. We say that a cuspidal automorphic representation $\Pi$ of $\mathrm{GL}_{n}(\mathbb{A}_{\mathcal{K}})$ is $\mathrm{GL}_{n}(\mathbb{A}_{\mathcal{K}})^{\tau}$-distinguished if there exists a cusp form in the space of $\Pi$ such that $\mathcal{P}_{\tau}(\phi)\neq 0$. In 1990's, Jacquet and Ye began to study the relation between $\mathrm{GL}_{n}(\mathbb{A}_{\mathcal{K}})^{\tau}$-distinction and global base change (see for example \cite{jacquet1996distinguished} when $n=3$). For general $n$, Jacquet \cite{jacquet2005kloosterman} showed that $\Pi$ is contained in the image of the quadratic base change map (or equivalently $\Pi$ is $\sigma$-invariant \cite{arthur1989simple}) with respect to $\mathcal{K}/\mathcal{K}_{0}$ if and only if there exists a unitary involution $\tau$ such that $\Pi$ is $G^{\tau}$-distinguished. This result may be viewed as the global version of the Jacquet conjecture for supercuspidal representations.
	
	In fact, for the special case of the Jacquet conjecture in \cite{jacquet2001factorization}, Jacquet used the global analogue of the same conjecture and the relative trace formula to finish the proof. To say it simple, he first proved the global analogue of the conjecture. Then he used the relative trace formula to write a non-zero unitary period integral as the product of its local components at each place of $\mathcal{K}_{0}$, where each local component characterizes the distinction of the local component of $\Pi$ with respect to the corresponding unitary group over local fields. When $\pi$ is $\sigma$-invariant, he chose $\Pi$ to be a $\sigma$-invariant cuspidal automorphic representation of $\mathrm{GL}_{n}(\mathbb{A}_{\mathcal{K}})$ and $v_{0}$ to be a non-archimedean place of $\mathcal{K}_{0}$ such that $(G^{\tau},\pi)=(\mathrm{GL}_{n}(\mathcal{K}_{v_{0}})^{\tau},\Pi_{v_{0}})$. Then the product decomposition leads to the proof of the ``if" part of the conjecture. The ``only if" part of the conjecture, which will be discussed in section 4, requires the application of a globalization theorem. His method was generalized by Feigon-Lapid-Offen in \cite{feigon2012representations} to general $n$ and more general families of representations. They showed that the Jacquet conjecture works for generic representations of $G$. Moreover they were able to give a lower bound for the dimension of $\mathrm{Hom}_{G^{\tau}}(\pi,1)$ and they further conjectured that the inequality they gave is actually an equality. Finally, Beuzart-Plessis has recently verified the equality conjectured above \cite{beuzart2020multiplicities}. Thus for generic representations of $G$, the Jacquet conjecture was settled.
	
	Instead of using global methods, there are other methods to study this conjecture which are local and algebraic. Hakim-Mao \cite{hakim1998supercuspidal} verified the conjecture when $p\neq 2$ and $\pi$ is supercuspidal of level zero, that is, $\pi$ is supercuspidal such that $\pi^{1+\mathfrak{p}_{F}\mathrm{M}_{n}(\mathfrak{o}_{F})}\neq 0$, where $\mathfrak{o}_{F}$ denotes the ring of integers of $F$ and $\mathfrak{p}_{F}$ denotes its maximal ideal. When $\pi$ is supercuspidal and $F/F_{0}$ is unramified, Prasad \cite{prasad2001conjecture} proved the conjecture by applying the simple type theory developed by Bushnell-Kutzko in \cite{bushnell129admissible}. When $p\neq 2$ and $\pi$ is tame supercuspidal, that is, $\pi$ is a supercuspidal representation arising from the construction of Howe \cite{howe1977tamely}, Hakim-Murnaghan \cite{hakim2002tame} verified the conjecture. 
	
	The discussion above leaves us an open question: \emph{Is there any local and algebraic method that leads to a proof of the Jacquet conjecture which works for all supercuspidal representations of $G$?} First, this will lead to a new proof of the results of Hakim-Mao, Prasad and Hakim-Murnaghan which we mentioned in the last paragraph. Secondly, instead of considering complex representations, we are also willing to study $l$-modular representations with $l\neq p$. One hopes to prove an analogue of the Jacquet conjecture for $l$-modular supercuspidal representations, which will generalize the result of Feigon-Lapid-Offen for supercuspidal representations. Noting that they use global methods in their proof, which strongly relies on the assumption that all the representations are complex. Thus their method doesn't work anymore for $l$-modular representations. Finally, we are willing to consider $F/F_{0}$ to be a quadratic extension of non-archimedean locally compact fields instead of $p$-adic fields. Since the result of Feigon-Lapid-Offen heavily relies on the fact that the characteristic of $F$ equals 0, their method fails when considering non-archimedean locally compact fields of positive characteristic. The aim of this paper is to answer this question.
	
	We say a bit more about $l$-modular representations. The study of smooth $l$-modular representations of $G=\mathrm{GL}_{n}(F)$ was initiated by Vign\'eras \cite{vigneras1996representations}, \cite{vigneras2001correspondance} to extend the local Langlands program to $l$-modular representations. In this spirit, many classical results related to smooth complex representations of $p$-adic groups have been generalized to $l$-modular representations. For example, the local Jacquet-Langlands correspondence related to $l$-modular representations has been studied in detail in \cite{dat2011cas}, \cite{minguez2017correspondance} and \cite{secherre2019towards}. Thus, it is also natural to consider the $l$-modular version of the Jacquet conjecture, which hopes to build up the relation between distinction and an expected $l$-modular version of quadratic base change. This paper is the starting point of the whole project.
	
	To begin with, from now on we assume $F/F_{0}$ to be a quadratic extension of non-archimedean locally compact fields of residue characteristic $p\neq 2$ instead of $p$-adic fields. We fix $R$ an algebraically closed field of characteristic $l\neq p$, allowing that $l=0$. When $l>0$, we say that we are in the $l$-modular case (or modular case for short). Later on, we always consider smooth representations over $R$ and we assume $\pi$ to be a supercuspidal representation of $G$ over $R$. Be aware that when $l\neq 0$, a supercuspidal representation is not the same as a cuspidal representation of $G$, although they are the same when $l=0$ (see for example Vign\'eras \cite{vigneras1996representations}, chapitre II, section 2). Now we state our first main theorem:
	
	\begin{theorem}\label{Thmmain}
		
		For $\pi$ a supercuspidal representation of $G=\mathrm{GL}_{n}(F)$ and $\tau$ a unitary involution, $\pi$ is distinguished by $G^{\tau}$ if and only if $\pi^{\sigma}\cong\pi$.
		
	\end{theorem}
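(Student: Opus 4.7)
The ``only if'' direction is the easier one and I would handle it first. Writing $\theta(x)=\,^{t}x^{-1}$, we have $\pi^{\theta}\cong\pi^{\vee}$ for $\pi$ an irreducible representation of $G=\mathrm{GL}_{n}(F)$. Because $\tau=\mathrm{Int}(\varepsilon)\circ\sigma\circ\theta$, inner conjugation gives $\pi^{\tau}\cong(\pi^{\sigma})^{\vee}$. On the other hand, for any irreducible $\pi$ the existence of a non-zero $G^{\tau}$-invariant linear form $\lambda$ forces $\pi^{\tau}\cong\pi^{\vee}$: the map $v\otimes w\mapsto\lambda(\pi(\cdot)v)\cdot\lambda(\pi(\tau(\cdot))w)$ produces a non-degenerate $G$-invariant pairing $\pi\otimes\pi^{\tau}\to R$ via descent to the symmetric space $G/G^{\tau}$. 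Combining the two isomorphisms gives $(\pi^{\sigma})^{\vee}\cong\pi^{\vee}$, hence $\pi^{\sigma}\cong\pi$.

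For the ``if'' direction I would use type theory. Over $R$, every supercuspidal $\pi$ of $G$ is compactly induced from an extended maximal simple type, $\pi\cong\mathrm{ind}_{\mathbf{J}}^{G}\boldsymbol{\Lambda}$, associated to a simple stratum $[\mathfrak{A},n,0,\beta]$ with $E=F[\beta]$; this is the framework of Bushnell--Kutzko adapted to the modular setting by Vign\'eras and M\'inguez--S\'echerre. By a Mackey-type decomposition valid for compact induction against a closed symmetric subgroup, one obtains
\[
\mathrm{Hom}_{G^{\tau}}(\pi,1)\;\cong\;\prod_{g\in\mathbf{J}\backslash G/G^{\tau}}\mathrm{Hom}_{\mathbf{J}\cap gG^{\tau}g^{-1}}(\boldsymbol{\Lambda},1),
\]
so the goal becomes producing at least one double coset whose local Hom-space is non-zero. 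The starting point for this is to translate $\pi^{\sigma}\cong\pi$ into a symmetry of the type: by the intertwining theorem for simple types, $(\mathbf{J},\boldsymbol{\Lambda})$ and $(\sigma(\mathbf{J}),\sigma(\boldsymbol{\Lambda}))$ must be $G$-conjugate, and after replacing $\tau$ by a $G^{\sigma}$-conjugate one can arrange that $\tau(\mathbf{J})=\mathbf{J}$ and $\boldsymbol{\Lambda}\circ\tau\cong\boldsymbol{\Lambda}^{\vee}$, i.e., a \emph{$\tau$-self-dual} type for $\pi$. Having such a type, one takes $g=1$ in the decomposition and reduces the conjecture to a distinction statement for the finite-dimensional representation $\boldsymbol{\Lambda}$ by the unitary-type subgroup $\mathbf{J}\cap G^{\tau}$.

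The main obstacle, and the bulk of the technical work, is the construction of the $\tau$-self-dual type. One must descend the $\sigma$-symmetry through the layers of the simple character and $\beta$-extension: choose $\beta$ inside a $\sigma$-stable field extension $E/F$ with a compatible involution, verify that the hereditary order $\mathfrak{A}$, the open subgroups $H^{1}\subset J^{1}\subset J$, the simple character $\theta$, the Heisenberg representation $\eta$ and the $\beta$-extension $\kappa$ can all be chosen $\tau$-stable with the right self-duality, and then handle the cuspidal quotient on the Levi-type finite group $J/J^{1}\cong\mathrm{GL}_{m}(k_{E})$. Modulo this reduction, the finite-group step is exactly Gow's theorem, which provides the non-zero invariant form on $\boldsymbol{\Lambda}$ modulo the unipotent radical; the extension back up the filtration is made by an inductive construction of an invariant vector, using the $\tau$-stability at each level and the fact that $p\neq 2$ (which is what lets one split $J$-adic Heisenberg groups into $\tau$-fixed and $\tau$-antifixed parts). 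The delicate points that distinguish this proof from its predecessors are the handling of the wild part of $\beta$ (absent in Hakim--Mao and Prasad), the ramified case $F/F_{0}$ (absent in Prasad), and the need to work with coefficient field $R$ of arbitrary characteristic $\ell\neq p$ and allow $F$ of positive characteristic, where the global arguments of Feigon--Lapid--Offen are unavailable.
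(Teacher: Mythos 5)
Your ``only if'' argument has a genuine gap. From Gelfand--Kazhdan you correctly get $\pi^{\tau}\cong(\pi^{\sigma})^{\vee}$, but the key claim --- that a non-zero $\lambda\in\mathrm{Hom}_{G^{\tau}}(\pi,1)$ forces $\pi^{\tau}\cong\pi^{\vee}$ --- is exactly the hard statement, and your construction does not prove it. The assignment $v\otimes w\mapsto\lambda(\pi(\cdot)v)\cdot\lambda(\pi(\tau(\cdot))w)$ yields, for each pair $(v,w)$, a function on $G^{\tau}\backslash G$, not a scalar; to obtain a pairing into $R$ you would have to integrate over the non-compact space $G^{\tau}\backslash G$ (or its quotient by the centre), and neither convergence (which in characteristic zero already requires a non-trivial compact-support statement for relative matrix coefficients) nor non-degeneracy is addressed. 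Worse, when $\mathrm{char}(R)=\ell>0$ such integrals are not even defined without a finiteness statement, and this modular setting is precisely what the theorem is supposed to cover. The paper proves this direction quite differently: for $R=\mathbb{C}$ it invokes globalization (Hakim--Murnaghan, and Gan--Lomel\'{\i} when $\mathrm{char}(F)=p$), and for $R=\overline{\mathbb{F}_{\ell}}$ it reduces to characteristic zero via the projective envelope of $\Lambda|_{J}$, producing a distinguished supercuspidal $\overline{\mathbb{Q}_{\ell}}$-lift (Theorem \ref{Thmdistgalinv}); the remark after that theorem notes that a purely local proof exists but needs the refined machinery of Sections 5 and 6, not a short symmetric-space argument.

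For the ``if'' direction your outline matches the paper's strategy (Mackey decomposition, $\tau$-selfdual type, reduction to the residue field), but two essential points are missing or wrong. First, you cannot simply ``arrange that $\tau(\mathbf{J})=\mathbf{J}$'': when $E/E_{0}$ is unramified and the hermitian matrix of $\tau$ is not $G$-equivalent to $I_{n}$, a $\tau$-stable hereditary order need not exist at all unless $m$ is odd; this is why Theorem \ref{Thmendotautheta} carries a parity hypothesis, and the paper must make a detour --- treat $\tau=\tau_{1}$ first, use the finite unitary group to show that $m$ is odd for a $\sigma$-invariant supercuspidal (Lemma \ref{Lemmamodd}), and only then handle general $\tau$ --- a step where supercuspidality, not mere cuspidality, is essential. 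Second, the finite-group step is \emph{not} ``exactly Gow's theorem'': Gow applies only when $E/E_{0}$ is unramified, so that $J\cap G^{\tau}$ surjects onto a finite unitary group. When $E/E_{0}$ is ramified the quotient is an orthogonal or symplectic group over $\boldsymbol{l}$, the auxiliary quadratic character $\chi$ coming from the $\boldsymbol{\kappa}$-factor (Lemma \ref{lemkappa}) cannot be assumed trivial, and one needs the Deligne--Lusztig/Hakim--Lansky criterion for distinction of a cuspidal representation of $\mathrm{GL}_{m}(\boldsymbol{l})$ by an orthogonal group relative to $\overline{\chi}$ (Proposition \ref{Proporthdist}) together with the vanishing of symplectic periods (Proposition \ref{Propsympdisc}) to dispose of the second of the at most two contributing double cosets (Corollary \ref{corJGtau}). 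Even in the unramified case one must still prove $\chi=1$ (Proposition \ref{Propchi=1}), and all the finite-field inputs have to be transferred to $\ell$-modular coefficients by projective-envelope and lifting arguments; none of this is supplied by your sketch, so the ``if'' direction is not established as written.
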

	
	Moreover, we may also calculate the dimension of the space of $G^{\tau}$-invariant linear forms:
	
	\begin{theorem}\label{Thmmult1}
		
		For $\pi$ a $\sigma$-invariant supercuspidal representation of $G$, we have $$\mathrm{dim}_{R}\mathrm{Hom}_{G^{\tau}}(\pi,1)=1.$$
		
	\end{theorem}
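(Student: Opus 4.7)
The plan is to combine Theorem \ref{Thmmain}, which already delivers the non-vanishing, with a type-theoretic Mackey analysis to establish the upper bound. Fix an extended maximal simple type $(\mathbf{J},\boldsymbol{\lambda})$ attached to $\pi$ in the sense of Bushnell--Kutzko (and its $l$-modular refinement due to M\'inguez--S\'echerre), so that $\pi\cong\mathrm{ind}_{\mathbf{J}}^{G}\boldsymbol{\lambda}$ by compact induction. Applying Frobenius reciprocity and a Mackey decomposition of $\pi|_{G^{\tau}}$ yields
\[
\mathrm{Hom}_{G^{\tau}}(\pi,1)\;\cong\;\bigoplus_{g\in\mathbf{J}\backslash G/G^{\tau}}\mathrm{Hom}_{g^{-1}\mathbf{J}g\,\cap\,G^{\tau}}\bigl({}^{g^{-1}}\boldsymbol{\lambda},\,1\bigr),
\]
reducing the problem to identifying which double cosets contribute and bounding each local Hom-space by one.

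The next step is to show that only double cosets whose conjugate type $(g^{-1}\mathbf{J}g,{}^{g^{-1}}\boldsymbol{\lambda})$ is $\tau$-stable can contribute, since otherwise the intersection $g^{-1}\mathbf{J}g\cap G^{\tau}$ is too small to carry a non-zero invariant linear form on the type. This $\tau$-stability forces the underlying simple stratum to be $\tau$-selfdual. Using the intertwining and conjugacy properties of maximal simple types, I would show that the set of $\tau$-selfdual conjugates of $(\mathbf{J},\boldsymbol{\lambda})$ forms a single $\mathbf{J}$-orbit, and Theorem \ref{Thmmain} guarantees that this orbit is non-empty. Exactly one term then survives the Mackey sum.

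For the surviving coset, one descends through the layered structure $J^{1}\subset\mathbf{J}$. The Heisenberg component $\eta$ of $\boldsymbol{\lambda}$ on the pro-$p$ radical $J^{1}$ admits a $\tau$-selfdual polarization, and a Weil-representation style argument establishes that $\mathrm{Hom}_{(J^{1})^{\tau}}(\eta,1)$ is one-dimensional. The residual problem then lives on the finite reductive quotient $\mathbf{J}/J^{1}$, which is essentially $\mathrm{GL}_{m}(\mathbb{F}_{q^{2}})$ with $\tau$ restricting to the Galois involution and fixed points $\mathrm{U}_{m}(\mathbb{F}_{q})$. Multiplicity one for this finite-group piece follows from Gow's theorem \cite{gow1984two} in the complex case and from its $l$-modular analogue, which has to be set up in any case to run type theory for modular supercuspidals.

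The principal obstacle is the orbit-uniqueness in the double coset decomposition: although $\mathbf{J}\backslash G/G^{\tau}$ is infinite and a priori several geometrically distinct $\tau$-selfdual types may occur inside the inertial class of $\pi$, one must show that all of them are $\mathbf{J}$-conjugate. This requires a delicate matching between the Galois action $\sigma$, the involution $\tau$, and the hereditary order attached to $\boldsymbol{\lambda}$, carried out uniformly for $R=\mathbb{C}$ and for $l$-modular $R$, and compatibly with the equivalence already used in the proof of Theorem \ref{Thmmain}. Once this uniqueness is in place, the Heisenberg descent and Gow's theorem deliver the bound essentially mechanically.
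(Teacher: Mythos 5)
Your overall frame (write $\pi=\mathrm{c}\text{-}\mathrm{Ind}_{\boldsymbol J}^{G}\Lambda$, apply Mackey and Frobenius, isolate the contributing double cosets, then split the local Hom via $\Lambda=\boldsymbol\kappa\otimes\boldsymbol\rho$ and the one-dimensional Heisenberg piece) is the paper's frame, but two of your key claims fail, and they are exactly where the real work lies.

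First, it is not true that the $\tau$-selfdual conjugates of $(\boldsymbol J,\Lambda)$ form a single $\boldsymbol J$-orbit, so "exactly one term survives the Mackey sum" is not available as stated. What the paper proves (Theorem \ref{thmtaugg-1} and Corollary \ref{corJGtau}) is that a contributing coset must satisfy $\tau(g)g^{-1}\in\boldsymbol J$, and that this leaves \emph{up to two} double cosets, $\boldsymbol J G^{\tau}$ and $\boldsymbol J g_{1}G^{\tau}$ with $\tau(g_{1})g_{1}^{-1}=\varpi_{E}I_{m}$ or $\varpi_{E}J_{m/2}$, the second occurring precisely when $m$ is even; this reflects the two $\mathrm{GL}_{m}(E)$-orbits of hermitian forms over $E/E_{0}$ and cannot be conjugated away inside $\boldsymbol J$. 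The equality $\dim=1$ is then obtained not by orbit-uniqueness but by showing the second coset contributes zero: for it, the relevant fixed-point group in $J^{g_1}/J^{1g_1}\cong\mathrm{GL}_{m}(\boldsymbol l)$ is a \emph{symplectic} group, and a supercuspidal representation of $\mathrm{GL}_{m}(\boldsymbol l)$ is never $\mathrm{Sp}_{m}(\boldsymbol l)$-distinguished (Proposition \ref{Propsympdisc}). Your argument has no mechanism to kill this coset.

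Second, your descent to the finite quotient assumes the residual situation is always $\mathrm{U}_{m}(\mathbb F_{q})\subset\mathrm{GL}_{m}(\mathbb F_{q^{2}})$, so that Gow's theorem applies. That is only the case when $E/E_{0}$ is unramified. When $E/E_{0}$ is ramified, $\sigma$ acts trivially on the residue field $\boldsymbol l$, and the involution induced on $J/J^{1}\cong\mathrm{GL}_{m}(\boldsymbol l)$ is orthogonal (on the coset $\boldsymbol J G^{\tau}$) or symplectic (on $\boldsymbol J g_{1}G^{\tau}$). Moreover the quadratic character $\chi$ produced by the $\boldsymbol\kappa$-factor need not extend to $\boldsymbol J$ and cannot be assumed trivial there, so the finite problem becomes distinction of a supercuspidal $\overline\rho$ of $\mathrm{GL}_{m}(\boldsymbol l)$ by an orthogonal group against a possibly nontrivial quadratic character $\overline\chi$; the paper handles this with the Deligne--Lusztig/Hakim--Lansky criterion $\omega_{\overline\rho}(-1)=\overline\chi(-1)$ and its $l$-modular analogue (Propositions \ref{Proporthdist}, \ref{Proporthdistl}), together with the central-character computation forcing $\omega_{\overline\rho}(-1)=\chi(-1)=1$. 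None of this is covered by Gow's theorem, so the ramified case — half of the theorem — is missing from your proposal. (In the unramified case your outline is close to the paper's: there $m$ is forced to be odd, only one coset occurs, $\chi$ is shown to be trivial, and Gow plus its modular analogue give multiplicity one.)
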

	
	One important corollary of Theorem \ref{Thmmain} relates to the $\overline{\mathbb{Q}_{l}}$-lift of a $\sigma$-invariant supercuspidal representation of $G$ over $\overline{\mathbb{F}_{l}}$ when $l>0$, where we denote by $\overline{\mathbb{Q}_{l}}$, $\overline{\mathbb{Z}_{l}}$ and $\overline{\mathbb{F}_{l}}$ the algebraic closure of an $l$-adic field, its ring of integers and the algebraic closure of the finite field of $l$ elements respectively. For $(\widetilde{\pi}, V)$ a smooth irreducible representation of $G$ over $\overline{\mathbb{Q}_{l}}$, we call it \emph{integral} if it admits an integral structure, that is, a $\overline{\mathbb{Z}_{l}}[G]$-submodule $L_{V}$ of $V$ generated by a $\bql$-basis of $V$. For such a representation, the semi-simplification of $L_{V}\otimes_{\overline{\mathbb{Z}_{l}}}\overline{\mathbb{F}_{l}}$ doesn't depend on the choice of $L_{V}$, which we denote by $r_{l}(\widetilde{\pi})$ as a representation of $G$ over $\overline{\mathbb{F}_{l}}$, called the \emph{reduction modulo $l$} of $\pi$ (see \cite{vigneras1996representations} for more details). The following theorem which will be proved at the end of section 8, says that it is always possible to find a $\sigma$-invariant $\overline{\mathbb{Q}_{l}}$-lift for a $\sigma$-invariant supercuspidal representation of $G$ over $\overline{\mathbb{F}_{l}}$.
	
	\begin{theorem}\label{Thmlift}
		
		For $\pi$ a $\sigma$-invariant supercuspidal representation of $G$ over $\overline{\mathbb{F}_{l}}$, there exists an integral $\sigma$-invariant supercuspidal representation $\widetilde{\pi}$ of $G$ over $\overline{\mathbb{Q}_{l}}$, such that $r_{l}(\widetilde{\pi})=\pi$.
		
	\end{theorem}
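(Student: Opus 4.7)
The plan is to construct $\widetilde{\pi}$ by lifting an extended maximal simple type for $\pi$ in a $\sigma$-equivariant manner. By the modular analogue of Bushnell--Kutzko's classification of supercuspidal representations of $\mathrm{GL}_{n}(F)$, developed by Vign\'eras and M\'inguez--S\'echerre, the representation $\pi$ is compactly induced from an extended maximal simple type $(J,\lambda)$, where $J\subset G$ is open and compact modulo the center. Since $\pi^{\sigma}\cong\pi$ and the inducing type is unique up to $G$-conjugacy, I would first replace $(J,\lambda)$ by an appropriate $G$-conjugate so as to arrange $\sigma(J)=J$ and $\lambda^{\sigma}\cong\lambda$ as representations of $J$.

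Next I would decompose $\lambda=\kappa\otimes\rho$, where $\kappa$ is a $\beta$-extension of the Heisenberg representation attached to the underlying simple stratum and $\rho$ is the inflation of an irreducible cuspidal representation of a finite general linear group quotient of $J$. Each factor admits a $\overline{\mathbb{Q}_{l}}$-lift: the Heisenberg representation underlying $\kappa$ is determined by its central character and admits an essentially canonical lift, and irreducible cuspidal representations of $\mathrm{GL}_{m}$ over a finite field lift from $\overline{\mathbb{F}_{l}}$ to $\overline{\mathbb{Q}_{l}}$ via Green's parametrization. Setting $\widetilde{\pi}:=\mathrm{c\text{-}Ind}_{J}^{G}\widetilde{\lambda}$ would then produce a supercuspidal representation of $G$ over $\overline{\mathbb{Q}_{l}}$. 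Integrality and the identity $r_{l}(\widetilde{\pi})\cong\pi$ follow from the existence of a $J$-stable $\overline{\mathbb{Z}_{l}}$-lattice in $\widetilde{\lambda}$ together with the compatibility of compact induction with reduction modulo $l$.

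The main obstacle is arranging the lift $\widetilde{\lambda}$ to be $\sigma$-equivariant on the nose rather than merely having the correct reduction. The ambiguity in choosing $\widetilde{\kappa}$ is controlled by characters of a finite quotient of $J$ that are trivial modulo $l$, and must be absorbed into the corresponding ambiguity of $\widetilde{\rho}$. The $\sigma$-invariance of $\rho$ translates, via Green's parametrization, into a $\mathrm{Gal}(\mathbb{F}_{q^{2}}/\mathbb{F}_{q})$-invariance of a regular character of a maximal elliptic torus in a finite general linear group; such characters do admit Galois-equivariant $\overline{\mathbb{Q}_{l}}$-lifts, essentially by the finite-field analogue of Theorem \ref{Thmmain} proved by Gow. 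Tracking the twists through the tensor decomposition would yield $\widetilde{\lambda}^{\sigma}\cong\widetilde{\lambda}$, hence $\widetilde{\pi}^{\sigma}\cong\widetilde{\pi}$, completing the construction. An alternative route, worth considering if the type-theoretic bookkeeping becomes unwieldy, is to start from an arbitrary lift $\widetilde{\pi}_{0}$ of $\pi$, observe that $\widetilde{\pi}_{0}^{\sigma}=\widetilde{\pi}_{0}\otimes\chi$ for a character $\chi$ with $r_{l}(\chi)=1$, and solve the cohomological equation $\chi=\psi/\psi^{\sigma}$ for some character $\psi$ trivial modulo $l$; the desired lift is then $\widetilde{\pi}_{0}\otimes\psi$.
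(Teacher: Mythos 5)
The decisive step in your main construction is the claim that, since $\pi^{\sigma}\cong\pi$ and the inducing type is unique up to conjugacy, you may replace $(\boldsymbol{J},\Lambda)$ by a $G$-conjugate with $\sigma(\boldsymbol{J})=\boldsymbol{J}$ and $\Lambda^{\sigma}\cong\Lambda$. This does not follow from Proposition \ref{PropType}: uniqueness up to $G$-conjugacy only yields some $g\in G$ with $\sigma(\boldsymbol{J})=\boldsymbol{J}^{g}$ and $\Lambda^{\sigma}\cong\Lambda^{g}$, and upgrading this to an honestly $\sigma$-stable type is a non-abelian fixed-point problem, exactly the kind of statement that occupies all of Section 5 of this paper (for unitary involutions) and Section 4 of Anandavardhanan--Kurinczuk--Matringe--S\'echerre--Stevens (for the Galois involution in the selfdual setting): it needs Stevens' result on $\sigma_{t}$-stable strata, endo-class and tame-lifting arguments, and cohomological vanishing for pro-$p$ groups, and in the unitary case it only holds under a technical hypothesis; the introduction even notes that for cuspidal non-supercuspidal representations such symmetric types can fail to exist. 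So you are implicitly assuming a ``$\sigma$-invariant type theorem'' that is itself a substantial result requiring proof. Granting it, the rest of your sketch (canonical lift of $\theta$ and $\eta$, lifting $\boldsymbol{\kappa}$ and $\boldsymbol{\rho}$, absorbing the discrepancy character) is plausible, but the absorption step is not free either: one gets $\widetilde{\boldsymbol{\kappa}}^{\sigma}\cong\widetilde{\boldsymbol{\kappa}}\mu$ for some character $\mu$ of $\boldsymbol{J}$ trivial on $J^{1}$, and showing $\mu$ can be split off requires an argument of the trace/root-of-unity type used in Proposition \ref{propselfdual}. Your fallback route has a further gap: two supercuspidal $\overline{\mathbb{Q}_{l}}$-lifts of the same mod-$l$ supercuspidal representation need not differ by a character twist, so there is no reason that $\widetilde{\pi}_{0}^{\sigma}\cong\widetilde{\pi}_{0}\otimes\chi$; already in level zero, changing the Green parameter by an $l$-power-order character of the elliptic torus changes the lift without twisting by any character of the group unless that character factors through the norm.

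For comparison, the paper avoids both issues entirely by routing through distinction: by Theorem \ref{Thmmain} the representation $\pi$ is distinguished by $G^{\tau}$, and the projective-envelope argument in the $\overline{\mathbb{F}_{l}}$-case of the proof of Theorem \ref{Thmdistgalinv} (applied to $\Lambda|_{J}$) produces a supercuspidal $\overline{\mathbb{Q}_{l}}$-lift $\widetilde{\pi}$ of $\pi$ which is still $G^{\tau}$-distinguished, hence $\sigma$-invariant by the characteristic-zero case of that theorem. If you want a purely type-theoretic proof along your lines, you would first have to prove the $\sigma$-invariant analogue of Theorem \ref{Thmselfdualtype}, which is a project comparable in size to Section 5.
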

	
	Let us outline the contents of this paper by introducing the strategy of our proof for Theorem \ref{Thmmain} and Theorem \ref{Thmmult1}. In section 2 we introduce our settings and basic knowledge about hermitian matrices and unitary subgroups. Our main tool to prove the theorems will be the simple type theory developed by Bushnell-Kutzko in \cite{bushnell129admissible}, and further generalized by Vign\'eras \cite{vigneras1996representations} and M\'{\i}nguez-S\'echerre \cite{minguez2014types} to the $l$-modular case. In section 3 we will give a detailed introduction to this theory, but here we also recall a little bit for convenience. The idea of simple type theory is to realize any cuspidal representation $\pi$ of $G$ as the compact induction of a finite dimensional representation $\Lambda$ of $\boldsymbol{J}$, which is an open subgroup of $G$ compact modulo its centre. Such a pair $(\boldsymbol{J},\Lambda)$, constructed as in \cite{bushnell129admissible}, is called an \emph{extended maximal simple type}, which we will abbreviate to \emph{simple type} for simplicity. We also mention the following main properties of $(\boldsymbol{J},\Lambda)$:
	
	(1) The group $\boldsymbol{J}$ contains a unique maximal open compact subgroup $J$, which contains a unique maximal normal pro-$p$-subgroup $J^{1}$;
	
	(2) We have $J/J^{1}\cong\mathrm{GL}_{m}(\boldsymbol{l})$, where $E/F$ is a certain field extension of degree $d$ with $\boldsymbol{l}$ denoting the residue field of $E$, and $n=md$;
	
	(3) We may write $\Lambda=\boldsymbol{\kappa}\otimes\boldsymbol{\rho}$, where $\boldsymbol{\kappa}$ and $\boldsymbol{\rho}$ are irreducible representations of $\boldsymbol{J}$ such that the restriction $\boldsymbol{\kappa}|_{J^{1}}=\eta$ is an irreducible representation of $J^{1}$, called a \emph{Heisenberg representation}, and $\boldsymbol{\rho}|_{J}$ is the inflation of a cuspidal representation of $\mathrm{GL}_{m}(\boldsymbol{l})\cong J/J^{1}$.
	
	For a given supercuspidal representation $\pi$ of $G$, our starting point is to prove the ``only if" part of Theorem \ref{Thmmain}. When $R=\mathbb{C}$ and $\mathrm{char}(F)=0$, it is a standard result by using global argument, especially the globalization theorem (\cite{hakim2002globalization}, Theorem 1). When $\mathrm{char}(F)=p>0$, we may keep the original proof except that we need a characteristic $p$ version of the globalization theorem. Fortunately we can use a more general result due to Gan-Lomel\'{\i} \cite{ganglobalization} to get the result we need. Since any supercuspidal representation of $G$ over a characteristic 0 algebraically closed field can be realized as a representation over $\overline{\mathbb{Q}}$ up to twisting by an unramified character, we finish the proof when $\mathrm{char}(R)=0$. When $R=\overline{\mathbb{F}_{l}}$, we consider the projective envelope $P_{\Lambda|_{J}}$ of $\Lambda|_{J}$ and we use the results in \cite{vigneras1996representations} to study its irreducible components and the irreducible components of its $\overline{\mathbb{Q}_{l}}$-lift. Finally we will show that there exists a $\overline{\mathbb{Q}_{l}}$-lift of $\pi$ which is supercuspidal and $G^{\tau}$-distinguished. Thus by using the characteristic 0 case we finish the proof for the ``only if" part for any $R$ under our settings. The details will be presented in section 4.
	
	In section 5, we prove the \emph{$\tau$-selfdual type theorem}, which says that for a unitary involution $\tau$ and a $\sigma$-invariant cuspidal representation $\pi$ of $G$ with a technical condition, we may find a simple type $(\boldsymbol{J},\Lambda)$ contained in $\pi$ such that $\tau(\boldsymbol{J})=\boldsymbol{J}$ and $\Lambda^{\tau}\cong\Lambda^{\vee}$, where $^{\vee}$ denotes the smooth contragredient. In other words, we find a ``symmetric" simple type contained in $\pi$ with respect to $\tau$. Our strategy follows from \cite{anandavardhanan2019galois}, section 4. First we consider the case where $E/F$ is totally wildly ramified and $n=d$. Then for $E/F$ in general with $n=d$, we make use of the techniques about endo-class and tame lifting developed in \cite{bushnell1996local} to prove the theorem by reducing it to the former case. Finally by using the $n=d$ case, we prove the general theorem.
	
	In section 6, for $\tau$, $\pi$ as in section 5 satisfying the technical condition, we first choose a $\tau$-selfdual simple type $(\boldsymbol{J},\Lambda)$ contained in $\pi$. The main result of section 6, which we call the \emph{distinguished type theorem}, says that $\pi$ is distinguished by $G^{\tau}$ if and only if there exists a $\tau$-selfdual and distinguished simple type of $\pi$. More specifically, by Frobenius reciprocity and the Mackey formula, we have
	$$\mathrm{Hom}_{G^{\tau}}(\pi,1)\cong\prod_{g\in\boldsymbol{J}\backslash G/G^{\tau}}\mathrm{Hom}_{\boldsymbol{J}^{g}\cap G^{\tau}}(\Lambda^{g},1).$$
	We concentrate on those $g$ in the double coset such that $\mathrm{Hom}_{\boldsymbol{J}^{g}\cap G^{\tau}}(\Lambda^{g},1)\neq 0$. The proof of the distinguished type theorem also shows that there are at most two such double cosets which can be written down explicitly. Moreover for those $g$ we have
	$$\mathrm{Hom}_{\boldsymbol{J}^{g}\cap G^{\tau}}(\Lambda^{g},1)\cong\mathrm{Hom}_{\boldsymbol{J}^{g}\cap G^{\tau}}(\boldsymbol{\kappa}^{g},\chi^{-1})\otimes_{R}\mathrm{Hom}_{\boldsymbol{J}^{g}\cap G^{\tau}}(\boldsymbol{\rho}^{g},\chi),$$
	where $\bs{\kappa}$ is well-chosen such that $\boldsymbol{\kappa}^{\tau}\cong\boldsymbol{\kappa}^{\vee}$ and $\chi$ is a quadratic character of $\boldsymbol{J}^{g}\cap G^{\tau}$ which is trivial on $J^{1g}\cap G^{\tau}$. In the tensor product, the first term $\mathrm{Hom}_{\boldsymbol{J}^{g}\cap G^{\tau}}(\boldsymbol{\kappa}^{g},\chi^{-1})$ is of dimension one as an $R$-vector space. So essentially we only need to study the second term. If we denote by $\overline{\rho^{g}}$ the cuspidal representation of $\mathrm{GL}_{m}(\boldsymbol{l})\cong J^{g}/J^{1g}$ whose inflation equals $\boldsymbol{\rho}^{g}|_{J^{g}}$ , and by $\overline{\chi}$ the character of $H:=J^{g}\cap G^{\tau}/J^{1g}\cap G^{\tau}$ whose inflation equals $\chi|_{J^{g}\cap G^{\tau}}$, then we further have
	$$\mathrm{Hom}_{\boldsymbol{J}^{g}\cap G^{\tau}}(\boldsymbol{\rho}^{g},\chi)\cong\mathrm{Hom}_{H}(\overline{\rho^{g}},\overline{\chi}).$$
	Here $H$ could be a unitary subgroup, an orthogonal subgroup or a symplectic subgroup of $\mathrm{GL}_{m}(\boldsymbol{l})$. When $\pi$ is supercuspidal, the technical condition in the $\tau$-selfdual type theorem is always satisfied and we reduce our problem to studying the $H$-distinction of a supercuspidal representation of $\mathrm{GL}_{m}(\boldsymbol{l})$.
	
	Moreover at the beginning of section 6, we use the result in section 5 to extend $\sigma$ to a non-trivial involution on $E$. We write $E_{0}=E^{\sigma}$ and we deduce that $E/E_{0}$ is a quadratic extension. When $E/E_{0}$ is unramified, $H$ is a unitary subgroup. We first use the result of Gow \cite{gow1984two} to deal with the characteristic 0 case. For $\mathrm{char}(R)>0$, we consider the projective envolope as in section 4. When $E/E_{0}$ is ramified, $H$ is either an orthogonal subgroup or a symplectic subgroup. When $H$ is orthogonal, we use Deligne-Lusztig theory \cite{deligne1976representations}, precisely a formula given by Hakim-Lansky \cite{hakim2012distinguished} to calculate the dimension of $\mathrm{Hom}_{H}(\overline{\rho^{g}},\overline{\chi})$ when $\mathrm{char}(R)=0$. For $\mathrm{char}(R)>0$, we use again the same method as in section 4 to finish the proof. When $H$ is symplectic, by \cite{klyachko1984models} the space is always $\{0\}$. These two cases will be studied in section 7 and section 8 separately. Finally in section 9, we give a purely local proof of the main theorem of section 4.
	
	It is worth mentioning that in \cite{secherre2019supercuspidal}, S\'echerre studied the $\sigma$-selfdual supercuspidal representations of $G$ over $R$, with the same notation unchanged as before. He proved the following \emph{Dichotomy Theorem} and \emph{Disjunction Theorem}: For $\pi$ a supercuspidal representation of $G$, it is $\sigma$-selfdual (that is, $\pi^{\sigma}\cong\pi^{\vee}$) if and only if it is either distinguished by $\mathrm{GL}_{n}(F_{0})$ or $\omega$-distinguished, where $\omega$ denotes the unique non-trivial character of $F_{0}^{\times}$ trivial on $\mathrm{N}_{F/F_{0}}(F^{\times})$. The method we use in this paper is the same as that was developed in \emph{ibid.} For example, our section 5 corresponds to section 4 of \cite{anandavardhanan2019galois} and our section 6 corresponds to section 6 of \cite{secherre2019supercuspidal}, etc.
	
	We point out the main differences in our case to end this introduction. First in section 5 we will find that in a certain case, it is even impossible to find a hereditary order $\mathfrak{a}$ such that $\tau(\mathfrak{a})=\mathfrak{a}$ , which isn't a problem in section 4 of \cite{anandavardhanan2019galois}. That's why we need to add a technical condition in the main theorem of section 5 and finally verify it for supercuspidal representations. Precisely, for a $\sigma$-invariant supercuspidal representation, we first consider the unitary involution $\tau=\tau_{1}$ corresponding to the identity hermitian matrix $I_{n}$. In this case, we may use our discussion in section 5 to find a $\tau$-selfdual type contained in $\pi$ and we may further use our discussion in section 6 and section 7 to show that $m$ is odd when $E/E_{0}$ is unramified. This exactly affirms the condition we need, and we may repeat the procedure of section 5 and section 6 for general unitary involutions. This detouring argument also indicates that a $\sigma$-invariant cuspidal not supercuspidal representation does not always contain a $\tau$-selfdual simple type, which justifies that our supercuspidal (instead of cuspidal) assumption is somehow important.
	
	Furthermore in section 8, it is unclear if the character $\chi$ mentioned above can be realized as a character of $\boldsymbol{J}$ or not, thus cannot be assumed to be trivial a priori as in \cite{secherre2019supercuspidal}. It means that we need to consider a supercuspidal representation of the general linear group over finite field distinguished by a non-trivial character of an orthogonal subgroup instead of the trivial one. That's why the result of Hakim-Lansky (\cite{hakim2012distinguished} Theorem 3.11) shows up.
	
	Last but not least, in section 6 a large part of our results are stated and proved for a general involution instead of a unitary one. This provides the possibility of using the same method to study the distinction of supercuspidal representations of $G$ by other involutions. For instance, the similar problem for orthogonal subgroups was also considered by the author \cite{zou2020supercuspidal}.
	
	\textbf{Acknowledgements.} This work forms part of my PhD project. I thank my PhD advisor, Vincent S\'echerre, for starting me this project, for helpful discussions and suggestions, and in particular, for his careful reading and revising for the early versions of this paper. I thank EDMH for financial support of my PhD project and Universit\'e de Versailles for excellent research conditions. I thank an anonymous referee for his detailed report with pertinent advice included.
	
	\section{Notation and basic definitions}
	
	\subsection{Notation}
	
	Let $F/F_{0}$ be a quadratic extension of non-archimedean locally compact fields with residue characteristic $p\neq 2$ and let $\sigma$ be the unique non-trivial involution in the Galois group. Write $\mathfrak{o}_{F}$ (resp. $\mathfrak{o}_{F_{0}}$) for the ring of integers of $F$ (resp. $F_{0}$) and $\boldsymbol{k}$ (resp. $\boldsymbol{k}_{0}$) for the residue field of $F$ (resp. $F_{0}$). The involution $\sigma$ induces a $\boldsymbol{k}_{0}$-automorphism of $\boldsymbol{k}$ generating $\mathrm{Gal}(\boldsymbol{k}/\boldsymbol{k}_{0})$, still denoted by $\sigma$.
	
	Let $R$ be an algebraically closed field of characteristic $l\geq 0$ different from $p$. If $l>0$, then we are in the ``modular case".
	
	We fix a character
	$$\psi_{0}: F_{0}\rightarrow R^{\times}$$
	trivial on the maximal ideal of $\mathfrak{o}_{F_{0}}$ but not on $\mathfrak{o}_{F_{0}}$, and we define $\psi=\psi_{0}\circ\mathrm{tr}_{F/F_{0}}$.
	
	Let $G$ be the locally profinite group $\mathrm{GL}_{n}(F)$ with $n\geq 1$, equipped with the involution $\sigma$ acting componentwise. Let $\varepsilon$ be a \emph{hermitian matrix} in $G$, which means that $\varepsilon^{*}=\varepsilon$. Here $x^{*}:=\sigma(\,^{t}x)$ for any $x\in\mathrm{M}_{n}(F)$ with $\,^{t}$ denoting the transpose operator. Sometimes we write $\sigma_{t}(x):=x^{*}$ for any $x\in\mathrm{M}_{n}(F)$ to emphasize that $\sigma_{t}$ is an anti-involution on $\mathrm{M}_{n}(F)$ extending $\sigma$. For $\varepsilon$ hermitian and $g\in G$, we define $\tau_{\varepsilon}(g)=\varepsilon\sigma(\,^t g^{-1})\varepsilon^{-1}$, called the \emph{unitary involution} corresponding to $\varepsilon$. For $\tau=\tau_{\varepsilon}$ a fixed unitary involution, we denote by $G^{\tau}$ the corresponding \emph{unitary subgroup}, which consists of the elements of $G$ fixed by $\tau$.
	
	By \emph{representations} of a locally profinite group, we always mean smooth representations on an $R$-module. Given a representation $\pi$ of a closed subgroup $H$ of $G$, we write $\pi^{\vee}$ for the smooth contragredient of $\pi$. We write $\pi^{\sigma}$ and $\pi^{\tau}$ for the representations $\pi\circ\sigma$ and $\pi\circ\tau$ of groups $\sigma(H)$ and $\tau(H)$ respectively. We say that $\pi$ is \emph{$\tau$-selfdual} if $H$ is $\tau$-stable and $\pi^{\tau}$ is isomorphic to $\pi^{\vee}$. We say that $\pi$ is \emph{$\sigma$-invariant} if $H$ is $\sigma$-stable and $\pi^{\sigma}$ is isomorphic to $\pi$. For $g\in G$, we write $H^{g}=\{g^{-1}hg|h\in H\}$ a closed subgroup and we write $\pi^{g}:x\mapsto\pi(gxg^{-1})$ a representation of $H^{g}$.
	
	For $\mathfrak{a}$ an $\mathfrak{o}_{F}$-subalgebra of $\mathrm{M}_{n}(F)$ and $\tau=\tau_{\varepsilon}$ a unitary involution, we denote by $$\tau(\mathfrak{a}):=\sigma_{\varepsilon}(\mathfrak{a})=\{\sigma_{\varepsilon}(x)|x\in\mathfrak{a}\}$$
	an $\mathfrak{o}_{F}$-subalgebra of $\mathrm{M}_{n}(F)$, where $\sigma_{\varepsilon}(x):=\varepsilon\sigma(\,^{t}x)\varepsilon^{-1}$ is an anti-involution for any $x\in\mathrm{M}_{n}(F)$. We say that $\mathfrak{a}$ is \emph{$\tau$-stable} if $\tau(\mathfrak{a})=\mathfrak{a}$. Moreover for $g\in G$, we obtain
	$$\tau(\mathfrak{a})^{g}=g^{-1}\sigma_{\varepsilon}(\mathfrak{a})g=\sigma_{\varepsilon}(\sigma_{\varepsilon}(g)\mathfrak{a}\sigma_{\varepsilon}(g^{-1}))=\sigma_{\varepsilon}(\tau(g)^{-1}\mathfrak{a}\tau(g))=\tau(\mathfrak{a}^{\tau( g)})$$
	In other words, the notation $\tau(\mathfrak{a})$ is compatible with $G$-conjugacy.
	
	For $\tau$ a unitary involution and $\pi$ a representation of $H$ as above, we say that $\pi$ is $H\cap G^{\tau}$-\emph{distinguished}, or just \emph{distinguished}, if the space $\mathrm{Hom}_{H\cap G^{\tau}}(\pi,1)$ is non-zero.
	
	An irreducible representation of $G$ is called \emph{cuspidal} (resp. \emph{supercuspidal}) if it doesn't occur as a subrepresentation (resp. subquotient) of a parabolically induced representation with respect to a proper parabolic subgroup of $G$.

	\subsection{Hermitian matrices and unitary groups}\label{subsectionunitary}
	
	We make use of this subsection to introduce basic knowledge of hermitian matrices and unitary groups. The references will be \cite{hakim1998supercuspidal} and \cite{jacobowitz1962hermitian}.
	
	Let $E/E_{0}$ be a quadratic extension of non-archimedean locally compact fields which are algebraic extensions of $F$ and $F_{0}$ respectively. Write $\mathfrak{o}_{E}$ for the ring of integers of $E$ and $\mathfrak{o}_{E_{0}}$ for that of $E_{0}$. Let $\sigma'\in\mathrm{Gal}(E/E_{0})$ be the unique non-trivial involution in the Galois group. For $\varepsilon'\in\mathrm{GL}_{m}(E)$, just as in the last subsection, we say that $\varepsilon'$ is a \emph{hermitian matrix} if $(\varepsilon')^{*}=\varepsilon'$, where we consider $(\cdot)^{*}$ as before with $n$, $F$, $F_{0}$, $\sigma$ replaced by $m$, $E$, $E_{0}$, $\sigma'$ respectively. Write $\varpi_{E}$ for a uniformizer of $E$ such that $$\sigma'(\varpi_{E})=
	\begin{cases}
		\varpi_{E} & \text{if}\ E/E_{0}\ \text{is unramified},\\
		-\varpi_{E} & \text{if}\ E/E_{0}\ \text{is ramified}.
	\end{cases}$$
	
	Let $\mathcal{X}$ denote the set of all the hermitian matrices in $\mrgl_{m}(E)$ for $E/E_{0}$. The group $\mrgl_{m}(E)$ acts on $\mathcal{X}$ by $g\cdot x=gxg^{*}$.
	
	\begin{proposition}[\cite{jacobowitz1962hermitian}, Theorem 3.1]\label{PropGOH}
		
		There are exactly two $\mathrm{GL}_{m}(E)$-orbits of $\mathcal{X}$ with respect to the action given above. Furthermore, the elements in each orbit are exactly determined by the classes of their determinants in $E_{0}^{\times}/\mathrm{N}_{E/E_{0}}(E^{\times})$.
		
	\end{proposition}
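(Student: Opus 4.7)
The plan is to use the determinant class in $E_0^\times/N_{E/E_0}(E^\times)$ as the invariant separating orbits. For $x \in \mathcal{X}$ one has $\sigma'(\det x) = \det(x^*) = \det x$, so $\det x \in E_0^\times$, and the formula
\[
\det(g \cdot x) = \det g \cdot \det x \cdot \sigma'(\det g) = N_{E/E_0}(\det g)\,\det x
\]
shows that $\det$ descends to a well-defined map $\overline{\det}\colon \mathcal{X}/\mathrm{GL}_m(E) \to E_0^\times/N_{E/E_0}(E^\times)$. Local class field theory yields $|E_0^\times/N_{E/E_0}(E^\times)| = 2$, and both classes are realized by $I_m$ and $\mathrm{diag}(1,\ldots,1,c)$ with $c \in E_0^\times \setminus N_{E/E_0}(E^\times)$. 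The whole statement thus reduces to proving that $\overline{\det}$ is injective.

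For this I would identify $\mathcal{X}/\mathrm{GL}_m(E)$ with the set of isometry classes of non-degenerate $m$-dimensional hermitian forms on $E^m$ over $E/E_0$ and perform Gram--Schmidt. Since $p \neq 2$, polarization forces every non-degenerate hermitian form to admit a non-isotropic vector, and the orthogonal complement of such a vector is again non-degenerate; induction on $m$ then produces $g \in \mathrm{GL}_m(E)$ with $g \varepsilon' g^* = \mathrm{diag}(a_1,\ldots,a_m)$, $a_i \in E_0^\times$. It now suffices to prove the $2$-dimensional normalization $\mathrm{diag}(a,b) \sim \mathrm{diag}(1,ab)$ for all $a,b \in E_0^\times$: applied to consecutive pairs of diagonal entries, this collapses any diagonal form to $\mathrm{diag}(1,\ldots,1,d)$ for some $d$ in the same class as $\det \varepsilon'$ modulo norms, and adjusting the last basis vector by any $w \in E^\times$ changes $d$ by $N_{E/E_0}(w)$, so $d$ may be freely set equal to $\det \varepsilon'$.

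The main obstacle, and the only step that genuinely uses the non-archimedean local hypothesis, is the $2$-dimensional normalization. It is equivalent to the universality statement that every non-degenerate $2$-dimensional hermitian form over $E/E_0$ represents $1 \in E_0^\times$: given a vector $v$ with $h(v,v)=1$, its orthogonal complement is a $1$-dimensional form whose diagonal entry lies in the right class modulo norms, after which a scaling of the remaining basis vector puts the matrix into $\mathrm{diag}(1,ab)$. Universality is classical for local hermitian forms: for $a,b \in E_0^\times$ one shows that the set $\{a N_{E/E_0}(z_1) + b N_{E/E_0}(z_2) : z_1, z_2 \in E\}$ exhausts $E_0^\times$ by treating the unramified and ramified cases for $E/E_0$ separately and using Hensel's lemma (available because $p \neq 2$) to describe the image of $N_{E/E_0}$ on units and uniformizers. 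Granting this, the chain of reductions closes, $\overline{\det}$ is a bijection, and the proposition follows.
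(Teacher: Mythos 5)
The paper offers no argument for this proposition: it is quoted directly from Jacobowitz (\cite{jacobowitz1962hermitian}, Theorem 3.1), so there is no internal proof to compare against. Your proposal is the standard classical proof of that cited result, and it is sound in structure: the determinant computation $\det(g\cdot x)=\mathrm{N}_{E/E_{0}}(\det g)\det x$ does give a well-defined invariant in $E_{0}^{\times}/\mathrm{N}_{E/E_{0}}(E^{\times})$, local class field theory gives exactly two classes, both are visibly realized, and injectivity correctly reduces, via diagonalization and the collapse $\mathrm{diag}(a,b)\sim\mathrm{diag}(1,ab)$, to the universality of binary hermitian forms. You rightly identify that universality as the only step carrying genuine arithmetic content, but it is also the only step you leave as a sketch; to make the argument complete you would either carry out the unramified/ramified norm-group computation you outline (which does work: in the unramified case $\mathrm{N}_{E/E_{0}}(\mathfrak{o}_{E}^{\times})=\mathfrak{o}_{E_{0}}^{\times}$ and the norm group is the set of elements of even valuation, in the ramified case the norm group is an index-two subgroup containing a prime element, and in each configuration one either lands in the norm class directly or reduces to the isotropy of $\mathrm{N}\perp\mathrm{N}$), or simply quote the classical fact, e.g.\ by observing that the associated quaternary quadratic form $a\mathrm{N}_{E/E_{0}}\perp b\mathrm{N}_{E/E_{0}}$ over $E_{0}$ is either isotropic or the anisotropic quaternion norm form, and in both cases represents all of $E_{0}^{\times}$. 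Two harmless inaccuracies: $p\neq 2$ is not actually needed to produce a non-isotropic vector for a nondegenerate hermitian form (nondegeneracy of the trace form of the separable extension $E/E_{0}$ already forces one), and you only need the binary form to represent $1$, not to exhaust $E_{0}^{\times}$, though the two are equally easy here.
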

	
	We also consider the $\mathrm{GL}_{m}(\mathfrak{o}_{E})$-orbits of $\mathcal{X}$. We consider sequences $\alpha=(\alpha_{1},...,\alpha_{r})$ of certain triples $\alpha_{i}=(a_{i},m_{i},\delta_{i})$, such that $a_{1}>...>a_{r}$ is a decreasing sequence of integers, and $m_{1}+...+m_{r}=m$ is a partition of $m$ by positive integers, and $\delta_{1},...,\delta_{r}$ are elements of $E$ such that:
	
	(1) If $E/E_{0}$ is unramified, then $\delta_{i}=1$;
	
	(2) If $E/E_{0}$ is ramified and $a_{i}$ is odd, then $\delta_{i}=1$ and $m_{i}$ is even;
	
	(3) If $E/E_{0}$ is ramified and $a_{i}$ is even, then $\delta_{i}$ is either 1 or $\epsilon$, with $\epsilon\in \mathfrak{o}_{E_{0}}^{\times}-\mathrm{N}_{E/E_{0}}(\mathfrak{o}_{E}^{\times})$ fixed.\\
	For each $\alpha=(\alpha_{1},...,\alpha_{r})$ as above, we introduce a hermitian matrix $\varpi_{E}^{\alpha}=\varpi_{E}^{\alpha_{1}}\oplus...\oplus\varpi_{E}^{\alpha_{r}}$, where $\varpi_{E}^{\alpha_{i}}\in \mathrm{GL}_{m_{i}}(E)$ is a hermitian matrix, such that:
	
	(\romannumeral1) In the case (1), $\varpi_{E}^{\alpha_{i}}=\varpi_{E}^{a_{i}}I_{m_{i}}$;
	
	(\romannumeral2) In the case (2), $\varpi_{E}^{\alpha_{i}}=\varpi_{E}^{a_{i}}J_{m_{i}/2}$
		, where $J_{m_{i}/2}=\bigg(\begin{matrix}0 & I_{m_{i}/2} \\ -I_{m_{i}/2} & 0\end{matrix}\bigg)$;
		
		(\romannumeral3) In the case (3), $\varpi_{E}^{\alpha_{i}}=\varpi_{E}^{a_{i}}\mathrm{diag}(1,...,1,\delta_{i})$, where $\mathrm{diag}(*,...,*)$ denotes the diagonal matrix with corresponding diagonal elements.
		
		We state the following proposition which classifies all the $\mathrm{GL}_{m}(\mathfrak{o}_{E})$-orbits of $\mathcal{X}$.
		
		\begin{proposition}[\cite{jacobowitz1962hermitian}, Theorem 7.1, Theorem 8.2]\label{PropJOH}
			
			Each class of the $\mathrm{GL}_{m}(\mathfrak{o}_{E})$-orbits of $\mathcal{X}$ contains a unique representative of the form $\varpi_{E}^{\alpha}$ for a certain $\alpha$ as above.
			
		\end{proposition}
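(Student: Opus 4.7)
The plan is to proceed by induction on $m$, peeling off at each stage a single block $\varpi_{E}^{\alpha_{i}}$ from the normal form of the given hermitian matrix $h\in\mathcal{X}$, viewed as a form on the lattice $L=\mathfrak{o}_{E}^{m}$. The invariant driving the induction is $a_{1}:=\min\{v_{E}(h(v,w)):v,w\in L\}$, which is clearly stable under the $\mathrm{GL}_{m}(\mathfrak{o}_{E})$-action and will be the largest valuation appearing in the sequence $\alpha$.

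For existence, the peeling step depends on whether $a_{1}$ is attained on a diagonal vector. In the affirmative case there is a primitive $v\in L$ with $v_{E}(h(v,v))=a_{1}$; extending $v$ to an $\mathfrak{o}_{E}$-basis of $L$ and applying a Gram--Schmidt substitution based on the polarization identity $h(v+\lambda w,v+\lambda w)=h(v,v)+\mathrm{tr}_{E/E_{0}}(\sigma'(\lambda)h(v,w))+\mathrm{N}_{E/E_{0}}(\lambda)h(w,w)$, I orthogonally split off the rank one sublattice $\mathfrak{o}_{E}v$ with Gram scalar $h(v,v)=\varpi_{E}^{a_{1}}u$, where $u\in\mathfrak{o}_{E_{0}}^{\times}$. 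In the unramified case, Hensel applied to $\mathrm{N}_{E/E_{0}}:\mathfrak{o}_{E}^{\times}\to\mathfrak{o}_{E_{0}}^{\times}$ gives surjectivity, so I may rescale $v$ to force $u=1$, producing a block of type~(\romannumeral1). In the ramified even case the cokernel $\mathfrak{o}_{E_{0}}^{\times}/\mathrm{N}_{E/E_{0}}(\mathfrak{o}_{E}^{\times})$ has order two and the representative $u\in\{1,\epsilon\}$ produces a type~(\romannumeral3) block. The remaining subcase, where no primitive vector attains the minimum diagonal valuation, occurs exactly when $E/E_{0}$ is ramified and $a_{1}$ is odd: the identity $h(v,v)\in E_{0}$ together with $v_{E}|_{E_{0}}=2v_{E_{0}}$ forces every diagonal valuation to be even. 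Here I select primitive $v,w\in L$ realizing $v_{E}(h(v,w))=a_{1}$ and show, by a direct $2\times 2$ unit-level substitution using $p\neq 2$, that the restriction of $h$ to $\mathfrak{o}_{E}v\oplus\mathfrak{o}_{E}w$ is $\mathrm{GL}_{2}(\mathfrak{o}_{E})$-equivalent to $\varpi_{E}^{a_{1}}J_{1}$ and splits off orthogonally from its $h$-complement. The complement inherits a hermitian form of strictly smaller rank whose off-diagonal minimum is strictly larger than $a_{1}$, so induction on $m$ closes existence and forces the strict ordering $a_{1}>\dots>a_{r}$.

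For uniqueness, the elementary divisors of $h$ viewed as a pairing $L\times L\to\mathfrak{o}_{E}$, equivalently as an $\mathfrak{o}_{E}$-linear map from $L$ to its $\sigma'$-sesquilinear dual, determine the pairs $(a_{i},m_{i})$ and depend only on the $\mathrm{GL}_{m}(\mathfrak{o}_{E})$-orbit of $h$. On each graded piece the induced form is an intrinsic hermitian (or skew-hermitian, in the ramified odd case) form over the residue field, and its discriminant in $\mathfrak{o}_{E_{0}}^{\times}/\mathrm{N}_{E/E_{0}}(\mathfrak{o}_{E}^{\times})$ recovers $\delta_{i}$ in the ramified even case. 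In the unramified case this quotient is trivial; in the ramified odd case the skew-hermitian graded form is forced to be hyperbolic (matching the requirement that $m_{i}$ be even), leaving no further invariant to choose.

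The main obstacle is the hyperbolic subcase of the peeling step: one must verify that the plane spanned by the two vectors realizing the off-diagonal minimum is genuinely hyperbolic and not equivalent to a diagonal form, which amounts to a careful $2\times 2$ unit-scale computation relying on $p\neq 2$ and the parity of $a_{1}$. Once this step is in place, the rest is a routine Jordan-style decomposition of hermitian $\mathfrak{o}_{E}$-lattices, in parallel to the proof of Proposition~\ref{PropGOH}.
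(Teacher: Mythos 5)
The paper itself gives no proof of this proposition: it is imported verbatim from Jacobowitz (Theorems 7.1 and 8.2), and your Jordan-splitting argument is essentially the standard proof from that source, so the overall route is the right one. (A cosmetic point: your $a_{1}$ is the \emph{smallest} exponent occurring, and the paper orders $a_{1}>\dots>a_{r}$ decreasingly; this is only a relabelling of blocks.)

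There is, however, a genuine mis-step in the inductive bookkeeping. After splitting off the rank-one piece (or the rank-two piece in the ramified odd-scale case) realizing $a_{1}$, the orthogonal complement does \emph{not} in general have minimal valuation strictly larger than $a_{1}$: already for $h=I_{2}$ in the unramified case the complement of $\mathfrak{o}_{E}e_{1}$ still has minimum $0$. The complement only satisfies $\geq a_{1}$, so the peeling produces several constituents of the same scale which must afterwards be grouped; your strict inequality, and the claim that it "forces the strict ordering $a_{1}>\dots>a_{r}$", fails as stated. Grouping is harmless for recovering the pairs $(a_{i},m_{i})$, but to arrive at the \emph{stated} normal form you also need a normalization step absent from your sketch: in the ramified case with $a_{i}$ even, repeated peeling can yield $\varpi_{E}^{a_{i}}\mathrm{diag}(\delta^{(1)},\dots,\delta^{(m_{i})})$ with several entries equal to $\epsilon$, and one must prove $\varpi_{E}^{a_{i}}\mathrm{diag}(\epsilon,\epsilon)\cong\varpi_{E}^{a_{i}}\mathrm{diag}(1,1)$ over $\mathfrak{o}_{E}$ (for instance because $x\sigma'(x)+y\sigma'(y)$ represents every unit of $\mathfrak{o}_{E_{0}}$: norms of units reduce to squares in $\boldsymbol{l}_{0}$ and, $p$ being odd, every element of $\boldsymbol{l}_{0}$ is a sum of two squares), so that at most one $\delta_{i}=\epsilon$ survives per scale. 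You should also make explicit the trace argument showing that in the unramified and ramified-even cases the minimum $a_{1}$ is attained on the diagonal, which you only gesture at via the polarization identity. With these repairs the existence half is complete, and your uniqueness sketch via elementary divisors and the discriminants of the residual graded forms is sound, provided you verify that those graded forms are defined intrinsically by the filtration and not by the chosen splitting.
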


		Now we study unitary groups. For $\varepsilon'\in\mathcal{X}$, we denote by $\mathrm{U}_{m}(\varepsilon')$ the unitary group consisting of those $g\in \mathrm{GL}_{m}(E)$ such that $g\varepsilon' g^{*}=\varepsilon'$. We say that two unitary groups are \emph{equivalent} if and only if they are conjugate by some $g\in \mrgl_{m}(E)$. Since it is easy to check that $g\mathrm{U}_{m}(\varepsilon')g^{-1}=\mathrm{U}_{m}(g\varepsilon' g^{*})$, by Proposition \ref{PropGOH}, there are at most two equivalence classes of unitary groups, which are represented by $\mathrm{U}_{m}(E/E_{0}):=\mathrm{U}_{m}(I_{m})$ and $\mathrm{U}'_{m}(E/E_{0}):=\mathrm{U}_{m}(\varepsilon)$ for $\varepsilon=\mathrm{diag}(1,...,1,\epsilon)$, where $\epsilon\in E_{0}^{\times}-\mathrm{N}_{E/E_{0}}(E^{\times})$ is fixed.  
		
		\begin{remark}\label{rmkUequvclass}
			
			While we will not use it, we list the following result for completeness: $\mathrm{U}_{m}(E/E_{0})$ is equivalent to $\mathrm{U}'_{m}(E/E_{0})$ if and only if $m$ is odd. 
			
		\end{remark}

		\begin{remark}\label{remuniequi}
			
			In the future, we only consider the following two cases. First, we consider $E=F$, $E_{0}=F_{0}$, $m=n$ and $\sigma'=\sigma$. For any two unitary involutions with the corresponding hermitian matrices in the same $\mathrm{GL}_{n}(F)$-orbit, we already showed that the corresponding two unitary groups are equivalent. Since distinction is a property invariant up to equivalence of unitary groups, we may choose a hermitian matrix in its $\mrgl_{n}(F)$-orbit such that the corresponding unitary involution $\tau$ is simple enough to simplify the problem. Secondly, we consider $E$ as a finite field extension of $F$ determined by a cuspidal representation $\pi$ such that $n=m[E:F]$. We will find out that if $\pi^{\sigma}\cong\pi$, then we may find an involution $\sigma'$ on $E$ such that $E_{0}=E^{\sigma'}$ and $\sigma'|_{F}=\sigma$. So we may make use of the propositions in this subsection to study hermitian matrices and unitary groups of $\mathrm{GL}_{m}(E)$.
			
		\end{remark}
		
		\section{Preliminaries on simple types}
		
		In this section, we recall the main results we will need on simple strata, characters and types \cite{bushnell129admissible}, \cite{bushnell1996local}, \cite{bushnell2014effective}, \cite{vigneras1996representations}, \cite{minguez2014types}. We mainly follow the structure of \cite{anandavardhanan2019galois} and \cite{secherre2019supercuspidal}.
		
		\subsection{Simple strata and characters}\label{subsectionsimplestratum}
		
		Let $[\mathfrak{a},\beta]$ be a simple stratum in $\mathrm{M}_{n}(F)$ for a certain $n\geq 1$. Recall that $\mathfrak{a}$ is a hereditary order in $\mathrm{M}_{n}(F)$ and $\beta$ is in $G=\mathrm{GL}_{n}(F)$ such that:
		
		(1) the $F$-algebra $E=F[\beta]$ is a field with degree $d$ over $F$;
		
		(2) $E^{\times}$ normalizes $\mathfrak{a}^{\times}$.
		
		The centralizer of $E$ in $\mathrm{M}_{n}(F)$, denoted by $B$, is an $E$-algebra isomorphic to $\mathrm{M}_{m}(E)$ with $n=md$. The intersection $\mathfrak{b}:=\mathfrak{a}\cap B$ is a hereditary order in $B$.
		
		We denote by $\mathfrak{p}_{\mathfrak{a}}$ the Jacobson radical of $\mathfrak{a}$, and $U^{1}(\mathfrak{a})$ the compact open pro-$p$-subgroup $1+\mathfrak{p}_{\mathfrak{a}}$ of $G$. Similarly, we denote by $\mathfrak{p}_{\mathfrak{b}}$ the Jacobson radical of $\mathfrak{b}$ and $U^{1}(\mathfrak{b})$ the compact open pro-$p$-subgroup $1+\mathfrak{p}_{\mathfrak{b}}$ of $B^{\times}$. For any $x\in B^{\times}$, we have (\cite{bushnell129admissible}, Theorem 1.6.1)
		
		\begin{equation}\label{UUinB}
			U^{1}(\mathfrak{a})x U^{1}(\mathfrak{a})\cap B^{\times}=U^{1}(\mathfrak{b})x U^{1}(\mathfrak{b}).
		\end{equation}
		
		Associated with $[\mathfrak{a},\beta]$, there are open compact subgroups
		$$H^{1}(\mathfrak{a},\beta)\subset J^{1}(\mathfrak{a},\beta)\subset J(\mathfrak{a},\beta)$$
		of $\mathfrak{a}^{\times}$ and a finite set $\mathcal{C}(\mathfrak{a},\beta)$ of \emph{simple characters} of $H^{1}(\mathfrak{a},\beta)$ depending on the choice of $\psi$. We denote by $\boldsymbol{J}(\mathfrak{a},\beta)$ the subgroup of $G$ generated by $J(\mathfrak{a},\beta)$ and the normalizer of $\mathfrak{b}^{\times}$ in $B^{\times}$.
		
		\begin{proposition}[\cite{bushnell129admissible}, section 3]\label{PropJandJ1}
			
			We have the following properties:
			
			(1) The group $J(\mathfrak{a},\beta)$ is the unique maximal compact subgroup of $\boldsymbol{J}(\mathfrak{a},\beta)$;
			
			(2) The group $J^{1}(\mathfrak{a},\beta)$ is the unique maximal normal pro-$p$-subgroup of $J(\mathfrak{a},\beta)$;
			
			(3) The group $J(\mathfrak{a},\beta)$ is generated by $J^{1}(\mathfrak{a},\beta)$ and $\mathfrak{b}^{\times}$, and we have
			\begin{equation}
				J(\mathfrak{a},\beta)\cap B^{\times}=\mathfrak{b}^{\times}, J^{1}(\mathfrak{a},\beta)\cap B^{\times}= U^{1}(\mathfrak{b});
			\end{equation}
			
			(4) The normalizer of any simple character $\theta\in\mathcal{C}(\mathfrak{a},\beta)$ in $G$ is equal to $\boldsymbol{J}(\mathfrak{a},\beta)$;
			
			(5) The intertwining set of any $\theta\in\mathcal{C}(\mathfrak{a},\beta)$ in $G$, which we denote by $I_{G}(\theta)$, is equal to $J^{1}(\mathfrak{a},\beta)B^{\times}J^{1}(\mathfrak{a},\beta)=J(\mathfrak{a},\beta)B^{\times}J(\mathfrak{a},\beta)$.
			
		\end{proposition}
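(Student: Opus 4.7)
This proposition collects several structural facts about simple types and simple characters, and the plan is essentially to extract each statement from the existing literature on simple types: Bushnell--Kutzko \cite{bushnell129admissible} in the classical setting and Mínguez--Séchérre \cite{minguez2014types} for the $l$-modular generalization. The point is that the constructions of $H^{1}(\mathfrak{a},\beta)$, $J^{1}(\mathfrak{a},\beta)$, $J(\mathfrak{a},\beta)$ and of the simple character set $\mathcal{C}(\mathfrak{a},\beta)$ are insensitive to whether the coefficient field is $\mathbb{C}$ or an arbitrary algebraically closed $R$ with $\mathrm{char}(R)\neq p$, so classical arguments transfer verbatim once the Heisenberg machinery is available over $R$.

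For items (1)--(3), I would begin from the decomposition $J(\mathfrak{a},\beta)=U^{1}(\mathfrak{a})\mathfrak{b}^{\times}$ together with $J^{1}(\mathfrak{a},\beta)\cap B^{\times}=U^{1}(\mathfrak{b})$, which is a direct consequence of the simple intersection property already recorded in (3.1). This immediately yields (3). For (2), normality of $J^{1}$ in $J$ is visible from the definition and $J^{1}$ is pro-$p$ by construction; any strictly larger normal pro-$p$-subgroup of $J$ would project to a non-trivial normal $p$-subgroup of $J/J^{1}\cong\mathrm{GL}_{m}(\boldsymbol{l})$, which does not exist. For (1), the quotient $\boldsymbol{J}(\mathfrak{a},\beta)/J(\mathfrak{a},\beta)$ is free abelian of rank one, generated by the image of a uniformizer of $\mathcal{N}(\mathfrak{b})$ modulo $\mathfrak{b}^{\times}$; hence any compact subgroup of $\boldsymbol{J}(\mathfrak{a},\beta)$ must have trivial image and therefore lie inside $J(\mathfrak{a},\beta)$.

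Parts (4) and (5) go back to the intertwining theorem for simple characters. I would cite \cite{bushnell129admissible}, Theorem 3.3.2, for the equality $I_{G}(\theta)=J^{1}B^{\times}J^{1}$; the second form $J B^{\times}J$ then follows from $J=U^{1}(\mathfrak{b})J^{1}$ and $U^{1}(\mathfrak{b})\subset B^{\times}$. For the normalizer statement (4), the inclusion $\boldsymbol{J}(\mathfrak{a},\beta)\subset N_{G}(\theta)$ is easy: $J(\mathfrak{a},\beta)$ fixes $\theta$ because $\theta$ extends to $J$, and $\mathcal{N}(\mathfrak{b})$ normalizes $\theta$ by the transfer property for simple characters. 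For the converse, one writes any normalizer $g\in I_{G}(\theta)$ as $g=j_{1}bj_{2}$ with $j_{i}\in J^{1}$ and $b\in B^{\times}$, and analyzes the restriction of the conjugation action to $H^{1}\cap B^{\times}$; the uniqueness part of the classification of simple characters in $B$ then forces $b\in\mathcal{N}(\mathfrak{b})$, giving $g\in\boldsymbol{J}(\mathfrak{a},\beta)$.

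The main and really only non-formal obstacle is that the intertwining formula for $\theta$ ultimately rests on the existence and uniqueness of the Heisenberg representation attached to $\theta$ and on the $\beta$-extension machinery. In the $l$-modular setting, both of these require $l\neq p$, which is exactly our standing assumption, and were established by Mínguez--Séchérre in \cite{minguez2014types}; once one invokes their construction of the Heisenberg representation and the corresponding uniqueness statement, the classical proofs of \cite{bushnell129admissible} apply without change, and the proposition follows.
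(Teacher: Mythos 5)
Your route is essentially the paper's: the paper offers no argument of its own for this proposition, it simply recalls it with the citation to S\'echerre's Proposition 5.1, which in turn is a compilation of the standard facts from Bushnell--Kutzko and from M\'{\i}nguez--S\'echerre in the $l$-modular case, and your appeal to \cite{bushnell129admissible} (the intertwining theorem, Theorem 3.3.2) together with \cite{minguez2014types} for the Heisenberg machinery with $l\neq p$ is exactly the intended justification. Two identities in your sketch are, however, misstated and should be corrected before the deductions built on them are read literally. First, the decomposition you start from for (1)--(3) is not $J(\mathfrak{a},\beta)=U^{1}(\mathfrak{a})\mathfrak{b}^{\times}$: one has $U^{1}(\mathfrak{a})\not\subset J(\mathfrak{a},\beta)$ in general, and the correct statement (part of the construction of $J$) is $J(\mathfrak{a},\beta)=\mathfrak{b}^{\times}J^{1}(\mathfrak{a},\beta)$; combined with the simple intersection property (3.1), which gives $U^{1}(\mathfrak{a})\cap B^{\times}=U^{1}(\mathfrak{b})$ and hence $J^{1}\cap B^{\times}=U^{1}(\mathfrak{b})$ and $J\cap B^{\times}=\mathfrak{b}^{\times}$, this does yield (3) as you intend. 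Second, in (5) you pass from $J^{1}B^{\times}J^{1}$ to $JB^{\times}J$ using "$J=U^{1}(\mathfrak{b})J^{1}$"; since $U^{1}(\mathfrak{b})\subset J^{1}$ that equality would say $J=J^{1}$, which is false -- what you need is $J=U(\mathfrak{b})J^{1}=\mathfrak{b}^{\times}J^{1}$ with $\mathfrak{b}^{\times}\subset B^{\times}$, after which the equality of the two intertwining sets follows as you say. With these repairs, and with the remaining points (triviality of normal $p$-subgroups of $\mathrm{GL}_{m}(\boldsymbol{l})$ for (2), the infinite cyclic quotient $\boldsymbol{J}/J$ for (1), and the reduction of the normalizer statement (4) to $b\in B^{\times}$ normalizing $H^{1}\cap B^{\times}=U^{1}(\mathfrak{b})$, hence $\mathfrak{b}$, hence lying in $\mathcal{N}(\mathfrak{b})$), your outline matches the literature the paper relies on.
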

		
		\begin{remark}
			
			We write for short $J$, $J^{1}$, $H^{1}$ for $J(\mathfrak{a},\beta)$, $J^{1}(\mathfrak{a},\beta)$, $H^{1}(\mathfrak{a},\beta)$ respectively if $\mathfrak{a}$ and $\beta$ are clear to us.
			
		\end{remark}
		
		
		When $\mathfrak{b}$ is a maximal order in $B$, we call the simple stratum $[\mathfrak{a},\beta]$ and the simple characters in $\mathcal{C}(\mathfrak{a},\beta)$ \emph{maximal}. In this case, we may find an isomorphism of $E$-algebras $B\cong\mathrm{M}_{m}(E)$ which identifies $\mathfrak{b}$ with the standard maximal order, and moreover we have group isomorphisms
		\begin{equation}\label{eqGLml}
			J(\mathfrak{a},\beta)/J^{1}(\mathfrak{a},\beta)\cong\mathfrak{b}^{\times}/U^{1}(\mathfrak{b})\cong\mathrm{GL}_{m}(\boldsymbol{l}),
		\end{equation}
		where $\boldsymbol{l}$ denotes the residue field of $E$.
		
		\subsection{Simple types and cuspidal representations}\label{subsectiontypes}
		
		A pair $(\boldsymbol{J},\Lambda)$, called an \emph{extended maximal simple type} in $G$ (we always write \emph{simple type} for short) and constructed in \cite{bushnell129admissible} in the characteristic 0 case and in \cite{vigneras1996representations}, \cite{minguez2014types} in the modular case, is made of a subgroup $\boldsymbol{J}$ of $G$ which is open and compact modulo its centre, and an irreducible representation $\Lambda$ of $\boldsymbol{J}$. 
		
		Given a simple type ($\boldsymbol{J},\Lambda$) in $G$, there are a maximal simple stratum $[\mathfrak{a},\beta]$ in $\mathrm{M}_{n}(F)$ and a maximal simple character $\theta\in\mathcal{C}(\mathfrak{a},\beta)$, such that $\boldsymbol{J}(\mathfrak{a},\beta)=\boldsymbol{J}$ and $\theta$ is contained in the restriction of $\Lambda$ to $H^{1}(\mathfrak{a},\beta)$. Such a character $\theta$ is said to be \emph{attached to} $\Lambda$. By \cite{bushnell129admissible}, Proposition 5.1.1 (or \cite{minguez2014types}, Proposition 2.1 in the modular case), the group $J^{1}(\mathfrak{a},\beta)$ has, up to isomorphism, a unique irreducible representation $\eta$ whose restriction to $H^{1}(\mathfrak{a},\beta)$ contains $\theta$. Such a representation $\eta$, called the \emph{Heisenberg representation} associated to $\theta$, has the following properties:
		
		(1) the restriction of $\eta$ to $H^{1}(\mathfrak{a},\beta)$ is made of $(J^{1}(\mathfrak{a},\beta):H^{1}(\mathfrak{a},\beta))^{1/2}$ copies of $\theta$. Here $(J^{1}(\mathfrak{a},\beta):H^{1}(\mathfrak{a},\beta))^{1/2}$ is a power of $p$;
		
		(2) the direct sum of $(J^{1}(\mathfrak{a},\beta):H^{1}(\mathfrak{a},\beta))^{1/2}$ copies of $\eta$, which we denote by $\eta^{(J^{1}(\mathfrak{a},\beta):H^{1}(\mathfrak{a},\beta))^{1/2}}$, is isomorphic to $\mathrm{Ind}_{H^{1}}^{J^{1}}\theta$;
		
		(3) the representation $\eta$ extends to $\boldsymbol{J}$;
		
		(4) the intertwining set of $\eta$, which we denote by $I_{G}(\eta)$, equals $I_{G}(\theta)$;
		
		(5) for $h\in I_{G}(\eta)$, we have $\mathrm{dim}_{R}(\mathrm{Hom}_{J^{1}\cap J^{1h}}(\eta^{h},\eta))=1$.
		
		For any representation $\boldsymbol{\kappa}$ of $\boldsymbol{J}$ extending $\eta$, there exists a unique irreducible representation $\boldsymbol{\rho}$ of $\boldsymbol{J}$ trivial on $J^{1}(\mathfrak{a},\beta)$ such that $\Lambda\cong\boldsymbol{\kappa}\otimes\boldsymbol{\rho}$. Through (\ref{eqGLml}), the restriction of $\boldsymbol{\rho}$ to $J=J(\mathfrak{a},\beta)$ is identified with the inflation of a cuspidal representation of $\mathrm{GL}_{m}(\boldsymbol{l})$.
		
		\begin{remark}
			
			Recall that in \cite{bushnell129admissible}, Bushnell and Kutzko also assume $\kappa^{0}=\boldsymbol{\kappa}|_{J(\mathfrak{a},\beta)}$ to be a so called \emph{beta-extension}, which means that:
			
			(1) $\kappa^{0}$ is an extension of $\eta$;
			
			(2) if we denote by $I_{G}(\kappa^{0})$ the intertwining set of $\kappa^{0}$, then $I_{G}(\kappa^{0})=I_{G}(\eta)=I_{G}(\theta)$.
			
			However in our case, since $\mathrm{GL}_{m}(\boldsymbol{l})$ is not isomorphic to $\mathrm{GL}_{2}(\mathbb{F}_{2})$ ($p\neq2$), any character of $\mathrm{GL}_{m}(\boldsymbol{l})$ factors through the determinant. It follows that any representation of $J$ extending $\eta$ is a beta-extension. So finally our consideration of $\kappa^{0}$ coincides with the original assumption of Bushnell and Kutzko.
			
		\end{remark}
		
		
		
		
		
		We now give the classification of irreducible cuspidal representations of $G$ in terms of simple types (see \cite{bushnell129admissible}, \S 6.2, \S 8.4 and \cite{minguez2014types}, section 3 in the modular case).
		
		\begin{proposition}[\cite{bushnell129admissible},\cite{minguez2014types}]\label{PropType}
			
			Let $\pi$ be a cuspidal representation of $G$.
			
			(1) There is a simple type $(\boldsymbol{J},\Lambda)$ such that $\Lambda$ is a subrepresentation of the restriction of $\pi$ to $\boldsymbol{J}$. It is unique up to $G$-conjugacy;
			
			(2) Compact induction $c$-$\mathrm{Ind}_{\boldsymbol{J}}^{G}$ gives a bijection between the $G$-conjugacy classes of simple types and the isomorphism classes of cuspidal representations of $G$.
			
		\end{proposition}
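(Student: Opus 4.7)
The plan is to follow the classical Bushnell--Kutzko strategy, adapted to the modular case by M\'inguez--S\'echerre. The statement naturally splits into three parts: (a) producing a simple type $(\boldsymbol{J},\Lambda)$ inside a given cuspidal $\pi$, (b) showing that $c$-$\mathrm{Ind}_{\boldsymbol{J}}^{G}\Lambda$ is an irreducible cuspidal representation for any simple type, and (c) showing that the $G$-conjugacy class of $(\boldsymbol{J},\Lambda)$ is determined by $\pi$. Together (b) and (c) upgrade the assignment $(\boldsymbol{J},\Lambda)\mapsto c$-$\mathrm{Ind}_{\boldsymbol{J}}^{G}\Lambda$ to the claimed bijection.

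For (a), the starting point is the \emph{exhaustion theorem} for simple characters: every cuspidal representation $\pi$ of $G$ contains a maximal simple character $\theta\in\mathcal{C}(\mathfrak{a},\beta)$ for some maximal simple stratum $[\mathfrak{a},\beta]$. I would import this from \cite{bushnell129admissible} (respectively \cite{minguez2014types} in the $l$-modular setting); it rests on a careful analysis of the level of $\pi$ via fundamental strata and a reduction to the semisimple case. Granted $\theta$, the Heisenberg representation $\eta$ of $J^{1}$ above $\theta$ and an extension $\boldsymbol{\kappa}$ of $\eta$ to $\boldsymbol{J}$ are supplied by the theory recalled in subsection 3.2. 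Proposition \ref{xi} shows that every irreducible representation of $\boldsymbol{J}$ whose restriction to $J^{1}$ contains $\eta$ has the form $\boldsymbol{\kappa}\otimes\boldsymbol{\rho}$ with $\boldsymbol{\rho}$ trivial on $J^{1}$; one picks the $\boldsymbol{\rho}$ appearing in $\pi|_{\boldsymbol{J}}$ and checks that its inflation to $J$ comes from a cuspidal representation of $\mathrm{GL}_{m}(\boldsymbol{l})\cong J/J^{1}$. If $\boldsymbol{\rho}$ were non-cuspidal, transitivity of compact induction applied through a parabolic inducing datum would embed $\pi$ into a properly parabolically induced representation of $G$, contradicting cuspidality of $\pi$.

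For (b), I would apply Mackey's criterion: $c$-$\mathrm{Ind}_{\boldsymbol{J}}^{G}\Lambda$ is irreducible once $I_{G}(\Lambda)=\boldsymbol{J}$. Any $g\in I_{G}(\Lambda)$ must intertwine $\eta$, hence by property (4) of the Heisenberg representation and Proposition \ref{PropJandJ1}(5) lies in $J^{1}B^{\times}J^{1}$; the multiplicity-one statement (5) of subsection 3.2 then reduces the intertwining of $\Lambda$ at such a $g$ to the intertwining of $\boldsymbol{\rho}$, which via $\mathfrak{b}^{\times}/U^{1}(\mathfrak{b})\cong\mathrm{GL}_{m}(\boldsymbol{l})$ becomes the intertwining of the cuspidal finite-group representation behind $\boldsymbol{\rho}$. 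Cuspidality on $\mathrm{GL}_{m}(\boldsymbol{l})$ together with the identification $\boldsymbol{J}/J\cong\mathcal{N}(\mathfrak{b})/\mathfrak{b}^{\times}$ then collapses the intertwining to $\boldsymbol{J}$. Cuspidality of $c$-$\mathrm{Ind}_{\boldsymbol{J}}^{G}\Lambda$ follows by a Jacquet-module computation along any proper parabolic, using an Iwahori-type decomposition of $\boldsymbol{J}$ relative to a Levi to translate non-vanishing of a Jacquet module into non-cuspidality of the finite-group representation attached to $\boldsymbol{\rho}$.

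For (c), if $(\boldsymbol{J},\Lambda)$ and $(\boldsymbol{J}',\Lambda')$ both embed in $\pi$, Frobenius reciprocity yields surjections from $c$-$\mathrm{Ind}_{\boldsymbol{J}}^{G}\Lambda$ and $c$-$\mathrm{Ind}_{\boldsymbol{J}'}^{G}\Lambda'$ onto $\pi$; by (b) both induced representations are irreducible and hence isomorphic to $\pi$, and a further Frobenius-plus-Mackey argument produces $g\in G$ such that $(\boldsymbol{J}^{g},\Lambda^{g})\cong(\boldsymbol{J}',\Lambda')$. The hard part of the plan is the exhaustion step in (a): everything else is bookkeeping around the intertwining statements of subsections 3.1--3.2, but extracting an initial simple character inside an arbitrary cuspidal $\pi$ is the technical heart of Bushnell--Kutzko theory, and in the modular case one must additionally be vigilant about the distinction between cuspidal and supercuspidal $\pi$, which on the finite-group side manifests as the distinction between cuspidal and supercuspidal irreducible representations of $\mathrm{GL}_{m}(\boldsymbol{l})$.
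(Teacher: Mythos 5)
The paper does not actually prove this proposition: it is imported wholesale from Bushnell--Kutzko (\cite{bushnell129admissible}, 6.2, 8.4) and M\'inguez--S\'echerre (\cite{minguez2014types}, Section 3), and section 3 of the paper only recalls the objects entering its statement. Your outline is a faithful summary of how those references establish it: exhaustion of maximal simple characters inside a cuspidal $\pi$, the decomposition $\Lambda\cong\boldsymbol{\kappa}\otimes\boldsymbol{\rho}$ via Proposition \ref{xi}, the intertwining computation collapsing $I_{G}(\Lambda)$ to $\boldsymbol{J}$ through cuspidality of the representation of $\mathrm{GL}_{m}(\boldsymbol{l})$ underlying $\boldsymbol{\rho}$, and the Frobenius--Mackey argument for uniqueness up to $G$-conjugacy. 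Since the substantive inputs (the exhaustion theorem above all) are taken from exactly the sources the paper cites, your proposal is best read as a summary of that literature rather than an independent proof, which is legitimate here because the paper itself treats the result as background. One place where your sketch is too quick, and where the modular case genuinely differs from the complex one: the implication ``$I_{G}(\Lambda)=\boldsymbol{J}$ implies $c$-$\mathrm{Ind}_{\boldsymbol{J}}^{G}\Lambda$ irreducible'' is not a formal Mackey statement when $\mathrm{char}(R)=l>0$, since $\mathrm{End}_{G}=R$ alone does not force irreducibility of a non-admissible induced representation; in \cite{minguez2014types} one first shows the induced representation has compactly supported coefficients modulo the centre (hence is killed by all proper Jacquet functors) and only then deduces irreducibility, so this intermediate cuspidality step should be made explicit rather than folded into the Jacquet-module computation you place afterwards.
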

		
		\subsection{Endo-classes, tame parameter fields and tame lifting}\label{subsectionendo}
		
		In this subsection, we introduce the concepts of endo-classes, tame parameter fields and tame lifting. The main references will be  \cite{bushnell129admissible}, \cite{bushnell1996local} and \cite{bushnell2014effective}.
		
		For $[\mathfrak{a},\beta]$ a simple stratum in $\mathrm{M}_{n}(F)$ and $[\mathfrak{a}',\beta']$ a simple stratum in $\mathrm{M}_{n'}(F)$ with $n,n'\geq 1$, if we have an isomorphism of $F$-algebras $\phi: F[\beta]\rightarrow F[\beta']$ such that $\phi(\beta)=\beta'$, then there exists a canonical bijection
		$$t_{\mathfrak{a},\mathfrak{a}'}^{\beta,\beta'}:\mathcal{C}(\mathfrak{a},\beta)\rightarrow\mathcal{C}(\mathfrak{a}',\beta'),$$
		called the \emph{transfer map} (see \cite{bushnell129admissible}, Theorem 3.6.14).
		
		Now let $[\mathfrak{a}_{1},\beta_{1}]$ and $[\mathfrak{a}_{2},\beta_{2}]$ be simple strata in $\mathrm{M}_{n_{1}}(F)$ and $\mathrm{M}_{n_{2}}(F)$ respectively with $n_{1},n_{2}\geq 1$. We call two simple characters $\theta_{1}\in\mathcal{C}(\mathfrak{a}_{1},\beta_{1})$ and $\theta_{2}\in\mathcal{C}(\mathfrak{a}_{2},\beta_{2})$ \emph{endo-equivalent}, if there are simple strata $[\mathfrak{a}',\beta_{1}']$ and $[\mathfrak{a}',\beta_{2}']$ in $\mathrm{M}_{n'}(F)$ for some $n'\geq 1$ such that $\theta_{1}$ and $\theta_{2}$ transfer to two simple characters $\theta_{1}'\in\mathcal{C}(\mathfrak{a}',\beta_{1}')$ and $\theta_{2}'\in\mathcal{C}(\mathfrak{a}',\beta_{2}')$ respectively which intertwine (or by \cite{bushnell129admissible}, Theorem 3.5.11 which are $\mathrm{GL}_{n'}(F)$-conjugate). This defines an equivalence relation on
		$$\bigcup_{[\mathfrak{a},\beta]}\mathcal{C}(\mathfrak{a},\beta),$$
		where the union runs over all simple strata of $\mathrm{M}_{n}(F)$ for all $n\geq 1$ (see \cite{bushnell1996local}, section 8). An equivalence class for this equivalence relation is called an \emph{endo-class}.
		
		For $\pi$ a cuspidal representation of $G=\mathrm{GL}_{n}(F)$, there exist a simple stratum $[\mathfrak{a},\beta]$ and a simple character $\theta\in\mathcal{C}(\mathfrak{a},\beta)$ contained in $\pi$. The set of simple characters $\theta$ contained in $\pi$ constitutes a $G$-conjugacy class, thus those simple characters are endo-equivalent. So we may denote by $\Theta_{\pi}$ the endo-class of $\pi$ which is the endo-class determined by any $\theta$ contained in $\pi$.
		
		Given $\theta\in\mathcal{C}(\mathfrak{a},\beta)$, the degree of $E/F$, its ramification index and its residue degree depend only on the endo-class of $\theta$. They are called the degree, ramification index and residue degree of this endo-class. Although the field extension $E/F$ is not uniquely determined, its maximal tamely ramified subextension is uniquely determined by the endo-class of $\theta$ up to $F$-isomorphism. This field is called a \emph{tame parameter field} of the endo-class (see \cite{bushnell2014effective}, \S 2.2, \S 2.4).
		
		We denote by $\mathcal{E}(F)$ the set of endo-classes of simple characters over $F$. Given a finite tamely ramified extension $T$ of $F$, we have a surjection
		$$\mathcal{E}(T)\rightarrow\mathcal{E}(F)$$
		with finite fibers, which is called a \emph{restriction map} (see \cite{bushnell2014effective}, \S 2.3). Given $\Theta\in\mathcal{E}(F)$, the endo-classes $\Psi\in\mathcal{E}(T)$ restricting to $\Theta$ are called the $T/F$-lifts of $\Theta$. If $\Theta$ has a tame parameter field $T$, then $\mathrm{Aut}_{F}(T)$ acts faithfully and transitively on the set of $T/F$-lifts of $\Theta$ (see \cite{bushnell2014effective}, \S 2.3, \S 2.4).
		
		Let $[\mathfrak{a},\beta]$ be a simple stratum and let $\theta\in\mathcal{C}(\mathfrak{a},\beta)$ be a simple character, let $T$ be the maximal tamely ramified extension of $F$ in $E$, and let $\Theta$ be the endo-class of $\theta$, then $T$ is a tame parameter field for $\Theta$. Let $C\cong\mathrm{M}_{n/t}(T)$ denote the centralizer of $T$ in $\mathrm{M}_{n}(F)$, where $t=[T:F]$. The intersection $\mathfrak{c}=\mathfrak{a}\cap C$ is an order in $C$ 
		which gives rise to a simple stratum $[\mathfrak{c},\beta]$. The restriction of $\theta$ to $H^{1}(\mathfrak{c},\beta)$, denoted by $\theta_{T}$, is a simple character associated to this simple stratum, called the \emph{interior $T/F$-lift} of $\theta$. Its endo-class, denoted by $\Psi$, is a $T/F$-lift of $\Theta$. For the origin and details of the construction of $\Psi$, see \cite{bushnell1996local}.
		
		For $T\subset\mrm_{n}(F)$ a tamely ramified subextension over $F$, the map
		$$\mathfrak{a}\mapsto\mathfrak{a}\cap C$$
		is injective from the set of hereditary orders of $\mathrm{M}_{n}(F)$ normalized by $T^{\times}$ to the set of hereditary orders of $C$ (see \cite{bushnell1996local}, section 2), where we still denote by $C$ the centralizer of $T$ in $\mrm_{n}(F)$. For $[\mfa,\beta_{1}]$, $[\mfa_{2},\theta_{2}]$ two simple strata, and $\theta_{1}\in\mcc(\mfa,\beta_{1})$, $\theta_{2}\in\mcc(\mfa,\beta_{2})$ two simple characters, such that $\theta_{1}$ and $\theta_{2}$ have the same tame parameter field $T$, if
		$$\mcc(\mfc,\beta_{1})=\mcc(\mfc,\beta_{2})\quad \text{and}\quad (\theta_{1})_{T}=(\theta_{2})_{T},$$ 
		then (see [BH96], Theorem 7.10, Theorem 7.15)
		$$\mcc(\mfa,\beta_{1})=\mcc(\mfa,\beta_{2})\quad \text{and}\quad \theta_{1}=\theta_{2}.$$ In particular, when $\beta_{1}=\beta_{2}=\beta$, the interior $T/F$-lift
		is injective from $\mcc(\mfa,\beta)$ to $\mcc(\mfc,\beta)$.
		
		\subsection{Supercuspidal representations}
		
		Let $\pi$ be a cuspidal representation of $G$. By Proposition \ref{PropType}, it contains a simple type $(\boldsymbol{J},\Lambda)$. Fix a maximal simple stratum $[\mathfrak{a},\beta]$ such that $\boldsymbol{J}=\boldsymbol{J}(\mathfrak{a},\beta)$, and write $\Lambda=\boldsymbol{\kappa}\otimes\boldsymbol{\rho}$ as in \S \ref{subsectiontypes}. Let $\overline{\rho}$ be the cuspidal representation of $J/J^{1}\cong\mathrm{GL}_{m}(\boldsymbol{l})$ whose inflation equals $\boldsymbol{\rho}|_{J}$. We have the following proposition:
		
		\begin{proposition}[\cite{vigneras1996representations}, Chapitre III, 5.14]
			
			The representation $\pi$ is supercuspidal if and only if $\overline{\rho}$ is supercuspidal.
			
		\end{proposition}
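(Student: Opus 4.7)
The plan is to exploit the compatibility between parabolic induction on $G$ and parabolic induction on the finite quotient $J/J^{1}\cong\mathrm{GL}_{m}(\boldsymbol{l})$, via the cover theory for simple types developed by Bushnell--Kutzko and extended to the modular setting by M\'{\i}nguez--S\'echerre.

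For the contrapositive of the ``if'' direction, suppose $\overline{\rho}$ is cuspidal but not supercuspidal. Then there exist a proper standard Levi $\overline{L}=\mathrm{GL}_{m_{1}}(\boldsymbol{l})\times\cdots\times\mathrm{GL}_{m_{r}}(\boldsymbol{l})$ of $\mathrm{GL}_{m}(\boldsymbol{l})$ with $r\geq 2$ and supercuspidals $\overline{\rho}_{i}$ on $\mathrm{GL}_{m_{i}}(\boldsymbol{l})$ such that $\overline{\rho}$ is a subquotient of the parabolic induction of $\overline{\rho}_{1}\boxtimes\cdots\boxtimes\overline{\rho}_{r}$ from $\overline{L}$. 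I lift the partition $m=m_{1}+\cdots+m_{r}$ to $n=n_{1}+\cdots+n_{r}$ via $n_{i}=m_{i}d$, take the corresponding block-diagonal Levi $M=\mathrm{GL}_{n_{1}}(F)\times\cdots\times\mathrm{GL}_{n_{r}}(F)$ of $G$, and restrict the embedding $E\hookrightarrow\mathrm{M}_{n}(F)$ to block-diagonal embeddings $E\hookrightarrow\mathrm{M}_{n_{i}}(F)$. The maximal simple stratum $[\mathfrak{a},\beta]$ decomposes into maximal simple strata $[\mathfrak{a}_{i},\beta]$; transferring $\theta$ yields simple characters $\theta_{i}$, Heisenberg extensions, and simple types $(\boldsymbol{J}_{i},\Lambda_{i})$ with $\Lambda_{i}\cong\boldsymbol{\kappa}_{i}\otimes\boldsymbol{\rho}_{i}$, where $\boldsymbol{\rho}_{i}$ has $\overline{\rho}_{i}$ as its finite-field datum. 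Setting $\pi_{i}=c\text{-}\mathrm{Ind}_{\boldsymbol{J}_{i}}^{\mathrm{GL}_{n_{i}}(F)}\Lambda_{i}$, the key structural statement is that $(\boldsymbol{J},\Lambda)$ is a $G$-cover of $(\prod_{i}\boldsymbol{J}_{i},\bigotimes_{i}\Lambda_{i})$, which implies that $\pi$ is a subquotient of the normalized parabolic induction $\mathrm{Ind}_{P}^{G}(\pi_{1}\otimes\cdots\otimes\pi_{r})$; in particular $\pi$ is not supercuspidal.

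Conversely, if $\pi$ is not supercuspidal, then it is a subquotient of $\mathrm{Ind}_{P}^{G}\tau$ for some proper parabolic $P=MN$; after replacing $\tau$ by its cuspidal support we may assume $\tau$ is cuspidal on $M$. The cover property forces $\tau$ to be built from an $M$-type sharing the endo-class $\Theta_{\pi}$, hence associated to a block-diagonal decomposition of $[\mathfrak{a},\beta]$ as above. Reading off the $J/J^{1}$-level via the identification (\ref{eqGLml}) exhibits $\overline{\rho}$ as a subquotient of a proper parabolic induction on $\mathrm{GL}_{m}(\boldsymbol{l})$, so $\overline{\rho}$ is not supercuspidal.

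The main obstacle is executing the cover decomposition in the modular setting: one needs the refined type-theoretic bookkeeping of M\'{\i}nguez--S\'echerre to compatibly decompose $\boldsymbol{\kappa}$ across the block Levi and to verify that the $G$-cover axioms, in particular the compatibility of the Hecke algebra isomorphism with parabolic induction, survive in characteristic $l$. Once that machinery is granted, the bijection between cuspidal supports on the two sides is essentially functorial and both directions fall out uniformly.
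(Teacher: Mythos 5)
There is a genuine gap, and it sits at the load-bearing step of your first direction. You assert that the maximal simple type $(\boldsymbol{J},\Lambda)$ is a $G$-cover of the Levi datum $(\prod_{i}\boldsymbol{J}_{i},\bigotimes_{i}\Lambda_{i})$. This cannot be true: by the defining property of covers, if a smooth irreducible representation contains a cover of a pair attached to a proper Levi subgroup $M$, then its Jacquet module along the corresponding parabolic is non-zero. Here $\pi$ is cuspidal by hypothesis and contains $(\boldsymbol{J},\Lambda)$, so if your cover claim held, $\pi$ would not even be cuspidal — your argument proves too much and therefore proves nothing. Structurally the point is that covers of Levi data are built from \emph{non-maximal} hereditary orders (those for which $\mathfrak{b}=\mathfrak{a}\cap B$ is not maximal, arranged compatibly with $P$, so that $J$ admits an Iwahori decomposition relative to $P$); the maximal simple type attached to a cuspidal representation is precisely not of this kind, and $J\cap M$ is not $\prod_{i}J_{i}$ for it. Consequently the deduction that $\pi$ is a subquotient of $\mathrm{Ind}_{P}^{G}(\pi_{1}\otimes\cdots\otimes\pi_{r})$ does not follow from anything you have established. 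The same vagueness affects the converse ("the cover property forces $\tau$ to be built from an $M$-type sharing the endo-class $\Theta_{\pi}$"), though that half is closer to being repairable via the endo-class invariance of supercuspidal support.

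Note also that the paper does not prove this statement at all: it is quoted from M\'{\i}nguez--S\'echerre, Proposition 6.10. The actual mechanism there is the one your opening sentence gestures at, but implemented differently: one uses the functor $V\mapsto\mathrm{Hom}_{J^{1}}(\eta,V)$ (equivalently, a $\boldsymbol{\kappa}$-twisted level-zero functor taking values in representations of $J/J^{1}\cong\mathrm{GL}_{m}(\boldsymbol{l})$), together with the decomposition of $\boldsymbol{\kappa}$ and of simple characters along Levi subgroups via covers attached to non-maximal orders, to show that this functor intertwines parabolic induction on $G$ with parabolic induction on $\mathrm{GL}_{m}(\boldsymbol{l})$; combined with the classification of cuspidal non-supercuspidal representations (whose supercuspidal support is an unramified-twist family of a single supercuspidal, both over the finite field and over $F$), this yields both directions. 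If you want to salvage your write-up, replace the false cover claim by this compatibility statement for the $\boldsymbol{\kappa}$-functor, and justify separately which representations $\pi_{i}$ actually occur in the supercuspidal support of $\pi$; neither of these is automatic in the modular setting.
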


		\section{Distinction implies Galois invariance for a supercuspidal representation}
		
		Let $G=\mathrm{GL}_{n}(F)$ and let $G^{\tau}$ be the unitary group corresponding to a unitary involution $\tau$. We state the following theorem which is well-known when $R=\mathbb{C}$ and $\mathrm{char}(F)=0$ (see for example \cite{hakim2002globalization}, section 4, corollary or the earlier paper \cite{harder1986algebraische} which illustrates the idea).
		
		\begin{theorem}\label{Thmdistgalinv}
			
			Let $\pi$ be a supercuspidal representation of $G$. If $\pi$ is distinguished by $G^{\tau}$, then $\pi$ is $\sigma$-invariant.
			
		\end{theorem}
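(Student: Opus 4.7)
The plan is to split the argument according to the characteristic of $R$. When $\mathrm{char}(R)=0$ I would argue globally, using a suitable globalization theorem together with Jacquet's global result identifying $\mathrm{GL}_n(\mathbb{A}_{\mathcal{K}})^{\tau}$-distinguished cuspidal automorphic representations with those in the image of quadratic base change. When $\mathrm{char}(R)=l>0$, I would bootstrap to the characteristic-zero case by exhibiting a supercuspidal $\overline{\mathbb{Q}_l}$-lift of $\pi$ that remains $G^{\tau}$-distinguished, and then descending via reduction modulo $l$.

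For the characteristic-zero case, any supercuspidal representation over $R$ agrees with a representation defined over $\overline{\mathbb{Q}}$ after an unramified twist, so I may assume $R=\mathbb{C}$. I would then globalize the pair $(F/F_0,\pi,\tau)$: choose a quadratic extension of number fields $\mathcal{K}/\mathcal{K}_0$ equipped with a place $v_0$ of $\mathcal{K}_0$ whose completion is $F_0$ and a global unitary involution $\tau_{\mathrm{gl}}$ restricting to $\tau$ at $v_0$, together with a cuspidal automorphic representation $\Pi$ of $\mathrm{GL}_n(\mathbb{A}_{\mathcal{K}})$ with $\Pi_{v_0}\cong\pi$ and non-vanishing $\tau_{\mathrm{gl}}$-unitary period. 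When $\mathrm{char}(F)=0$, this is exactly the content of Hakim's globalization theorem \cite{hakim2002globalization}; when $\mathrm{char}(F)>0$, the same conclusion follows from the more general globalization of Gan--Lomel\'\i\ \cite{ganglobalization}. Applying Jacquet's theorem \cite{jacquet2005kloosterman}, such a $\Pi$ must be $\sigma$-invariant; taking local components at $v_0$ yields $\pi^{\sigma}\cong\pi$.

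For the modular case $R=\overline{\mathbb{F}_l}$, fix a simple type $(\boldsymbol{J},\Lambda)$ contained in $\pi$ with $\Lambda=\boldsymbol{\kappa}\otimes\boldsymbol{\rho}$ and $\overline{\rho}$ a supercuspidal representation of $\mathrm{GL}_m(\boldsymbol{l})$. I would lift each of the two factors separately: lift $\overline{\rho}$ to a supercuspidal $\widetilde{\overline{\rho}}$ of $\mathrm{GL}_m(\boldsymbol{l})$ over $\overline{\mathbb{Q}_l}$ via the projective envelope $P_{\overline{\rho}}$ in the category of smooth $\overline{\mathbb{Z}_l}[\mathrm{GL}_m(\boldsymbol{l})]$-modules, using Vign\'eras's theory \cite{vigneras1996representations}; and lift $\boldsymbol{\kappa}$ by first lifting the Heisenberg representation $\eta$ (whose structure is rigid enough over $\overline{\mathbb{Z}_l}$) and then extending to $\boldsymbol{J}$. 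Assembling, set $\widetilde{\Lambda}=\widetilde{\boldsymbol{\kappa}}\otimes\widetilde{\boldsymbol{\rho}}$ and $\widetilde{\pi}=c\text{-}\mathrm{Ind}_{\boldsymbol{J}}^G\widetilde{\Lambda}$; this is a supercuspidal $\overline{\mathbb{Q}_l}$-representation with $r_l(\widetilde{\pi})=\pi$. The key technical point is to choose the lift so that $\widetilde{\pi}$ is still $G^{\tau}$-distinguished. For this, I would expand $\mathrm{Hom}_{G^{\tau}}(\pi,1)$ via Mackey and Frobenius reciprocity as a product of local Hom-spaces $\mathrm{Hom}_{\boldsymbol{J}^g\cap G^{\tau}}(\Lambda^g,1)$ indexed by a small double-coset set, fix an integral structure on a distinguishing linear form, and use the projectivity of $P_{\Lambda|_J}$ together with semi-simplicity of its generic fibre to pick a composition factor of the lift of $\boldsymbol{\rho}$ for which the corresponding Hom-space remains non-zero over $\overline{\mathbb{Q}_l}$. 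Applying the characteristic-zero result to $\widetilde{\pi}$ gives $\widetilde{\pi}^{\sigma}\cong\widetilde{\pi}$, and since reduction modulo $l$ commutes with $\sigma$ and preserves isomorphism of semi-simple reductions, this descends to $\pi^{\sigma}\cong\pi$.

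The main obstacle is the modular lifting step: one must arrange for the $\overline{\mathbb{Q}_l}$-lift to inherit distinction, which is not automatic because $\mathrm{Hom}_{G^{\tau}}$ is not in general exact. This requires extracting integrality from the distinguishing form on $\pi$ and combining it with the projective envelope structure to select a lift $\widetilde{\pi}$ whose corresponding Hom-space has non-zero generic fibre. A secondary but more routine point is verifying that the globalization theorem in positive characteristic (Gan--Lomel\'\i) can be refined so that the global representation matches both the local $\pi$ and the local $\tau$ at $v_0$ while preserving non-vanishing of the unitary period; once both are in hand, the rest of the argument is essentially formal.
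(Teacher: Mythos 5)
Your proposal is correct and follows essentially the same route as the paper: reduce to the complex case by an unramified twist, globalize the distinguished supercuspidal (Hakim--Murnaghan in characteristic zero, Gan--Lomel\'{\i} in positive characteristic) and invoke the global distinction/base-change result, and in the modular case use the projective envelope of $\Lambda|_{J}$ over $\overline{\mathbb{Z}_{l}}$ together with a freeness/integrality argument on Hom spaces to produce a distinguished supercuspidal $\overline{\mathbb{Q}_{l}}$-lift, then apply the characteristic-zero case and reduce modulo $l$. The only deviations are cosmetic (citing Jacquet's global theorem directly rather than the Hakim--Murnaghan corollary, and phrasing the lift of $\Lambda$ via separate lifts of $\boldsymbol{\kappa}$ and $\boldsymbol{\rho}$ before falling back on the same projective-envelope mechanism).
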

		
		Before proving Theorem \ref{Thmdistgalinv}, we state a useful lemma which will be used not only in the proof of the theorem, but also in the latter sections.
		
		\begin{lemma}\label{LemmaunitaryJJ0}
			
			For $\delta$ a unitary involution on $G$ and for $(\boldsymbol{J},\Lambda)$ a simple type in $G$, we have $\boldsymbol{J}\cap G^{\delta}=J\cap G^{\delta}$.
			
		\end{lemma}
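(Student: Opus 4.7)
The plan is to control $\boldsymbol{J}$ modulo $J$ and exploit the unitary condition to kill the non-compact part. The inclusion $J\cap G^{\delta}\subset\boldsymbol{J}\cap G^{\delta}$ is immediate, so I focus on the reverse: any $g\in\boldsymbol{J}\cap G^{\delta}$ must lie in $J$. Because $(\boldsymbol{J},\Lambda)$ is a maximal simple type, the hereditary order $\mathfrak{b}=\mathfrak{a}\cap B$ is maximal in $B\cong\mathrm{M}_{m}(E)$, so the $B^{\times}$-normalizer satisfies $\mathcal{N}(\mathfrak{b})=E^{\times}\mathfrak{b}^{\times}$; combined with $\mathfrak{b}^{\times}\subset J$ this yields $\boldsymbol{J}=E^{\times}J$. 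Proposition~\ref{PropJandJ1}~(3) further gives $J\cap E^{\times}=\mathfrak{b}^{\times}\cap E^{\times}=\mathfrak{o}_{E}^{\times}$, so fixing any uniformizer $\varpi_{E}$ of $E$ produces an isomorphism $\boldsymbol{J}/J\cong\mathbb{Z}$ and every $g\in\boldsymbol{J}$ factors as $g=\varpi_{E}^{k}j$ with $j\in J$ and a unique integer $k$. The lemma reduces to proving that $\delta(g)=g$ forces $k=0$.

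I would read off $k$ from the $F$-valuation $v_{F}$ of $\det g$. Since $J\subset\mathfrak{a}^{\times}$ is compact, $\det(J)\subset\mathfrak{o}_{F}^{\times}$ and hence $v_{F}(\det j)=0$. Under the embedding $E\hookrightarrow B\hookrightarrow\mathrm{M}_{n}(F)$, the element $\varpi_{E}$ becomes the scalar matrix $\varpi_{E}I_{m}\in B$, whose determinant as an $F$-linear endomorphism of $E^{m}$ equals $\mathrm{N}_{E/F}(\varpi_{E})^{m}$. Writing $f_{E/F}$ for the residue degree of $E/F$, this gives $v_{F}(\det\varpi_{E})=mf_{E/F}\neq 0$, so
$$v_{F}(\det g)=kmf_{E/F},$$
and proving $k=0$ is equivalent to proving $v_{F}(\det g)=0$.

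To conclude, the unitary condition $\delta(x)=\varepsilon\sigma({}^{t}x^{-1})\varepsilon^{-1}$ implies $\det(\delta(x))=\sigma(\det x)^{-1}$, so $\delta(g)=g$ forces $\mathrm{N}_{F/F_{0}}(\det g)=\det(g)\,\sigma(\det g)=1$; since $\sigma$ preserves $v_{F}$, this yields $2v_{F}(\det g)=0$, whence $v_{F}(\det g)=0$, $k=0$, and $g\in J$. I do not foresee any real obstacle: the argument is completely uniform in whether $F/F_{0}$ is ramified or unramified, and the only mildly delicate ingredient — the identity $v_{F}(\det\varpi_{E})=mf_{E/F}$ via the norm formula for determinants across $E/F$ — is entirely standard.
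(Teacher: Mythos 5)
Your argument is correct and is essentially the paper's own proof: the paper also uses $\boldsymbol{J}=E^{\times}J$ together with the observation that $\delta(g)=g$ forces $\sigma(\det g)\det g=1$, hence $\det g\in\mathfrak{o}_{F}^{\times}$, which pins the $E^{\times}$-part down to $\mathfrak{o}_{E}^{\times}\subset J$. Your extra bookkeeping with $v_{F}(\det\varpi_{E})=mf_{E/F}$ just makes that last step explicit.
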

		
		\begin{proof}
			
			For $x\in\boldsymbol{J}\cap G^{\delta}$, we have $\delta(x)=x$ which implies that $\sigma(\mathrm{det}(x))\mathrm{det}(x)=1$, where we denote by $\mathrm{det}(\cdot)$ the determinant function defined on $G$. Thus $\mathrm{det}(x)\in\mathfrak{o}_{F}^{\times}$. Since $\boldsymbol{J}=E^{\times}J$, we get $x\in\mathfrak{o}_{E}^{\times}J\cap G^{\delta}=J\cap G^{\delta}$. 
			
		\end{proof}
		
		Moreover, we need the following lemma which says that the properties of distinction and $\sigma$-invariance are maintained up to change of base fields.
		
		\begin{lemma}\label{lemmabasechange}
			
			Let $R_{1}\hookrightarrow R_{2}$ be a fixed embedding of two algebraically closed fields of characteristic $l\geq 0$. Let $\pi_{0}$ be a supercuspidal representation of $G$ over $R_{1}$. Let $\pi=\pi_{0}\otimes_{R_{1}}R_{2}$ be the corresponding representation of $G$ over $R_{2}$. Then:
			
			(1) $\pi_{0}$ is distinguished by $G^{\tau}$ if and only if $\pi$ is distinguished by $G^{\tau}$;
			
			(2) $\pi_{0}^{\sigma}\cong\pi_{0}$ if and only if $\pi^{\sigma}\cong\pi$.
			
			\begin{proof}
				
				For (1), let $(\boldsymbol{J},\Lambda_{0})$ be a simple type of $\pi_{0}$. Then $(\boldsymbol{J},\Lambda):=(\boldsymbol{J},\Lambda_{0}\otimes_{R_{1}}R_{2})$ is a simple type of $\pi$ and thus $\pi$ is also supercuspidal. Using Frobenius reciprocity and the Mackey formula\footnote{This argument will occur several times in this section, so we refer to the reader for more details in the proof of Theorem \ref{Thmdistgalinv}.}, 
				$$\mathrm{Hom}_{R_{1}[G^{\tau}]}(\pi_{0},1)\neq 0\Longleftrightarrow\ \text{There exists }\ g\in G\ \text{such that}\ \mathrm{Hom}_{R_{1}[\boldsymbol{J}^{g}\cap G^{\tau}]}(\Lambda_{0}^{g},1)\neq 0$$
				and
				$$\mathrm{Hom}_{R_{2}[G^{\tau}]}(\pi,1)\neq 0\Longleftrightarrow\ \text{There exists }\ g\in G\ \text{such that}\ \mathrm{Hom}_{R_{2}[\boldsymbol{J}^{g}\cap G^{\tau}]}(\Lambda^{g},1)\neq 0$$
				By Lemma \ref{LemmaunitaryJJ0}, $\boldsymbol{J}^{g}\cap G^{\tau}=J^{g}\cap G^{\tau}$ is a compact group, and $\Lambda_{0}^{g}$ is a representation of finite dimension. Thus 
				$$\mathrm{Hom}_{R_{1}[\boldsymbol{J}^{g}\cap G^{\tau}]}(\Lambda_{0}^{g},1)\otimes_{R_{1}}R_{2}\cong\mathrm{Hom}_{R_{2}[\boldsymbol{J}^{g}\cap G^{\tau}]}(\Lambda^{g},1)$$
				which finishes the proof of (1).
				For (2), from \cite{vigneras1996representations}, Chapitre I, 6.13, we know that $\pi_{0}$ is isomorphic to $\pi_{0}^{\sigma}$ if and only if their trace characters are equal up to a scalar in $R_{1}^{\times}$, which works similarly for $\pi$ and $\pi^{\sigma}$. Since the trace characters of $\pi_{0}$ and $\pi$ are equal up to the change of scalars, which works similarly for $\pi_{0}^{\sigma}$ and $\pi^{\sigma}$, we finish the proof of (2). \footnote{Note that if the trace characters of $\pi_{0}^{\sigma}$ and $\pi_{0}$ are equal up to a scalar in $R_{2}^{\times}$, then that scalar is in $R_{1}^{\times}$ since the trace of $\pi_{0}$ and $\pi_{0}^{\sigma}$ take values in $R_{1}$.}
				
			\end{proof}
			
		\end{lemma}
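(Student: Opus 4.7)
For part (1), the plan is to reduce the distinction question to a finite-dimensional calculation via simple type theory, where scalar extension behaves transparently. Since $\pi_{0}$ is supercuspidal, Proposition \ref{PropType} provides a simple type $(\boldsymbol{J},\Lambda_{0})$ with $\pi_{0}\cong c$-$\mathrm{Ind}_{\boldsymbol{J}}^{G}\Lambda_{0}$. Compact induction commutes with scalar extension, so $\pi\cong c$-$\mathrm{Ind}_{\boldsymbol{J}}^{G}\Lambda$ with $\Lambda:=\Lambda_{0}\otimes_{R_{1}}R_{2}$, and the pair $(\boldsymbol{J},\Lambda)$ remains a simple type for $\pi$: the representation $\Lambda_{0}$ is finite-dimensional and absolutely irreducible over the algebraically closed field $R_{1}$, so $\Lambda$ is still irreducible, and the induced representation is still an irreducible supercuspidal representation of $G$ over $R_{2}$.

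Next I would apply Frobenius reciprocity together with a Mackey-type decomposition to obtain
\begin{equation*}
\mathrm{Hom}_{R_{1}[G^{\tau}]}(\pi_{0},1)\cong\prod_{g\in\boldsymbol{J}\backslash G/G^{\tau}}\mathrm{Hom}_{R_{1}[\boldsymbol{J}^{g}\cap G^{\tau}]}(\Lambda_{0}^{g},1),
\end{equation*}
and similarly for $\pi$ with $\Lambda$ in place of $\Lambda_{0}$. By Lemma \ref{LemmaunitaryJJ0}, each intersection $\boldsymbol{J}^{g}\cap G^{\tau}=J^{g}\cap G^{\tau}$ is compact, and $\Lambda_{0}^{g}$ is finite-dimensional, so for every $g$ one has
\begin{equation*}
\mathrm{Hom}_{R_{1}[\boldsymbol{J}^{g}\cap G^{\tau}]}(\Lambda_{0}^{g},1)\otimes_{R_{1}}R_{2}\cong\mathrm{Hom}_{R_{2}[\boldsymbol{J}^{g}\cap G^{\tau}]}(\Lambda^{g},1),
\end{equation*}
because taking $K$-invariants of a finite-dimensional representation of a compact group $K$ commutes with flat scalar extension. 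A factor is thus nonzero over $R_{1}$ if and only if the corresponding factor is nonzero over $R_{2}$, which yields both directions of (1).

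For part (2), the plan is to compare trace characters. By Vign\'eras (\cite{vigneras1996representations}, Chapitre I, 6.13), two irreducible smooth representations of $G$ over an algebraically closed field are isomorphic if and only if their trace characters agree up to a scalar. The trace character of $\pi$ is the composition of that of $\pi_{0}$ with the embedding $R_{1}\hookrightarrow R_{2}$, and analogously for $\pi^{\sigma}$ and $\pi_{0}^{\sigma}$. Hence the trace characters of $\pi_{0}$ and $\pi_{0}^{\sigma}$ agree up to a scalar in $R_{1}^{\times}$ if and only if the trace characters of $\pi$ and $\pi^{\sigma}$ agree up to a scalar in $R_{2}^{\times}$, giving the claimed equivalence.

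The only substantive step, and the place where care is required, is verifying that the simple-type description transfers under scalar extension: that $\Lambda$ is irreducible and that $c$-$\mathrm{Ind}_{\boldsymbol{J}}^{G}\Lambda$ is irreducible and supercuspidal. The irreducibility of $\Lambda$ follows from $R_{1}$ being algebraically closed (so $\mathrm{End}_{R_{1}[\boldsymbol{J}]}(\Lambda_{0})=R_{1}$) and $\Lambda_{0}$ finite-dimensional; supercuspidality is preserved because it is detected by the vanishing of all proper Jacquet modules, a property that is preserved by flat base change. Once these ingredients are granted, both parts reduce to the two basic principles that $\mathrm{Hom}$ spaces over compact groups in finite-dimensional representations, and trace characters of admissible representations, commute with scalar extension.
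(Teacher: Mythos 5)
Your proposal is correct and follows essentially the same route as the paper: both reduce part (1), via the simple type $(\boldsymbol{J},\Lambda_{0})$, Mackey decomposition and Frobenius reciprocity, to the compatibility of finite-dimensional $\mathrm{Hom}$ spaces over the compact groups $\boldsymbol{J}^{g}\cap G^{\tau}=J^{g}\cap G^{\tau}$ with scalar extension, and both prove part (2) by the trace character criterion of Vign\'eras. The extra verifications you supply (irreducibility of $\Lambda$ and preservation of supercuspidality under base change) are consistent with, and only slightly more detailed than, what the paper records.
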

		
		\begin{proof}[Proof of Theorem \ref{Thmdistgalinv}]
			
			First we consider $R=\mathbb{C}$. If $\mathrm{char}(F)=0$, it is a standard result proved by using a global method (\cite{hakim2002globalization}, section 4, Corollary). Especially, their result is based on the globalization theorem, saying a distinguished $\pi$ under our settings can be realized as a local component of a cuspidal automorphic representation $\Pi$ of $\mathrm{GL}_{n}(\mathbb{A}_{\mathcal{K}})$, which is distinguished by a unitary subgroup of $\mathrm{GL}_{n}(\mathbb{A}_{\mathcal{K}})$ with respect to a quadratic extension of number fields $\mathcal{K}/\mathcal{K}_{0}$ (see \emph{ibid.}, Theorem 1). If $\mathrm{char}(F)>0$, in order to use the proof of Hakim-Murnaghan, we only need a variant of globalization theorem for characteristic positive case. Fortunately, Gan-Lomel\'{\i} already built up the globalization theorem for general reductive groups over function fields and locally compact fields of characteristic positive (see \cite{ganglobalization}, Theorem 1.3). Following their notations, we choose the reductive group $H$ to be $\mathrm{R}_{\mathcal{K}/\mathcal{K}_{0}}(\mathrm{GL}_{n}(\mathcal{K}))$, where $\mathcal{K}/\mathcal{K}_{0}$ is a quadratic extension of function fields, and  $\mathrm{R}_{\mathcal{K}/\mathcal{K}_{0}}$ is the Weil restriction. We choose $V$ to be $\mathrm{M}_{n}(\mathcal{K})$ as a $\mathcal{K}_{0}$-vector space and $\iota:H\rightarrow\mathrm{GL}(V)$ to be a representation over $\mathcal{K}_{0}$ defined by
			$$\iota(h)x=hx\sigma(\,^{t}h),\quad x\in V,\ h\in H, $$
			where $\sigma$ denotes the non-trivial involution in $\mathrm{Gal}(\mathcal{K}/\mathcal{K}_{0})$. If we choose $x_{0}\in V$ to be a hermitian matrix in $\mathrm{M}_{n}(\mathcal{K})$ and $H^{x_{0}}$ to be the stabilizer of $x_{0}$, then $H^{x_{0}}$ becomes a unitary subgroup of $H$ which satisfies the condition of
			\emph{loc. cit.} In order to use their result, we only need to verify the conditions (a) and (b) in their theorem. For condition (a), $\iota$ is semi-simple since it is the direct sum of two irreducible subrepresentations, composing of hermitian matrices and anti-hermitian matrices respectively \footnote{Here we need the assumption $p\neq 2$.}. For condition (b), since we only care about the case where $\chi=1$, it is automatically satisfied. Thus, if we use \cite{ganglobalization}, Theorem 1.3 to replace \cite{hakim2002globalization}, Theorem 1 and follow the proof in \cite{hakim2002globalization}, then we finish the proof when $R=\mathbb{C}$ and $F/F_{0}$ is a quadratic extension of locally compact fields of characteristic $p$.
			
			For $\mathrm{char}(R)=0$ in general, a supercuspidal representation of $G$ can be realized as a representation over $\overline{\mathbb{Q}}$ up to twisting by an unramified character, where $\overline{\mathbb{Q}}$ is the algebraic closure of $\mathbb{Q}$. More precisely, there exists a character $\chi:F^{\times}\rightarrow R^{\times}$ such that $\chi|_{\mathfrak{o}_{F}^{\times}}=1$ and $\pi\cdot\chi\circ\mrdet$ can be realized as a representation over $\overline{\mathbb{Q}}$. Since $\mrdet(G^{\tau})\subset\mfo_{F}^{\times}$ and $\chi\circ\mrdet|_{G^{\tau}}$ is trivial, $\pi$ is $G^{\tau}$-distinguished if and only if $\pi\cdot\chi\circ\mrdet$ is, as a representation over $R$, and also as a representation over $\overline{\mathbb{Q}}$ or $\mathbb{C}$ by Lemma \ref{lemmabasechange}.(1). Using the complex case, $\pi\cdot\chi\circ\mrdet$ is $\sigma$-invariant as a representation over $\mathbb{C}$, and also as a representation over $\overline{\mathbb{Q}}$ or $R$ by Lemma \ref{lemmabasechange}.(2). By definition, $\chi$ is $\sigma$-invariant, thus $\pi$ is also $\sigma$-invariant.
			
			For $R=\overline{\mathbb{F}_{l}}$, we write $\pi\cong\mathrm{c}$-$\mathrm{Ind}_{\boldsymbol{J}}^{G}\Lambda$ for a simple type $(\boldsymbol{J},\Lambda)$. Using the Mackey formula and Frobenius reciprocity, we have
			$$0\neq\mathrm{Hom}_{G^{\tau}}(\pi,1)\cong\prod_{g\in\boldsymbol{J}\backslash G/G^{\tau}}\mathrm{Hom}_{\boldsymbol{J}^{g}\cap G^{\tau}}(\Lambda^{g},1).$$
			Thus $\pi$ is distinguished if and only if there exists $g\in G$ such that $\mathrm{Hom}_{\boldsymbol{J}^{g}\cap G^{\tau}}(\Lambda^{g},1)\neq 0$. Let $\gamma=\tau(g)g^{-1}$ and let $\delta(x)=\gamma^{-1}\tau(x)\gamma$ for $x\in G$ which is also a unitary involution, then we have
			$$0\neq\mathrm{Hom}_{\boldsymbol{J}^{g}\cap G^{\tau}}(\Lambda^{g},1)\cong\mathrm{Hom}_{\boldsymbol{J}\cap G^{\delta}}(\Lambda,1)=\mathrm{Hom}_{J\cap G^{\delta}}(\Lambda^{0},1)\cong\mathrm{Hom}_{J}(\Lambda^{0},\mathrm{Ind}_{J\cap G^{\delta}}^{J}\overline{\mathbb{F}_{l}}),$$
			where $\Lambda^{0}=\Lambda|_{J}$ and we use the fact that $\boldsymbol{J}\cap G^{\delta}=J\cap G^{\delta}$ by Lemma \ref{LemmaunitaryJJ0}.
			
			We consider $P_{\Lambda^{0}}$ to be the projective envelope of $\Lambda^{0}$ as a $\overline{\mathbb{Z}_{l}}[J]$-module, where we denote by $\overline{\mathbb{Z}_{l}}$ the ring of integers of $\overline{\mathbb{Q}_{l}}$, then we have (\cite{vigneras1996representations}, Chapitre III, 4.28 and \cite{serre2012linear}, Proposition 42 for finite group case. Since $\Lambda^{0}$ is a smooth representation of the compact group $J$ of finite dimension, it can be regarded as a representation of a finite group.):
			
			(1) $P_{\Lambda^{0}}\otimes_{\overline{\mathbb{Z}_{l}}}\overline{\mathbb{F}_{l}}$ is the projective envelope of $\Lambda^{0}$ as a $\overline{\mathbb{F}_{l}}[J]$-module, which is indecomposable of finite length, with each irreducible component isomorphic to $\Lambda^{0}$. Thus $\mathrm{Hom}_{\overline{\mathbb{F}_{l}}[J]}(P_{\Lambda^{0}}\otimes_{\overline{\mathbb{Z}_{l}}}\overline{\mathbb{F}_{l}},\mathrm{Ind}_{J\cap G^{\delta}}^{J}\overline{\mathbb{F}_{l}})\neq 0$;

			(2) For $\widetilde{P_{\Lambda^{0}}}=P_{\Lambda^{0}}\otimes_{\overline{\mathbb{Z}_{l}}}\overline{\mathbb{Q}_{l}}$ the $\overline{\mathbb{Q}_{l}}$-lift of $P_{\Lambda^{0}}$, we have $\widetilde{P_{\Lambda^{0}}}\cong\bigoplus\widetilde{\Lambda^{0}}$, where $\widetilde{\Lambda^{0}}$ in the direct sum are $\overline{\mathbb{Q}_{l}}$-lifts of $\Lambda^{0}$ of multiplicity 1 (The multiplicity 1 statement is derived from counting the length of $P_{\Lambda^{0}}\otimes_{\overline{\mathbb{Z}_{l}}}\overline{\mathbb{F}_{l}}$, and the number of different $\widetilde{\Lambda^{0}}$ in $\widetilde{P_{\Lambda^{0}}}$, and then showing that they are equal. The argument is indicated in the proof of \cite{vigneras1996representations}, Chapitre III, 4.28, or more precisely, \emph{ibid.}, Chapitre III, Th\'eor\`eme 2.2 and Th\'eor\`eme 2.9);
			
			(3) In (2), each $(J,\widetilde{\Lambda^{0}})$ can be extended to a simple type $(\boldsymbol{J},\widetilde{\Lambda})$ of $G$ as a $\overline{\mathbb{Q}_{l}}$-lift of $(\boldsymbol{J},\Lambda)$ (\cite{vigneras1996representations}, Chapitre III, 4.29).
			
			Using (1), $\mathrm{Hom}_{\overline{\mathbb{F}_{l}}[J]}(P_{\Lambda^{0}}\otimes_{\overline{\mathbb{Z}_{l}}}\overline{\mathbb{F}_{l}},\mathrm{Ind}_{J\cap G^{\delta}}^{J}\overline{\mathbb{F}_{l}})\neq 0$. Since $P_{\Lambda^{0}}$ is a projective $\overline{\mathbb{Z}_{l}}[J]$-module, it is a free $\overline{\mathbb{Z}_{l}}$-module.
			Since $\mathrm{Ind}_{J\cap G^{\delta}}^{J}\overline{\mathbb{Z}_{l}}$ is a free $\overline{\mathbb{Z}_{l}}$-module, $$\mathrm{Hom}_{\overline{\mathbb{Z}_{l}}[J]}(P_{\Lambda^{0}},\mathrm{Ind}_{J\cap G^{\delta}}^{J}\overline{\mathbb{Z}_{l}})$$
			is a free $\overline{\mathbb{Z}_{l}}$-module. As a result,
			$$\mathrm{Hom}_{\overline{\mathbb{F}_{l}}[J]}(P_{\Lambda^{0}}\otimes_{\overline{\mathbb{Z}_{l}}}\overline{\mathbb{F}_{l}},\mathrm{Ind}_{J\cap G^{\delta}}^{J}\overline{\mathbb{F}_{l}})\cong\mathrm{Hom}_{\overline{\mathbb{Z}_{l}}[J]}(P_{\Lambda^{0}},\mathrm{Ind}_{J\cap G^{\delta}}^{J}\overline{\mathbb{Z}_{l}})\otimes_{\overline{\mathbb{Z}_{l}}}\overline{\mathbb{F}_{l}}\neq 0$$
			if and only if
			$$\mathrm{Hom}_{\overline{\mathbb{Z}_{l}}[J]}(P_{\Lambda^{0}},\mathrm{Ind}_{J\cap G^{\delta}}^{J}\overline{\mathbb{Z}_{l}})\neq 0$$
			if and only if
			$$\mathrm{Hom}_{\overline{\mathbb{Q}_{l}}[J]}(\widetilde{P_{\Lambda^{0}}},\mathrm{Ind}_{J\cap G^{\delta}}^{J}\overline{\mathbb{Q}_{l}})\cong\mathrm{Hom}_{\overline{\mathbb{Z}_{l}}[J]}(P_{\Lambda^{0}},\mathrm{Ind}_{J\cap G^{\delta}}^{J}\overline{\mathbb{Z}_{l}})\otimes_{\overline{\mathbb{Z}_{l}}}\overline{\mathbb{Q}_{l}}\neq 0,$$
			So there exists $\widetilde{\Lambda^{0}}$ as in condition (2) such that $\mathrm{Hom}_{\overline{\mathbb{Q}_{l}}[J]}(\widetilde{\Lambda^{0}},\mathrm{Ind}_{J\cap G^{\delta}}^{J}\overline{\mathbb{Q}_{l}})\neq 0$. Using (3), we may choose $(\boldsymbol{J},\widetilde{\Lambda})$ as an extension of $(J,\widetilde{\Lambda^{0}})$. We write $\widetilde{\pi}=c$-$\mathrm{Ind}_{\boldsymbol{J}}^{G}\widetilde{\Lambda}$ which is a supercuspidal representation of $G$ over $\overline{\mathbb{Q}_{l}}$. By using
			$$\mathrm{Hom}_{\boldsymbol{J}^{g}\cap G^{\tau}}(\widetilde{\Lambda}^{g},1)\cong\mathrm{Hom}_{\boldsymbol{J}\cap G^{\delta}}(\widetilde{\Lambda},1)=\mathrm{Hom}_{J\cap G^{\delta}}(\widetilde{\Lambda^{0}},1)\cong\mathrm{Hom}_{J}(\widetilde{\Lambda^{0}},\mathrm{Ind}_{J\cap G^{\delta}}^{J}\overline{\mathbb{Q}_{l}})\neq 0$$ and by the Mackey formula and Frobenius reciprocity as before, $\widetilde{\pi}$ is $G^{\tau}$-distinguished. Using the result of the characteristic 0 case, we have $\widetilde{\pi}^{\sigma}\cong\widetilde{\pi}$. By (3), $\widetilde{\Lambda}$ is a $\overline{\mathbb{Q}_{l}}$-lift of $\Lambda$. So $\widetilde{\pi}$ is a $\overline{\mathbb{Q}_{l}}$-lift of $\pi$ and we have $\pi^{\sigma}\cong\pi$.
			
			For $\mathrm{char}(R)=l>0$ in general, as in the characteristic zero case, there exists a character $\chi:F^{\times}\rightarrow R^{\times}$ such that $\chi|_{\mathfrak{o}_{F}^{\times}}=1$ and $\pi\cdot\chi\circ\mrdet$ can be realized as a representation over $\overline{\mathbb{F}_{l}}$. Similarly we deduce that $\pi$ is $G^{\tau}$-distinguished if and only if $\pi\cdot\chi\circ\mrdet$ is, as a representation over $R$, and also as a representation over $\overline{\mathbb{F}}_{l}$ by Lemma \ref{lemmabasechange}.(1). Using the case above, $\pi\cdot\chi\circ\mrdet$ is $\sigma$-invariant, as a representation over $\overline{\mathbb{F}}_{l}$, and also as a representation over $R$ by Lemma \ref{lemmabasechange}.(2). By definition, $\chi$ is $\sigma$-invariant, thus $\pi$ is also $\sigma$-invariant.
			
		\end{proof}
		
		\begin{remark}
			
			It is also possible to give a purely local proof (without using the result of the complex supercuspidal case) for this theorem which also works for cuspidal representations. Since our proof relies on the refinement of the results and the arguments in section 5-8, we leave it to the last section to avoid breaking the structure of the paper.
			
		\end{remark}
		
		
		
		
		\section{The $\tau$-selfdual type theorem}
		
		Let $G=\mathrm{GL}_{n}(F)$ and let $\tau$ be the unitary involution of $G$ corresponding to a hermitian matrix $\varepsilon$. Let $\pi$ be a cuspidal representation of $G$. We choose a maximal simple stratum $[\mathfrak{a},\beta]$ and a simple character $\theta\in\mathcal{C}(\mathfrak{a},\beta)$ contained in $\pi$. 
		
		\begin{lemma}\label{LemmatauE}
			
			If $\pi$ is $\sigma$-invariant, then we may choose the simple stratum above such that $\sigma(\,^{t}\beta)=\beta$. As a result, $\sigma_{t}$ (see section 2) is an involution defined on $E$ whose restriction to $F$ is $\sigma$.
			
		\end{lemma}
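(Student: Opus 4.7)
The plan is to use the $\sigma$-invariance of the endo-class of $\pi$ to extend $\sigma$ to an involution $\sigma_E$ of the field $E=F[\beta]$, and then to choose the embedding $E\hookrightarrow\mathrm{M}_n(F)$ so that $\sigma_E$ is realized by the anti-involution $\sigma_t$ of $\mathrm{M}_n(F)$. Once this is done, picking $\beta$ in the $\sigma_E$-fixed subfield will make $\beta$ automatically hermitian.

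Since $\pi\cong\pi^\sigma$, the endo-class $\Theta_\pi$ is $\sigma$-invariant, because $\Theta_{\pi^\sigma}=\sigma(\Theta_\pi)$. By the theory of tame parameter fields recalled in Subsection~\ref{subsectionendo}, the tame parameter field $T$ attached to $\Theta_\pi$ is $\sigma$-stable up to $F$-isomorphism, so $\sigma$ lifts to an involution $\sigma_T$ of $T$. Since $E/T$ is totally wildly ramified and $p\neq 2$, this further lifts to an involution $\sigma_E$ of $E$ whose restriction to $F$ is $\sigma$.

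Next I would construct an $F$-embedding $\iota\colon E\hookrightarrow\mathrm{M}_n(F)$ that is $\sigma_t$-equivariant, i.e., $\sigma_t\circ\iota=\iota\circ\sigma_E$. Start with any $F$-embedding $\iota_0$. Since $\sigma_t$ is an anti-involution and $E$ is commutative, the map $\iota_0':=\sigma_t\circ\iota_0\circ\sigma_E^{-1}$ is again an $F$-embedding of $E$ into $\mathrm{M}_n(F)$; by Skolem--Noether, there is $g\in G$ with $\iota_0'=\mathrm{Ad}(g)\circ\iota_0$. This $g$ represents a $1$-cocycle for $\sigma_t$ with values in the centralizer $B^\times\cong\mathrm{GL}_m(E)$ of $\iota_0(E)$, and by Hilbert~90 applied to the cyclic extension $E/E_0$ this cocycle is a coboundary. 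Hence there is $h\in G$ such that $\iota:=\mathrm{Ad}(h)\circ\iota_0$ satisfies the equivariance $\sigma_t\circ\iota=\iota\circ\sigma_E$.

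Now pick a generator $\beta_E\in E_0:=E^{\sigma_E}$ of $E$ over $F$; such a $\beta_E$ exists because $[E:E_0]=[F:F_0]=2$ and $F\cap E_0=F_0$ together force $F\cdot E_0=E$. Setting $\beta:=\iota(\beta_E)$, equivariance gives $\sigma_t(\beta)=\iota(\sigma_E(\beta_E))=\iota(\beta_E)=\beta$, i.e.\ $\beta$ is hermitian. Any hereditary order $\mathfrak{a}$ normalized by $\iota(E)^\times$ gives a simple stratum $[\mathfrak{a},\beta]$ of endo-class $\Theta_\pi$, and Bushnell--Kutzko's transfer theory supplies a simple character in $\mathcal{C}(\mathfrak{a},\beta)$ contained in $\pi$. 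The ``as a result'' assertion is then immediate: since $\sigma_t(\beta)=\beta$ and $\sigma_t|_F=\sigma$, for any $f(X)\in F[X]$ we get $\sigma_t(f(\beta))=f^\sigma(\beta)\in E$, so $\sigma_t$ preserves $E=F[\beta]$ and restricts to an anti-involution which, on the commutative field $E$, is the same as an involution extending $\sigma$. The main obstacle is the Hilbert~90 alignment of the previous paragraph, compounded by the need to simultaneously guarantee that a simple character of the newly constructed stratum actually lies in $\pi$: this requires handling the extension data determining $\pi$ in a manner compatible with the equivariant embedding $\iota$.
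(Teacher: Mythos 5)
There are genuine gaps, and they sit exactly at the points your plan treats as routine. First, the extension of $\sigma$ to an involution $\sigma_{E}$ of $E$ is not available at the outset: the endo-class $\Theta_{\pi}$ determines only the degree, the ramification data and the tame parameter field $T$ up to $F$-isomorphism, not the wildly ramified field $E$ itself (which may change with the stratum). Lemma \ref{LemmaTFlift} does give an involution of $T$, but the claim that it ``further lifts to an involution of $E$ because $E/T$ is totally wildly ramified and $p\neq 2$'' has no justification; in the paper the existence of an involution on $E$ extending $\sigma$ is a \emph{consequence} of the lemma (the ``as a result'' clause), obtained only after a hermitian $\beta$ has been produced, possibly after replacing $E$ by a different field in the same endo-class. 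Second, the Skolem--Noether/``Hilbert 90'' alignment is not a legitimate application of Hilbert 90: the condition you need is that the element $g$ with $g^{-1}\sigma_{t}(g)\in B^{\times}$ can be written as $\sigma_{t}(h)h$ modulo $B^{\times}$, a hermitian-form/twisted-cohomology condition for the anti-involution $\sigma_{t}$ relative to $B^{\times}$, not a Galois cocycle condition for $\mathrm{Gal}(E/E_{0})$ acting on $\mathrm{GL}_{m}(E)$. Statements of exactly this type are where the real work of Sections 5 and 6 lies (Propositions \ref{propmaxwild} and \ref{Propmaximalonly}, the double coset Lemma \ref{lemdoucos}), and they are proved by explicit choices of hermitian matrices, successive approximation along the filtration $U^{i}(\mathfrak{a})$, vanishing of $H^{1}$ for pro-$p$ groups and interior tame lifting --- not by a one-line cohomological vanishing.

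Third, and most importantly, even if both steps went through, your construction produces a hermitian generator of a field isomorphic to $E$ but gives no control over the simple characters of the new stratum: nothing guarantees that some $\theta'\in\mathcal{C}(\mathfrak{a},\beta)$ has endo-class $\Theta_{\pi}$, i.e.\ is contained in $\pi$ --- you flag this yourself as an unresolved ``obstacle,'' but it is the heart of the matter. The paper's proof of Lemma \ref{LemmatauE} goes through the endo-class version Lemma \ref{LemmaendotauE}: one reduces to the maximal case $n=d$ by transfer, uses Proposition \ref{Propmaximalonly} (built on the totally wildly ramified case and tame lifting) to conjugate the given pair $(\mathfrak{a}_{0},\theta_{0})$ so that $\mathfrak{a}_{0}$ is $\sigma_{t}$-stable and $\theta_{0}\circ\sigma_{t}=\theta_{0}$, and only then invokes Stevens' theorem (Proposition \ref{Propstevens}) to replace $\beta_{0}$ by a $\sigma_{t}$-fixed element $\gamma_{0}$ with $\theta_{0}\in\mathcal{C}(\mathfrak{a}_{0},\gamma_{0})$, finally passing to general $m$ by the block-diagonal construction and transfer. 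It is this order of operations --- first make the \emph{character} symmetric without leaving its endo-class, then change $\beta$ inside the same set of simple characters --- that your proposal is missing and that cannot be recovered by the embedding argument as written.
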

		
		Let $E_{0}=E^{\sigma_{t}}$, where $E=F[\beta]$ and $\beta$ is chosen as in Lemma \ref{LemmatauE}.
		
		\begin{theorem}\label{Thmtautheta}
			
			Let $\pi$ be a $\sigma$-invariant cuspidal representation of $G$ and let $\tau$ be a unitary involution. We also assume the following additional condition:
			
			\emph{If the hermitian matrix corresponding to $\tau$ is not in the same $G$-class as $I_{n}$ in $\mathcal{X}$ and if there exists a maximal simple stratum $[\mathfrak{a},\beta]$ as in Lemma \ref{LemmatauE} with a $\theta\in\mathcal{C}(\mathfrak{a},\beta)$ contained in $\pi$, such that the corresponding $E/E_{0}$ is unramified, then $m$ is odd.}
			
			Then there exist a maximal simple stratum $[\mathfrak{a}',\beta']$ and a simple character $\theta'\in\mathcal{C}(\mathfrak{a}',\beta')$ contained in $\pi$ such that:
			
			(1) $\tau(\beta')=\beta'^{-1}$;
			
			(2) $\tau(\mathfrak{a}')=\mathfrak{a}'$ and\footnote{For the definition of $\tau(\mathfrak{a}')$, see \S2.1. We will use the same notation for Theorem \ref{Thmendotautheta} and further proofs.} $\tau(H^{1}(\mathfrak{a}',\beta'))=H^{1}(\mathfrak{a}',\beta')$;
			
			(3) $\theta'\circ \tau=\theta'^{-1}$.
			
		\end{theorem}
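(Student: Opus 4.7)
The plan is to follow the three-step reduction strategy outlined in the introduction, in analogy with Section 4 of \cite{anandavardhanan2018galois}. Conditions (1) and (2) are essentially geometric: combining condition (1) with $\sigma_t(\beta') = \beta'$ from Lemma \ref{LemmatauE} is equivalent to asking that $\beta'$ commute with $\varepsilon$, while the first part of condition (2) asks that the $\mathfrak{o}_{E'}$-lattice chain defining $\mathfrak{a}'$ (where $E' = F[\beta']$) be stable under the adjoint involution of the $\varepsilon$-hermitian form on $V = F^n$; the condition on $H^1(\mathfrak{a}', \beta')$ then follows automatically from its intrinsic construction. The problem of choosing $(\beta', \mathfrak{a}')$ is thus a matching problem between the hermitian form $\varepsilon$, the $E'$-module structure and the lattice chain, controlled by Propositions \ref{PropGOH} and \ref{PropJOH}. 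Condition (3) will be handled separately, once the stratum is symmetric, by a descent on simple characters.

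The first case to treat is $n = d$ with $E/F$ totally wildly ramified. In this situation $V$ has $E$-dimension one, there is essentially a unique hereditary order normalized by $E^{\times}$, and the residue-field extension is trivial so no Jacobowitz obstruction arises. A suitable $G$-conjugate of $\beta$ thus produces $\beta'$ commuting with $\varepsilon$ and a $\tau$-stable $\mathfrak{a}'$. For condition (3), the $\sigma$-invariance of $\pi$ forces the simple characters $\theta$ and $\theta^{-1}\circ\tau$ to be attached to the same stratum, hence $\mathfrak{b}^{\times}$-conjugate; averaging over the $\tau$-orbit in $\mathcal{C}(\mathfrak{a}',\beta')$, which is a finite set acted on by a pro-$p$ group with orders coprime to $l$, produces the required $\tau$-selfdual representative $\theta'$.

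The second case is $n = d$ with $E/F$ general, handled via the tame parameter field machinery of subsection \ref{subsectionendo}. The endo-class $\Theta_{\pi}$ is $\sigma$-invariant, and its tame parameter field $T$ is uniquely determined up to $F$-isomorphism, so an embedding $T \hookrightarrow \mathrm{M}_n(F)$ can be arranged simultaneously $\sigma_t$-stable and $\tau$-stable. Interior $T/F$-lifting then reduces the problem to finding a selfdual simple character for the stratum $[\mathfrak{c},\beta]$ inside the centralizer $C \cong \mathrm{M}_{n/t}(T)$, where the relevant extension $E/T$ is totally wildly ramified; this is precisely the first case, applied inside $C$. The injectivity of the lifting maps $\theta \mapsto \theta_T$ and $\mathfrak{a} \mapsto \mathfrak{a} \cap C$ recorded in subsection \ref{subsectionendo} then pulls the $\tau$-selfdual data back up to $\mathrm{M}_n(F)$.

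The final case is general $n = md$ with $m > 1$. The centralizer $B = Z_{\mathrm{M}_n(F)}(E) \cong \mathrm{M}_m(E)$ inherits an anti-involution $\sigma_{\varepsilon}|_B$ that places it in the unitary-involution situation on $\mathrm{GL}_m(E)$ studied in subsection \ref{subsectionunitary}. Asking that $\mathfrak{b} = \mathfrak{a} \cap B$ be $\tau$-stable becomes a hermitian form problem on $E^m$, governed by Propositions \ref{PropGOH} and \ref{PropJOH}. The only obstruction arises when $E/E_0$ is unramified with $m$ even: two inequivalent hermitian classes exist and the class induced from $\varepsilon$ need not match the one compatible with the standard lattice chain, as signalled by Remark \ref{rmkUequvclass}. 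This is exactly the situation the technical hypothesis of the theorem excludes. Once $\mathfrak{b}$ is $\tau$-stable, lifting to $\mathfrak{a}$ and applying the $n = d$ case inside $B$ yields the desired $(\mathfrak{a}',\beta',\theta')$. The hardest part of the whole argument is precisely this hermitian-form matching step in the last case; all other steps, though technical, follow from the simple-character formalism once the geometric symmetry has been established.
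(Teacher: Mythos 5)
Your coarse skeleton (totally wild maximal case, then the maximal case via tame parameter fields and interior lifting, then the general case, with the \emph{$m$ odd} hypothesis controlling a hermitian-class mismatch when $E/E_{0}$ is unramified) does match the structure of the actual proof, but the step you dismiss as routine is precisely the analytic core, and your argument for it does not work. In the totally wildly ramified maximal case, condition (3) is not obtained by ``averaging over the $\tau$-orbit in $\mathcal{C}(\mathfrak{a}',\beta')$'': averaging characters makes no sense here, and coprimality with $l$ is irrelevant. What one actually has, from the $\sigma$-invariance of the endo-class and $\tau$-stability of $\mathfrak{a}'$ (Lemma \ref{Lemmasigmaendo}), is only $\theta'\circ\tau=(\theta'^{-1})^{u'}$ for some $u'\in U(\mathfrak{a}')$, and one must find a conjugating element $x'$ with $u'\tau(x')x'^{-1}\in\boldsymbol{J}'$. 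This requires (a) the explicit construction of a $\tau$-stable conjugate of $\mathfrak{a}$ via an antidiagonal hermitian matrix with determinant a norm (Lemma \ref{lemmataua}), (b) an explicit diagonal adjustment reducing $u'$ into $\mathfrak{o}_{F}^{\times}U^{1}(\mathfrak{a}')$ (Lemma \ref{LemmayU1}) — this is needed because $J'$ is not pro-$p$, so no cohomological vanishing applies directly — and (c) a successive-approximation argument through the filtration $U'^{i}$ using triviality of the first cohomology of an involution on pro-$p$ groups with $p\neq2$ (Lemmas \ref{Lemmavtauvi+1}, \ref{Lemmaxjvh}). None of this is replaced by your orbit-counting remark, so the crux of the theorem is unproved in your proposal.

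Two further gaps. First, condition (1) is not a by-product of the geometric matching: conjugating $\beta$ to commute with $\varepsilon$ while retaining the \emph{same} simple character is exactly the content of Stevens' intertwining theorem (Proposition \ref{Propstevens}), which lets one replace $\beta$ by a $\sigma_{t}$-symmetric element $\gamma$ with $\theta\in\mathcal{C}(\mathfrak{a},\gamma)$; without this input your claim that ``a suitable $G$-conjugate of $\beta$ produces $\beta'$ commuting with $\varepsilon$'' is unjustified. Second, in the general case $n=md$ the phrase ``applying the $n=d$ case inside $B$'' is not meaningful, since $\beta$ is central in $B$; the actual argument takes a $\tau_{1}$-selfdual datum $(\mathfrak{a}_{0}',\beta_{0}',\theta_{0}')$ in $\mathrm{M}_{d}(F)$, embeds it diagonally, sets $\mathfrak{a}'=\mathrm{M}_{m}(\mathfrak{a}_{0}')$ and takes $\theta'$ to be the transfer, and then verifies $\theta'\circ\tau=\theta'^{-1}$ via the Iwahori decomposition of $H^{1}(\mathfrak{a}',\beta')$ relative to the Levi $\mathrm{GL}_{d}(F)\times\cdots\times\mathrm{GL}_{d}(F)$ (Th\'eor\`eme 2.17 of \cite{secherre2008representations}); it is in choosing the explicit representatives $\varepsilon=\mathrm{diag}(\epsilon,\dots,\epsilon)$ or $\mathrm{diag}(I_{d},\dots,I_{d},\epsilon)$ with $\epsilon\in E_{0}'^{\times}\setminus\mathrm{N}_{E'/E_{0}'}(E'^{\times})$, together with Lemma \ref{Lemmafieldext}, that the hypothesis on $m$ is actually consumed. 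You correctly identified where the obstruction lives, but the mechanism that turns the hypothesis into a proof is absent.
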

		
		As a corollary of Theorem \ref{Thmtautheta}, we state the main theorem of this section:
		
		\begin{theorem}[The $\tau$-selfdual type theorem]\label{Thmselfdualtype}
			
			Under the same conditions as Theorem \ref{Thmtautheta}, there exists a simple type $(\boldsymbol{J},\Lambda)$ contained in $\pi$ such that $\tau(\boldsymbol{J})=\boldsymbol{J}$ and $\Lambda^{\tau}\cong\Lambda^{\vee}$.
			
		\end{theorem}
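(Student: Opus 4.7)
The plan is to deduce Theorem \ref{Thmselfdualtype} from Theorem \ref{Thmtautheta} by promoting the $\tau$-selfduality of the simple character to the full simple type, following the template of Anandavardhanan--Matringe \cite{anandavardhanan2018galois}, Section 4. First, I would invoke Theorem \ref{Thmtautheta} to produce a maximal simple stratum $[\mathfrak{a}',\beta']$ and a simple character $\theta'\in\mathcal{C}(\mathfrak{a}',\beta')$ contained in $\pi$ with $\tau(\mathfrak{a}')=\mathfrak{a}'$, $\tau(\beta')=\beta'^{-1}$, $\tau(H^{1})=H^{1}$ and $\theta'\circ\tau=\theta'^{-1}$, where $H^{1}=H^{1}(\mathfrak{a}',\beta')$. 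Setting $J=J(\mathfrak{a}',\beta')$, $J^{1}=J^{1}(\mathfrak{a}',\beta')$ and $\boldsymbol{J}=\boldsymbol{J}(\mathfrak{a}',\beta')$, the $\tau$-stability of $J$ and $J^{1}$ follows from $\tau(\mathfrak{a}')=\mathfrak{a}'$, while $\tau(\boldsymbol{J})=\boldsymbol{J}$ follows because $\tau(\beta')=\beta'^{-1}$ implies that the anti-involution $\sigma_{\varepsilon}$ stabilizes $E=F[\beta']$, hence also the centralizer $B$ and the normalizer $\mathcal{N}(\mathfrak{b}')$ generating $\boldsymbol{J}$ together with $J$, by Proposition \ref{PropJandJ1}.

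Second, I would study the Heisenberg representation $\eta$ of $J^{1}$ attached to $\theta'$. Both $\eta^{\tau}$ and $\eta^{\vee}$ are irreducible representations of $J^{1}$ whose restriction to $H^{1}$ contains the character $\theta'\circ\tau=\theta'^{-1}=(\theta')^{\vee}$, so the uniqueness part of the Heisenberg construction recalled in subsection 3.2 forces $\eta^{\tau}\cong\eta^{\vee}$. The heart of the argument is then to find an extension $\boldsymbol{\kappa}$ of $\eta$ to $\boldsymbol{J}$ satisfying $\boldsymbol{\kappa}^{\tau}\cong\boldsymbol{\kappa}^{\vee}$. Starting from any extension $\boldsymbol{\kappa}_{0}$, Schur's lemma provides a unique character $\chi_{0}$ of $\boldsymbol{J}/J^{1}$ with $\boldsymbol{\kappa}_{0}^{\tau}\cong\boldsymbol{\kappa}_{0}^{\vee}\otimes\chi_{0}$, and iterating $\tau$ gives the cocycle relation $\chi_{0}\cdot(\chi_{0}\circ\tau)=1$. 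I would solve the twisted square-root equation $\mu\cdot(\mu\circ\tau)=\chi_{0}^{-1}$ in the character group of $\boldsymbol{J}/J^{1}$, analyzing it in the decomposition of the latter into the character group of $J/J^{1}\cong\mathrm{GL}_{m}(\boldsymbol{l})$ (which factors through the determinant) and the free part coming from $\boldsymbol{J}/J$; the parity hypothesis on $m$ from Theorem \ref{Thmtautheta} is exactly what ensures that the obstruction from the norm map on the residue field vanishes. Setting $\boldsymbol{\kappa}=\boldsymbol{\kappa}_{0}\otimes\mu$ then yields the desired $\tau$-selfdual extension.

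Finally, by Proposition \ref{xi} the simple type $\Lambda$ of $\pi$ decomposes uniquely as $\Lambda=\boldsymbol{\kappa}\otimes\boldsymbol{\rho}$ with $\boldsymbol{\rho}$ trivial on $J^{1}$. Since $\pi$ is $\sigma$-invariant and $\tau=\mathrm{Int}(\varepsilon)\circ\sigma\circ\iota$ with $\iota(g)=\,^{t}g^{-1}$, the Gelfand--Kazhdan identity $\pi\circ\iota\cong\pi^{\vee}$ for irreducible representations of $\mathrm{GL}_{n}(F)$ gives $\pi^{\tau}\cong(\pi^{\sigma})^{\vee}\cong\pi^{\vee}$. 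Hence $(\boldsymbol{J},\Lambda^{\tau})$ and $(\boldsymbol{J},\Lambda^{\vee})$ are both simple types contained in the cuspidal representation $\pi^{\tau}\cong\pi^{\vee}$, so the uniqueness in Proposition \ref{PropType} together with the self-normalizing property of $\boldsymbol{J}$ in $G$ forces $\Lambda^{\tau}\cong\Lambda^{\vee}$. Equivalently, $\boldsymbol{\rho}^{\tau}\cong\boldsymbol{\rho}^{\vee}$, which can also be verified directly by descending to the cuspidal representation $\overline{\rho}$ of $\mathrm{GL}_{m}(\boldsymbol{l})$. The main obstacle in the plan is the construction of the $\tau$-selfdual $\boldsymbol{\kappa}$ in the second step: the cocycle $\chi_{0}$ can in general represent a nontrivial class, and it is precisely the extra parity hypothesis in Theorem \ref{Thmtautheta} that rules out this obstruction and makes the twisted square root $\mu$ exist.
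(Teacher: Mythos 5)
Your first and last steps reproduce the paper's strategy (Theorem \ref{Thmtautheta} plus Proposition \ref{PropGelKa} give $\pi^{\tau\vee}\cong\pi$, and $\tau(\boldsymbol{J})=\boldsymbol{J}$ because $\boldsymbol{J}$ is the normalizer of $\theta'$ and of $\theta'\circ\tau=\theta'^{-1}$), but the crucial step is exactly where you wave your hands. From the uniqueness in Proposition \ref{PropType} you only get some $g\in G$ with $\boldsymbol{J}^{g}=\boldsymbol{J}$ and $\Lambda^{\tau\vee}\cong\Lambda^{g}$; to conclude $\Lambda^{\tau}\cong\Lambda^{\vee}$ you must show the conjugation is inner, i.e.\ $g\in\boldsymbol{J}$. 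You invoke a ``self-normalizing property of $\boldsymbol{J}$ in $G$'' for this, but that property is neither among the recalled facts (Proposition \ref{PropJandJ1} gives the normalizer of the simple character $\theta'$ and its intertwining set, not the normalizer of $J$ or $\boldsymbol{J}$) nor proved by you, and establishing it requires essentially the work you are trying to skip. The paper closes this gap differently: since $\Lambda^{\tau\vee}\cong\Lambda^{g}$ contains both $(\theta'\circ\tau)^{-1}=\theta'$ and $\theta'^{g}$, restriction to $H^{1}\cap H^{1g}$ shows that $g$ intertwines $\theta'$; then Proposition \ref{PropJandJ1}.(5) gives $g\in J B'^{\times}J$, one may take $g\in B'^{\times}$, and $J^{g}=J$ forces $\mathfrak{b}'^{\times g}=\mathfrak{b}'^{\times}$, hence $g\in E'^{\times}\mathfrak{b}'^{\times}\subset\boldsymbol{J}$. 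Without this (or an actual proof that $N_{G}(\boldsymbol{J})=\boldsymbol{J}$), your deduction does not go through: uniqueness up to $G$-conjugacy alone is compatible with $\Lambda^{\tau}$ and $\Lambda^{\vee}$ being conjugate by an element normalizing $\boldsymbol{J}$ yet not isomorphic.

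Your middle step is both unnecessary for this theorem and inaccurate in its details. The theorem only asserts $\Lambda^{\tau}\cong\Lambda^{\vee}$, so no $\tau$-selfdual extension $\boldsymbol{\kappa}$ of $\eta$ is needed here; the paper postpones that construction to Proposition \ref{propselfdual}. Moreover, iterating $\tau$-duality gives $\chi_{0}\circ\tau=\chi_{0}$ (this is Lemma \ref{lemmu}), not $\chi_{0}\cdot(\chi_{0}\circ\tau)=1$, and solving $\mu\cdot(\mu\circ\tau)=\chi_{0}$ is not a formal cocycle computation: the paper needs a trace argument (oddness of $(J^{1}:H^{1})^{1/2}$ and of certain exponents, via Lemma \ref{lem2sroot}) in characteristic $0$, followed by an integral-lift argument for $l>0$, and no parity condition on $m$ enters there. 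The parity hypothesis in Theorem \ref{Thmtautheta} is used earlier, in producing a $\tau$-stable stratum (it governs the determinant class of the hermitian matrix one can realize), so your diagnosis of where the hypothesis acts, and of where the ``heart of the argument'' lies, is misplaced.
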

		
		In the following subsections, we will focus on the proof of the results stated.
		
		\subsection{Endo-class version of main results}
		
		To prove Theorem \ref{Thmtautheta} and Theorem \ref{Thmselfdualtype}, we consider their corresponding analogues for the endo-class. Let $\Theta$ be an endo-class over $F$. As mentioned in section 3, we write $d=\mathrm{deg}(\Theta)$. Moreover, its tame parameter field $T$ as a tamely ramified extension over $F$ is unique up to $F$-isomorphism.
		
		From the definition of the endo-class, we may choose a maximal simple stratum $[\mathfrak{a},\beta]$ and a simple character $\theta\in\mathcal{C}(\mathfrak{a},\beta)$ such that $\theta\in\Theta$. We denote by $\Theta^{\sigma}$ the endo-class of $\theta^{\sigma}$ which doesn't depend on the choice of $\theta$.
		We denote by $n$ the size of $\mathfrak{a}$, that is,  $\mathfrak{a}\hookrightarrow\mathrm{M}_{n}(F)$ as a hereditary order. We write $n=md$ with $m$ a positive integer. First of all, we have the following lemma as an endo-class version of Lemma \ref{LemmatauE} which will be proved in \S 5.4.
		
		\begin{lemma}\label{LemmaendotauE}
			
			If $\Theta^{\sigma}=\Theta$, then we may choose the simple stratum above such that $\sigma(\,^{t}\beta)=\beta$. As a result, $\sigma_{t}$ is an involution defined on $E$ whose restriction to $F$ is $\sigma$.
			
		\end{lemma}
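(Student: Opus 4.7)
The plan is to exploit the $\sigma$-invariance of $\Theta$ to produce, by a conjugacy-and-descent argument, a representative $[\mathfrak{a},\beta]$ of $\Theta$ with $\beta$ fixed by the anti-involution $\sigma_t$ of $\mathrm{M}_n(F)$. First I would observe that $\sigma_t$ extends $\sigma$ and hence transports simple strata to simple strata: if $[\mathfrak{a},\beta]$ represents $\Theta$ with $\theta\in\mathcal{C}(\mathfrak{a},\beta)$, then $[\sigma_t(\mathfrak{a}),\sigma_t(\beta)]$ is again a simple stratum in $\mathrm{M}_n(F)$, and $\theta\circ\sigma_t$ (read as a character after inverting to account for the order-reversal of the anti-involution) is a simple character attached to it. Tracking the effect on the endo-class via the $\psi$-trace formula for simple characters, one checks that this operation sends $\Theta$ to the twist $\Theta^\sigma$: the transpose contribution is absorbed because on the commutative subalgebra $E=F[\beta]$ the transpose acts as an $F$-algebra isomorphism onto $F[\,^t\beta]$. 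By hypothesis $\Theta^\sigma=\Theta$, so the new character represents the same endo-class.

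Next, by the conjugacy theorem for simple characters within a single endo-class (Theorem~3.5.11 of \cite{bushnell1996local}, recalled in subsection~3.3), there exists $g\in G$ with $g\sigma_t(\beta)g^{-1}=\beta$ after a suitable transfer. The main obstacle is the final descent: producing an actual symmetric representative $\beta_1=k\beta k^{-1}$ satisfying $\sigma_t(\beta_1)=\beta_1$. Unwinding the requirement using that $\sigma_t$ is an anti-involution, this reduces to solving an equation of the form $\sigma_t(k)k\equiv g\pmod{B^\times}$, where $B^\times=Z_G(\beta)\cong\mathrm{GL}_m(E)$; equivalently, to the vanishing of a class in a non-abelian first cohomology set for $\langle\sigma_t\rangle$ acting on $B^\times$. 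I would kill this class in two stages: Hilbert~90 for the induced involution on $B^\times$, whose fixed points form a unitary group, handles the semisimple quotient modulo the pro-$p$ radical, and a pro-$p$ averaging argument inside $1+\mathfrak{p}_{\mathfrak{b}}$ handles the pro-$p$ kernel---this succeeds because $p\neq 2$ permits Hensel-type extraction of square roots in the $\sigma_t$-symmetric part. The stratum attached to the resulting $\beta_1$ still represents $\Theta$, and $\sigma_t$ restricts to the desired $F_0$-algebra involution on $E_1=F[\beta_1]$ extending $\sigma$ on $F$.
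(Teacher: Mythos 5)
There is a genuine gap, and it sits exactly at the two places where your argument replaces the paper's machinery by general principles that do not hold. First, from $\Theta^{\sigma}=\Theta$ you infer that $\sigma_{t}(\beta)$ is $G$-conjugate to $\beta$ ("there exists $g$ with $g\sigma_t(\beta)g^{-1}=\beta$ after a suitable transfer"). Endo-equivalence, and even conjugacy of simple characters (\cite{bushnell129admissible}, Theorem 3.5.11), is a statement about the characters $\theta$, not about the elements $\beta$: a given simple character lies in $\mathcal{C}(\mathfrak{a},\beta)$ for many non-conjugate $\beta$, the field $F[\beta]$ is not an invariant of $\theta$ or of $\Theta$ (only its maximal tame subextension is, up to $F$-isomorphism), and in general $\sigma_{t}(\beta)$ need not be conjugate to $\beta$ at all. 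What the hypothesis does give is Lemma \ref{Lemmasigmaendo}: $\theta\circ\tau$ and $\theta^{-1}$ are endo-equivalent, i.e.\ a statement about characters. The paper therefore proceeds differently: it first reduces to the maximal case $n=d$ by Skolem--Noether and the transfer map, uses Proposition \ref{Propmaximalonly} (whose proof is the real work of subsections 5.2--5.3, via the totally wildly ramified case and tame lifting) to conjugate the pair so that the \emph{order} is $\sigma_t$-stable and the \emph{character} satisfies $\theta_0\circ\sigma_t=\theta_0$, and only then invokes Proposition \ref{Propstevens} (Stevens' skew-stratum theorem) to replace $\beta_0$ by a possibly quite different, in general non-conjugate, element $\gamma_0$ with $\sigma_t(\gamma_0)=\gamma_0$ and $\theta_0\in\mathcal{C}(\mathfrak{a}_0',\gamma_0)$, before tensoring back up. Your proposal has no substitute for either of these steps.

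Second, even granting your conjugacy claim, the descent you propose fails: the vanishing of the twisted class of $g$ is exactly a statement about $H^1$ of the involution acting on $B^\times\cong\mathrm{GL}_m(E)$ with unitary fixed points, and "Hilbert 90" is false there. That cohomology set classifies hermitian forms over $E/E_0$, and by Proposition \ref{PropGOH} it has two elements, not one; this non-vanishing is precisely the source of the obstruction that forces the technical hypothesis in Theorems \ref{Thmtautheta} and \ref{Thmendotautheta} and the counter-example in the remark closing section 5 (where no $\tau$-stable hereditary order exists at all). So the semisimple part of your two-stage argument cannot be carried out in general; the pro-$p$ averaging half is fine (and is used, in effect, in the paper's Lemmas \ref{Lemmavtauvi+1}--\ref{Lemmaxjvh}), but it only handles the pro-unipotent piece. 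In short, the missing ideas are (i) symmetrizing the simple character (Proposition \ref{Propmaximalonly}) rather than the element $\beta$, and (ii) Stevens' theorem, which changes $\beta$ within the same simple character set instead of within its conjugacy class; neither can be recovered from the conjugacy-plus-cohomology scheme you describe.
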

		
		Let $E_{0}=E^{\sigma_{t}}$, where $E=F[\beta]$ and $\beta$ is chosen as in Lemma \ref{LemmaendotauE}. The following theorem as an endo-class version of Theorem \ref{Thmtautheta} says that we may adjust our choice of the simple stratum and simple character such that they are $\tau$-selfdual with respect to a unitary involution $\tau$:
		
		\begin{theorem}\label{Thmendotautheta}
			
			Let $\Theta\in\mathcal{E}(F)$ be an endo-class over $F$ such that $\Theta^{\sigma}=\Theta$. Let $\tau$ be a unitary involution of $G$. We also assume the following additional condition:
			
			\emph{If the hermitian matrix corresponding to $\tau$ is not in the same $G$-class as $I_{n}$ in $\mathcal{X}$ and if there exists a maximal simple stratum $[\mathfrak{a},\beta]$ as in Lemma \ref{LemmaendotauE} with a $\theta\in\mathcal{C}(\mathfrak{a},\beta)$ contained in $\Theta$, such that the corresponding $E/E_{0}$ is unramified, then $m=n/d$ is odd.}
			
			Then there exist a maximal simple stratum $[\mathfrak{a}',\beta']$ in $\mathrm{M}_{n}(F)$ and a simple character $\theta'\in\mathcal{C}(\mathfrak{a}',\beta')$ such that:
			
			(1) $\tau(\beta')=\beta'^{-1}$;
			
			(2) $\tau(\mathfrak{a}')=\mathfrak{a}'$ and $\tau(H^{1}(\mathfrak{a}',\beta'))=H^{1}(\mathfrak{a}',\beta')$;
			
			(3) $\theta'\in\Theta$ and $\theta'\circ\tau=\theta'^{-1}$.
			
		\end{theorem}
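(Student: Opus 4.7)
The plan is to follow the three-step strategy of \cite{anandavardhanan2018galois}, Section~4, which proves an analogous statement for the Galois involution $\sigma$ itself. First I would handle the case $n=d$ with $E/F$ totally wildly ramified; then the case $n=d$ with $E/F$ arbitrary, by reduction via tame lifting; and finally the general case $n=md$ by a block-diagonal construction. Throughout, Lemma~\ref{LemmaendotauE} provides the starting point: after replacing $[\mathfrak{a},\beta]$ within its endo-class we may assume $\sigma_t(\beta)=\beta$, so that $\sigma_t$ is a non-trivial involution on $E$ extending $\sigma$ and $E_0=E^{\sigma_t}$ is well-defined.

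For the base case ($n=d$, $E/F$ totally wildly ramified), we have $B=E$, the tame parameter field equals $F$, and $\mathfrak{a}$ is the unique hereditary order in $\mathrm{M}_n(F)$ normalized by $E^\times$. The goal is to conjugate by an element of $G$ so that the hermitian matrix $\varepsilon$ defining $\tau$ lies in $E^\times$ (which gives $\tau(\beta)=\beta^{-1}$, since then $\varepsilon$ commutes with $\beta$) and further normalizes $\mathfrak{a}$ (which gives $\tau(\mathfrak{a})=\mathfrak{a}$, and hence $\tau(H^1)=H^1$ by functoriality of the construction of $H^1$). The existence of such a representative inside the $G$-orbit of $\varepsilon$ is governed by Propositions~\ref{PropGOH} and \ref{PropJOH}: we must realize the $G$-class of $\varepsilon$ in $F_0^\times/\mathrm{N}_{F/F_0}(F^\times)$ by a hermitian element of $E^\times$ whose $\mathrm{GL}_1(\mathfrak{o}_E)$-class in $E_0^\times/\mathrm{N}_{E/E_0}(E^\times)$ is compatible, the arithmetic content being a comparison via the relative norm $\mathrm{N}_{E_0/F_0}$.

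The reduction from arbitrary $E/F$ (still with $n=d$) to the wildly ramified case proceeds through the tame parameter field $T$ of $\Theta$. Writing $C$ for the centralizer of $T$ in $\mathrm{M}_n(F)$ and $\mathfrak{c}=\mathfrak{a}\cap C$, the interior $T/F$-lift $\theta_T\in\mathcal{C}(\mathfrak{c},\beta)$ represents a $T/F$-lift $\Psi$ of $\Theta$, and the extension $E/T$ is totally wildly ramified. I would extend $\sigma_t$ to $T$ and verify that $\tau$ restricts to a unitary involution on $C^\times\cong\mathrm{GL}_{n/[T:F]}(T)$ with respect to $T/T_0$; by the base case applied inside $C$, we obtain a $\tau$-selfdual $\theta'_T$, and by the injectivity of the interior lifting map $\theta\mapsto\theta_T$ (Subsection~\ref{subsectionendo}) we pull $\theta'_T$ back to a $\tau$-selfdual $\theta'\in\mathcal{C}(\mathfrak{a}',\beta')$. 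Finally, for $n=md$ with $m>1$, I would embed the $m=1$ construction block-diagonally by choosing $\mathfrak{b}'=\mathfrak{a}'\cap B$ to be the standard maximal order and $\beta'$ to be scalar on the $B$-side, so that the $\tau$-selfduality at the $m=1$ level propagates; the transfer map then delivers the required simple character $\theta'$ in $\mathcal{C}(\mathfrak{a}',\beta')$, and $\theta'\circ\tau=\theta'^{-1}$ follows because $\sigma_t$ is compatible with the transfer maps.

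The main obstacle, and the precise place where the technical parity hypothesis intervenes, is the step of realizing the $G$-class of the hermitian matrix $\varepsilon$ by an element lying in $B^\times$. When $E/E_0$ is unramified, Proposition~\ref{PropGOH} describes the hermitian classes in $\mathrm{M}_m(E)$ and in $\mathrm{M}_n(F)$ by the groups $E_0^\times/\mathrm{N}_{E/E_0}(E^\times)$ and $F_0^\times/\mathrm{N}_{F/F_0}(F^\times)$, and the relevant compatibility is surjectivity of the induced map between these two groups of order two. One checks that this map is surjective exactly when $m$ is odd: for even $m$, the nontrivial class in $E_0^\times/\mathrm{N}_{E/E_0}(E^\times)$ is absorbed (the determinant over $F$ of a scalar hermitian element of $\mathrm{M}_m(E)$ acquires a factor of $m$ in its valuation-parity bookkeeping). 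This is exactly the obstruction that forces the hypothesis of the theorem, and it is why the statement must exclude the even-$m$, non-standard-$\tau$, unramified configuration rather than cover it uniformly.
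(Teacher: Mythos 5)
Your three-step skeleton (totally wildly ramified maximal case, reduction via the tame parameter field and interior lifting, then the block-diagonal general case) is the same as the paper's, and your last paragraph correctly locates where the parity hypothesis enters: in the unramified case the only hermitian elements of $B'^{\times}$ of non-trivial class that normalize the standard maximal order are essentially scalar, so their determinant over $F$ is an $m$-th power of a class in $F_{0}^{\times}/\mathrm{N}_{F/F_{0}}(F^{\times})$ and the non-trivial class is lost when $m$ is even, while in the ramified case $\mathrm{diag}(I_{d},\ldots,I_{d},\epsilon)$ with a unit non-norm $\epsilon$ is available. However, there is a genuine gap in your base case: you only arrange $\tau(\mathfrak{a}')=\mathfrak{a}'$ and $\tau(\beta')=\beta'^{-1}$ (by conjugating $\varepsilon$ into $E_{0}^{\times}$, which is legitimate by Proposition \ref{PropGOH}, Lemma \ref{lemGconjtau} and Lemma \ref{Lemmafieldext}), and you never prove condition (3). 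Stability of the order together with $\Theta^{\sigma}=\Theta$ only yields, via Lemma \ref{Lemmasigmaendo}, that $\theta\circ\tau=(\theta^{-1})^{u}$ for some $u\in U(\mathfrak{a})$; upgrading this conjugacy to the equality $\theta'\circ\tau=\theta'^{-1}$ is the technical heart of the paper's treatment of the wildly ramified case. There one first chooses the anti-diagonal hermitian matrix of Lemma \ref{lemmataua} so that the conjugated order is $\tau$-stable, then pushes $u$ into $U^{1}(\mathfrak{a}')$ by the explicit diagonal computation of Lemma \ref{LemmayU1}, and finally removes it by the successive-approximation limit argument of Lemmas \ref{Lemmavtauvi+1} and \ref{Lemmaxjvh}, which rests on the vanishing of first cohomology of an involution acting on a pro-$p$-group with $p$ odd. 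Without this step your tame-lifting reduction has no $\tau$-selfdual character $\theta'_{T}$ to pull back through the injective interior lift, so the whole argument does not get off the ground.

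A secondary weakness is in the passage from $m=1$ to general $m$: the assertion that $\theta'\circ\tau=\theta'^{-1}$ ``follows because $\sigma_{t}$ is compatible with the transfer maps'' is not a proof, since the transfer map does not by itself control the behaviour of $\theta'$ under $\tau$ on all of $H^{1}(\mathfrak{a}',\beta')$. The paper verifies the identity using the Iwahori-type decomposition $H^{1}(\mathfrak{a}',\beta')=(H^{1}\cap N^{-})(H^{1}\cap M)(H^{1}\cap N)$ of \cite{secherre2008representations}, Th\'eor\`eme 2.17, together with the facts that $\theta'$ is trivial on the unipotent parts and restricts to $\theta_{0}'\otimes\cdots\otimes\theta_{0}'$ on the Levi part, and that $\epsilon\in F[\beta_{0}']^{\times}$ normalizes $\theta_{0}'$; some such argument is needed in your write-up as well. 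Finally, note that the paper proves the wildly ramified case only for $\tau=\tau_{1}$ and handles the second hermitian class at the level of the tame parameter field via the Hakim--Murnaghan embedding (Proposition \ref{Proptameembed}); your variant of treating a general $\varepsilon$ directly in the wild case can be made to work (since $\varepsilon\in E^{\times}$ normalizes $\theta$, it reduces to the $\tau_{1}$-statement), but only once the missing character argument above is supplied.
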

		
		Later we will focus on the proof of Lemma \ref{LemmaendotauE} and Theorem \ref{Thmendotautheta}. So before we begin our proof, we illustrate how this theorem implies Lemma \ref{LemmatauE}, Theorem \ref{Thmtautheta} and Theorem \ref{Thmselfdualtype}. First, we have the following important result due to Gelfand and Kazhdan (see \cite{bernstein1976representations}, Theorem 7.3 for the complex case and \cite{secherre2016modular}, Proposition 8.4 for the $l$-modular case):
		
		\begin{proposition}\label{PropGelKa}
			
			For $\pi$ an irreducible representation of $\mathrm{GL}_{n}(F)$, the representation defined by $g\mapsto\pi(\,^{t}g^{-1})$ is isomorphic to $\pi^{\vee}$.
			
		\end{proposition}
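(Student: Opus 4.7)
My plan is to reduce the statement to a character-theoretic identity, exactly as in the classical Gelfand–Kazhdan argument. Denote the anti-involution $\iota\colon g\mapsto {}^{t}g^{-1}$ of $G=\mathrm{GL}_{n}(F)$ and set $\pi^{\iota}$ for the representation $g\mapsto\pi(\iota(g))$; the goal is to show $\pi^{\iota}\cong\pi^{\vee}$. First I would invoke the trace-character rigidity principle that was already used in the proof of Lemma~\ref{lemmabasechange}: following Vign\'eras (chapitre I, 6.13 of \cite{vigneras1996representations}) in the $l$-modular case and Harish-Chandra in the complex case, an irreducible smooth representation of $G$ over $R$ is determined up to isomorphism by its distribution character, and this character is represented by a locally constant function $\chi_{\pi}$ on the regular semisimple locus $G^{\mathrm{rs}}$. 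It therefore suffices to establish the pointwise identity
\[
\chi_{\pi^{\iota}}(g)=\chi_{\pi^{\vee}}(g), \qquad g\in G^{\mathrm{rs}}.
\]

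Next I would unfold both sides. From the definition of the contragredient one has $\chi_{\pi^{\vee}}(g)=\chi_{\pi}(g^{-1})$, and from the definition of $\pi^{\iota}$ one has $\chi_{\pi^{\iota}}(g)=\chi_{\pi}({}^{t}g^{-1})$. Setting $h=g^{-1}$, the desired equality becomes
\[
\chi_{\pi}({}^{t}h)=\chi_{\pi}(h), \qquad h\in G^{\mathrm{rs}}.
\]
Since $\chi_{\pi}$ is invariant under $G$-conjugacy, this reduces to the purely linear-algebraic statement that every regular semisimple matrix $h\in\mathrm{GL}_{n}(F)$ is $G$-conjugate to its transpose ${}^{t}h$. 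This is immediate from rational canonical form: for regular semisimple $h$, the minimal polynomial equals the characteristic polynomial, so $h$ is cyclic and determined up to conjugacy by its characteristic polynomial alone, which coincides with that of ${}^{t}h$.

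The hardest ingredient is not the linear algebra but the trace-character rigidity in the full generality required: one needs to know that arbitrary irreducible smooth $R$-representations of $G$ are admissible (so the character is defined as a distribution) and that distinct irreducible classes have linearly independent characters on the regular semisimple locus. Both facts hold for $\mathrm{GL}_{n}(F)$ over any algebraically closed $R$ of characteristic different from $p$ by Vign\'eras's work, so the argument is uniform across the complex and $l$-modular settings and no separate case analysis is needed. Alternative routes via Whittaker models would only cover the generic case, so the character-theoretic path is the natural one for the statement as phrased.
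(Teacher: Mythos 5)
The paper does not actually reprove Proposition \ref{PropGelKa}: it quotes \cite{bernstein1976representations}, Theorem 7.3 for the complex case and \cite{secherre2016modular}, Proposition 8.4 for the $l$-modular case, and both of those rest on the Gelfand--Kazhdan invariant-distribution theorem (every conjugation-invariant distribution on $\mathrm{GL}_n(F)$ is invariant under transpose), an argument which works over any coefficient field in which $p$ is invertible and needs no regularity of characters. Your character-theoretic route is sound when $\mathrm{char}(R)=0$: there the character of an irreducible representation is a locally integrable conjugation-invariant function, locally constant on the regular semisimple set, it determines the representation, and since every regular semisimple matrix is conjugate to its transpose you correctly obtain $\chi_{\pi^{\iota}}=\chi_{\pi^{\vee}}$ and conclude.

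The genuine gap is in the $l$-modular case, which is precisely the case for which the paper needs this proposition beyond the classical statement. What Vign\'eras's work gives you there is admissibility of irreducible representations and the fact that the character, \emph{as a distribution on $G$}, determines the representation up to isomorphism (this is what the paper itself invokes via \cite{vigneras1996representations}, Chapitre I, 6.13 in Lemma \ref{lemmabasechange}). It does not give the mod-$l$ analogue of Harish-Chandra's regularity theorem: it is not available that the character distribution is represented by a function on $G^{\mathrm{rs}}$ and, more importantly, that its restriction to $G^{\mathrm{rs}}$ determines it, i.e.\ that a difference of two characters cannot be a nonzero invariant distribution supported on the singular set. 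In characteristic zero this is exactly local integrability of characters; over $\overline{\mathbb{F}_l}$ no such statement is known (local integrability does not even make sense), so your asserted ``linear independence of characters on the regular semisimple locus'' has no reference and your reduction of $\pi^{\iota}\cong\pi^{\vee}$ to the pointwise identity $\chi_{\pi}({}^{t}h)=\chi_{\pi}(h)$ on $G^{\mathrm{rs}}$ is unjustified in the modular case. To cover that case you should argue as in \cite{secherre2016modular}, Proposition 8.4 --- i.e.\ prove transpose-invariance of the character at the level of invariant distributions, where the Bernstein--Zelevinsky/Gelfand--Kazhdan geometric argument goes through verbatim for $R$-valued distributions --- or simply cite that result, as the paper does.
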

		
		For $\pi$ given as in Lemma \ref{LemmatauE}, if we denote by $\Theta_{\pi}$ the endo-class of $\pi$, then we get $\Theta_{\pi}^{\sigma}=\Theta_{\pi}$. So we may use Lemma \ref{LemmaendotauE} to get Lemma \ref{LemmatauE} and use Theorem \ref{Thmendotautheta} to get Theorem \ref{Thmtautheta}.
		
		Now we show that Theorem \ref{Thmtautheta} implies Theorem \ref{Thmselfdualtype}. Using Proposition \ref{PropGelKa}, we have $\pi^{\tau\vee}\cong\pi^{\sigma}\cong\pi$. Let $(\boldsymbol{J},\Lambda)$ be a simple type of $\pi$ containing $\theta'$, where $\theta'$ is obtained from Theorem \ref{Thmtautheta} such that $\theta'\circ\tau=\theta'^{-1}$. Thus $\tau(\boldsymbol{J})=\boldsymbol{J}$ since they are the $G$-normalizers of $\theta'\circ\tau$ and $\theta'^{-1}$ respectively. Since $\pi^{\tau\vee}\cong\pi$, it contains both $(\boldsymbol{J},\Lambda)$ and $(\boldsymbol{J},\Lambda^{\tau\vee})$. By Proposition \ref{PropType}, there exists $g\in G$ such that $(\boldsymbol{J},\Lambda^{\tau\vee})=(\boldsymbol{J}^{g},\Lambda^{g})$. Since $\Lambda^{\tau\vee}\cong\Lambda^{g}$  contains both $(\theta'\circ\tau)^{-1}=\theta'$ and $\theta'^{g}$ as simple characters, the restriction of $\Lambda^{g}$ to the intersection
		\begin{equation}\label{eqH1int}
			H^{1}(\mathfrak{a}',\beta')\cap H^{1}(\mathfrak{a}',\beta')^{g},
		\end{equation}
		which is a direct sum of copies of $\theta'^{g}$ restricting to (\ref{eqH1int}), contains the restriction of $\theta'$ to (\ref{eqH1int}). It follows that $g$ intertwines $\theta'$. By Proposition \ref{PropJandJ1}.(5), $g\in J(\mathfrak{a}',\beta')B'^{\times}J(\mathfrak{a}',\beta')$ with $B'$ the centralizer of $E'$ in $\mathrm{M}_{n}(F)$. Thus we may assume $g\in B'^{\times}$. From the uniqueness of the maximal compact subgroup in $\boldsymbol{J}$, we deduce that $\boldsymbol{J}^{g}=\boldsymbol{J}$ implies $J(\mathfrak{a}',\beta')^{g}=J(\mathfrak{a}',\beta')$. Intersecting it with $B'^{\times}$ implies that $\mathfrak{b}'^{\times g}=\mathfrak{b}'^{\times}$. Since $\mathfrak{b}'^{\times}$ is a maximal compact subgroup of $B'^{\times}\cong\mathrm{GL}_{m}(E')$ and $g\in B'^{\times}$, we deduce that $g\in E'^{\times}\mathfrak{b}'^{\times}\subset J(\mathfrak{a}',\beta')$. Thus $(\boldsymbol{J}^{g},\Lambda^{g})=(\boldsymbol{J},\Lambda)$, which finishes the proof of Theorem \ref{Thmselfdualtype}.
		
		Finally we state the following two lemmas which will be useful in our further proof:
		
		\begin{lemma}\label{Lemmasigmaendo}
			
			Let $[\mathfrak{a},\beta]$ be a maximal simple stratum in $\mathrm{M}_{n}(F)$ and let $\Theta$ be a $\sigma$-invariant endo-class over $F$, such that there exists $\theta\in\mathcal{C}(\mathfrak{a},\beta)$ a simple character in $\Theta$. Then $\theta\circ\tau$ and $\theta^{-1}$ are in the same endo-class. In particular, if the hereditary order $\mathfrak{a}$ is $\tau$-invariant, then $\theta\circ\tau$ conjugates to $\theta^{-1}$ by an element in $U(\mathfrak{a})$.
			
		\end{lemma}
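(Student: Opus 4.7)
The natural approach is to factor the unitary involution as
$$\tau = \mathrm{Int}(\varepsilon) \circ \sigma \circ \iota,$$
where $\iota(g) = {}^{t}g^{-1}$, and then to track the endo-class of $\theta$ through each of the three elementary operations. The goal is to show that $\theta\circ\tau$ and $\theta^{-1}$ lie in the same endo-class; once this is done, the second part of the lemma will follow from the uniqueness of simple characters attached to a common hereditary order.

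\emph{Step 1 (inner conjugation).} The character $x \mapsto \theta(\varepsilon x \varepsilon^{-1})$ is a simple character in $\mathcal{C}(\varepsilon^{-1}\mathfrak{a}\varepsilon,\varepsilon^{-1}\beta\varepsilon)$, so it represents $\Theta$ since endo-classes are $G$-conjugation invariants. \emph{Step 2 (Galois action).} The additive character $\psi = \psi_{0}\circ\mathrm{tr}_{F/F_{0}}$ is $\sigma$-invariant because $\mathrm{tr}_{F/F_{0}}\circ\sigma = \mathrm{tr}_{F/F_{0}}$; consequently, precomposition with $\sigma$ sends $\mathcal{C}(\mathfrak{a}',\beta')$ to $\mathcal{C}(\sigma(\mathfrak{a}'),\sigma(\beta'))$ and twists the endo-class by $\sigma$, which by hypothesis fixes $\Theta$. \emph{Step 3 (transpose–inverse).} This is the heart of the argument: for any simple character $\theta'$, the character $\theta'\circ\iota$ lies in the endo-class of $\theta'^{-1}$. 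The cleanest way is to invoke Gelfand--Kazhdan (Proposition~\ref{PropGelKa}): extending the Heisenberg representation $\eta'$ attached to $\theta'$ to $\boldsymbol{J}$ and applying \ref{PropGelKa} gives $\eta'\circ\iota \cong \eta'^{\vee}$, and restricting to $H^{1}$ identifies $\theta'\circ\iota$ as a constituent of $\eta'^{\vee}|_{H^{1}}$, which is a sum of copies of $\theta'^{-1}$. Equivalently one can verify Step~3 by a direct trace computation on the defining character $\theta(1+x) = \psi(\mathrm{tr}(\beta x))$ using $\mathrm{tr}(\beta\,{}^{t}x) = \mathrm{tr}({}^{t}\beta\,x)$, which identifies $\theta\circ\iota$ with the transfer of a simple character in $\mathcal{C}({}^{t}\mathfrak{a},-{}^{t}\beta)$.

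Combining the three steps and using the trivial identity $(\theta\circ f)^{-1} = \theta^{-1}\circ f$, I find that $\theta\circ\tau$ is endo-equivalent to $\theta^{-1}\circ\mathrm{Int}(\varepsilon)\circ\sigma$, which, by applying Steps~1 and~2 to $\theta^{-1}$ (whose endo-class is also $\sigma$-invariant), is endo-equivalent to $\theta^{-1}$. This establishes the first assertion.

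For the second assertion, assume $\tau(\mathfrak{a}) = \mathfrak{a}$. Writing $\tau = \mathrm{Int}(\varepsilon)\circ\sigma\circ\iota$, one checks step by step that $\theta\circ\tau$ is then a simple character attached to a maximal simple stratum whose hereditary order is still $\mathfrak{a}$ (one uses $\sigma(\mathfrak{a}) = \mathfrak{a}$, which follows from $\tau$-invariance together with the explicit shape of $\tau$, and the fact that transpose preserves the period and reduced degree), while $\theta^{-1}\in\mathcal{C}(\mathfrak{a},-\beta)$ is obviously attached to $\mathfrak{a}$. Both simple characters are attached to the same hereditary order and represent the same endo-class by the first part of the lemma, so the uniqueness principle for endo-equivalent simple characters on a common hereditary order (\cite{bushnell129admissible}, Theorem~3.5.11, together with its modular analogue in \cite{minguez2014types}) produces an element of $U(\mathfrak{a})$ conjugating one onto the other. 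The hardest step, and the conceptual heart of the proof, is Step~3: unlike inner conjugation and the Galois action, the transpose–inverse changes the endo-class from $\Theta$ to that of $\theta^{-1}$, and pinning this down cleanly at the level of simple characters is what makes Gelfand--Kazhdan (or the explicit trace identity) indispensable.
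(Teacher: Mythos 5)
Your overall strategy---factor $\tau$ as $\mathrm{Int}(\varepsilon)\circ\sigma\circ\iota$ and track the endo-class through each factor---is reasonable, and Steps 1 and 2 are fine, but the justification of Step 3, which you rightly call the heart of the matter, has a genuine gap. Proposition \ref{PropGelKa} is a statement about irreducible smooth representations of $\mathrm{GL}_{n}(F)$; it cannot be applied to the Heisenberg representation $\eta'$, nor to an extension of it to $\boldsymbol{J}$, since these are representations of a compact-mod-centre open subgroup and not of $G$. The asserted isomorphism $\eta'\circ\iota\cong\eta'^{\vee}$ does not even typecheck a priori: $\eta'\circ\iota$ is a representation of $\iota(J^{1})$ while $\eta'^{\vee}$ is one of $J^{1}$. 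Your fallback ``direct trace computation'' is also not a proof: the formula $\theta(1+x)=\psi(\mathrm{tr}(\beta x))$ describes a simple character only on part of $H^{1}$ (simple characters are defined recursively), so the identity $\mathrm{tr}(\beta\,{}^{t}x)=\mathrm{tr}({}^{t}\beta\,x)$ does not by itself pin down $\theta\circ\iota$ up to endo-equivalence; the transport of simple characters under $x\mapsto{}^{t}x$ is a genuine, citable fact, not a one-line trace identity. The correct way to make Gelfand--Kazhdan bear on simple characters is to pass through a cuspidal representation, and this is exactly what the paper does, in one stroke rather than three: choose any cuspidal $\pi$ containing $\theta$, so that $\Theta_{\pi}=\Theta$; Proposition \ref{PropGelKa} gives $\pi^{\tau}\cong\pi^{\sigma\vee}$, whence $\theta\circ\tau\in\Theta_{\pi^{\tau}}=\Theta_{\pi^{\sigma\vee}}=\Theta_{\pi^{\vee}}^{\sigma}=\Theta_{\pi^{\vee}}\ni\theta^{-1}$, the last equality using $\Theta^{\sigma}=\Theta$. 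Note that this also absorbs your Step 2, so once Step 3 is repaired along these lines your decomposition essentially collapses back into the paper's shorter argument.

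For the second assertion your conclusion matches the paper: endo-equivalent simple characters attached to the same hereditary order intertwine (\cite{bushnell1996local}, Theorem 8.7) and are then $U(\mathfrak{a})$-conjugate (\cite{bushnell129admissible}, Theorem 3.5.11). However, one intermediate claim you make is false: $\tau(\mathfrak{a})=\mathfrak{a}$ does not imply $\sigma(\mathfrak{a})=\mathfrak{a}$, since the conjugation by $\varepsilon$ and the transpose can compensate the Galois action. Fortunately you do not need it: all that is required is that $\theta\circ\tau$ is a simple character attached to a stratum whose hereditary order is $\tau(\mathfrak{a})=\mathfrak{a}$, which follows from transporting the simple character by the involution $\tau$ as a whole, without decomposing it into its three factors.
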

		
		\begin{proof}
			
			We choose $\pi$ a cuspidal representation of $G$ containing $\theta$. Thus by definition, we have $\Theta_{\pi}=\Theta$. Using Proposition \ref{PropGelKa}, we have $\pi^{\tau}\cong\pi^{\sigma\vee}$. So $\theta\circ\tau\in\Theta_{\pi^{\tau}}=\Theta_{\pi^{\sigma\vee}}=\Theta_{\pi^{\vee}}^{\sigma}$ and $\theta^{-1}\in\Theta_{\pi^{\vee}}$. Since $\Theta^{\sigma}=\Theta$, we have $\Theta_{\pi^{\vee}}^{\sigma}=\Theta_{\pi^{\vee}}$, which means that $\theta\circ\tau$ and $\theta^{-1}$ are in the same endo-class. If $\tau(\mathfrak{a})=\mathfrak{a}$, then by definition of the endo-equivalence (\cite{bushnell1996local}, Theorem 8.7), $\theta\circ\tau$ intertwines with $\theta^{-1}$. By \cite{bushnell129admissible}, Theorem 3.5.11, $\theta\circ\tau$ conjugates to $\theta^{-1}$ by an element in $U(\mathfrak{a})$.
			
		\end{proof}
		
		The following lemma will be used to change the choice of a unitary involution up to $G$-action on its corresponding hermitian matrix.
		
		\begin{lemma}\label{lemGconjtau}
			
			Let $\tau=\tau_{\varepsilon}$ be the unitary involution on $G$ corresponding to a hermitian matrix $\varepsilon$, and let $[\mathfrak{a},\beta]$ be a maximal simple stratum in $\mathrm{M}_{n}(F)$ and let $\theta\in\mathcal{C}(\mathfrak{a},\beta)$ be a simple character, such that $$\tau(\mathfrak{a})=\mathfrak{a},\quad \theta\circ\tau=\theta^{-1}\quad (\text{and}\ \tau(\beta)=\beta^{-1}).$$
			Then for $\tau'=\tau_{\varepsilon'}$ the unitary involution corresponding to a hermitian matrix $\varepsilon'=g^{-1}\varepsilon\sigma(\,^{t}g^{-1})$, we have $$\tau'(\mathfrak{a}^{g})=\mathfrak{a}^{g},\quad \theta^{g}\circ\tau'=(\theta^{g})^{-1}\quad (\text{and}\ \tau'(\beta^{g})=(\beta^{g})^{-1}).$$
			
		\end{lemma}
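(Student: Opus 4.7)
The plan is to reduce everything to a single conjugation identity relating $\tau'$ to $\tau$, and then to verify each of the three claims by a one-line calculation.

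First, I compute explicitly how $\tau'$ differs from $\tau$. Since $\varepsilon' = g^{-1}\varepsilon\sigma({}^t g^{-1})$, we have $\varepsilon'^{-1} = \sigma({}^t g)\varepsilon^{-1} g$. Plugging into the definition of $\tau' = \tau_{\varepsilon'}$ and using the identity
\[
\sigma({}^t g^{-1})\,\sigma({}^t x^{-1})\,\sigma({}^t g) = \sigma\bigl({}^t(gxg^{-1})^{-1}\bigr),
\]
I obtain the key formula
\[
\tau'(x) = g^{-1}\,\tau(gxg^{-1})\,g \qquad \text{for all } x \in G.
\]
In other words, $\tau' = \mathrm{Ad}(g^{-1})\circ\tau\circ\mathrm{Ad}(g)$. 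This is the heart of the lemma; the rest is bookkeeping.

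Next, I apply this formula to $\mathfrak{a}^{g} = g^{-1}\mathfrak{a}g$: writing $g\cdot g^{-1}\mathfrak{a}g\cdot g^{-1} = \mathfrak{a}$, the conjugation identity gives $\tau'(\mathfrak{a}^g) = g^{-1}\tau(\mathfrak{a})g = g^{-1}\mathfrak{a}g = \mathfrak{a}^g$. The same computation applied to $\beta^g$ yields $\tau'(\beta^g) = g^{-1}\tau(\beta)g = g^{-1}\beta^{-1}g = (\beta^g)^{-1}$, using the hypothesis $\tau(\beta)=\beta^{-1}$. In particular, $\tau'$ preserves $H^{1}(\mathfrak{a},\beta)^g = H^{1}(\mathfrak{a}^g,\beta^g)$.

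Finally, recall that $\theta^g$ lives on $H^1(\mathfrak{a},\beta)^g$ via $\theta^g(x) = \theta(gxg^{-1})$. For $x \in H^1(\mathfrak{a},\beta)^g$ I compute
\[
(\theta^g\circ\tau')(x) = \theta^g\bigl(g^{-1}\tau(gxg^{-1})g\bigr) = \theta\bigl(\tau(gxg^{-1})\bigr) = (\theta\circ\tau)(gxg^{-1}) = \theta^{-1}(gxg^{-1}) = (\theta^g)^{-1}(x),
\]
using the hypothesis $\theta\circ\tau = \theta^{-1}$. This gives $\theta^g\circ\tau' = (\theta^g)^{-1}$, completing all three assertions.

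The only potential pitfall is getting the transpose/inverse bookkeeping right in the first step; once the conjugation identity $\tau' = \mathrm{Ad}(g^{-1})\circ\tau\circ\mathrm{Ad}(g)$ is established, the remainder is immediate, and there is no serious obstacle. No new structural input from simple type theory is needed — the lemma is purely about how unitary involutions transform under conjugation.
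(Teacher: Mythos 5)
Your proof is correct and follows essentially the same route as the paper: the paper likewise verifies the claims by a direct matrix computation, whose content is exactly your intertwining identity $\tau'(x)=g^{-1}\tau(gxg^{-1})g$ (the paper phrases it as $(\varepsilon'\varepsilon^{-1})^{-1}\tau'(g)=g$ inside the computation of $\tau'(\mathfrak{a}^{g})$, then says the other two equations are similar). Your organization — establishing the conjugation identity once and then applying it to $\mathfrak{a}^{g}$, $\beta^{g}$ and $\theta^{g}$ (noting that the same bookkeeping gives $\sigma_{\varepsilon'}(x)=g^{-1}\sigma_{\varepsilon}(gxg^{-1})g$ for the order) — is just a cleaner write-up of the same calculation.
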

		
		\begin{proof}
			
			The proof is just a simple calculation. We have
			\begin{align*}
				\tau'(\mathfrak{a}^{g})=\tau'(g^{-1})\tau'(\mathfrak{a})\tau'(g)=\tau'(g^{-1})\varepsilon'\varepsilon^{-1}\tau(\mathfrak{a})(\varepsilon'\varepsilon^{-1})^{-1}\tau'(g)=g^{-1}\tau(\mathfrak{a})g,
			\end{align*}
			where in the last step we use
			$$(\varepsilon'\varepsilon^{-1})^{-1}\tau'(g)=\varepsilon\sigma(\,^{t}g^{-1})\varepsilon'^{-1}=g.$$
			Since $\tau(\mathfrak{a})=\mathfrak{a}$, we get $\tau'(\mathfrak{a}^{g})=\mathfrak{a}^{g}$. The other two equations can be proved in a similar way.
		\end{proof}
		
		\subsection{The maximal and totally wildly ramified case}
		
		Now we focus on the proof of Theorem \ref{Thmendotautheta}. We imitate the strategy in \cite{anandavardhanan2019galois}, section 4 which first considered a special case, and then used tame lifting developed by Bushnell and Henniart \cite{bushnell1996local} and other tools developed by Bushnell and Kutzko \cite{bushnell129admissible} to generalize their result. In this subsection, we prove the following proposition as a special case of (2) and (3) of Theorem \ref{Thmendotautheta}:
		
		\begin{proposition}\label{propmaxwild}
			
			Let $[\mathfrak{a},\beta]$ be a simple stratum in $\mathrm{M}_{n}(F)$ and let $\theta\in\mathcal{C}(\mathfrak{a},\beta)$ such that $\theta\in\Theta$ with $\Theta$ a $\sigma$-invariant endo-class. Let $E/F$ be totally wildly ramified of degree $n$. Let $\tau=\tau_{1}$ with $\tau_{1}(x):=\sigma(\,^{t}x^{-1})$ for any $x\in G$. Then there exist a simple stratum $[\mathfrak{a}'',\beta'']$ and a simple character $\theta''\in\mathcal{C}(\mathfrak{a}'',\beta'')$ such that $(\mathfrak{a}'',\theta'')$  is $G$-conjugate to $(\mathfrak{a},\theta)$ with the property $\tau(\mathfrak{a}'')=\mathfrak{a}''$ and $\theta''\circ\tau=\theta''^{-1}$.
			
		\end{proposition}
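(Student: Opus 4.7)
The plan is to first normalise the simple stratum so that $\sigma_t(\beta) = \beta$, then deduce that $\mathfrak{a}$ is automatically $\tau$-stable in the totally wildly ramified maximal case, and finally adjust $\theta$ within its $U(\mathfrak{a})$-conjugacy class by a Hilbert 90-type argument.

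First I would apply Lemma \ref{LemmaendotauE} to the endo-class $\Theta$ of $\theta$, producing a simple stratum $[\mathfrak{a}_0,\beta_0] \in \Theta$ with $\sigma_t(\beta_0) = \beta_0$, equivalently $\tau(\beta_0) = \beta_0^{-1}$. Since $E/F$ is totally wildly ramified of degree $n$, we are in the case $m=1$: any two embeddings of $E$ into $\mathrm{M}_n(F)$ are $G$-conjugate by Skolem-Noether, and in this setting the hereditary order attached to a maximal simple stratum is uniquely determined by the embedding of $E$. Combined with the classification of simple characters within a given endo-class attached to a common maximal hereditary order (via the transfer map, see \cite{bushnell1996local}), this allows me to replace $(\mathfrak{a},\theta)$ by a $G$-conjugate satisfying $\sigma_t(\beta) = \beta$, henceforth assumed.

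Next, since $\sigma_t$ preserves $E = F[\beta]$ and $E^\times$, and since in the $m=1$ case $\mathfrak{a}$ is the unique hereditary order of $\mathrm{M}_n(F)$ normalised by $E^\times$, we obtain $\tau(\mathfrak{a}) = \sigma_t(\mathfrak{a}) = \mathfrak{a}$. Consequently $\tau$ acts on each of $U(\mathfrak{a})$, $\boldsymbol{J}(\mathfrak{a},\beta)$, $J(\mathfrak{a},\beta)$, $J^1(\mathfrak{a},\beta)$ and $H^1(\mathfrak{a},\beta)$. By Lemma \ref{Lemmasigmaendo} there exists $u \in U(\mathfrak{a})$ with $\theta \circ \tau = (\theta^{-1})^u$, and a short computation translates the condition that $\theta^v$ be $\tau$-selfdual into finding $v \in U(\mathfrak{a})$ with $v\tau(v)^{-1} \equiv \tau(u)^{-1}$ modulo $\boldsymbol{J}(\mathfrak{a},\beta)$. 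Applying the defining relation of $u$ twice and using $\tau^2 = \mathrm{id}$ shows that $u\tau(u)$ normalises $\theta$, hence lies in $\boldsymbol{J}(\mathfrak{a},\beta) \cap U(\mathfrak{a}) = J(\mathfrak{a},\beta)$.

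The main obstacle is the resulting two-step Hilbert 90 argument: one first exploits the indeterminacy of $u$ (which is defined only up to left multiplication by $J(\mathfrak{a},\beta)$) to enforce the cocycle identity $u\tau(u)=1$, and then solves the coboundary equation $v\tau(v)^{-1} = \tau(u)^{-1}$ in $U(\mathfrak{a})$. Both steps reduce to the vanishing of the non-abelian cohomology of the order-two group $\langle\tau\rangle$ acting on the relevant compact subgroups; this vanishing on $J^1(\mathfrak{a},\beta)$ and on pro-$p$ subquotients of $U(\mathfrak{a})$ follows from $p \neq 2$, while the finite quotient $J(\mathfrak{a},\beta)/J^1(\mathfrak{a},\beta) \cong \boldsymbol{l}^\times$ is handled directly: when $F/F_0$ is unramified, $\tau$ induces the non-trivial Galois action on $\boldsymbol{l} = \boldsymbol{k}$ and classical Hilbert 90 for finite fields applies, whereas when $F/F_0$ is ramified, $\tau$ acts trivially on $\boldsymbol{l}$ and the obstruction class $\bar u \tau(\bar u) = \bar u^2$ is automatically a square, so lies in the image of the norm. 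A suitable $v \in U(\mathfrak{a})$ is then produced, and $(\mathfrak{a}^v,\theta^v) = (\mathfrak{a},\theta^v)$ is the required $G$-conjugate.
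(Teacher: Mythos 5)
Your first step is circular. You begin by invoking Lemma \ref{LemmaendotauE} to replace the stratum by one with $\sigma_{t}(\beta)=\beta$, but in this paper Lemma \ref{LemmaendotauE} is proved (in subsection 5.4) by combining Proposition \ref{Propmaximalonly} with Stevens' Proposition \ref{Propstevens}, and Proposition \ref{Propmaximalonly} is itself deduced from the very Proposition \ref{propmaxwild} you are asked to prove. Moreover this is not just an accident of exposition: Stevens' theorem, which is the only available tool for producing a $\sigma_{t}$-fixed $\beta$, takes as \emph{input} a $\sigma_{t}$-stable order together with a simple character satisfying $\theta\circ\sigma_{t}=\theta$, i.e.\ exactly the conclusion of the present proposition. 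So the normalization $\sigma_{t}(\beta)=\beta$ (and with it the cheap deduction $\tau(\mathfrak{a})=\mathfrak{a}$ from uniqueness of the order normalized by $E^{\times}$) is not available at this stage; the paper instead works with the standard minimal order and conjugates by an element $g_{1}$ with $\tau(g_{1})g_{1}^{-1}$ anti-diagonal (Lemma \ref{lemmataua}) precisely because the standard order is not $\tau_{1}$-stable.

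The second half of your argument also hides the real content in an assertion that is not correct as stated. Once one has $u\in U(\mathfrak{a}')$ with $\theta'\circ\tau=(\theta'^{-1})^{u}$ and $u\tau(u)\in J'$, the obstruction to solving $u\tau(v)v^{-1}\in\boldsymbol{J}'$ does not live in $J'/J'^{1}\cong\boldsymbol{l}^{\times}$: since $E/F$ is totally ramified of degree $n$, the relevant quotient is $U(\mathfrak{a}')/U^{1}(\mathfrak{a}')\cong(\boldsymbol{k}^{\times})^{n}$ modulo the diagonal image of $J'$, on which $\tau$ acts by an index-reversing permutation twisted by $\sigma$. Killing this obstruction is the content of Lemma \ref{LemmayU1}: it uses the constraint coming from $u\tau(u)\in J'$ together with the fact that $n$ is odd (the fixed middle index is where $b=u_{(n+1)/2}$ is chosen), and is not a consequence of Hilbert 90 for $\boldsymbol{l}^{\times}$ or of squares in the ramified case. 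Likewise, your preliminary normalization $u\tau(u)=1$ by changing $u$ within $J'u$ requires solving $(u\tau(u))^{-1}=\tau(j)j$ in $J'$, which is again a non-trivial twisted problem (note $\tau(u\tau(u))=u^{-1}(u\tau(u))u$, so $u\tau(u)$ is not even a naive cocycle) and is never justified; the paper avoids it altogether. Finally, the pro-$p$ part is not a one-shot vanishing of $H^{1}$ of $\langle\tau\rangle$ on $U^{1}(\mathfrak{a}')$, because $u$ is only a cocycle modulo $J'$; one needs the interleaved successive-approximation of Lemmas \ref{Lemmavtauvi+1} and \ref{Lemmaxjvh}, producing convergent products $x'\in U'^{1}$ and $h'\in J'^{1}$ with $u\tau(x')x'^{-1}=h'^{-1}$. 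In short, both the starting normalization and the cohomological reduction you rely on are unavailable or unproven, and the steps they replace are exactly where the paper's proof does its work.
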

		
		\begin{remark}
			
			In Proposition \ref{propmaxwild} we have $[E:F]=d=n$, which is a power of $p$ as an odd number.
			
		\end{remark}
		
		Up to $G$-conjugacy, we may and will assume $\mathfrak{a}$ to be standard (that is, $\mathfrak{a}$ is made of matrices with upper triangular elements in $\mathfrak{o}_{F}$ and other elements in $\mathfrak{p}_{F}$.). 
		
		\begin{lemma}\label{lemmataua}
			
			There exist $g_{1}\in G$ and $a_{1},...,a_{n}\in\mathfrak{o}_{F}^{\times}$ such that $$\tau(g_{1})g_{1}^{-1}=A:=\begin{pmatrix}0 & 0 & \ldots & 0 & a_{1}\\ 0 & \iddots & \iddots & a_{2} & 0 \\ \vdots & \iddots & \iddots & \iddots & \vdots \\ 0 & a_{n-1} & \iddots & \iddots & 0 \\ a_{n} & 0 & \ldots & 0 & 0 \end{pmatrix}.$$
			Moreover, if we define $\mathfrak{a}':=\mathfrak{a}^{g_{1}}$, then we have $\tau(\mathfrak{a}')=\mathfrak{a}'$.
			
		\end{lemma}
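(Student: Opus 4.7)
The plan is to reformulate the equation $\tau(g_1)g_1^{-1}=A$ as a hermitian-matrix equation, apply Proposition~\ref{PropGOH} to ensure solvability, and then verify the invariance of $\mathfrak{a}'$ by a direct matrix computation.

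Since $\tau=\tau_1$ corresponds to $\varepsilon=I_n$, for any $g_1\in G$ one has $\tau(g_1)g_1^{-1}=\sigma(\,^t g_1^{-1})g_1^{-1}=(g_1 g_1^{*})^{-1}$, so the equation $\tau(g_1)g_1^{-1}=A$ is equivalent to $g_1 g_1^{*}=A^{-1}$. A direct computation also gives $\tau(\tau(g_1)g_1^{-1})=(\tau(g_1)g_1^{-1})^{-1}$, which forces $A$ to be hermitian; for anti-diagonal $A$ with entries $a_i$ at position $(i,n+1-i)$ this translates to $\sigma(a_i)=a_{n+1-i}$ for all $i$. Since $n=d$ is a power of the odd prime $p$ and hence odd, the middle entry $a_{(n+1)/2}$ must lie in $\mathfrak{o}_{F_0}^{\times}$, while the remaining $a_i$ are free in $\mathfrak{o}_F^{\times}$ subject to this pairing.

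To produce $g_1$, I will choose the $a_i$ so that $A^{-1}$ lies in the $\mathrm{GL}_n(F)$-orbit of $I_n$ under $g\cdot h=g h g^{*}$; then Proposition~\ref{PropGOH} yields some $g_1\in G$ with $g_1 g_1^{*}=A^{-1}$, i.e.\ $\tau(g_1)g_1^{-1}=A$. By the same proposition this orbit is characterized by $\det(h)\in N_{F/F_0}(F^{\times})$, and taking all $a_i=1$ gives $\det(A)=\pm 1$, which is automatically a norm, so such a choice is possible.

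Finally, using $\tau(g_1)=Ag_1$, I compute $\tau(\mathfrak{a}')=g_1^{-1}(A^{-1}\tau(\mathfrak{a})A)g_1$, so the invariance claim reduces to $A\mathfrak{a}A^{-1}=\tau(\mathfrak{a})$. Here $\tau(\mathfrak{a})=\sigma(\,^t\mathfrak{a})=\,^t\mathfrak{a}$, since $\sigma$ preserves $\mathfrak{o}_F$ and $\mathfrak{p}_F$, so $\tau(\mathfrak{a})$ is the standard lower-triangular hereditary order opposite to $\mathfrak{a}$. Conjugation by the anti-diagonal $A$ sends the matrix unit $E_{jk}$ to a unit multiple of $E_{n+1-j,\,n+1-k}$, which reverses the indices and swaps the upper and lower triangles; an entrywise check then gives $A\mathfrak{a}A^{-1}=\,^t\mathfrak{a}=\tau(\mathfrak{a})$. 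The only genuine obstacle in the argument is the non-abelian Hilbert~90 step of realizing $A^{-1}$ as $g_1 g_1^{*}$, and the oddness of $n$ combined with Proposition~\ref{PropGOH} disposes of it without difficulty.
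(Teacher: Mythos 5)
Your overall route is exactly the paper's: impose the hermitian condition $\sigma(a_{i})=a_{n+1-i}$ on the anti-diagonal matrix $A$, invoke Proposition \ref{PropGOH} to solve $\tau(g_{1})g_{1}^{-1}=A$ once $\mathrm{det}(A)\in\mathrm{N}_{F/F_{0}}(F^{\times})$, and verify $A\mathfrak{a}A^{-1}=\,^{t}\mathfrak{a}$ by conjugating matrix units. The reduction of the equation to $g_{1}g_{1}^{*}=A^{-1}$ and the computation showing that $\tau(\mathfrak{a}')=\mathfrak{a}'$ is equivalent to $A\mathfrak{a}A^{-1}=\tau(\mathfrak{a})=\,^{t}\mathfrak{a}$ are both correct.

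There is, however, a genuine error at the one point that needs care: the claim that taking all $a_{i}=1$ gives $\mathrm{det}(A)=\pm 1$, ``which is automatically a norm.'' The determinant of the anti-diagonal matrix with all entries $1$ is $(-1)^{n(n-1)/2}$, which equals $-1$ when $n\equiv 3\pmod 4$ (and $n$ is an odd power of $p$, so e.g. $n=3$ occurs); and $-1$ need not lie in $\mathrm{N}_{F/F_{0}}(F^{\times})$: when $F/F_{0}$ is ramified and $-1$ is not a square in the residue field $\boldsymbol{k}_{0}$, the unit norms are exactly the squares of units, so $-1$ is a non-norm. In that case $A^{-1}$ lies in the other $G$-orbit of hermitian matrices and Proposition \ref{PropGOH} produces no $g_{1}$ at all. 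The repair is precisely what the paper does: since $\mathrm{det}(A)$ lies in the class of $\pm a_{(n+1)/2}$ modulo $\mathrm{N}_{F/F_{0}}(F^{\times})$ (the paired entries contribute $\mathrm{N}_{F/F_{0}}(a_{i})$), leave the middle entry $a_{(n+1)/2}\in\mathfrak{o}_{F_{0}}^{\times}$ free and choose it in the appropriate class --- a non-norm unit exists when $F/F_{0}$ is ramified, while in the unramified case every unit, in particular $-1$, is already a norm. With that adjustment your argument is complete and coincides with the paper's proof.
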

		
		\begin{proof}
			
			First we claim that we may choose $a_{i}\in\mathfrak{o}_{F}^{\times}$ such that $A$ is a hermitian matrix and $\mathrm{det}(A)\in\mathrm{N}_{F/F_{0}}(F^{\times})$. To do this, noting that $A^{*}=A$ if and only if $a_{i}=\sigma(a_{n+1-i})$ for $i=1,2,...,n$, we choose $a_{i}=\sigma(a_{n+1-i})$ for $i=1,2,...,(n-1)/2$ randomly but only to make sure that they are in $\mathfrak{o}_{F}^{\times}$ and we choose $a_{(n+1)/2}\in\mathfrak{o}_{F_{0}}^{\times}$ to make sure that $\mathrm{det}(A)\in\mathrm{N}_{F/F_{0}}(F^{\times})$. 
			
			Since $A$ is a hermitian matrix which is in the same $G$-orbit as $I_{n}$ by considering the determinant, using Proposition \ref{PropGOH}, there exists an element $g_{1}\in G$ such that $(g_{1}^{-1})^{*}g_{1}^{-1}=A$, which means that $\tau(g_{1})g_{1}^{-1}=A$. By definition $\tau(\mathfrak{a}')=\mathfrak{a}'$ if and only if $\tau(g_{1}^{-1})\tau(\mathfrak{a})\tau(g_{1})=g_{1}^{-1}\mathfrak{a}g_{1}$. Since $\mathfrak{a}^{*}=\,^{t}\mathfrak{a}$, we deduce that $\tau(\mathfrak{a}')=\mathfrak{a}'$ if and only if $A^{-1}\,^{t}\mathfrak{a}A=(\tau(g_{1})g_{1}^{-1})^{-1}\,^{t}\mathfrak{a}\tau(g_{1})g_{1}^{-1}=\mathfrak{a}$. From our choice of $A$ and the definition of $\mathfrak{a}$, this can be verified directly. 
			
		\end{proof}
		
		Now fix $g_{1}$ as in Lemma \ref{lemmataua}. We write $\theta'=\theta^{g_{1}}$ and $\beta'=\beta^{g_{1}}$. Since $\mathfrak{a'}=\mathfrak{a}^{g_{1}}$, we also have:
		
		(1) $U'^{i}:=U^{i}(\mathfrak{a}')=U^{i}(\mathfrak{a})^{g_{1}}$, where $U^{i}(\mathfrak{a}):=1+\mathfrak{p}_{\mathfrak{a}}^{i}$ for $i\geq 1$;
		
		(2) $J':=J(\mathfrak{a}',\beta')=J(\mathfrak{a},\beta)^{g_{1}}$;
		
		(3) $J'^{1}:=J^{1}(\mathfrak{a}',\beta')=J^{1}(\mathfrak{a},\beta)^{g_{1}}$;
		
		(4)  $\boldsymbol{J}':=\boldsymbol{J}(\mathfrak{a}',\beta')=\boldsymbol{J}(\mathfrak{a},\beta)^{g_{1}}$;
		
		(5) $H'^{1}:=H^{1}(\mathfrak{a}',\beta')=H^{1}(\mathfrak{a},\beta)^{g_{1}}$;
		
		(6) $M':=M^{g_{1}}$, where $M=\mathfrak{o}_{F}^{\times}\times...\times\mathfrak{o}_{F}^{\times}$ is the subgroup of diagonal matrices contained in $\mathfrak{a}$.

		
		Since $\mathfrak{a}'$ is $\tau$-stable and $\Theta^{\sigma}=\Theta$, using Lemma \ref{Lemmasigmaendo}, there exists $u'\in U(\mathfrak{a}')$ such that $\theta'\circ\tau=(\theta'^{-1})^{u'}$. Since $\theta'=\theta'\circ\tau\circ\tau=(\theta'^{-1})^{u'}\circ\tau=\theta'^{u'\tau(u')}$, we deduce that $u'\tau(u')$ normalizes $\theta'$, which means that $u'\tau(u')\in \boldsymbol{J}'\cap  U(\mathfrak{a}')=J'$ by using Proposition \ref{PropJandJ1}.(4). To prove Proposition \ref{propmaxwild}, we only need to find $x'\in G$ such that $\mathfrak{a}'':=\mathfrak{a'}^{x'}$ and $\theta'':=\theta'^{x'}$ have the desired property. By direct calculation, it means that $\tau(x')x'^{-1}$ normalizes $\mathfrak{a}'$ and $u'\tau(x')x'^{-1}$ normalizes $\theta'$, so using Proposition \ref{PropJandJ1}.(4) and the fact that $u'^{-1}\boldsymbol{J}'$ is contained in the normalizer of $\mathfrak{a}'$, it suffices to choose $x'$ such that $u'\tau(x')x'^{-1}\in\boldsymbol{J}'$.
		
		\begin{lemma}\label{LemmayU1}
			
			There exists $y'\in M'$ such that $u'\tau(y')y'^{-1}\in J(\mathfrak{a'},\beta')U^{1}(\mathfrak{a}')=\mathfrak{o}_{F}^{\times}U^{1}(\mathfrak{a}')$.
			
		\end{lemma}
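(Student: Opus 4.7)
The plan is to translate the statement into the reductive quotient $U(\mathfrak{a}')/U^1(\mathfrak{a}')$ and solve it by an explicit Hilbert~90 calculation. First, because $E/F$ is totally wildly ramified of degree $n$, we have $m=1$, $\mathfrak{b}' = \mathfrak{o}_{E'}$, and the residue field of $E$ coincides with $\boldsymbol{k}$; since $J^1(\mathfrak{a}',\beta')$ is pro-$p$ and $U(\mathfrak{a}')/U^1(\mathfrak{a}')$ has order prime to $p$, one has $J^1(\mathfrak{a}',\beta') \subset U^1(\mathfrak{a}')$. Combined with $1+\mathfrak{p}_{E'} \subset U^1(\mathfrak{a}')$, this gives the stated equality $J(\mathfrak{a}',\beta')U^1(\mathfrak{a}') = \mathfrak{o}_{E'}^\times U^1(\mathfrak{a}') = \mathfrak{o}_F^\times U^1(\mathfrak{a}')$. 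Moreover $\mathfrak{a}'$ is $G$-conjugate to the standard Iwahori, so $U(\mathfrak{a}')/U^1(\mathfrak{a}') \cong (\boldsymbol{k}^\times)^n$, with $M'$ surjecting onto this quotient and $\mathfrak{o}_F^\times$ mapping onto the diagonally embedded $\boldsymbol{k}^\times$.

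Next, I would determine the involution $\bar\tau$ induced by $\tau$ on $(\boldsymbol{k}^\times)^n$. Starting from $\tau(g_1)=Ag_1$ in Lemma~\ref{lemmataua}, a direct matrix computation exploiting the antidiagonal shape of $A$ (together with $\varepsilon=I_n$) yields $\bar\tau(a_1,\ldots,a_n) = (\bar\sigma(a_n^{-1}),\ldots,\bar\sigma(a_1^{-1}))$, where $\bar\sigma$ is the induced element of $\mathrm{Gal}(\boldsymbol{k}/\boldsymbol{k}_0)$ (trivial exactly when $F/F_0$ is ramified). Writing $\bar{u}'=(c_1,\ldots,c_n)$, the hypothesis $u'\tau(u')\in J' \subset \mathfrak{o}_F^\times U^1(\mathfrak{a}')$ becomes the assertion that $d:=c_i\,\bar\sigma(c_{n+1-i})^{-1}$ is independent of $i$, and the conclusion of the lemma reduces to finding $(y_1,\ldots,y_n)\in(\boldsymbol{k}^\times)^n$ and $c\in\boldsymbol{k}^\times$ with
\[
y_i\,\bar\sigma(y_{n+1-i}) = c_i/c, \quad i=1,\ldots,n.
\]

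Finally, I would solve these componentwise equations. Since $n=d$ is a power of the odd prime $p$, there is a distinguished middle index $i_0=(n+1)/2$. In the ramified case $\bar\sigma=\mathrm{id}$, the cocycle condition forces $d=1$ and $c_i=c_{n+1-i}$; then taking $c=c_{i_0}$, $y_{i_0}=1$, and for $i<i_0$ choosing $y_i$ freely and setting $y_{n+1-i}:=c_i/(cy_i)$ solves every equation. In the unramified case, applying $\bar\sigma$ to $c_i = d\,\bar\sigma(c_{n+1-i})$ gives $\mathrm{Nm}_{\boldsymbol{k}/\boldsymbol{k}_0}(d)=1$, so Hilbert~90 produces $c\in\boldsymbol{k}^\times$ with $d=c/\bar\sigma(c)$; then $c_{i_0}/c \in \boldsymbol{k}_0^\times = \mathrm{Nm}_{\boldsymbol{k}/\boldsymbol{k}_0}(\boldsymbol{k}^\times)$ so the middle equation admits a solution $y_{i_0}$, and the paired equations are solved analogously by setting $y_{n+1-i}:=\bar\sigma(c_i/(c y_i))$. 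Lifting $(y_1,\ldots,y_n)$ through $M'\twoheadrightarrow(\boldsymbol{k}^\times)^n$ yields the required $y'$. The main obstacle is step two, namely pinning down the explicit action of $\bar\tau$ on the reductive quotient; once this formula is in hand, the oddness of $n$ and Hilbert~90 (or norm surjectivity) make the construction routine in both ramification cases.
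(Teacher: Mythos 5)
Your proposal is correct and takes essentially the same route as the paper: transport everything through $g_{1}$, reduce modulo $U^{1}(\mathfrak{a}')$ to the componentwise system $\overline{u_i}\,\overline{y_i}^{-1}\sigma(\overline{y_{n+1-i}})^{-1}=\overline{b}$ on the residue torus $(\boldsymbol{k}^{\times})^{n}$, with indices paired by $i\leftrightarrow n+1-i$ and the odd degree $n$ providing a fixed middle index. The only (harmless) difference is the final step: the paper solves the system uniformly in both ramification cases by taking $b=u_{(n+1)/2}$ and $y_{i}=b^{-1}u_{i}$ for $i\le (n-1)/2$, $y_{i}=1$ otherwise, so your case split and the Hilbert 90 argument are not needed — the middle relation already gives $d=c_{(n+1)/2}\,\bar\sigma(c_{(n+1)/2})^{-1}$, i.e.\ the scalar $c=c_{(n+1)/2}$ works directly.
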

		
		\begin{proof}
			
			First we write $u'=g_{1}^{-1}ug_{1}$ for a certain $u\in U(\mathfrak{a})$. Then $u'\tau(u')\in J(\mathfrak{a'},\beta')$ implies that $uA^{-1}(u^{-1})^{*}A\in J(\mathfrak{a},\beta)\subset \mathfrak{o}_{F}^{\times}U^{1}(\mathfrak{a})$ by direct calculation, where $A$ is defined as in Lemma
			\ref{lemmataua}.
			
			We choose $y'=g_{1}^{-1}yg_{1}$ with $y=\mathrm{diag}(y_{1},...,y_{n})\in M=\mathfrak{o}_{F}^{\times}\times...\times\mathfrak{o}_{F}^{\times}$ to be determined. By direct calculation, $u'\tau(y')y'^{-1}\in J(\mathfrak{a'},\beta')U^{1}(\mathfrak{a}')$ if and only if $uA^{-1}(y^{-1})^{*}Ay^{-1}\in J(\mathfrak{a},\beta)U^{1}(\mathfrak{a})=\mathfrak{o}_{F}^{\times}U^{1}(\mathfrak{a})$. We use $\overline{u_{i}}$, $\overline{a}$, $\overline{y_{i}}$ and $\overline{b}$ to denote the image of $u_{i}$, $a$, $y_{i}$, $b$ in $k_{F}\cong\mathfrak{o}_{F}/\mathfrak{p}_{F}$ respectively, where $u_{i},a,b\in\mathfrak{o}_{F}$ will be defined in the following two paragraphs.
			
			We write $A=\begin{pmatrix}0 & 0 & \ldots & 0 & a_{1}\\ 0 & \iddots & \iddots & a_{2} & 0 \\ \vdots & \iddots & \iddots & \iddots & \vdots \\ 0 & a_{n-1} & \iddots & \iddots & 0 \\ a_{n} & 0 & \ldots & 0 & 0 \end{pmatrix}$ and
			$u=\begin{pmatrix}
				u_{1} & *_{\mathfrak{o}_{F}} & \ldots & \ldots & \ *_{\mathfrak{o}_{F}}  \\ *_{\mathfrak{p}_{F}} & u_{2} & \ddots & \ddots & \vdots \\
				\vdots & \ddots & \ddots & \ddots & \vdots \\
				\vdots & \ddots & \ddots & u_{n-1} & *_{\mathfrak{o}_{F}} \\
				*_{\mathfrak{p}_{F}} & \ldots & \ldots & *_{\mathfrak{p}_{F}} & u_{n}\end{pmatrix}$, where $*_{\mathfrak{o}_{F}}$ and $*_{\mathfrak{p}_{F}}$ represent elements in $\mathfrak{o}_{F}$ and $\mathfrak{p}_{F}$ respectively. By direct calculation, we have $$uA^{-1}(u^{-1})^{*}A=
			\begin{pmatrix}
				u_{1}\sigma(u_{n}^{-1}) & *_{\mathfrak{o}_{F}} & \ldots & \ldots & \ *_{\mathfrak{o}_{F}}  \\ *_{\mathfrak{p}_{F}} & u_{2}\sigma(u_{n-1}^{-1}) & \ddots & \ddots & \vdots \\
				\vdots & \ddots & \ddots & \ddots & \vdots \\
				\vdots & \ddots & \ddots & u_{n-1}\sigma(u_{2}^{-1}) & *_{\mathfrak{o}_{F}} \\
				*_{\mathfrak{p}_{F}} & \ldots & \ldots & *_{\mathfrak{p}_{F}} & u_{n}\sigma(u_{1}^{-1})\end{pmatrix}\in\mathfrak{o}_{F}^{\times}U^{1}(\mathfrak{a}),$$ which means that there exists $a\in\mathfrak{o}_{F}^{\times}$ such that
			\begin{equation}\label{equsigmau}
				u_{1}\sigma(u_{n}^{-1}), u_{2}\sigma(u_{n-1}^{-1}),...,u_{n}\sigma(u_{1}^{-1})\in a(1+\mathfrak{p}_{F}).
			\end{equation}
			
			Also by direct calculation, we have
			$$uA^{-1}(y^{-1})^{*}Ay^{-1}=\begin{pmatrix}
				u_{1}y_{1}^{-1}\sigma(y_{n}^{-1}) & *_{\mathfrak{o}_{F}} & \ldots & \ldots & \ *_{\mathfrak{o}_{F}}  \\ *_{\mathfrak{p}_{F}} & u_{2}y_{2}^{-1}\sigma(y_{n-1}^{-1}) & \ddots & \ddots & \vdots \\
				\vdots & \ddots & \ddots & \ddots & \vdots \\
				\vdots & \ddots & \ddots & u_{n-1}y_{n-1}^{-1}\sigma(y_{2}^{-1}) & *_{\mathfrak{o}_{F}} \\
				*_{\mathfrak{p}_{F}} & \ldots & \ldots & *_{\mathfrak{p}_{F}} & u_{n}y_{n}^{-1}\sigma(y_{1}^{-1})\end{pmatrix},$$
			which means that the lemma is true if and only if there exists $b\in\mathfrak{o}_{F}^{\times}$ such that
			\begin{equation}\label{equAsigmay}
				u_{1}y_{1}^{-1}\sigma(y_{n}^{-1}),u_{2}y_{2}^{-1}\sigma(y_{n-1}^{-1}),...,u_{n}y_{n}^{-1}\sigma(y_{1}^{-1})\in b(1+\mathfrak{p}_{F}).
			\end{equation}
			
			If we consider modulo $\mathfrak{p}_{F}$, then the condition (\ref{equsigmau}) becomes
			\begin{equation}\label{eqmodusigmau}
				\overline{u_{1}}\sigma(\overline{u_{n}}^{-1})=\overline{u_{2}}\sigma(\overline{u_{n-1}}^{-1})=...=\overline{u_{n}}\sigma(\overline{u_{1}}^{-1})=\overline{a}.
			\end{equation}
			Moreover, if we consider modulo $U^{1}(\mathfrak{a})$, then $uA^{-1}(y^{-1})^{*}Ay^{-1}\in\mathfrak{o}_{F}^{\times}U^{1}(\mathfrak{a})$ if and only if there exist $y_{i}\in\mathfrak{o}_{F}^{\times}$ such that there exists $b\in\mathfrak{o}_{F}^{\times}$ in the condition (\ref{equAsigmay}) such that
			\begin{equation}\label{eqmoduAsigmay}
				\overline{u_{1}}\overline{y_{1}}^{-1}\sigma(\overline{y_{n}}^{-1})=\overline{u_{2}}\overline{y_{2}}^{-1}\sigma(\overline{y_{n-1}}^{-1})=...=\overline{u_{n}}\overline{y_{n}}^{-1}\sigma(\overline{y_{1}}^{-1})=\overline{b}.
			\end{equation}

			We choose $b=u_{(n+1)/2}$, then $\overline{b}\sigma(\overline{b}^{-1})=\overline{a}$. Furthermore we choose $y_{i}=b^{-1}u_{i}$ when $i=1,2,...,(n-1)/2$ and $y_{i}=1$ when $i=(n+1)/2,...,n$. Combining with the equation (\ref{eqmodusigmau}), the equation (\ref{eqmoduAsigmay}) is satisfied.

			
		\end{proof}
		
		Let us write $z'u'\tau(y')y'^{-1}\in U'^{1}$ for some $y'\in M'$ and $z'\in \mathfrak{o}_{F}^{\times}$ given by Lemma \ref{LemmayU1}. By replacing the simple
		stratum $[\mathfrak{a}',\beta']$ with $[\mathfrak{a}'^{y'},\beta'^{y'}]$, the simple character $\theta'$ with $\theta'^{y'}$ and $u'$ with $y'^{-1}z'u'\tau(y')$, which doesn't affect the fact that the order is $\tau$-stable, we can and will assume that $u'\in U'^{1}$. We write $J'^{i}=J'\cap U'^{i}$ for $i\geq 1$. We state the following two lemmas which correspond to Lemma 4.16 and Lemma 4.17 in \cite{anandavardhanan2019galois}. Actually the same proofs work when one replaces the Galois involution $\sigma$ in the original lemmas with any involution $\tau$ on $G$.
		
		\begin{lemma}\label{Lemmavtauvi+1}
			
			Let $v'\in U'^{i}$ for some $i\geq 1$ and assume that $v'\tau(v')\in J'^{i}$. Then there exist $j'\in J'^{i}$ and $x'\in U'^{i}$ such that $j'v'\tau(x')x'^{-1}\in U'^{i+1}$.
			
		\end{lemma}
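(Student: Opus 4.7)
The plan is to linearize the problem by passing to the quotient $W := U'^{i}/U'^{i+1}$. Via the map $1+w \mapsto w$, this group is identified with the $\boldsymbol{k}$-vector space $\mathfrak{p}_{\mathfrak{a}'}^{i}/\mathfrak{p}_{\mathfrak{a}'}^{i+1}$. Because $\tau(\mathfrak{a}')=\mathfrak{a}'$, the unitary involution $\tau$ descends to an additive involution $\bar\tau$ of $W$; explicitly, $\bar\tau$ is the reduction of $w \mapsto -\varepsilon\sigma(\,^tw)\varepsilon^{-1}$, as one sees by expanding $\tau(1+w)=\varepsilon\sigma(\,^t(1+w)^{-1})\varepsilon^{-1}\equiv 1-\varepsilon\sigma(\,^tw)\varepsilon^{-1}$ modulo $\mathfrak{p}_{\mathfrak{a}'}^{2i}$. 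Since $p\neq 2$, $W$ splits as $W^{+}\oplus W^{-}$ into the $\pm 1$ eigenspaces of $\bar\tau$. Let $L\subset W$ be the image of $J'^{i}$; the standard Bushnell--Kutzko description writes $J'^{i}=1+\mathfrak{J}'^{i}$ for an $\mathfrak{o}_{F}$-submodule $\mathfrak{J}'^{i}$ of $\mathfrak{p}_{\mathfrak{a}'}^{i}$, so $L$ is a $\boldsymbol{k}$-subspace of $W$, and in particular is $2$-divisible.

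Writing $v' = 1+w$ with $w=w^{+}+w^{-}$, a direct computation modulo $U'^{i+1}$ yields
\[
v'\tau(v')\equiv 1+w+\bar\tau(w)=1+2w^{+}.
\]
The hypothesis $v'\tau(v')\in J'^{i}$ therefore forces $2w^{+}\in L$, and hence $w^{+}\in L$. Now for a trial element $x' = 1+u$ with $u=u^{+}+u^{-}\in \mathfrak{p}_{\mathfrak{a}'}^{i}$, the analogous computation gives $\tau(x')x'^{-1}\equiv 1-2u^{-}$ and then $v'\tau(x')x'^{-1}\equiv 1+w-2u^{-}$ modulo $U'^{i+1}$. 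The desired condition $j'v'\tau(x')x'^{-1}\in U'^{i+1}$ for some $j'\in J'^{i}$ is therefore equivalent, in $W$, to the single requirement $w-2u^{-}\in L$.

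The construction is now immediate: I would choose $u\in\mathfrak{p}_{\mathfrak{a}'}^{i}$ lifting $w^{-}/2\in W^{-}$ (well-defined because $p\neq 2$), so that $w-2u^{-}=w^{+}\in L$, and then select $j'\in J'^{i}$ whose class in $L$ is $-w^{+}$. This furnishes the required pair $(j',x')$. The only non-formal input is that $L$ is a $\boldsymbol{k}$-subspace of $W$, which is a standard ingredient from \cite{bushnell129admissible} and is exactly the fact used in \cite{anandavardhanan2018galois}, Lemma 4.16; modulo that ingredient, the whole argument is linear algebra over $\boldsymbol{k}$ and transfers verbatim from the Galois involution $\sigma$ to the unitary involution $\tau$, as anticipated by the author's remark preceding the lemma.
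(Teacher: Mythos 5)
Your proposal is correct and is essentially the argument the paper itself relies on: the paper proves Lemma \ref{Lemmavtauvi+1} by asserting that the proof of Lemma 4.16 of \cite{anandavardhanan2018galois} goes through verbatim with the Galois involution replaced by $\tau$, and that proof is precisely your linearization in $U'^{i}/U'^{i+1}$ with the $\pm1$-eigenspace splitting under the induced additive involution (using $p\neq 2$) and the fact that the image of $J'^{i}$ is a $\boldsymbol{k}$-subspace. So you have simply written out explicitly the same approach the paper invokes by reference.
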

		
		Using Lemma \ref{Lemmavtauvi+1} to replace Lemma 4.16 in \cite{anandavardhanan2019galois}, we may prove the following lemma:
		
		\begin{lemma}\label{Lemmaxjvh}
			
			There exists a sequence of $(x_{i}',j_{i}',v_{i}')\in U'^{i}\times J'^{i}\times U'^{i+1}$ for $i\geq 0$, satisfying the following conditions:
			
			(1) $(x_{0}',j_{0}',v_{0}')=(1,1,u')$;
			
			(2) for all $i\geq 0$, if we set $y'_{i}=x_{0}'x_{1}'...x_{i}'\in U'^{1}$, then the simple character $\theta_{i}'=\theta'^{y_{i}'}\in\mathcal{C}(\mathfrak{a}',\beta'^{y_{i}'})$
			satisfies $\theta_{i}'\circ\tau=(\theta_{i}'^{-1})^{v_{i}'}$;
			
			(3) for all $i\geq 1$, we have $y_{i}'v_{i}'=j_{i}'y_{i-1}'v_{i-1}'\tau(x_{i}')$.
			
		\end{lemma}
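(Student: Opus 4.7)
The plan is to prove the lemma by induction on $i$, with the previous Lemma \ref{Lemmavtauvi+1} providing the key one-step improvement at each stage.

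For the base case $i=0$ I would take $(x_0',j_0',v_0')=(1,1,u')$. Then $y_0'=1$ and $\theta_0'=\theta'$, so condition (2) is exactly the relation $\theta'\circ\tau=(\theta'^{-1})^{u'}$ established just before the lemma, and condition (3) is vacuous. The fact that $u'\in U'^1$ was arranged earlier by absorbing $z'\in\mathfrak{o}_F^\times$ and passing to $y'^{-1}z'u'\tau(y')$.

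For the inductive step, suppose $(x_k',j_k',v_k')$ have been constructed for all $k\le i$. Starting from $\theta_i'\circ\tau=(\theta_i'^{-1})^{v_i'}$ with $v_i'\in U'^{i+1}$, I would first compose with $\tau$ on the right and use the elementary identity $\chi^g\circ\tau=(\chi\circ\tau)^{\tau(g)}$ to deduce that $\tau(v_i')v_i'$ normalizes $\theta_i'$, i.e.\ lies in the normalizer $\boldsymbol{J}(\mathfrak{a}',\beta'^{y_i'})=\boldsymbol{J}'^{y_i'}$. Since $\tau(\mathfrak{a}')=\mathfrak{a}'$ forces $\tau(v_i')v_i'\in U'^{i+1}$, and since this element is then in $\mathcal{N}(\mathfrak{a}')\cap U'^{i+1}\subset U'^{i+1}\cap \mathfrak{a}'^\times$, a small argument using $y_i'\in U'^1$ and Proposition \ref{PropJandJ1}.(3) gives $v_i'\tau(v_i')\in J'^{i+1}$. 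This is exactly the hypothesis needed to invoke Lemma \ref{Lemmavtauvi+1} at level $i+1$, yielding $j_{i+1}'\in J'^{i+1}$ and $x_{i+1}'\in U'^{i+1}$ with $j_{i+1}'v_i'\tau(x_{i+1}')x_{i+1}'^{-1}\in U'^{i+2}$.

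I then set $y_{i+1}'=y_i'x_{i+1}'$ and define $v_{i+1}'$ by the rearrangement
\[
v_{i+1}':=x_{i+1}'^{-1}\,y_i'^{-1}\,j_{i+1}'\,y_i'\,v_i'\,\tau(x_{i+1}'),
\]
which makes condition (3) true by construction. For condition (2), using $\theta_{i+1}'=\theta_i'^{x_{i+1}'}$ and the identity $\theta^g\circ\tau=\theta^{\tau(g)}\circ\tau$ appropriately, one rewrites
$\theta_{i+1}'\circ\tau=(\theta_i'^{-1})^{v_i'\tau(x_{i+1}')}$ and $(\theta_{i+1}'^{-1})^{v_{i+1}'}=(\theta_i'^{-1})^{x_{i+1}'v_{i+1}'}$, so the desired equality reduces to checking that $x_{i+1}'v_{i+1}'(v_i'\tau(x_{i+1}'))^{-1}=y_i'^{-1}j_{i+1}'y_i'$ normalizes $\theta_i'$, which is clear since $j_{i+1}'\in J'^{i+1}$ normalizes $\theta'$ and conjugation by $y_i'$ transports this to normalization of $\theta_i'=\theta'^{y_i'}$. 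Finally, the membership $v_{i+1}'\in U'^{i+2}$ follows from the $U'^{i+2}$-output of Lemma \ref{Lemmavtauvi+1}, together with the observation that $y_i'^{-1}j_{i+1}'y_i'$ differs from $j_{i+1}'$ by an element of $U'^{i+2}$ (again because $y_i'\in U'^1$ commutes with $J'^{i+1}$ modulo $U'^{i+2}$).

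The principal obstacle I expect is the compatibility issue in Step 1: the normalizer of $\theta_i'$ is $\boldsymbol{J}'^{y_i'}$, not $\boldsymbol{J}'$, whereas Lemma \ref{Lemmavtauvi+1} is phrased in terms of the fixed groups $J'^i\subset\boldsymbol{J}'$. Verifying that the conjugation by $y_i'\in U'^1$ is sufficiently harmless, namely that the filtrations $\{U'^k\}$ and $\{J'^k\}$ are stable under such conjugation modulo the next step, is the genuinely delicate technical point. Once this is in hand, the induction runs cleanly, in direct parallel to \cite{anandavardhanan2018galois}, Lemma 4.17, with $\sigma$ replaced by the unitary involution $\tau$ throughout.
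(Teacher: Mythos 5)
Your argument is essentially the paper's own proof: the paper establishes this lemma by exactly the successive-approximation induction you describe, deferring to the proof of \cite{anandavardhanan2018galois}, Lemma 4.17 with $\sigma$ replaced by $\tau$ and with Lemma \ref{Lemmavtauvi+1} in place of their Lemma 4.16; your base case, your definition of $v_{i+1}'$ which makes condition (3) hold by construction, and your verifications of condition (2) and of $v_{i+1}'\in U'^{i+2}$ all match that argument. The only imprecise step is the claim that $v_i'\tau(v_i')\in J'^{i+1}$ on the nose: the relation $\theta_i'\circ\tau=(\theta_i'^{-1})^{v_i'}$ gives that $v_i'\tau(v_i')$ normalizes $\theta_i'$, hence lies in $\boldsymbol{J}(\mathfrak{a}',\beta'^{y_i'})\cap U'^{i+1}=(J'^{i+1})^{y_i'}$, which is contained in $J'^{i+1}U'^{i+2}$ but not in $J'^{i+1}$ in general. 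This is harmless, and is resolved exactly by the point you flag at the end: either apply Lemma \ref{Lemmavtauvi+1} to the conjugate stratum $[\mathfrak{a}',\beta'^{y_i'}]$ (its proof is unchanged, the groups $U'^{k}$ being untouched) and then set $j_{i+1}'=y_i'\,j\,y_i'^{-1}\in J'^{i+1}$, or note that its hypothesis may be weakened to membership in $J'^{i+1}U'^{i+2}$ since the proof only works in $U'^{i+1}/U'^{i+2}$, where conjugation by $y_i'\in U'^{1}$ acts trivially.
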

		
		Let $x'\in U'^{1}$ be the limit of $y_{i}'=x_{0}'x_{1}'...x_{i}'$ and let $h'\in J'^{1}$ be that of $j_{i}'...j_{1}'j_{0}'$ when $i$ tends to infinity. By Lemma \ref{Lemmaxjvh}.(3),
		we have
		$$y_{i}'v_{i}'\tau(y_{i}'^{-1})=j_{i}'y_{i-1}'v_{i-1}'\tau(y_{i-1}'^{-1})=...=j_{i}'...j_{1}'j_{0}'u'.$$
		Passing to the limit, we get $x'\tau(x')^{-1}=h'u'$, which implies that $u'\tau(x')x'^{-1}=h'^{-1}\in J'$. Let $(\mathfrak{a}'',\theta'')=(\mathfrak{a}'^{x'},\theta'^{x'})$, which finishes the proof of Proposition \ref{propmaxwild}.
		
		\subsection{The maximal case}
		
		In this subsection, we generalize Proposition \ref{propmaxwild} to the following situation:
		
		\begin{proposition}\label{Propmaximalonly}
			
			Let $[\mathfrak{a},\beta]$ be a simple stratum in $\mathrm{M}_{n}(F)$ such that $[E:F]=n$, let $\theta\in\mathcal{C}(\mathfrak{a},\beta)$ such that $\theta\in\Theta$ with $\Theta$ a $\sigma$-invariant endo-class and let $\tau$ be a given unitary involution. Then there exist a simple stratum $[\mathfrak{a}'',\beta'']$ and a simple character $\theta''\in\mathcal{C}(\mathfrak{a}'',\beta'')$ such that $(\mathfrak{a}'',\theta'')$  is $G$-conjugate to $(\mathfrak{a},\theta)$ with the property $\tau(\mathfrak{a}'')=\mathfrak{a}''$ and $\theta''\circ\tau=\theta''^{-1}$.
			
		\end{proposition}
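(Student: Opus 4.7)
The plan is to reduce to the totally wildly ramified situation already handled in Proposition~\ref{propmaxwild} by means of the tame lifting machinery recalled in subsection~\ref{subsectionendo}. By Lemma~\ref{LemmaendotauE} applied to the endo-class $\Theta$, I may assume that $\sigma(\,^{t}\beta)=\beta$, so that $\sigma_{t}$ is an involution on $E$. The maximal tamely ramified subextension $T$ of $E/F$ is unique up to $F$-isomorphism, hence stable under $\sigma_{t}$, and $T_{0}:=T^{\sigma_{t}}$ gives a quadratic extension $T/T_{0}$ extending $F/F_{0}$. Writing $t=[T:F]$, the degree $n/t=[E:T]$ is a power of $p$, and in particular odd. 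Let $C$ be the centralizer of $T$ in $\mathrm{M}_{n}(F)$, a central simple $T$-algebra isomorphic to $\mathrm{M}_{n/t}(T)$; set $\mathfrak{c}:=\mathfrak{a}\cap C$, a hereditary order in $C$, and let $\theta_{T}:=\theta|_{H^{1}(\mathfrak{c},\beta)}$ be the interior $T/F$-lift of $\theta$, which is a simple character in $\mathcal{C}(\mathfrak{c},\beta)$ attached to a totally wildly ramified maximal simple stratum $[\mathfrak{c},\beta]$ in $C$.

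Second, I would arrange that $\tau$ preserves $C$, so that it induces a unitary involution $\tau_{T}$ on $\mathrm{GL}_{n/t}(T)$ with respect to $T/T_{0}$. Since $\sigma_{t}$ already stabilises $T$ and hence $C$, the condition $\tau(C)=C$ is equivalent to the hermitian matrix $\varepsilon$ representing $\tau$ normalising $T$ in $\mathrm{M}_{n}(F)$. Using Lemma~\ref{lemGconjtau} to replace $(\mathfrak{a},\theta,\tau)$ by a simultaneous $G$-conjugate, together with Proposition~\ref{PropGOH}, I would modify $\varepsilon$ within its $G$-orbit so as to bring it into the normalizer of $T$. Once this is done, the resulting $\tau_{T}$ corresponds to a hermitian matrix in $\mathrm{GL}_{n/t}(T)$; since $n/t$ is odd, Proposition~\ref{PropGOH} over $T$ together with the observation that $\tau_{c\varepsilon'}=\tau_{\varepsilon'}$ for $c\in T_{0}^{\times}$ implies that every class in $T_{0}^{\times}/\mathrm{N}_{T/T_{0}}(T^{\times})$ is represented by the $(n/t)$-th power of some scalar, so $\tau_{T}$ is $\mathrm{GL}_{n/t}(T)$-conjugate to the standard unitary involution $\tau_{1}$ on $\mathrm{GL}_{n/t}(T)$. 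A further application of Lemma~\ref{lemGconjtau} inside $\mathrm{GL}_{n/t}(T)$ then lets me assume $\tau_{T}=\tau_{1}$.

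Third, I would apply Proposition~\ref{propmaxwild} inside $\mathrm{M}_{n/t}(T)$ to the totally wildly ramified maximal simple stratum $[\mathfrak{c},\beta]$ and the simple character $\theta_{T}$ (which lies in a $\sigma_{t}$-invariant endo-class over $T$ since $\Theta$ is $\sigma$-invariant), obtaining a $\mathrm{GL}_{n/t}(T)$-conjugate $(\mathfrak{c}'',\theta_{T}'')$ with $\tau_{T}(\mathfrak{c}'')=\mathfrak{c}''$ and $\theta_{T}''\circ\tau_{T}=(\theta_{T}'')^{-1}$. Using the injectivity of the maps $\mathfrak{a}\mapsto\mathfrak{a}\cap C$ on hereditary orders normalised by $T^{\times}$, and $\theta\mapsto\theta_{T}$, recalled at the end of subsection~\ref{subsectionendo}, the $\mathrm{GL}_{n/t}(T)$-conjugation lifts uniquely to a $G$-conjugation in $\mathrm{M}_{n}(F)$ carrying $(\mathfrak{a},\theta)$ to the desired $(\mathfrak{a}'',\theta'')$; the $\tau$-symmetry at the $F$-level follows from the $\tau_{T}$-symmetry at the $T$-level via the compatibilities $\tau(\mathfrak{a}'')\cap C=\tau_{T}(\mathfrak{a}''\cap C)$ and $(\theta''\circ\tau)|_{H^{1}(\mathfrak{c}'',\beta)}=\theta_{T}''\circ\tau_{T}$ together with the uniqueness properties above.

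The step I expect to be the main obstacle is the second paragraph: namely, showing that after a simultaneous $G$-conjugation of $(\mathfrak{a},\theta,\tau)$ the hermitian matrix $\varepsilon$ can indeed be brought inside the normalizer of the embedded tame subfield $T$, so that $\tau$ restricts to a genuine involution of $C$ compatible with the quadratic extension $T/T_{0}$. This is the place where the oddness of $n/t$, forced by the wild ramification of $E/T$ and the hypothesis $p\neq 2$, plays an essential role, and it is also the reason why no additional parity hypothesis on $m$ is required in Proposition~\ref{Propmaximalonly} beyond what is automatic from $m=1$.
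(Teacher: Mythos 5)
Your overall reduction — pass to the tame parameter field $T$, use the interior $T/F$-lift, apply the totally wildly ramified case (Proposition \ref{propmaxwild}) inside the centralizer of $T$, and descend via the injectivity of $\mathfrak{a}\mapsto\mathfrak{a}\cap C$ and $\theta\mapsto\theta_{T}$ — is the same skeleton as the paper's proof, but two points in your argument are genuine gaps rather than routine verifications. First, you begin by invoking Lemma \ref{LemmaendotauE} to assume $\sigma(\,^{t}\beta)=\beta$; this is circular, since the paper proves Lemma \ref{LemmaendotauE} (in subsection 5.4, via Proposition \ref{Propstevens}) by \emph{using} Proposition \ref{Propmaximalonly}. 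The non-circular substitute is Lemma \ref{LemmaTFlift}: one gets an involution $\alpha$ of $T$ extending $\sigma$ characterized abstractly by $\Psi^{\alpha}=\Psi$, without any symmetric choice of $\beta$. This also exposes a second unjustified claim in your third paragraph: that $\theta_{T}$ lies in an invariant endo-class ``since $\Theta$ is $\sigma$-invariant'' is not automatic. The group $\mathrm{Aut}_{F}(T)$ acts transitively on the $T/F$-lifts of $\Theta$, and only one extension of $\sigma$ to $T$ fixes $\Psi$; to apply Proposition \ref{propmaxwild} over $T$ you must know that the involution your $\tau$ induces on $T$ is precisely that $\alpha$, which your construction does not arrange.

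Second, the step you yourself flag as the main obstacle — conjugating so that the hermitian matrix $\varepsilon$ normalizes the embedded $T$ and induces the correct involution on it — is exactly the substantive content of the paper's proof, and the tools you cite do not supply it: Proposition \ref{PropGOH} only classifies hermitian matrices up to congruence $x\mapsto gxg^{*}$, and Lemma \ref{lemGconjtau} only transports already-established symmetry under simultaneous conjugation; neither produces, inside a prescribed $G$-orbit, a hermitian representative compatible with $(T,\alpha)$. The paper fills this by Hakim--Murnaghan's embedding (Proposition \ref{Proptameembed}), which realizes $(T,\alpha)$ inside $\mathrm{M}_{t}(F)$ so that $\iota(T_{0})$ consists of hermitian matrices; then $\varepsilon=I_{n}$ and $\varepsilon=\mathrm{diag}(\iota(\epsilon),\dots,\iota(\epsilon))$ with $\epsilon\in T_{0}^{\times}\setminus\mathrm{N}_{T/T_{0}}(T^{\times})$ automatically normalize (indeed centralize in $C'$) the embedded tame field, Lemma \ref{Lemmafieldext} together with the oddness of $n/t$ shows these two matrices represent \emph{both} $G$-orbits, and Skolem--Noether is used to conjugate $(\mathfrak{a},\theta)$ so that its tame parameter field becomes $\iota(T)$ (rather than moving $\varepsilon$ towards $T$). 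Your observation that over $T$ every unitary involution in odd size $n/t$ is equivalent to $\tau_{1}$ is correct and consonant with the paper, but without the embedding step and the endo-class compatibility above, the application of Proposition \ref{propmaxwild} is not yet justified, so the proof as written is incomplete.
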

		
		To prove the proposition, we first study an endo-class $\Theta$ over $F$ being $\sigma$-invariant, that is, $\Theta^{\sigma}=\Theta$. Let $T$ be a tame parameter field of $\Theta$. 
		
		\begin{lemma}\label{LemmaTFlift}
			
			Let $\Theta$ be a $\sigma$-invariant endo-class and let $T/F$ be its tame parameter field. Then given a $T/F$-lift $\Psi$ of $\Theta$, there is a unique involution $\alpha$ of $T$ extending $\sigma$ such that $\Psi^{\alpha}=\Psi$.
			
		\end{lemma}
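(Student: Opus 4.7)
The plan is to exploit the faithful and transitive action of $\mathrm{Aut}_{F}(T)$ on the set of $T/F$-lifts of $\Theta$, after first producing some extension of $\sigma$ to $T$ as an $F_{0}$-automorphism.

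First I would show that $\sigma$ extends to an $F_{0}$-automorphism $\tilde{\sigma}$ of $T$. Fix an embedding $T\hookrightarrow\overline{F_{0}}$ and extend $\sigma$ to an automorphism $\sigma_{1}$ of $\overline{F_{0}}$. If $\theta\in\mathcal{C}(\mathfrak{a},\beta)$ represents $\Theta$ with $T$ the maximal tame subextension of $F[\beta]$, then transporting $\theta$ via $\sigma_{1}$ yields a simple character whose ambient stratum is built over $\sigma_{1}(F[\beta])$ with maximal tame subextension $\sigma_{1}(T)$; its endo-class is $\Theta^{\sigma}=\Theta$, so $\sigma_{1}(T)$ is also a tame parameter field of $\Theta$. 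By the uniqueness of the tame parameter field up to $F$-isomorphism (Subsection 3.3), there exists an $F$-isomorphism $\phi:\sigma_{1}(T)\to T$, and $\tilde{\sigma}:=\phi\circ\sigma_{1}|_{T}$ is the desired extension.

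Given the $T/F$-lift $\Psi$ of $\Theta$, the endo-class $\Psi^{\tilde{\sigma}}\in\mathcal{E}(T)$ has restriction to $\mathcal{E}(F)$ equal to $\Theta^{\sigma}=\Theta$, so $\Psi^{\tilde{\sigma}}$ is again a $T/F$-lift of $\Theta$. By transitivity of the $\mathrm{Aut}_{F}(T)$-action on the set of such lifts, there is $\gamma\in\mathrm{Aut}_{F}(T)$ with $\Psi^{\tilde{\sigma}\gamma}=\Psi$. Set $\alpha:=\tilde{\sigma}\gamma$, so that $\alpha|_{F}=\sigma$ and $\Psi^{\alpha}=\Psi$. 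To see that $\alpha$ is an involution, note $\alpha^{2}|_{F}=\sigma^{2}=\mathrm{id}$, so $\alpha^{2}\in\mathrm{Aut}_{F}(T)$, and $\Psi^{\alpha^{2}}=\Psi$; by the faithfulness of the action, $\alpha^{2}=\mathrm{id}$. Uniqueness follows by the same token: if $\alpha'$ is another involution extending $\sigma$ and fixing $\Psi$, then $\alpha(\alpha')^{-1}\in\mathrm{Aut}_{F}(T)$ fixes $\Psi$, hence equals $\mathrm{id}$.

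The only delicate point is the first step, namely extracting an extension $\tilde{\sigma}$ of $\sigma$ to $T$ within the formalism of endo-classes. It is the $\sigma$-invariance of $\Theta$ that forces $\sigma_{1}(T)$ and $T$ to be $F$-isomorphic, and this is exactly what one needs for the extension to exist; the remainder is a clean application of the faithful transitive action recalled in Subsection 3.3.
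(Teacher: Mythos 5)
Your argument is correct and is in substance the proof the paper itself uses: the paper's proof of this lemma simply refers to the proof of Lemma 4.8 of \cite{anandavardhanan2018galois} (with $\Theta^{\vee}$, $\Psi^{\vee}$ replaced by $\Theta$, $\Psi$), and that argument, like yours, first extends $\sigma$ to $T$ by observing that the $\sigma$-twist of $T$ is a tame parameter field of $\Theta^{\sigma}=\Theta$ and invoking uniqueness of the tame parameter field up to $F$-isomorphism, then corrects the extension by the action of $\mathrm{Aut}_{F}(T)$ on the set of $T/F$-lifts. (Your phrase ``transporting $\theta$ via $\sigma_{1}$'' should be read as the entrywise action of $\sigma_{1}|_{F}=\sigma$ on $\mathrm{M}_{n}(F)$, with $\sigma_{1}(T)$ understood as an $F$-extension via $\sigma_{1}(F)=F$; with that reading this step is exactly right.)

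The one point to state at full strength is the input recalled in subsection \ref{subsectionendo}. At the two places where you invoke \emph{faithfulness} --- to conclude $\alpha^{2}=\mathrm{id}$ and to get uniqueness --- what you actually use is that the stabilizer of the lift $\Psi$ in $\mathrm{Aut}_{F}(T)$ is trivial, i.e.\ that the action on the set of $T/F$-lifts is simply transitive. A faithful transitive action need not be free (think of $S_{3}$ acting on three points), so ``faithful and transitive'' alone does not literally justify these two steps. The result of \cite{bushnell2014effective}, 2.3--2.4 underlying that statement does provide the free action (transitivity together with triviality of the stabilizer of a lift), so your involutivity and uniqueness arguments are complete once the citation is quoted in this stronger form; as written, this is the only step a careful reader would ask you to tighten.
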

		
		\begin{proof}
			
			The proof of Lemma 4.8 in \cite{anandavardhanan2019galois} can be used almost unchanged for our lemma. We only need to consider $\Theta$ instead of $\Theta^{\vee}$ and $\Psi$ instead of $\Psi^{\vee}$.
			
		\end{proof}
		
		Let $\alpha$ be the involution of $T$ given by Lemma \ref{LemmaTFlift}, and let $T_{0}$ be the subfield of $T$ fixed by $\alpha$. Then $T_{0}\cap F=F_{0}$. We write $t=[T:F]=[T_{0}:F_{0}]$. We need the following proposition due to Hakim and Murnaghan:
		
		\begin{proposition}[\cite{hakim2002tame}, Proposition 2.1]\label{Proptameembed}
			
			There exists an embedding $\iota: T\hookrightarrow \mathrm{M}_{t}(F)$ of $F$-algebras such that for $x\in T$, we have $\iota(\alpha(x))=\iota(x)^{*}:=\sigma(\,^{t}\iota(x))$. 
			
		\end{proposition}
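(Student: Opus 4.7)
The plan is to reduce the existence of $\iota$ to the construction of a suitable $F_{0}$-embedding of the fixed subfield $T_{0} = T^{\alpha}$ into symmetric matrices over $F_{0}$, and then to extend scalars from $F_{0}$ to $F$.

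First, I would record the structural decomposition $T \cong T_{0} \otimes_{F_{0}} F$ as $F$-algebras with involution. Since $\alpha|_{F} = \sigma$ is nontrivial, one has $T_{0} \cap F = F_{0}$, so $T_{0}$ and $F$ are linearly disjoint over $F_{0}$ inside $T$; their compositum $T_{0}\cdot F$ has $F$-dimension $t = [T_{0}:F_{0}] = [T:F]$, hence equals $T$. Under this identification, $\alpha$ corresponds to $\mathrm{id}_{T_{0}} \otimes \sigma$, so it will suffice to construct an $F_{0}$-algebra embedding $\iota_{0} : T_{0} \hookrightarrow \mathrm{M}_{t}(F_{0})$ whose image lies in the subspace of symmetric matrices, and then set $\iota := \iota_{0} \otimes_{F_{0}} F$.

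To produce $\iota_{0}$, I would consider on $T_{0}$ the family of symmetric $F_{0}$-bilinear forms $B_{c}(x,y) = \mathrm{tr}_{T_{0}/F_{0}}(cxy)$ for $c \in T_{0}^{\times}$. Commutativity of $T_{0}$ makes every multiplication operator $L_{a}$, $a \in T_{0}$, self-adjoint for $B_{c}$, so in any orthonormal basis of $(T_{0}, B_{c})$ the matrix of $L_{a}$ is symmetric. Granting the existence of such a basis for some $c$, the resulting $\iota_{0}$ satisfies $\iota_{0}(T_{0}) \subseteq \mathrm{Sym}_{t}(F_{0})$. Extending by $F$-linearity and decomposing $F = F_{0} \oplus F_{0}\delta$ with $\sigma(\delta) = -\delta$ (available since $p \neq 2$), any $z = z_{0} + z_{1}\delta \in T$ with $z_{i} \in T_{0}$ satisfies
\begin{equation*}
\iota(\alpha(z)) \;=\; \iota_{0}(z_{0}) - \iota_{0}(z_{1})\delta \;=\; {}^{t}\iota_{0}(z_{0}) - {}^{t}\iota_{0}(z_{1})\delta \;=\; \sigma({}^{t}\iota(z)),
\end{equation*}
where the middle equality uses symmetry of $\iota_{0}(z_{i})$ and the last uses that $\sigma$ fixes the $F_{0}$-rational entries of $\iota_{0}(z_{i})$ while sending $\delta$ to $-\delta$. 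For $x \in T_{0}$ one has $\alpha(x) = x$, so $\iota(x) = \iota(x)^{*}$ is hermitian, giving the ``consequently'' clause.

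The main obstacle is guaranteeing that for some $c$ the form $B_{c}$ admits an orthonormal basis, equivalently, that it is equivalent to $\langle 1,\dots,1 \rangle$ in the Witt theory of symmetric bilinear forms over the local field $F_{0}$. As $c$ varies, the class of $\mathrm{disc}(B_{c})$ in $F_{0}^{\times}/F_{0}^{\times 2}$ moves within the coset $\mathrm{disc}(B_{1}) \cdot N_{T_{0}/F_{0}}(T_{0}^{\times})$. The tameness of $T_{0}/F_{0}$ together with the residue characteristic being different from $2$ ensures that this coset meets $F_{0}^{\times 2}$ and that the Hasse invariant can be simultaneously trivialized; this local arithmetic verification, while elementary, is precisely the technical content of the Hakim--Murnaghan construction, and is where the hypothesis $p \neq 2$ is used in an essential way.
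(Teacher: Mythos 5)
Your reduction to a \emph{symmetric} embedding $\iota_{0}:T_{0}\hookrightarrow\mathrm{M}_{t}(F_{0})$ asks for strictly more than the proposition, and the deferred arithmetic claim on which it rests is false in general, so the argument has a fatal gap rather than a merely unproved technicality. Take $F_{0}$ with $-1\notin F_{0}^{\times2}$ (e.g. $F_{0}=\mathbb{Q}_{3}$), $F/F_{0}$ unramified, and $T_{0}=F_{0}(\sqrt{\varpi_{F_{0}}})$, $T=T_{0}F$; this is a legitimate instance of the setting (tame, $T_{0}\cap F=F_{0}$). Here $\mathrm{disc}(B_{c})\equiv\varpi_{F_{0}}\,\mathrm{N}_{T_{0}/F_{0}}(c)$ modulo squares, and since the norm group of $T_{0}/F_{0}$ is generated modulo squares by $-\varpi_{F_{0}}$, this discriminant only takes the values $\varpi_{F_{0}}$ and $-1$, never the trivial class; hence no scaled trace form $B_{c}$ is isomorphic to $\langle 1,\dots,1\rangle$ and no embedding of $T_{0}$ into symmetric matrices of $\mathrm{M}_{2}(F_{0})$ exists (concretely, a symmetric $X$ with $X^{2}=\varpi_{F_{0}}I_{2}$ would force $a^{2}+b^{2}=\varpi_{F_{0}}$, impossible when $-1$ is a nonsquare because $x^{2}+y^{2}$ then only represents elements of even valuation). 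The proposition is nevertheless true in this case, because hermitian matrices over $F$ give more room than symmetric matrices over $F_{0}$: your claim that tameness and $p\neq2$ let one trivialize discriminant and Hasse invariant simultaneously is exactly where the argument breaks, and it is not what the cited construction does.

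The correct route (the paper does not reprove the statement, it cites \cite{hakim2002tame}, and the standard argument is the following) is to work directly with the hermitian trace form on $T$ viewed as an $F$-vector space: for $c\in T_{0}^{\times}$ set $h_{c}(x,y)=\mathrm{tr}_{T/F}(c\,x\,\alpha(y))$. One checks $h_{c}$ is a nondegenerate $\sigma$-hermitian form and that the $h_{c}$-adjoint of multiplication by $a$ is multiplication by $\alpha(a)$. Replacing $c$ by $cu$ multiplies the discriminant by $\mathrm{N}_{T_{0}/F_{0}}(u)$, and by Lemma \ref{Lemmafieldext} (applied with $L_{0}=T_{0}$) the map $\mathrm{N}_{T_{0}/F_{0}}$ is surjective onto $F_{0}^{\times}/\mathrm{N}_{F/F_{0}}(F^{\times})$; since hermitian forms over $F/F_{0}$ are classified by rank and discriminant alone (Proposition \ref{PropGOH}), there is no Hasse-invariant obstruction and one can choose $c$ so that $h_{c}$ admits an orthonormal basis. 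Writing multiplication operators in such a basis gives $\iota$ with $\iota(\alpha(x))=\sigma(\,^{t}\iota(x))$, and the ``consequently'' clause follows as in your last step. Your tensor decomposition $T\cong T_{0}\otimes_{F_{0}}F$ and the self-adjointness of multiplication operators are fine; the flaw is solely in collapsing the hermitian problem over $F$ to a quadratic-form problem over $F_{0}$.
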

		
		\begin{proof}[Proof of Proposition \ref{Propmaximalonly}]
			
			Let $E=F[\beta]$ and let $T$ be the maximal tamely ramified extension of $F$ in $E$. It is a tame parameter field of the endo-class $\Theta$. The simple character $\theta$ gives $\Psi$, the endo-class of the interior $T/F$-lift of $\Theta$, as we introduced in \S \ref{subsectionendo}. Let $\alpha$ be defined as in Lemma \ref{LemmaTFlift} and let $\iota$ be defined as in Proposition \ref{Proptameembed}. By abuse of notation, we define $$\iota:\mathrm{M}_{n/t}(T)\hookrightarrow\mathrm{M}_{n/t}(\mathrm{M}_{t}(F))=\mathrm{M}_{n}(F)$$ with each block defined by the original $\iota$. First we consider $\tau(x)=\varepsilon\sigma(\,^{t}x^{-1})\varepsilon^{-1}$ for any $x\in G$ with $\varepsilon=I_{n}$ or $\mathrm{diag}(\iota(\epsilon),...,\iota(\epsilon),\iota(\epsilon))$, where $\epsilon\in T_{0}^{\times}-\mathrm{N}_{T/T_{0}}(T^{\times})$. The determinant of the latter matrix is $\mathrm{N}_{T_{0}/F_{0}}(\epsilon)^{n/t}$. Since $$\mathrm{N}_{T_{0}/F_{0}}:T_{0}^{\times}\rightarrow F_{0}^{\times}$$ is a homomorphism which maps $\mathrm{N}_{T/T_{0}}(T^{\times})$ to $\mathrm{N}_{F/F_{0}}(F^{\times})$, it leads to a group homomorphism  $$\mathrm{N}_{T_{0}/F_{0}}:T_{0}^{\times}/\mathrm{N}_{T/T_{0}}(T^{\times})\rightarrow F_{0}^{\times}/\mathrm{N}_{F/F_{0}}(F^{\times})$$ between two groups of order 2. We state and prove the following lemma in general:
			
			\begin{lemma}\label{Lemmafieldext}
				
				Let $F, F_{0}$ be defined as before. Let $L_{0}/F_{0}$ be a finite extension such that $L=L_{0}F$ is a field with $[L:L_{0}]=2$ and $F_{0}=L_{0}\cap F$. Then the group homomorphism $$\mathrm{N}_{L_{0}/F_{0}}:L_{0}^{\times}\rightarrow F_{0}^{\times}$$ induces an isomorphism  $$\mathrm{N}_{L_{0}/F_{0}}:L_{0}^{\times}/\mathrm{N}_{L/L_{0}}(L^{\times})\rightarrow F_{0}^{\times}/\mathrm{N}_{F/F_{0}}(F^{\times})$$
				of groups of order 2.
				
			\end{lemma}
			
			\begin{proof}
				
				We first consider the case where $L_{0}/F_{0}$ is abelian. If on the contrary the induced homomorphism is not an isomorphism, then  $\mathrm{N}_{L_{0}/F_{0}}(L_{0}^{\times})\subset\mathrm{N}_{F/F_{0}}(F^{\times})$ which means that $F$ is contained in $L_{0}$ by the local class field theory (\cite{serre2013local}, Chapter 14, Theorem 1), which is absurd.
				
				When $L_{0}/F_{0}$ is Galois, we may write $F_{0}=L^{0}_{0}\subsetneq...\subsetneq L^{r}_{0}=L_{0}$, such that $L^{i+1}_{0}/L^{i}_{0}$ is abelian for $i=0,...,r-1$ (\cite{serre2013local}, Chapter 4, Proposition 7). We write $L^{i}=L^{i}_{0}F$. Thus it is easy to show that $L^{i}/L^{i}_{0}$ is quadratic, $L^{i}_{0}=L^{i+1}_{0}\cap L^{i}$ and $L^{i+1}_{0}L^{i}=L^{i+1}$ for $i=0,...,r-1$. Using the abelian case,  $$\mathrm{N}_{L_{0}^{i+1}/L_{0}^{i}}:L_{0}^{i+1\times}/\mathrm{N}_{L^{i+1}/L_{0}^{i+1}}(L^{i+1\times})\rightarrow L_{0}^{i\times}/\mathrm{N}_{L^{i}/L^{i}_{0}}(L^{i\times})$$
				is an isomorphism for $i=0,1,...,r-1$. Composing them together, we finish the proof.
				
				When $L_{0}/F_{0}$ is separable, we write $L_{0}'$ the normal closure of $L_{0}$ over $F_{0}$. Thus $L_{0}'$ contains $L_{0}$ and $L_{0}'/F_{0}$ is a finite Galois extension. We write $L'=L_{0}'F$. Using the Galois case,  $$\mathrm{N}_{L_{0}'/F_{0}}:L_{0}'^{\times}/\mathrm{N}_{L'/L_{0}'}(L'^{\times})\rightarrow F_{0}^{\times}/\mathrm{N}_{F/F_{0}}(F^{\times})$$
				is an isomorphism. Since $\mathrm{N}_{L_{0}'/F_{0}}(L_{0}'^{\times})\subset\mathrm{N}_{L_{0}/F_{0}}(L_{0}^{\times})$,  $$\mathrm{N}_{L_{0}/F_{0}}:L_{0}^{\times}/\mathrm{N}_{L/L_{0}}(L^{\times})\rightarrow F_{0}^{\times}/\mathrm{N}_{F/F_{0}}(F^{\times})$$
				is also an isomorphism.
				
				In the characteristic $p$ case in general, we write $L_{0}^{sep}$ the maximal separable subextension of $F_{0}$ contained in $L_{0}$, thus $L_{0}/L_{0}^{sep}$ is purely inseparable. Thus $\mathrm{N}_{L_{0}/L_{0}^{sep}}(x)=x^{p^{[L_{0}:L_{0}^{sep}]}}$ for any $x\in L_{0}^{\times}$. Since $p\neq 2$ and $L_{0}^{\times}/\mathrm{N}_{L/L_{0}}(L^{\times})$ is of order 2, 
				$$\mathrm{N}_{L_{0}/L_{0}^{sep}}:L_{0}^{\times}/\mathrm{N}_{L/L_{0}}(L^{\times})\rightarrow L_{0}^{sep\times}/\mathrm{N}_{L^{sep}/L_{0}^{sep}}(L^{sep\times})$$
				is an isomorphism, where $L^{sep}:=LL_{0}^{sep}$. So we come back to the separable case which finishes the proof.
				
			\end{proof}
			
			Using Lemma \ref{Lemmafieldext} for $L_{0}=T_{0}$, the homomorphism above is actually an isomorphism. Since $n/t$ is odd and $\epsilon\in T_{0}^{\times}-\mathrm{N}_{T/T_{0}}(T^{\times})$, we have $\mathrm{det}(\varepsilon)=\mathrm{N}_{T_{0}/F_{0}}(\epsilon)^{n/t}\in F_{0}^{\times}-\mathrm{N}_{F/F_{0}}(F^{\times})$. So indeed these two involutions represent both of the $G$-classes of hermitian matrices. Thus using Lemma \ref{lemGconjtau}, we may from now on assume $\tau$ to be the two unitary involutions we mentioned above. Furthermore, $\iota(T)^{\times}$ is normalized by $\tau$ from the exact construction of $\tau$ and Proposition \ref{Proptameembed}, where we regard $T$ as an $F$-subalgebra of $\mathrm{M}_{n/t}(T)$ given by the diagonal embedding.
			
			Since $T$ and $\iota(T)$ are isomorphic as $F$-subalgebras contained in $\mathrm{M}_{n}(F)$, by the Skolem-Noether theorem, there exists $g\in G$ such that $\iota(T)=T^{g}$. Thus, if we write $[\mathfrak{a}',\beta']=[\mathfrak{a}^{g},\beta^{g}]$, $\theta'=\theta^{g}$ and $E'=F[\beta']$, then $\theta'\in\Theta$ such that its tame parameter field equals $\iota(T)$. Since $\tau$ normalizes $\iota(T)^{\times}$, we deduce that $\theta'\circ\tau$ and $\theta'^{-1}$ have the same parameter field $\iota(T)$. If we write $\Psi'$ the endo-class of the interior $\iota(T)/F$-lift corresponding to $\theta'$, and if we choose $\alpha'=\iota|_{T}\circ\alpha\circ\iota|_{\iota(T)}^{-1}$, then we have $\Psi'^{\alpha'}=\Psi'$.
			
			Let $C'=\mathrm{M}_{n/t}(\iota(T))$ denote the centralizer of $\iota(T)$ in $\mathrm{M}_{n}(F)$. For $c\in \mathrm{M}_{n/t}(T)$, we have
			$$\tau(\iota(c))=\varepsilon\sigma(\,^{t}\iota(c)^{-1})\varepsilon^{-1}=\varepsilon(\,^{t_{C'}}\iota(\alpha(c))^{-1})\varepsilon^{-1}=\varepsilon(\alpha'(\,^{t_{C'}}\iota(c))^{-1})\varepsilon^{-1}=\tau'(\iota(c)),$$
			where we denote by $t_{C'}$ the transpose on $C'=\mathrm{M}_{n/t}(\iota(T))$ and $\tau'(c')=\varepsilon(\alpha'(\,^{t_{C'}}c'^{-1}))\varepsilon^{-1}$ for any $c'\in C'^{\times}$ . Thus $\tau'$, the restriction of $\tau$ to $C'^{\times}$, is the unitary involution $\tau_{1}$ on $C'^{\times}=\mathrm{GL}_{n/t}(\iota(T))$ with respect to the Galois involution $\alpha'\in\mathrm{Gal}(\iota(T)/F)$. The intersection $\mathfrak{c}'=\mathfrak{a}'\cap C'$ gives rise to a simple stratum $[\mathfrak{c}',\beta']$. The restriction of $\theta'$ to $H^{1}(\mathfrak{c}',\beta')$, denoted by $\theta_{\iota(T)}'$, is a simple character associated to this simple stratum with endo-class $\Psi'$. Since $E'/\iota(T)$ is totally wildly ramified, using Proposition \ref{propmaxwild} with $G$, $\theta$, $\Theta$, $\sigma$ and $\tau$ replaced by $C'^{\times}$, $\theta_{\iota(T)}'$, $\Psi'$, $\alpha'$ and $\tau'$ respectively, there exists $c'\in C'^{\times}$ such that $\tau'(\mathfrak{c}'^{c'})=\mathfrak{c}'^{c'}$ and $\theta_{\iota(T)}'^{c'}\circ\tau'=(\theta_{\iota(T)}'^{c'})^{-1}$.
			
			By the injectivity of $\mathfrak{a}\mapsto \mathfrak{a}\cap C'$ between sets of hereditary orders as mentioned in \S \ref{subsectionendo}, $\mathfrak{a}'':=\mathfrak{a'}^{c'}$ is $\tau$-stable. Moreover if we write $\theta''=\theta'^{c'}$, then from our construction of $\tau$ and the definition of $\iota(T)/F$-lift, the simple characters $$(\theta''\circ\tau)_{\iota(T)}=\theta''\circ\tau|_{H^{1}(\tau(\mathfrak{c}'),\tau(\beta'))}=\theta''\circ\tau'|_{H^{1}(\tau(\mathfrak{c}'),\tau(\beta'))}=\theta_{\iota(T)}''\circ\tau'$$
			and
			$$(\theta''^{-1})_{\iota(T)}=\theta_{\iota(T)}''^{-1}$$ are equal. By the last paragraph of \S \ref{subsectionendo}, the simple character $\theta''$ satisfies the property $\theta''\circ\tau=\theta''^{-1}$.
			
		\end{proof}
		
		\subsection{The general case}
		
		In this subsection, we finish the proof of Lemma \ref{LemmaendotauE} and Theorem \ref{Thmendotautheta}. First of all, we recall the following result of Stevens: 
		
		\begin{proposition}[\cite{stevens2001intertwining}, Theorem 6.3]\label{Propstevens}
			
			Let $[\mathfrak{a},\beta]$ be a simple stratum in $\mathrm{M}_{n}(F)$ with $\sigma_{t}(\mathfrak{a})=\mathfrak{a}$. Suppose that there exists a simple character $\theta\in\mathcal{C}(\mathfrak{a},\beta)$ such that $H^{1}(\mathfrak{a},\beta)$ is $\sigma_{t}$-stable and $\theta\circ\sigma_{t}=\theta$. Then there exists a simple stratum $[\mathfrak{a},\gamma]$ such that $\theta\in\mathcal{C}(\mathfrak{a},\gamma)$ and $\sigma_{t}(\gamma)=\gamma$.
			
		\end{proposition}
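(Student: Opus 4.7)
The plan is to adapt Stevens' argument (\cite{stevens2001intertwining}, Theorem 6.3) from his classical-groups setting to the anti-involution $\sigma_{t}$ of $\mathrm{M}_{n}(F)$. The starting input is functoriality: since $\sigma_{t}$ is an anti-involution of $\mathrm{M}_{n}(F)$ stabilizing $\mathfrak{a}$, the pair $[\mathfrak{a}, \sigma_{t}(\beta)]$ is again a simple stratum of the same degree and level, and applying $\sigma_{t}$ to the Bushnell-Kutzko construction of simple characters gives a bijection $\mathcal{C}(\mathfrak{a},\beta) \to \mathcal{C}(\mathfrak{a}, \sigma_{t}(\beta))$ via $\chi \mapsto \chi \circ \sigma_{t}$. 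Combined with the hypotheses that $H^{1}(\mathfrak{a},\beta)$ is $\sigma_{t}$-stable and $\theta \circ \sigma_{t} = \theta$, this yields $\theta \in \mathcal{C}(\mathfrak{a}, \sigma_{t}(\beta))$, so both $\beta$ and $\sigma_{t}(\beta)$ realize $\theta$ on the same hereditary order $\mathfrak{a}$.

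Next, I would invoke the equivalence-of-strata theory in \cite{bushnell129admissible}, Sections 2.4 and 3.5: two simple strata on a common $\mathfrak{a}$ whose sets of simple characters share an element must be equivalent in the technical sense, which forces $\beta - \sigma_{t}(\beta)$ to lie in a controlled submodule of $\mathfrak{a}$ (depending on the level $-k = v_{\mathfrak{a}}(\beta)$) and forces their derived strata to match. The candidate $\sigma_{t}$-fixed element is the symmetric average $\gamma := \tfrac{1}{2}(\beta + \sigma_{t}(\beta))$, which is well-defined because $p \neq 2$ and manifestly satisfies $\sigma_{t}(\gamma) = \gamma$. The task then reduces to showing that $[\mathfrak{a}, \gamma]$ is a simple stratum and that $\theta \in \mathcal{C}(\mathfrak{a}, \gamma)$. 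Since $\gamma - \beta = \tfrac{1}{2}(\sigma_{t}(\beta) - \beta)$ lies in the same controlled submodule, one expects equivalence with $[\mathfrak{a}, \beta]$ to be preserved, so that the critical-exponent and minimality conditions remain valid and the set of associated simple characters is unchanged.

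The main obstacle is ensuring that $F[\gamma]$ remains a field of the same degree as $F[\beta]$ and that the derived stratum of $\gamma$ is genuinely simple. The one-step average may fail this test. To overcome this, the plan is to iterate: descend through the natural filtration $\mathfrak{p}_{\mathfrak{a}}^{-k} \supset \mathfrak{p}_{\mathfrak{a}}^{-k+1} \supset \cdots$, at each layer decomposing the correction term into its $\sigma_{t}$-symmetric and antisymmetric parts (using $p \neq 2$), and keeping only the symmetric part while checking inductively that the new stratum is still simple and still has $\theta$ among its simple characters. This produces a Cauchy sequence $\gamma_{0} = \gamma, \gamma_{1}, \gamma_{2}, \ldots$ in the compact $\mathfrak{o}_{F}$-module $\mathfrak{p}_{\mathfrak{a}}^{-k}$ whose limit is $\sigma_{t}$-fixed and defines the desired simple stratum. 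An analogue of \cite{stevens2001intertwining}, Lemma 1.7, transcribed with the anti-involution $\sigma_{t}$ in place of an orthogonal or symplectic involution, provides the precise inductive step that powers this descent.
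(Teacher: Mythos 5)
Your proposal is essentially the paper's own route: the paper proves this by observing that Stevens' proof of \cite{stevens2001intertwining}, Theorem 6.3 goes through verbatim after the dictionary $\overline{x}\mapsto -\sigma_{t}(x)$, $\sigma\mapsto\sigma_{t}$, and ``skew simple stratum'' $\mapsto$ ``$\sigma_{t}$-invariant simple stratum'' (so skew elements become $\sigma_{t}$-fixed ones), which is exactly the transcription you describe, with your iterative symmetric/antisymmetric descent (valid since $p\neq 2$) playing the role of Stevens' inductive approximation argument rather than the naive one-step average. So the approach matches; the paper simply cites Stevens' argument instead of re-running the induction.
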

		
		\begin{proof}
			
			The original proof of \cite{stevens2001intertwining}, Theorem 6.3 can be modified as follows. For any $x\in \mathrm{M}_{n}(F)$, we use $-\sigma_{t}(x)$ to replace $\overline{x}$; we use $\sigma_{t}$ to replace $\sigma$; for $[\mathfrak{a},\beta]$ a simple stratum, we say that it is \emph{$\sigma_{t}$-invariant} if $\sigma_{t}(\mathfrak{a})=\mathfrak{a}$, and $\sigma_{t}(\beta)=\beta$ and we use this concept to replace the concept \emph{skew simple stratum} in the original proof. With these replacements, the original proof can be used in our case without difficulty (see also the last paragraph of \emph{ibid.}).
			
		\end{proof}

		We choose $[\mathfrak{a}_{0},\beta_{0}]$ to be a maximal simple stratum in $\mathrm{M}_{d}(F)$ 
		and $\theta_{0}\in\mathcal{C}(\mathfrak{a}_{0},\beta_{0})$ such that $\theta_{0}\in\Theta$. 
		By Proposition \ref{Propmaximalonly}, there are a maximal simple stratum $[\mathfrak{a}_{0}',\beta_{0}']$ and a simple character $\theta_{0}'\in\mathcal{C}(\mathfrak{a}_{0}',\beta_{0}')$ which is $\mathrm{GL}_{d}(F)$-conjugate to $\theta_{0}$, such that:
		
		(1) the order $\mathfrak{a}_{0}'$ is $\tau_{1}$-stable;
		
		(2) the group $H^{1}(\mathfrak{a}_{0}',\beta_{0}')$ is $\tau_{1}$-stable and $\theta_{0}'\circ\tau_{1}=\theta_{0}'^{-1}$;
		
		Furthermore, using Proposition \ref{Propstevens} we may assume that:
		
		(3) $\sigma_{t}(\beta_{0}')=\beta_{0}'$.
		
		We embed $\mathrm{M}_{d}(F)$ diagonally into the $F$-algebra $\mathrm{M}_{n}(F)$. This gives an $F$-algebra homomorphism $\iota':F[\beta_{0}']\hookrightarrow\mathrm{M}_{n}(F)$. Write $\beta'=\iota'(\beta_{0}')=\beta_{0}'\otimes...\otimes\beta_{0}'$ and $E'=F[\beta']$. The centralizer $B'$ of $E'$ in $\mathrm{M}_{n}(F)$ is naturally identified with $\mathrm{M}_{m}(E')$. We regard $\sigma_{t}$ as an involution on $E'$ extending $\sigma$ and we write $E_{0}'=E'^{\sigma_{t}}$. Let $\mathfrak{b}'$ be a maximal standard hereditary order in $B'$ which may be identified with $\mathrm{M}_{m}(\mathfrak{o}_{E'})$, and let $\mathfrak{a}'=\mathrm{M}_{m}(\mathfrak{a}_{0}')$ be the unique hereditary order in $\mathrm{M}_{n}(F)$ normalized by $E'^{\times}$ such that $\mathfrak{a}'\cap B'=\mathfrak{b}'$. Then the simple stratum $[\mathfrak{a}',\beta']$ satisfies the requirement of Lemma \ref{LemmaendotauE}, finishing its proof.
		
		Now we focus on the proof of Theorem \ref{Thmendotautheta}. By Lemma \ref{lemGconjtau}, we may change $\tau$ up to $G$-action on its corresponding hermitian matrix which doesn't change the content of the theorem. So if $\varepsilon$ is in the same $G$-class as $I_{n}$, we may simply choose $\tau=\tau_{1}$, where $\tau_{1}(x)=\sigma(\,^{t}x^{-1})$ for any $x\in G$. If not, we fix an $\epsilon\in E_{0}'^{\times}-\mathrm{N}_{E'/E_{0}'}(E'^{\times})$. Regarding $\epsilon$ as an element in $\mathrm{M}_{d}(F)$, we have $\mathrm{det}(\epsilon)=\mathrm{N}_{E_{0}'/F_{0}}(\epsilon)$.
		Since
		$$\mathrm{N}_{E_{0}'/F_{0}}:E_{0}'^{\times}\rightarrow F_{0}^{\times}$$
		is a homomorphism which maps $\mathrm{N}_{E'/E_{0}'}(E'^{\times})$ to $\mathrm{N}_{F/F_{0}}(F^{\times})$, by Lemma \ref{Lemmafieldext} with $L_{0}=E_{0}'$, it leads to an isomorphism  $$\mathrm{N}_{E_{0}'/F_{0}}:E_{0}'^{\times}/\mathrm{N}_{E'/E_{0}'}(E'^{\times})\rightarrow F_{0}^{\times}/\mathrm{N}_{F/F_{0}}(F^{\times})$$ of the two groups of order 2. Thus $\mathrm{N}_{E_{0}'/F_{0}}(\epsilon)\in F_{0}^{\times}-\mathrm{N}_{F/F_{0}}(F^{\times})$. If $E'/E_{0}'$ is unramified, we write $\varepsilon=\mathrm{diag}(\epsilon,...,\epsilon)$. Then $\mathrm{det}(\varepsilon)=\mathrm{N}_{E_{0}'/F_{0}}(\epsilon)^{m}\in F_{0}^{\times}-\mathrm{N}_{F/F_{0}}(F^{\times})$, since $F_{0}^{\times}/\mathrm{N}_{F/F_{0}}(F^{\times})$ is a group of order 2, and $m$ is odd from the condition of the theorem. If $E'/E_{0}'$ is ramified, we may assume further that $\epsilon\in\mathfrak{o}_{E_{0}'}^{\times}$. We write $\varepsilon=\mathrm{diag}(I_{d},...,I_{d},\epsilon)$ and we have $\mathrm{det}(\varepsilon)=\mathrm{N}_{E_{0}'/F_{0}}(\epsilon)\in F_{0}^{\times}-\mathrm{N}_{F/F_{0}}(F^{\times})$. For both cases, $\tau_{\varepsilon}$ is a unitary involution whose corresponding hermitian matrix is not in the same $G$-class as $I_{n}$. So from now on, we only consider the three unitary involutions above. From our assumption of $\tau$, the restriction of $\tau$ to $\mathrm{GL}_{m}(E')$ is also a unitary involution $\tau'=\tau_{1}$ or $\tau_{\varepsilon}$ with $\varepsilon=\mathrm{diag}(1,...,1,\epsilon)$. In particular, since $\epsilon$ is an element in $E'$, we know that $\varepsilon$ commutes with elements in $E'$ and we have $\tau(\beta')=\beta'^{-1}$.
		
		Since $\mathfrak{a}_{0}'$ is $\tau_{1}$-stable and $\mathfrak{b}'$ is $\tau'$-stable, from our assumption of $\tau$ we deduce that $\mathfrak{a}'$ is $\tau$-stable, or by definition $\varepsilon\sigma_{t}(\mathfrak{a}')\varepsilon^{-1}=\mathfrak{a}'$. Since $\sigma_{t}(\beta')=\beta'$, by direct calculation we have
		$$\tau(H^{1}(\mathfrak{a}',\beta'))=\varepsilon H^{1}(\sigma_{t}(\mathfrak{a}'),\sigma_{t}(\beta'))^{-1}\varepsilon^{-1}=H^{1}(\sigma_{t}(\mathfrak{a}')^{\varepsilon^{-1}},\beta'^{\varepsilon^{-1}})=H^{1}(\mathfrak{a}',\beta'^{\varepsilon^{-1}})=H^{1}(\mathfrak{a}',\beta').$$
		Let $M$ be the standard Levi subgroup of $G$ isomorphic to $\mathrm{GL}_{d}(F)\times...\times\mathrm{GL}_{d}(F)$, let $P$ be the standard parabolic subgroup of $G$ generated by $M$ and upper triangular matrices, and let $N$ be its unipotent radical. Let $N^{-}$ be the unipotent radical of the parabolic subgroup opposite to $P$ with respect to $M$. By \cite{secherre2008representations}, Th\'eor\`eme 2.17, we have
		\begin{align}\label{eqHNMN}
			H^{1}(\mathfrak{a}',\beta')&=(H^{1}(\mathfrak{a}',\beta')\cap N^{-})\cdot(H^{1}(\mathfrak{a}',\beta')\cap M)\cdot(H^{1}(\mathfrak{a}',\beta')\cap N),\\
			H^{1}(\mathfrak{a}',\beta')\cap M&=H^{1}(\mathfrak{a}_{0}',\beta'_{0})\times...\times H^{1}(\mathfrak{a}_{0}',\beta'_{0}).
		\end{align}
		Let $\theta'\in\mathcal{C}(\mathfrak{a}',\beta')$ be the transfer of $\theta_{0}'$. By \emph{loc. cit.}, the character $\theta'$ is trivial on $H^{1}(\mathfrak{a}',\beta')\cap N^{-}$ and $H^{1}(\mathfrak{a}',\beta')\cap N$, and the restriction of $\theta'$ to $H^{1}(\mathfrak{a}',\beta')\cap M$ equals $\theta_{0}'\otimes...\otimes\theta_{0}'$. We have $$\theta'\circ\tau|_{H^{1}(\mathfrak{a}',\beta')\cap N^{-}}=\theta'\circ\tau|_{H^{1}(\mathfrak{a}',\beta')\cap N}=\theta'^{-1}|_{H^{1}(\mathfrak{a}',\beta')\cap N^{-}}=\theta'^{-1}|_{H^{1}(\mathfrak{a}',\beta')\cap N}=1$$
		and
		$$\theta'\circ\tau|_{H^{1}(\mathfrak{a}',\beta')\cap M} =\theta_{0}'\circ\tau_{1}\otimes...\otimes\theta_{0}'\circ\tau_{1}=\theta_{0}'^{-1}\otimes...\otimes\theta_{0}'^{-1}=\theta'^{-1}|_{H^{1}(\mathfrak{a}',\beta')\cap M}$$
		for $\tau=\tau_{1}$ or $\tau_{\varepsilon}$ with $\varepsilon=\mathrm{diag}(\epsilon,...,\epsilon)$ or $\mathrm{diag}(1,...,1,\epsilon)$,
		since $\epsilon\in F[\beta_{0}']^{\times}$ normalizes $\theta_{0}'$.
		Thus by equation (\ref{eqHNMN}), we have $\theta'\circ\tau=\theta'^{-1}$.
		
		\begin{remark}\label{remindchoose}
			
			From the proof of Theorem \ref{Thmendotautheta}, we observe that if $\tau$ is chosen as one of the three unitary involutions mentioned in the proof, then we may choose the same simple stratum and simple character satisfying the conclusion of the theorem.
			
		\end{remark}
		
		\begin{remark}
			
			We give a counter-example to show that the condition in Theorem \ref{Thmendotautheta} is necessary. Let $n=2$, let $F/F_{0}$ be unramified, let  $\Theta$ be trivial and let $\varepsilon=\mathrm{diag}(1,\varpi_{F_{0}})$. Then $d=1$, $m=n=2$, $E=F$ and $E_{0}=F_{0}$.  If the theorem is true, then $\mathfrak{a}=\mathrm{M}_{2}(\mathfrak{o}_{F})^{g}$ for some $g\in\mathrm{GL}_{2}(F)$ and $\tau(\mathfrak{a})=\mathfrak{a}$. By direct calculation $\sigma(\,^{t}g^{-1})\varepsilon^{-1} g^{-1}$ normalizes $\mathrm{M}_{2}(\mathfrak{o}_{F})$, which means that $\sigma(\,^{t}g^{-1})\varepsilon^{-1}g^{-1}\in F^{\times}\mathrm{GL}_{2}(\mathfrak{o}_{F})$. It is impossible since $\mathrm{det}(\sigma(\,^{t}g^{-1})\varepsilon^{-1}g^{-1})\in \varpi_{F_{0}}\mathrm{N}_{F/F_{0}}(F^{\times})$, while $\mathrm{det}(F^{\times}\mathrm{GL}_{2}(\mathfrak{o}_{F}))\subset\mathrm{N}_{F/F_{0}}(F^{\times})$.
			
		\end{remark}
		
		\section{The distinguished type theorem}
		
		Let $\pi$ be a cuspidal representation of $G$ such that $\pi^{\sigma}\cong\pi$. From the statements and proofs of Theorem \ref{Thmtautheta}, \ref{Thmselfdualtype} and \ref{Thmendotautheta}, we may assume the following conditions:
		
		\begin{remark}\label{rempicond}
			
			(1) For $\tau=\tau_{1}$, there exist a simple stratum $[\mathfrak{a},\beta]$ and a simple character $\theta\in\mathcal{C}(\mathfrak{a},\beta)$ contained in $\pi$ such that $\tau(\mathfrak{a})=\mathfrak{a}$, $\tau(H^{1}(\mathfrak{a},\beta))=H^{1}(\mathfrak{a},\beta)$, $\theta\circ \tau=\theta^{-1}$ and $\tau(\beta)=\beta^{-1}$, where $\tau_{1}(x):=\sigma(\,^{t}x^{-1})$ for any $x\in G$;
			
			(2) For $\tau=\tau_{1}$, there exists a simple type $(\boldsymbol{J},\Lambda)$ containing $\theta$ and contained in $\pi$ such that $\tau(\boldsymbol{J})=\boldsymbol{J}$ and $\Lambda^\tau\cong\Lambda^{\vee}$;
			
			(3) $\sigma_{t}$ is an involution on $E=F[\beta]$, whose restriction to $F$ equals $\sigma$. So by abuse of notation, we identify $\sigma$ with $\sigma_{t}$. Let $E_{0}=E^{\sigma}$.\textbf{ We assume further in this section that if $E/E_{0}$ is unramified, then $m$ is odd\footnote{ Although this condition seems a little bit annoying, finally in section \ref{sectionunram} we find out that this condition is automatically satisfied for $\pi$ a $\sigma$-invariant supercuspidal representation.}.};
			
			(4) Write $\tau(x)=\varepsilon\sigma(\,^{t}x^{-1})\varepsilon^{-1}$ for any $x\in G$ such that: when $E/E_{0}$ is unramified, we assume $\varepsilon=I_{n}$ or $\mathrm{diag}(\varpi_{E},...,\varpi_{E})\in\mathrm{GL}_{m}(E)\hookrightarrow G$; when $E/E_{0}$ is ramified, we assume $\varepsilon=I_{n}$ or $\mathrm{diag}(1,...,1,\epsilon)\in\mathrm{GL}_{m}(E)\hookrightarrow G$ with $\epsilon\in\mathfrak{o}_{E_{0}}^{\times}-\mathrm{N}_{E/E_{0}}(\mathfrak{o}_{E}^{\times})$. By Remark \ref{remindchoose}, we assume further that for these three unitary involutions, condition (1) and (2) are also satisfied. \textbf{From now on until the end of this section, we assume $\varepsilon$ to be one of these three hermitian matrices and $\tau$ to be one of these three corresponding involutions.}
			
			(5) the element $\beta$ has the block diagonal form:
			$$\beta=\mathrm{diag}(\beta_{0},...,\beta_{0})\in\mathrm{M}_{m}(\mathrm{M}_{d}(F))=\mathrm{M}_{n}(F)$$
			for some $\beta_{0}\in\mathrm{M}_{d}(F)$, where $d$ is the degree of $\beta$ over $F$ and $n=md$. The centralizer $B$ of $E$ in $\mathrm{M}_{n}(F)$ is identified with $\mathrm{M}_{m}(E)$. If we regard $\tau$ as the restriction of the original involution to $B^{\times}$, then it is a unitary involution with respect to $B^{\times}=\mathrm{GL}_{m}(E)$, $E/E_{0}$ and $\sigma\in\mathrm{Gal}(E/E_{0})$;
			
			(6) the order $\mathfrak{b}=\mathfrak{a}\cap B$ is the standard maximal order $\mathrm{M}_{m}(\mathfrak{o}_{E})$ of $\mathrm{M}_{m}(E)$. Thus if we write $\mathfrak{a}_{0}$ as the hereditary order of $\mathrm{M}_{d}(F)$ normalized by $E$, then $\mathfrak{a}$ is identified with $\mathrm{M}_{m}(\mathfrak{a}_{0})$;
			
			(7) $\varpi_{E}$ is a uniformizer of $E$ such that:
			$$\sigma(\varpi_{E})=
			\begin{cases}
				\varpi_{E} & \text{if}\ E\ \text{is unramified over}\ E_{0}; \\
				-\varpi_{E} & \text{if}\ E\ \text{is ramified over}\ E_{0}.
			\end{cases}$$
			
		\end{remark}
		
		Now we state the main theorem of this section:
		
		\begin{theorem}[distinguished type theorem]\label{thmtype}
			
			For $\pi$ a $\sigma$-invariant cuspidal representation, it is $G^{\tau}$-distinguished if and only if it contains a $\tau$-selfdual simple type $(\boldsymbol{J},\Lambda)$ such that $\mathrm{Hom}_{\boldsymbol{J}\cap G^{\tau}}(\Lambda,1)\neq 0$.
			
		\end{theorem}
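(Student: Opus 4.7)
The \emph{if} direction is immediate from Frobenius reciprocity and Mackey's formula. By Proposition \ref{PropType}, $\pi\cong\mathrm{c}$-$\mathrm{Ind}_{\boldsymbol{J}}^{G}\Lambda$, so
$$\mathrm{Hom}_{G^{\tau}}(\pi,1)\cong\prod_{g\in\boldsymbol{J}\backslash G/G^{\tau}}\mathrm{Hom}_{\boldsymbol{J}^{g}\cap G^{\tau}}(\Lambda^{g},1),$$
and the summand for $g=1$ is nonzero by hypothesis; hence $\pi$ is $G^{\tau}$-distinguished.

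For the \emph{only if} direction, the plan is to begin with a $\tau$-selfdual simple type $(\boldsymbol{J},\Lambda)$ contained in $\pi$, furnished by Theorem \ref{Thmselfdualtype} together with the normalizations collected in Remark \ref{rempicond}. Distinction of $\pi$ then forces the existence, through the Mackey decomposition above, of some $g\in G$ with $\mathrm{Hom}_{\boldsymbol{J}^{g}\cap G^{\tau}}(\Lambda^{g},1)\neq 0$. The goal is to adjust $g$ inside its double coset $\boldsymbol{J}gG^{\tau}$ so that the translate $(\boldsymbol{J}^{g},\Lambda^{g})$ is once again $\tau$-selfdual; this translate is then the simple type the theorem requires.

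The central device is the element $\gamma=\tau(g)g^{-1}$. A short computation using $\sigma_{t}(\varepsilon)=\varepsilon$ gives $\tau(\gamma)=\gamma^{-1}$, and transporting structure through $g$ identifies the nonzero Hom space with $\mathrm{Hom}_{\boldsymbol{J}\cap G^{\delta}}(\Lambda,1)$ for the unitary involution $\delta=\tau_{\varepsilon'}$ attached to $\varepsilon'=g\varepsilon\sigma_{t}(g)$. By Lemma \ref{LemmaunitaryJJ0} this Hom space equals $\mathrm{Hom}_{J\cap G^{\delta}}(\Lambda|_{J},1)$, and its nonvanishing forces the simple character $\theta$ contained in $\Lambda$ to be trivial on $H^{1}\cap G^{\delta}$. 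By Proposition \ref{PropJandJ1}.(5), $\gamma$ must then intertwine $\theta$, and hence $\gamma\in J^{1}B^{\times}J^{1}$.

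The main obstacle is to sharpen this containment to $\gamma\in\boldsymbol{J}$; once that is achieved, $\tau(\boldsymbol{J}^{g})=\boldsymbol{J}^{g}$ and $(\boldsymbol{J}^{g},\Lambda^{g})$ is automatically $\tau$-selfdual. Writing $\gamma=j_{1}bj_{2}$ with $j_{1},j_{2}\in J^{1}$ and $b\in B^{\times}$, the constraint $\tau(\gamma)=\gamma^{-1}$ together with the $\tau$-stability of $\boldsymbol{J}$ permits the $J^{1}$-factors to be absorbed by modifying $g$ on the left by $\boldsymbol{J}$ and on the right by $G^{\tau}$, reducing to the case $\gamma\in B^{\times}=\mathrm{GL}_{m}(E)$. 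The remaining problem is then governed by the hermitian classification of Propositions \ref{PropGOH} and \ref{PropJOH}: at most two $G$-orbits of hermitian matrices can survive (matching the at-most-two distinguished double cosets flagged in the introduction), and the parity hypothesis on $m$ from Remark \ref{rempicond}(3) in the unramified case is precisely what allows $\gamma$ to be adjusted modulo $\mathfrak{b}^{\times}\subset J$ into $\boldsymbol{J}$. This concluding case analysis, bridging the local structure of $\boldsymbol{J}$ with the hermitian classification in $\mathrm{GL}_{m}(E)$, is the most delicate step of the argument; once it is completed, $(\boldsymbol{J}^{g},\Lambda^{g})$ is $\tau$-selfdual and inherits the required Hom-nonvanishing, finishing the proof.
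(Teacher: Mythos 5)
The \emph{if} direction and the opening reductions of your \emph{only if} argument (Mackey decomposition, passing to $\gamma=\tau(g)g^{-1}$ with $\tau(\gamma)=\gamma^{-1}$, triviality of $\theta$ on $H^{1}\cap G^{\delta}$, hence $\gamma\in JB^{\times}J$) match the paper. But the concluding step is where the proposal has a genuine gap. First, a technical point: replacing $g$ by $gh$ with $h\in G^{\tau}$ does not change $\gamma$ at all, and replacing $g$ by $jg$ with $j\in\boldsymbol{J}$ only changes $\gamma$ into $\tau(j)\gamma j^{-1}$, so the claimed absorption of the $J^{1}$-factors ``reducing to $\gamma\in B^{\times}$'' is not available; the correct statement is the double coset lemma (Lemma \ref{lemdoucos}), whose proof already requires the Cartan decomposition in $B^{\times}$, the finite-field classification of hermitian/symmetric/antisymmetric forms (Lemmas \ref{lembfix}--\ref{lemredpgroup}) and the vanishing of $H^{1}$ for an involution on a pro-$p$ group, and it yields only $g\in JB^{\times}G^{\tau}$, not $\gamma\in\boldsymbol{J}$.

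More seriously, the final claim that the hermitian classification (Propositions \ref{PropGOH}, \ref{PropJOH}) together with the parity of $m$ ``allows $\gamma$ to be adjusted into $\boldsymbol{J}$'' cannot work: double cosets with $b=\mathrm{diag}(\varpi_{E}^{a_{1}}I_{m_{1}},\dots,\varpi_{E}^{a_{r}}I_{m_{r}})$, $r\geq 2$, satisfy $b\tau(b)=1$ but lie outside $\boldsymbol{J}G^{\tau}$, and they are \emph{not} excluded at the level of $\theta$ or $\eta$ --- Proposition \ref{propheis} shows the Heisenberg representation is distinguished (with multiplicity one) on every coset of $JB^{\times}G^{\tau}$. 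The paper eliminates these cosets only at the level of $\Lambda$, and this is the heart of the proof you are missing: one writes $\Lambda\cong\boldsymbol{\kappa}\otimes\boldsymbol{\rho}$ with $\boldsymbol{\kappa}$ a $\tau$-selfdual extension of $\eta$ (Proposition \ref{propselfdual}), produces the character $\chi$ of $\boldsymbol{J}^{g}\cap G^{\tau}$ (Lemma \ref{lemkappa}) and proves it is quadratic (Proposition \ref{propchiquad}, which itself rests on the comparison $\chi=\chi'$ of Proposition \ref{Propchichi'} and the whole intertwining analysis of subsection 6.4), and then argues (Theorem \ref{thmtaugg-1}): if $r>1$, the group $U^{1}(\mathfrak{b}_{m})/U^{1}$ is a nontrivial unipotent subgroup of $\mathrm{GL}_{m}(\boldsymbol{l})$, $\chi$ is trivial on it because it is a $p$-group and $\chi^{2}=1$ with $p\neq 2$, so $\mathrm{Hom}_{\boldsymbol{J}^{g}\cap G^{\tau}}(\boldsymbol{\rho}^{g},\chi)\neq 0$ would give a nonzero vector of the cuspidal representation $\overline{\rho}$ fixed by a nontrivial unipotent radical, a contradiction. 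Without this representation-theoretic input (cuspidality of $\overline{\rho}$ plus the quadraticity of $\chi$), the conclusion $\gamma\in\boldsymbol{J}$ does not follow; the hermitian classification and the parity of $m$ serve elsewhere (existence of $\tau$-selfdual strata, and the two-coset count of Corollary \ref{corJGtau}), not here.
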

		
		\begin{remark}
			
			Since every hermitian matrix is equivalent to one of the hermitian matrices mentioned in Remark \ref{rempicond}.(4) up to $G$-action, and the property of distinction is invariant up to equivalence of unitary group, the theorem works for every unitary involution, although we only consider those occurring in \emph{loc. cit.}
			
		\end{remark}
		
		Choose $(\boldsymbol{J},\Lambda)$ as in Remark \ref{rempicond}, using the Mackey formula and Frobenius reciprocity, we have
		
		$$\mathrm{Hom}_{G^{\tau}}(\pi,1)\cong\prod_{g}\mathrm{Hom}_{\boldsymbol{J}^{g}\cap G^{\tau}}(\Lambda^{g},1),$$
		where $g$ ranges over a set of representatives of $(\boldsymbol{J},G^{\tau})$-double cosets in $G$. So $\pi$ is $G^{\tau}$-distinguished if and only if there exists $g$ as a representative of a $(\boldsymbol{J},G^{\tau})$-double coset such that $\mathrm{Hom}_{\boldsymbol{J}^{g}\cap G^{\tau}}(\Lambda^{g},1)\neq 0$. We will study such $g$ and will show that $(\boldsymbol{J}^{g},\Lambda^{g})$ is actually $\tau$-selfdual. So  $(\boldsymbol{J}^{g},\Lambda^{g})$ is a distinguished and $\tau$-selfdual simple type we are looking for, finishing the proof of the theorem.
		
		\subsection{Double cosets contributing to the distinction of $\theta$}
		
		\begin{proposition}\label{propint}
			
			For $g\in G$, the character $\theta^{g}$ is trivial on $H^{1g}\cap G^{\tau}$ if and only if $\tau(g)g^{-1}\in JB^{\times}J$.
			
		\end{proposition}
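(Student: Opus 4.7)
The plan is to reduce both sides of the equivalence to statements about a single involution $\tau_h$ built from $g$, and then to establish the nontrivial direction via a cohomological argument for pro-$p$ groups with involution in odd residual characteristic. Set $h := \tau(g)g^{-1}$ and $\tau_h(x) := h^{-1}\tau(x)h$. A direct calculation using $\tau^2 = 1$ gives $\tau(h) = h^{-1}$, which in turn shows $\tau_h$ is an involution (indeed a unitary involution, corresponding to the hermitian matrix $h^{-1}\varepsilon$). The substitution $y = gxg^{-1}$ provides a bijection between $H^{1g}\cap G^\tau$ and $H^1\cap G^{\tau_h}$ under which $\theta^g(x) = \theta(y)$, so $\theta^g|_{H^{1g}\cap G^\tau}=1$ is equivalent to $\theta|_{H^1\cap G^{\tau_h}}=1$. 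On the other hand, by Proposition~\ref{PropJandJ1}.(5) we have $I_G(\theta) = J^1B^\times J^1 = JB^\times J$, so the right-hand side of the proposition asserts exactly that $h$ intertwines $\theta$.

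For the \emph{if} direction, assume $h\in I_G(\theta)$. For $y\in H^1\cap G^{\tau_h}$ the fixed-point condition reads $\tau(y) = hyh^{-1}$, which, since $\tau(y)\in H^1$, places $y$ in $H^1\cap H^{1h}$. Combining the intertwining relation for $h$ with Remark~\ref{rempicond}.(1) then yields
\[
\theta(y) \;=\; \theta(hyh^{-1}) \;=\; \theta(\tau(y)) \;=\; \theta(y)^{-1}.
\]
Hence $\theta(y)^2 = 1$, and since $\theta$ is a character of the pro-$p$ group $H^1$ (so its image has $p$-power order) and $p\neq 2$, we conclude $\theta(y) = 1$.

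For the \emph{only if} direction, assume $\theta|_{H^1\cap G^{\tau_h}}=1$ and set $H := H^1\cap H^{1h}$. Since $\tau_h(H^1) = H^{1h}$, the group $H$ is $\tau_h$-stable and $H^{\tau_h} = H^1\cap G^{\tau_h}$. Form the character $\chi := \theta\cdot(\theta\circ\tau_h)$ of $H$; a direct check shows $\chi$ is $\tau_h$-invariant, and the hypothesis together with $\chi(y) = \theta(y)^2$ for $y\in H^{\tau_h}$ shows that $\chi$ is trivial on $H^{\tau_h}$. The crux is the following lemma: \emph{a $\tau_h$-invariant character of a pro-$p$ group $H$ (with $p\neq 2$) that is trivial on $H^{\tau_h}$ is itself trivial.} Granting it, $\chi\equiv 1$ on $H$, i.e.\ $\theta(h^{-1}\tau(y)h) = \theta(y)^{-1}$ for all $y\in H$; substituting $z := \tau(y)\in H^1\cap H^{1h^{-1}}$ and using $\theta\circ\tau = \theta^{-1}$ yields $\theta(h^{-1}zh) = \theta(z)$, exhibiting $h^{-1}$ (and hence $h$) as an intertwiner of $\theta$, so $h\in JB^\times J$.

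The work is concentrated in the key lemma, and this is where I expect the main obstacle. The plan is to pass to the pro-$p$ abelianization $A := H/[H,H]$, which inherits the involution $\tau_h$. The vanishing $H^1(\langle\tau_h\rangle,[H,H]) = 0$, which holds because $[H,H]$ is pro-$p$ with $p\neq 2$, shows that the image of $H^{\tau_h}$ in $A$ is all of $A^{\tau_h} = A^+$. Since $2$ acts invertibly on $A$, one has the eigenspace decomposition $A = A^+\oplus A^-$ obtained from the idempotents $(1\pm\tau_h)/2$. The induced character $\chi$ is trivial on $A^+$ by hypothesis, while on $A^-$ its $\tau_h$-invariance forces $\chi(a) = \chi(a)^{-1}$, so $\chi(a)^2 = 1$ and again by $p\neq 2$ we get $\chi|_{A^-}=1$. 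The hypothesis $p\neq 2$ thus enters decisively both for the eigenspace splitting and for the cohomology vanishing, and this is the only place where the argument genuinely uses the odd residual characteristic.
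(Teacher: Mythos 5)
Your proof is correct and follows essentially the same route as the paper, whose own proof of Proposition \ref{propint} is simply a citation of \cite{secherre2018supercuspidal}, Proposition 6.6 with $\sigma$ replaced by $\tau$: that argument likewise twists by $\gamma=\tau(g)g^{-1}$, uses the standing relations $\theta\circ\tau=\theta^{-1}$ and $\tau(H^{1})=H^{1}$ together with $I_{G}(\theta)=J B^{\times}J$ from Proposition \ref{PropJandJ1}.(5), and exploits $p\neq 2$ through the triviality of the first cohomology of an involution acting on a pro-$p$ group. Your key lemma, established via the abelianization and the $\pm$-eigenspace splitting, is a repackaging of that same standard odd-$p$ fact, which the paper itself invokes elsewhere (for instance in the proofs of Lemma \ref{lemdoucos} and Proposition \ref{propheis}).
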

		
		\begin{proof}
			
			We only need to use the same proof of \cite{secherre2019supercuspidal}, Proposition 6.6, with $\sigma$ replaced by $\tau$.
			
		\end{proof}
		
		As a result, since $\mathrm{Hom}_{\boldsymbol{J}^{g}\cap G^{\tau}}(\Lambda^{g},1)\neq 0$ implies that $\mathrm{Hom}_{H^{1g}\cap G^{\tau}}(\theta^{g},1)\neq 0$, using Proposition \ref{propint} we have $\gamma:=\tau(g)g^{-1}\in JB^{\times}J$.
		
		\subsection{The double coset lemma}
		
		The next step is to prove the following double coset lemma:
		
		\begin{lemma}\label{lemdoucos}
			
			Let $g\in G$. Then $\gamma=\tau(g)g^{-1}\in JB^{\times}J$ if and only if $g\in JB^{\times}G^{\tau}$.
			
		\end{lemma}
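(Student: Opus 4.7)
The \textbf{if} direction is a direct check: if $g = jbh$ with $j \in J$, $b \in B^\times$, $h \in G^\tau$, then $\tau(h) = h$ gives
$$\tau(g) g^{-1} = \tau(j)\tau(b) b^{-1} j^{-1} \in JB^\times J,$$
using that $\tau$ stabilises both $J$ (Remark \ref{rempicond}.(2)) and $B^\times$ (Remark \ref{rempicond}.(5)).

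For the \textbf{only if} direction, $\gamma = \tau(g) g^{-1}$ always satisfies $\tau(\gamma) \gamma = 1$, and conversely it suffices to produce $g_0 \in JB^\times$ with $\tau(g_0)g_0^{-1} = \gamma$, because then $g_0^{-1} g \in G^\tau$ and $g = g_0 \cdot (g_0^{-1}g) \in JB^\times G^\tau$. So the real content is a Hilbert-90-type assertion: every $\gamma \in JB^\times J$ with $\tau(\gamma)\gamma = 1$ equals $\tau(g_0)g_0^{-1}$ for some $g_0 \in JB^\times$. By Proposition \ref{PropJandJ1}.(3) and $\mathfrak{b}^\times \subset B^\times$ we have $JB^\times J = J^1 B^\times J^1$, so I would write $\gamma = u\beta v$ with $u, v \in J^1$ and $\beta \in B^\times$. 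The cocycle equivalence $\gamma \mapsto \tau(x)\gamma x^{-1}$ for $x \in JB^\times$ (which corresponds to replacing $g$ by $xg$) allows the choice $x = v$, reducing to $\gamma = \mu\beta$ with $\mu = \tau(v)u \in J^1$; the condition $\tau(\gamma)\gamma = 1$ then becomes
$$\tau(\beta)\,\mu\,\beta = \tau(\mu)^{-1}.$$

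The two ingredients to combine are $H^1(\langle\tau\rangle, J^1) = \{1\}$ and $H^1(\langle\tau\rangle, B^\times) = \{1\}$. The first is the profinite Schur--Zassenhaus vanishing, valid because $J^1$ is a $\tau$-stable pro-$p$ group and $p \neq 2$; the second is classical Hilbert 90 applied to the unitary involution that $\tau$ induces on $B^\times = \mathrm{GL}_m(E)$ with respect to $E/E_0$, cf.\ Remark \ref{rempicond}.(5). The plan is to manipulate $\tau(\beta)\mu\beta = \tau(\mu)^{-1}$ using the simple intersection property (\ref{UUinB}) to pass between $J^1$-conjugation and $B^\times$-conjugation, so as to split the problem into a $J^1$-piece and a $B^\times$-piece, invoke the two vanishings separately, and reassemble $g_0 = v_0 b_0$ with $v_0 \in J^1$ and $b_0 \in B^\times$.

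The main obstacle is that $JB^\times$ is not a subgroup of $G$: conjugation by $B^\times$ does not normalise $J^1$ in general, so the two cohomological vanishings cannot simply be chained. Carefully interleaving the reductions so that every intermediate quantity stays inside $J^1 B^\times J^1$ is the delicate point, and the role of (\ref{UUinB}) is precisely to control, via $\mathfrak{b}^\times = U^1(\mathfrak{b}) \cap \mathfrak{b}^\times$-type intersections, how $\beta$-conjugation moves $J^1$-elements. This is structurally the same double-coset lemma as in \cite{secherre2018supercuspidal} (with $\sigma$ there replaced by the unitary involution $\tau$ here, and with $\tau|_{B^\times}$ itself unitary rather than Galois), and I expect the argument there to adapt essentially verbatim.
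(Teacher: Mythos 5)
The \textbf{if} direction and the reduction ``it suffices to write $\gamma=\tau(g_0)g_0^{-1}$ with $g_0\in JB^{\times}$'' are fine, but the key cohomological input you invoke is wrong, and this is exactly where the unitary case differs from the Galois case you want to copy. Classical Hilbert 90 gives $H^{1}$-vanishing for the \emph{Galois} action of $\sigma$ on $\mathrm{GL}_m(E)$; for the \emph{unitary} involution that $\tau$ induces on $B^{\times}$, the set $\{\beta\in B^{\times}:\tau(\beta)\beta=1\}$ modulo $\beta\mapsto\tau(x)\beta x^{-1}$ is in bijection with equivalence classes of hermitian matrices, and by Proposition \ref{PropGOH} there are exactly \emph{two} such classes, distinguished by the class of the determinant in $E_0^{\times}/\mathrm{N}_{E/E_0}(E^{\times})$. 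So ``$H^{1}(\langle\tau\rangle,B^{\times})=\{1\}$'' is false, and with it your Hilbert-90-type assertion: already for $d=1$, $B^{\times}=G$, an element $\gamma$ with $\tau(\gamma)\gamma=1$ whose associated hermitian matrix $\gamma^{-1}\varepsilon$ lies in the nontrivial class is not of the form $\tau(g_0)g_0^{-1}$ for any $g_0$. The correct argument cannot use only $\tau(\gamma)\gamma=1$; it must use that $\gamma=\tau(g)g^{-1}$ for a \emph{global} $g\in G$, which is what the paper exploits at the end: taking determinants gives $\mathrm{det}(b)\in\mathrm{N}_{F/F_0}(F^{\times})$, Lemma \ref{Lemmafieldext} transfers this to $\mathrm{det}_B(b)\in\mathrm{N}_{E/E_0}(E^{\times})$, and only then does Proposition \ref{PropGOH} produce $h\in B^{\times}$ with $b=\tau(h)h^{-1}$. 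This norm/determinant step has no analogue in \cite{secherre2018supercuspidal}, so the adaptation is not ``essentially verbatim''.

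A second, related gap: your plan to split into a $J^1$-piece and a $B^{\times}$-piece skips the finite reductive quotient, which is also not handled by any vanishing. The paper first uses the Cartan decomposition of $B^{\times}$ to replace $\gamma$ by $bx$ with $b$ in a normal form satisfying $b\tau(b)=1$ (in the ramified case this needs the parity observation that $m_i$ must be even when $a_i$ is odd, since there is no invertible antisymmetric matrix of odd size), then works in $K=J\cap b^{-1}Jb$ with its pro-$p$ radical $K^1$; the quotient $K/K^1\cong\mathrm{GL}_{m_1}(\boldsymbol{l})\times\cdots\times\mathrm{GL}_{m_r}(\boldsymbol{l})$ carries the involution $\delta_b$, whose twisted classes are again nontrivial, and one normalizes there by hand using the classification of hermitian, symmetric and alternating forms over the residue field (Lemmas \ref{lemherfin}--\ref{lemsymfin}). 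Only after these two reductions does the pro-$p$ vanishing $H^{1}(\delta_b,K^{1})=\{1\}$ (your first ingredient, which is correct) apply. So your outline is missing both the finite-field form classification step and the determinant argument; as written, the proof would fail at the ``$B^{\times}$-piece''.
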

		
		\begin{proof}
			
			If $g\in JB^{\times}G^{\tau}$, one verifies immediately that $\gamma\in JB^{\times}J$. Conversely, suppose that $\gamma\in JB^{\times}J$, first we need the following lemma:
			
			\begin{lemma}\label{lembfix}
				
				There exists an element $b\in B^{\times}$ such that $\gamma\in JbJ$ and $b\tau(b)=1$.
				
			\end{lemma}
			
			\begin{proof}
				
				Since $B^{\times}\cap J=\mathfrak{b}^{\times}$ is a maximal compact subgroup of $B^{\times}$, using the Cartan decomposition over $B^{\times}\cong \mathrm{GL}_{m}(E)$, we write $\gamma=xcy$ with $x,y\in J$ and $c=\mathrm{diag}(\varpi_{E}^{a_{1}}I_{m_{1}},...,\varpi_{E}^{a_{r}}I_{m_{r}})$, where $a_{1}>...>a_{r}$ are integers and $m_{1}+...+m_{r}=m$.
				
				If $E/E_{0}$ is unramified, then by definition $c^{*}=c$. So if we choose $b=c\varepsilon^{-1}$, then $b\varepsilon (b^{*})^{-1}\varepsilon^{-1}=c(c^{*})^{-1}=1$, that is, $b\tau(b)=1$.
				
				If $E/E_{0}$ is ramified, since $\tau(\gamma)\gamma=1$, we know that $xcy=\varepsilon y^{*}c^{*}x^{*}\varepsilon^{-1}$ which is equivalent to $(y^{*})^{-1}\varepsilon^{-1}xc=c^{*}x^{*}\varepsilon^{-1}y^{-1}$. Let $z=x^{*}\varepsilon^{-1}y^{-1}\in J$, then we have $z^{*}c=c^{*}z$. We regard $z$ and $c$ as matrices in $\mathrm{M}_{m}(\mathrm{M}_{d}(F))$. Denote by $z^{(j)}\in \mathrm{M}_{m_{j}}(\mathrm{M}_{d}(F))$ the block matrix in $z$ which is at the same place as $\varpi_{E}^{a_{j}}I_{m_{j}}$ in $c$. Since $z^{*}c=c^{*}z$, by direct calculation 
				\begin{equation}\label{eqzi}
					(z^{(j)})^{*}\varpi_{E}^{a_{j}}=(-1)^{a_{j}}\varpi_{E}^{a_{j}}z^{(j)}\quad \text{for}\ j=1,...,r.
				\end{equation}
				
				By considering the following embedding
				\begin{align*}
					\mathrm{M}_{m_{j}}(\mathrm{M}_{d}(F))\hookrightarrow&\mathrm{M}_{m}(\mathrm{M}_{d}(F))\\
					h\mapsto&\mathrm{diag}(0_{m_{1}d},...,0_{m_{j-1}d},h,0_{m_{j+1}d},...,0_{m_{r}d}),
				\end{align*}
				we regard $\mathrm{M}_{m_{j}d}(F)$ as a subalgebra of $\mathrm{M}_{md}(F)$ denoted by $A^{(j)}$, where $0_{m_{j}d}$ represents the zero matrix of size $m_{j}d\times m_{j}d$. We write $\mathfrak{a}^{(j)}=\mathfrak{a}\cap A^{(j)}$. By abuse of notation, we identify the element $\beta_{0}\otimes...\otimes{\beta}_{0}$, which consists of $m_{j}$ copies of $\beta_{0}$ and is contained in $\mathrm{M}_{m_{j}}(\mathrm{M}_{d}(F))$, with $\beta$.
				By \cite{secherre2008representations}, Th\'eor\`eme 2.17, since $z\in J(\mathfrak{a},\beta)$, we get $z^{(j)}\in J(\mathfrak{a}^{(j)},\beta)$ for $j=1,...,r$. By \emph{loc. cit.}, if we denote by $$M=\mathrm{GL}_{m_{1}d}(F)\times...\times\mathrm{GL}_{m_{r}d}(F)$$ the Levi subgroup of $G$ corresponding to the partition $n=m_{1}d+...+m_{r}d$, then
				$$ M\cap J=J(\mathfrak{a}^{(1)},\beta)\times...\times J(\mathfrak{a}^{(r)},\beta)$$
				and
				$$M\cap J^{1}= J^{1}(\mathfrak{a}^{(1)},\beta)\times...\times J^{1}(\mathfrak{a}^{(r)},\beta).$$
				Thus we get $\mathrm{diag}(z^{(1)},...,z^{(r)})\in M\cap J$. Furthermore we have
				$$M\cap J/M\cap J^{1}\cong J(\mathfrak{a}^{(1)},\beta)/J^{1}(\mathfrak{a}^{(1)},\beta)\times...\times J(\mathfrak{a}^{(r)},\beta)/J^{1}(\mathfrak{a}^{(r)},\beta)\cong\mathrm{GL}_{m_{1}}(\boldsymbol{l})\times...\times\mathrm{GL}_{m_{r}}(\boldsymbol{l}).$$
				Since $(\cdot)^{*}$ fixes $M\cap J$ and $M\cap J^{1}$, it induces a map
				\begin{align*}
					M\cap J/M\cap J^{1}\cong \mathrm{GL}_{m_{1}}(\boldsymbol{l})\times...\times\mathrm{GL}_{m_{r}}(\boldsymbol{l})&\longrightarrow \mathrm{GL}_{m_{1}}(\boldsymbol{l})\times...\times\mathrm{GL}_{m_{r}}(\boldsymbol{l})\cong M\cap J/M\cap J^{1},\\
					(\overline{z^{(1)}},...,\overline{z^{(r)}})&\longmapsto(\overline{(z^{(1)})^{*}},...,\overline{(z^{(r)})^{*}}),
				\end{align*}
				where $\boldsymbol{l}$ is the residue field of $E$ and $E_{0}$, and $\overline{z^{(j)}}\in J(\mathfrak{a}^{(j)},\beta)/J^{1}(\mathfrak{a}^{(j)},\beta)\cong\mathrm{GL}_{m_{j}}(\boldsymbol{l})$ is the image of $z^{(j)}$. 
				
				We show that for any $i$ such that $2\nmid a_{i}$, we have $2\mid m_{i}$. Consider $j=i$ in equation (\ref{eqzi}), we get $(z^{(i)})^{*}=-\varpi_{E}^{a_{i}}z^{(i)}\varpi_{E}^{-a_{i}}$. Since $J/J^{1}\cong U(\mathfrak{b})/U^{1}(\mathfrak{b})$ on which $E^{\times}$ acts trivially by conjugation, we get  $\overline{z^{(i)}}=\overline{\varpi_{E}^{a_{i}}z^{(i)}\varpi_{E}^{-a_{i}}}=\overline{-(z^{(i)})^{*}}=-\,^{t}\overline{z^{(i)}}$. Since there exists no anti-symmetric invertible matrix of odd dimension, we must have $2| m_{i}$. Now for $\alpha_{j}=(a_{j},m_{j})$, define $$\varpi_{E}^{\alpha_{j}}=\begin{cases}\varpi_{E}^{a_{j}}I_{m_{j}}\ &\text{if} \ 2|a_{j}; \\ \varpi_{E}^{a_{j}}J_{m_{j}/2}\ &\text{if}\ 2\nmid a_{j}.\end{cases}$$
				and $c'=\mathrm{diag}(\varpi_{E}^{\alpha_{1}},...,\varpi_{E}^{\alpha_{r}})$, where $J_{m_{j}/2}:=\bigg(\begin{matrix}0 & I_{m_{j}/2} \\ -I_{m_{j}/2} & 0\end{matrix}\bigg)$ . We have $c'=c'^{*}$ and $c'$ is in the same $J$-$J$ double coset as $c$. Let $b=c'\varepsilon^{-1}$, we get $b\tau(b)=1$.
				
			\end{proof}
			
			Now we write $\gamma=x'bx$ with $x,x'\in J$ and $b\in B^{\times}$ as in Lemma \ref{lembfix}. Replacing $g$ by $\tau(x')^{-1}g$ does not change the double coset $JgG^{\tau}$ but changes $\gamma$ into $bx\tau(x')$. So from now on, we assume that
			\begin{equation}\label{eqcondgamma}
				\gamma=bx,\quad b\tau(b)=1,\ x\in J, \quad b\ \text{is of the form in the proof of Lemma \ref{lembfix}}.
			\end{equation}
			
			Write $K$ for the group $J\cap b^{-1}Jb$. Since $\tau(b)=b^{-1}$ and $J$ is $\tau$-stable, we have $x\in K$. The following corollary of Lemma \ref{lembfix} is obvious.
			
			\begin{corollary}
				
				The map $\delta_{b}:k\mapsto b^{-1}\tau(k)b$ is an involution on $K$.
				
			\end{corollary}
			
			Now for $a_{1}>...>a_{r}$ as in the proof of Lemma \ref{lembfix}, and $M=\mathrm{GL}_{m_{1}d}(F)\times...\times\mathrm{GL}_{m_{r}d}(F)\subseteq G$, we write $P$ for the standard parabolic subgroup of $G$ generated by $M$ and upper triangular matrices, $N$ for the unipotent radical of $P$ and $N^{-}$ for the opposite of $N$ as a unipotent subgroup. By definition, $b$ normalizes $M$ and we have
			\begin{align*}
				K=(K\cap N ^{-})\cdot(K\cap M)\cdot(K\cap N).
			\end{align*}
			For $V=K\cap B^{\times}=U\cap b^{-1}Ub$ a subgroup of $B^{\times}$, similarly we have
			\begin{align*}
				V=(V\cap N ^{-})\cdot(V\cap M)\cdot(V\cap N),
			\end{align*}
			where $U=U(\mathfrak{b})$ and $U^{1}=J^{1}\cap B^{\times}=U^{1}(\mathfrak{b})$. By definition, $V$ is also fixed by $\delta_{b}$.
			
			\begin{lemma}\label{lemK1}
				
				The subset
				$$K^{1}=(K\cap N ^{-})\cdot(J^{1}\cap M)\cdot(K\cap N)$$
				is a $\delta_{b}$-stable normal pro-$p$-subgroup of $K$, and we have $K=VK^{1}$.
				
			\end{lemma}
			
			\begin{proof}
				
				The proof is the same as that in \cite{secherre2019supercuspidal}, Lemma 6.10.
				
			\end{proof}
			
			\begin{lemma}\label{lemredpgroup}
				
				Let $x\in K$ such that $x\delta_{b}(x)=1$, then there are $k\in K$ and $v\in V$ such that:
				
				(1) the element $v$ is in $\mathrm{GL}_{m_{1}}(\mathfrak{o}_{E})\times...\times\mathrm{GL}_{m_{r}}(\mathfrak{o}_{E})\subseteq B^{\times}$ such that $v\delta_{b}(v)=1$;
				
				(2) one has $\delta_{b}(k)xk^{-1}\in vK^{1}$.
				
			\end{lemma}
			
			\begin{proof}
				
				Let $V^{1}=V\cap K^{1}$. We have
				$$V^{1}=(V\cap N ^{-})\cdot(U^{1}\cap M)\cdot(V\cap N).$$
				Thus we have canonical $\delta_{b}$-equivariant group isomorphisms
				\begin{equation}\label{eqKK1}
					K/K^{1}\cong V/V^{1}\cong(U\cap M)/(U^{1}\cap M).
				\end{equation}
				Since $ B^{\times}\cap M=\mathrm{GL}_{m_{1}}(E)\times...\times\mathrm{GL}_{m_{r}}(E)$, the right hand side of (\ref{eqKK1}) is identified with $\mathcal{M}=\mathrm{GL}_{m_{1}}(\boldsymbol{l})\times...\times\mathrm{GL}_{m_{r}}(\boldsymbol{l})$, where $\boldsymbol{l}$ denotes the residue field of $E$. As in the proof of Lemma \ref{lembfix}, we may write
				$\varepsilon^{-1}b=\mathrm{diag}(\varpi_{E}^{a_{1}}c_{1},...,\varpi_{E}^{a_{r}}c_{r})$ with $c_{j}\in\mathrm{GL}_{m_{j}}(\mathfrak{o}_{E})$. Moreover, the involution $\delta_{b}$ acts on $\mathcal{M}$ by
				$$(g_{1},...,g_{r})\mapsto(\overline{c_{1}}^{-1}\sigma(^tg_{1}^{-1})\overline{c_{1}},...,\overline{c_{r}}^{-1}\sigma(^tg_{r}^{-1})\overline{c_{r}}),$$
				where we denote by $\overline{c_{j}}$ the image of $c_{j}$ in $\mathrm{GL}_{m_{j}}(\boldsymbol{l})$. We denote by $(g_{1},...,g_{r})$ the image of $x$ in $\mathcal{M}=\mathrm{GL}_{m_{1}}(\boldsymbol{l})\times...\times\mathrm{GL}_{m_{r}}(\boldsymbol{l})$.
				
				When $E/E_{0}$ is unramified, we denote by $\boldsymbol{l_{0}}$ the residue field of $E_{0}$. So $\boldsymbol{l}/\boldsymbol{l_{0}}$ is quadratic and the restriction of $\sigma$ to $\boldsymbol{l}$ is the non-trivial involution in $\mathrm{Gal}(\boldsymbol{l}/\boldsymbol{l_{0}})$. Since $(b^{-1}\varepsilon)^{*}=\varepsilon(b^{*})^{-1}\varepsilon^{-1}\varepsilon=\tau(b)\varepsilon=b^{-1}\varepsilon$, we get $\overline{c_{j}}^{*}=\overline{c_{j}}$. If $x\delta_{b}(x)=1$, then  $(\overline{c_{j}}g_{j})^{*}=g_{j}^{*}\overline{c_{j}}=\overline{c_{j}}g_{j}$. 
				
				\begin{lemma}[\cite{kleidman1990subgroup}, Proposition 2.3.1]\label{lemherfin}
					
					For $\overline{x}=\overline{x}^{*}$ in $\mathrm{GL}_{s}(\boldsymbol{l})$, there exists $A\in\mathrm{GL}_{s}(\boldsymbol{l})$ such that $A\overline{x}A^{*}=I_{s}$.
					
				\end{lemma}
				
				Using Lemma \ref{lemherfin}, we may choose $k_{j}\in\mathrm{GL}_{m_{j}}(\mathfrak{o}_{E})$ such that its image $\overline{k_{j}}$ in $\mathrm{GL}_{m_{j}}(\boldsymbol{l})$ satisfies $(\overline{k_{j}}^{*})^{-1}\overline{c_{j}}g_{j}\overline{k_{j}}^{-1}=I_{m_{j}}$. Choose $k=\mathrm{diag}(k_{1},...,k_{r})$ and $v=\mathrm{diag}(v_{1},...,v_{r})=\mathrm{diag}(c_{1}^{-1},...,c_{r}^{-1})$, we get $\delta_{b}(k)xk^{-1}\in vV^{1}$ and $\delta_{b}(v)v=\mathrm{diag}(c_{1}^{-1}c_{1}^{*}c_{1}c_{1}^{-1},...,c_{r}^{-1}c_{r}^{*}c_{r}c_{r}^{-1})=1$.

				When $E/E_{0}$ is ramified, the restriction of $\sigma$ to $\boldsymbol{l}$ is trivial. Since $(b^{-1}\varepsilon)^{*}=b^{-1}\varepsilon$, we get $c_{j}^{*}=(-1)^{a_{j}}c_{j}$ and $\,^{t}\overline{c_{j}}=(-1)^{a_{j}}\overline{c_{j}}$. 
				
				\begin{lemma}[\cite{kleidman1990subgroup}, Proposition 2.5.4]\label{lemothfin}
					
					For $\overline{x}=\,^t \overline{x}$ in $\mathrm{GL}_{s}(\boldsymbol{l})$, there exists $A\in\mathrm{GL}_{s}(\boldsymbol{l})$ such that $ A\overline{x} \,^t A$ is either $I_{s}$ or $\overline{\varepsilon_{s}}=\mathrm{diag}(1,...,1,\overline{\epsilon})$, where $\overline{\epsilon}\in\boldsymbol{l}^{\times}-\boldsymbol{l}^{\times2}$ with $\boldsymbol{l}^{\times2}$ denoting the group of square elements of $\boldsymbol{l}^{\times}$.
					
				\end{lemma}
				
				\begin{lemma}[\cite{kleidman1990subgroup}, Proposition 2.4.1]\label{lemsymfin}
					
					For $\overline{x}=-\,^t \overline{x}$ in $\mathrm{GL}_{s}(\boldsymbol{l})$ and $2\mid s$, there exists $A\in\mathrm{GL}_{s}(\boldsymbol{l})$ such that $A\overline{x}\,^t A=J_{s/2}$.
					
				\end{lemma}
				
				When $a_{j}$ is even, using Lemma \ref{lemothfin} we may choose $k_{j}\in\mathrm{GL}_{m_{j}}(\mathfrak{o}_{E})$ such that its image $\overline{k_{j}}$ in $\mathrm{GL}_{m_{j}}(\boldsymbol{l})$ satisfies that $(\,^{t}\overline{k_{j}})^{-1}\overline{c_{j}}g_{j}\overline{k_{j}}^{-1}$ equals either $I_{m_{j}}$ or $\overline{\varepsilon_{m_{j}}}$, where we choose $\varepsilon_{m_{j}}=\mathrm{diag}(1,...,1,\epsilon)\in\mathrm{GL}_{m_{j}}(\mathfrak{o}_{E})$ such that its image $\overline{\varepsilon_{m_{j}}}$ in $\mathrm{GL}_{m_{j}}(\boldsymbol{l})$ is $\mathrm{diag}(1,...,1,\overline{\epsilon})$ as in Lemma \ref{lemothfin}. Let $v_{j}$ be $c_{j}^{-1}$ or $c_{j}^{-1}\varepsilon_{m_{j}}$ in the two cases respectively.
				
				When $a_{j}$ is odd we deduce that $m_{j}$ is even from the proof of Lemma \ref{lembfix}. Using Lemma \ref{lemsymfin}, we may choose $k_{j}\in\mathrm{GL}_{m_{j}}(\mathfrak{o}_{E})$ such that its image $\overline{k_{j}}$ in $\mathrm{GL}_{m_{j}}(\boldsymbol{l})$ satisfies $(\,^{t}\overline{k_{j}})^{-1}\overline{c_{j}}g_{j}\overline{k_{j}}^{-1}=J_{m_{j/2}}$. We choose $v_{j}=c_{j}^{-1}J_{m_{j}/2}$.
				
				Choose $k=\mathrm{diag}(k_{1},...,k_{r})$ and $v=\mathrm{diag}(v_{1},...,v_{r})$, we know that
				$$\delta_{b}(k)xk^{-1}\in vV^{1}$$
				and $$\delta_{b}(v)v=\mathrm{diag}(c_{1}^{-1}(v_{1}^{*})^{-1}c_{1}v_{1},...,c_{r}^{-1}(v_{r}^{*})^{-1}c_{r}v_{r})=1$$ by direct calculation in the two cases respectively. So no matter $E/E_{0}$ is ramified or not, we finish the proof.
				
			\end{proof}
			
			Now we finish the proof of Lemma \ref{lemdoucos}. Using Lemma \ref{lemredpgroup}, we choose $k\in K$ and $v\in V$ such that $bv\tau(bv)=1$ and $\delta_{b}(k)xk^{-1}\in vK^{1}$. Thus we have $\tau(k)\gamma k^{-1}\in bv K^{1}$. Therefore replacing $g$ by $kg$ and $b$ by $bv$, we may assume
			\begin{equation}\label{eqgamma}
				\gamma=bx,\quad b\tau(b)=1, \quad x\in K^{1},\quad b\in\varpi_{E}^{a_{1}}\mathrm{GL}_{m_{1}}(\mathfrak{o}_{E})\times...\times\varpi_{E}^{a_{r}}\mathrm{GL}_{m_{r}}(\mathfrak{o}_{E}).
			\end{equation}
			Furthermore, we have $\delta_{b}(x)x=1$.
			
			Since $K^{1}$ is a $\delta_{b}$-stable pro-$p$-group and $p$ is odd, the first cohomology set of $\delta_{b}$ on $K^{1}$ is trivial. Thus $x=\delta_{b}(y)y^{-1}$ for some $y\in K^{1}$, hence $\gamma=\tau(y)by^{-1}$. Consider the determinant of this equation, we have $\mathrm{det}(b)\in\mathrm{N}_{F/F_{0}}(F^{\times})$. If we denote by $\mathrm{det}_{B}$ the determinant function defined on $B^{\times}=\mathrm{GL}_{m}(E)$, then $\mathrm{det}(b)=\mathrm{N}_{E/F}(\mathrm{det}_{B}(b))$. Using Lemma \ref{Lemmafieldext} for $L=E$, we get $\mathrm{det}_{B}(b)\in\mathrm{N}_{E/E_{0}}(E^{\times})$ and $\mathrm{det}_{B}(\varepsilon^{-1}b)\in\mathrm{det}_{B}(\varepsilon^{-1})\mathrm{N}_{E/E_{0}}(E^{\times})$. Since $\tau(b)b=1$, we have $(\varepsilon^{-1}b)^{*}=\varepsilon^{-1}b$. Using Proposition \ref{PropGOH}, there exists $h\in B^{\times}$ such that $\varepsilon^{-1}b=(h^{*})^{-1}\varepsilon^{-1} h^{-1}$. So we have $b=\tau(h)h^{-1}$. Thus $g\in yhG^{\tau}\subseteq JB^{\times}G^{\tau}$, which finishes the proof of Lemma \ref{lemdoucos}.
			
		\end{proof}
		
		\subsection{Distinction of the Heisenberg representation}
		
		Now let $\eta$ be the Heisenberg representation of $J^{1}$ associated to $\theta$. We have the following result similar to \cite{secherre2019supercuspidal}, Proposition 6.12. by replacing $\sigma$ with $\tau$:
		
		\begin{proposition}\label{propheis}
			
			Given $g\in G$, we have:
			$$\mathrm{dim}_{R}\mathrm{Hom}_{J^{1g}\cap G^{\tau}}(\eta^{g},1)=\begin{cases}1\quad &\text{if}\ g\in JB^{\times}G^{\tau},\\ 0\quad &\text{otherwise}.\end{cases}$$
			
		\end{proposition}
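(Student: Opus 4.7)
The proposition splits into a vanishing claim when $g\notin JB^{\times}G^{\tau}$ and a multiplicity-one claim when $g\in JB^{\times}G^{\tau}$. For the vanishing, suppose $g\notin JB^{\times}G^{\tau}$. Then Lemma \ref{lemdoucos} yields $\gamma=\tau(g)g^{-1}\notin JB^{\times}J$, and Proposition \ref{propint} forces $\theta^{g}$ to be non-trivial on $H^{1g}\cap G^{\tau}$. Since $\eta^{g}|_{H^{1g}}$ is a direct sum of copies of $\theta^{g}$ by property (1) of the Heisenberg representation recalled in Section 3.2, any non-zero $(J^{1g}\cap G^{\tau})$-invariant form on $\eta^{g}$ would restrict to a non-zero $(H^{1g}\cap G^{\tau})$-invariant form on $\theta^{g}$, contradicting its non-triviality.

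Now assume $g\in JB^{\times}G^{\tau}$. Left multiplication of $g$ by $J$ does not change the Hom space (since $\eta$ extends to $\boldsymbol{J}\supset J$), and right multiplication by $G^{\tau}$ induces an isomorphism by simultaneous conjugation. Hence by Lemma \ref{lembfix} and the normalization in the proof of Lemma \ref{lemdoucos}, I can take a representative with $\gamma=b$ where $b\tau(b)=1$ and $b$ is in the explicit block form there. The map $\delta(y)=b^{-1}\tau(y)b$ is then an involution of $G$, and conjugation by $g$ gives
\[
\mathrm{Hom}_{J^{1g}\cap G^{\tau}}(\eta^{g},1)\cong\mathrm{Hom}_{J^{1}\cap G^{\delta}}(\eta,1);
\]
moreover $\theta$ is trivial on $H^{1}\cap G^{\delta}$, transported from Proposition \ref{propint}.

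The final step is a finite Heisenberg calculation. Equip $V=J^{1}/H^{1}$ with its non-degenerate symplectic pairing $\langle\bar x,\bar y\rangle=\theta([x,y])$. The image $W$ of $J^{1}\cap G^{\delta}$ in $V$ is totally isotropic, because for $u_{1},u_{2}\in J^{1}\cap G^{\delta}$ one has $[u_{1},u_{2}]\in H^{1}\cap G^{\delta}$, on which $\theta$ is trivial. I would show $W$ is in fact Lagrangian by exploiting $\theta\circ\tau=\theta^{-1}$ together with the $b$-equivariance of the relevant simple-character data to deduce that $\delta$ acts anti-symplectically on $V$, then using $p\neq 2$ to split $V=V^{+}\oplus V^{-}$ into $(\pm 1)$-eigenspaces perfectly paired by $\langle\cdot,\cdot\rangle$, and using a Lang-style cohomology-vanishing argument (again relying on $p\neq 2$) to identify $W$ with $V^{+}$. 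The standard decomposition of $\eta$ relative to the Lagrangian $W$ (see \cite{bushnell129admissible}, Section 5.1) then reads $\eta|_{(J^{1}\cap G^{\delta})H^{1}}=\bigoplus_{\chi\in\widehat{W}}\widetilde{\theta}\chi$ with each summand appearing once; exactly one choice of $\chi$ makes $\widetilde{\theta}\chi$ trivial on $J^{1}\cap G^{\delta}$, yielding $\mathrm{dim}_{R}\mathrm{Hom}_{J^{1}\cap G^{\delta}}(\eta,1)=1$. The main obstacle is this Lagrangian identification, since $\delta$ need not preserve $J^{1}$; it must be handled by working on $V$ directly, following the pattern of \cite{secherre2018supercuspidal}, Proposition 6.12.
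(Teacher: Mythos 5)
Your vanishing argument and your reduction to $\gamma=b\in B^{\times}$ with $b\tau(b)=1$ are fine and agree with the paper (restriction to $H^{1g}\cap G^{\tau}$ plus Proposition \ref{propint} and Lemma \ref{lemdoucos}; then killing the $K^{1}$-part of $\gamma$ by a cohomology-vanishing argument inside the double coset). The multiplicity-one half, however, has a genuine gap exactly where you locate "the main obstacle". Your plan is to make $\delta$ act anti-symplectically on $V=J^{1}/H^{1}$, split $V$ into $\pm1$-eigenspaces and identify the image $W$ of $J^{1}\cap G^{\delta}$ with $V^{+}$, thereby proving $W$ is Lagrangian. But $\delta$ does not act on $V$ at all: since $\tau(J^{1})=J^{1}$ and $\tau(\gamma)=\gamma^{-1}$, the map $\delta$ sends $J^{1}$ to $J^{1\gamma}$ and $J^{1\gamma}$ back to $J^{1}$, so there is no eigenspace decomposition of $J^{1}/H^{1}$ to exploit. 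The only $\delta$-stable objects available are $J^{1}\cap J^{1\gamma}$, $H^{1}\cap J^{1\gamma}$, etc., and the counting statement you need, namely $(J^{1}\cap G^{\delta}:H^{1}\cap G^{\delta})=(J^{1}:H^{1})^{1/2}$ (equivalently $|W|=|V|^{1/2}$), is precisely the non-trivial content that must be established on those intersections; pointing to "the pattern of \cite{secherre2018supercuspidal}, Proposition 6.12" does not supply it, since in that reference (and in the present paper, via Lemma \ref{LemmaJHJgammaHgamma}.(3), which imports Lemmas 6.14 onward of \emph{ibid.}, subsection 6.3, generalized to an arbitrary involution $\delta$) this index identity is the outcome of a separate chain of lemmas, not a formal consequence of the Heisenberg structure.

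Granting that identity, the rest of your scheme does work and is a genuinely different packaging from the paper's: you would restrict $\eta$ to the preimage $(J^{1}\cap G^{\delta})H^{1}$ of the Lagrangian $W$ and use the multiplicity-free decomposition into characters extending $\theta$ (\cite{bushnell129admissible}, 5.1, together with the triviality of $\theta$ on $H^{1}\cap G^{\delta}$, which you correctly extract from Proposition \ref{propint}), exactly one such character being trivial on $J^{1}\cap G^{\delta}$. The paper instead never decomposes $\eta$ over a Lagrangian: it computes $\dim_{R}\mathrm{Hom}_{J^{1}\cap G^{\delta}}(\mathrm{Ind}_{H^{1}}^{J^{1}}\theta,1)=(J^{1}:H^{1})^{1/2}$ directly by Mackey theory (Lemma \ref{Lemmaindthetares}) combined with the same index identity, and divides by the number of copies of $\eta$; this is Proposition \ref{Propheisdelta}, stated for a general involution $\delta$ because it is reused later for Proposition \ref{Propvarphibijec}. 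So the two routes differ only in how the key counting fact is consumed, and your proposal stands or falls with a proof of that fact, which as written it does not contain.
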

		
		\begin{proof}
			
			It is useful to recall some details of the proof of this proposition, which will be used in the next subsection. We write $\delta(x):=\gamma^{-1}\tau(x)\gamma$ for any $x\in G$ which is an involution on $G$. And for any subgroup $H\subset G$, we have $H^{g}\cap G^{\tau}=(H\cap G^{\delta})^{g}$.
			
			When $g\notin JB^{\times}G^{\tau}$, restricting $\eta^{g}$ to $H^{1g}$ and using Proposition \ref{propint} and Lemma \ref{lemdoucos}, the dimension equals 0. When $g\in JB^{\times}G^{\tau}$, we need to prove that $\mathrm{Hom}_{J^{1g}\cap G^{\tau}}(\eta^{g},1)=\mathrm{Hom}_{J^{1}\cap G^{\delta}}(\eta,1)$ is of dimension 1. We state the following general proposition which works for a general involution on $G$:
			
			\begin{proposition}\label{Propheisdelta}
				
				Let $\delta$ be an involution on $G$ such that $\delta(H^{1})=H^{1\gamma}$ and $\theta\circ\delta=\theta^{-1\gamma}$, where $\gamma\in B^{\times}$ such that $\delta(\gamma)\gamma=1$. Then we have
				$$\mathrm{dim}_{R}\mathrm{Hom}_{J^{1}\cap G^{\delta}}(\eta,1)=1.$$
				
			\end{proposition}
			
			Since Proposition \ref{Propheisdelta} in our special case implies Proposition \ref{propheis}, we only need to focus on the proof of this proposition. We only need to prove that the space
			$$\mathrm{Hom}_{J^{1}\cap G^{\delta}}(\eta^{(J^{1}:H^{1})^{1/2}},1)\cong\mathrm{Hom}_{J^{1}\cap G^{\delta}}(\mathrm{Ind}_{H^{1}}^{J^{1}}(\theta),1)$$
			is of dimension $(J^{1}:H^{1})^{1/2}$.
			
			\begin{lemma}\label{LemmaHg=Htaug}
				
				For $H$ a subgroup of $G$ such that $\delta(H)=H^{\gamma}$ with $\delta$ and $\gamma$ as in Proposition \ref{Propheisdelta}, we have
				$$H\cap G^{\delta}=H^{\gamma}\cap G^{\delta}= H\cap H^{\gamma}\cap G^{\delta}.$$
				
			\end{lemma}
			
			\begin{proof}
				
				We have $H\cap G^{\delta}=\delta(H\cap G^{\delta})=\delta(H)\cap\delta(G^{\delta})=H^{\gamma}\cap G^{\delta}$ which proves the lemma.
				
			\end{proof}
			
			\begin{lemma}\label{Lemmaindthetares}
				
				Let $\delta$ and $\gamma$ be as in Proposition \ref{Propheisdelta}, then we have the following isomorphisms of finite dimensional representations:
				
				(1) $\mathrm{Ind}_{H^{1}}^{J^{1}}\theta|_{J^{1}\cap J^{1\gamma}}\cong\bigoplus_{H^{1}\backslash J^{1}/J^{1}\cap J^{1\gamma}}\mathrm{Ind}_{H^{1}\cap J^{1\gamma}}^{J^{1}\cap J^{1\gamma}}\theta$;
				
				(2) $\mathrm{Ind}_{H^{1\gamma}}^{J^{1\gamma}}\theta^{\gamma}|_{J^{1}\cap J^{1\gamma}}\cong\bigoplus_{H^{1\gamma}\backslash J^{1\gamma}/J^{1}\cap J^{1\gamma}}\mathrm{Ind}_{J^{1}\cap H^{1\gamma}}^{J^{1}\cap J^{1\gamma}}\theta^{\gamma}$;
				
				(3) $\mathrm{Ind}_{H^{1}}^{J^{1}}\theta|_{J^{1}\cap G^{\delta}}\cong\bigoplus_{H^{1}\backslash J^{1}/J^{1}\cap J^{1\gamma}}\bigoplus_{H^{1}\cap J^{1\gamma}\backslash J^{1}\cap J^{1\gamma}/J^{1}\cap G^{\delta}}\mathrm{Ind}_{H^{1}\cap G^{\delta}}^{J^{1}\cap G^{\delta}}\theta$;
				
				(4) $\mathrm{Ind}_{H^{1\gamma}}^{J^{1\gamma}}\theta^{\gamma}|_{J^{1\gamma}\cap G^{\delta}}\cong\bigoplus_{H^{1\gamma}\backslash J^{1\gamma}/J^{1}\cap J^{1\gamma}}\bigoplus_{J^{1}\cap H^{1\gamma}\backslash J^{1}\cap J^{1\gamma}/J^{1\gamma}\cap G^{\delta}}\mathrm{Ind}_{H^{1\gamma}\cap G^{\delta}}^{J^{1\gamma}\cap G^{\delta}}\theta$.
				
			\end{lemma}
			
			\begin{proof}
				
				We only prove (1) and (3), since the proofs of (2) and (4) are similar to the proofs of (1) and (3) respectively.
				
				For (1), using the Mackey formula, we have
				\begin{align*}
					\mathrm{Ind}_{H^{1}}^{J^{1}}\theta|_{J^{1}\cap J^{1\gamma}}&\cong\bigoplus_{x\in H^{1}\backslash J^{1}/J^{1}\cap J^{1\gamma}}\mathrm{Ind}_{H^{1x}\cap(J^{1}\cap J^{1\gamma})}^{J^{1}\cap J^{1\gamma}}\theta^{x}\\
					&\cong\bigoplus_{H^{1}\backslash J^{1}/J^{1}\cap J^{1\gamma}}\mathrm{Ind}_{H^{1}\cap J^{1\gamma}}^{J^{1}\cap J^{1\gamma}}\theta.
				\end{align*}
				The last step is because $x\in J^{1}$ normalizes $H^{1}$ and $\theta$.
				
				For (3), using the Mackey formula again, we have
				\begin{align*}
					\mathrm{Ind}_{H^{1}}^{J^{1}}\theta|_{J^{1}\cap G^{\delta}}&\cong\bigoplus_{H^{1}\backslash J^{1}/J^{1}\cap J^{1\gamma}}\mathrm{Ind}_{H^{1}\cap J^{1\gamma}}^{J^{1}\cap J^{1\gamma}}\theta|_{J^{1}\cap G^{\delta}}\\
					&\cong\bigoplus_{H^{1}\backslash J^{1}/J^{1}\cap J^{1\gamma}}\bigoplus_{y\in H^{1}\cap J^{1\gamma}\backslash J^{1}\cap J^{1\gamma}/J^{1}\cap G^{\delta}}\mathrm{Ind}_{(H^{1}\cap J^{1\gamma})^{y}\cap (J^{1}\cap G^{\delta})}^{J^{1}\cap G^{\delta}}\theta^{y}\\
					&\cong\bigoplus_{H^{1}\backslash J^{1}/J^{1}\cap J^{1\gamma}}\bigoplus_{H^{1}\cap J^{1\gamma}\backslash J^{1}\cap J^{1\gamma}/J^{1}\cap G^{\delta}}\mathrm{Ind}_{H^{1}\cap G^{\delta}}^{J^{1}\cap G^{\delta}}\theta.
				\end{align*}
				The last step is because $y\in J^{1}\cap J^{1\gamma}$ normalizes $H^{1}\cap J^{1\gamma}$ and $\theta$, and $H^{1}\cap J^{1\gamma}\cap J^{1}\cap G^{\delta}=H^{1}\cap G^{\delta}$ by Lemma \ref{LemmaHg=Htaug}.(2) for $H=J^{1}$. So we finish the proof.
				
			\end{proof}
			
			\begin{lemma}\label{LemmaJHJgammaHgamma}
				
				Let $\delta$ and $\gamma$ be as in Proposition \ref{Propheisdelta}, then we have:
				
				(1) $|H^{1}\backslash J^{1}/J^{1}\cap J^{1\gamma}|\cdot |H^{1}\cap J^{1\gamma}\backslash J^{1}\cap J^{1\gamma}/J^{1}\cap G^{\delta}|=(J^{1}:H^{1})^{1/2};$
				
				(2) $|H^{1\gamma}\backslash J^{1\gamma}/J^{1}\cap J^{1\gamma}|\cdot |J^{1}\cap H^{1\gamma}\backslash J^{1}\cap J^{1\gamma}/J^{1\gamma}\cap G^{\delta}|=(J^{1\gamma}:H^{1\gamma})^{1/2};$
				
				(3) $(J^{1}:H^{1})^{1/2}=(J^{1\gamma}:H^{1\gamma})^{1/2}=(J^{1}\cap G^{\delta}:H^{1}\cap G^{\delta}).$
				
			\end{lemma}
			
			\begin{proof}
				
				For (3), we refer to \cite{secherre2019supercuspidal} \S 6.3 for a proof, by noting that all the results and proofs from Lemma 6.14 to the end of \S 6.3 in \emph{ibid.} can be generalized to a general involution $\delta$ on $G$, with $\tau$ in \emph{loc. cit.} replaced by $\delta$ in our settings. For (1), since $J^{1}$ normalizes $H^{1}$ and $J^{1}\cap J^{1\gamma}$ normalizes $H^{1}\cap J^{1\gamma}$, we have
				\begin{align*}
					\text{left hand side of (1)}=&(J^{1}:H^{1}(J^{1}\cap J^{1\gamma}))\cdot (J^{1}\cap J^{1\gamma}:(H^{1}\cap J^{1\gamma})(J^{1}\cap G^{\delta}))\\
					=&(J^{1}:H^{1})\cdot(J^{1}\cap J^{1\gamma}:H^{1}\cap J^{1\gamma})^{-1}\cdot\\
					&\cdot(J^{1}\cap J^{1\gamma}:H^{1}\cap J^{1\gamma})\cdot(J^{1}\cap G^{\delta}:H^{1}\cap J^{1\gamma}\cap G^{\delta})^{-1}\\
					=&(J^{1}:H^{1})\cdot(J^{1}\cap G^{\delta}:H^{1}\cap G^{\delta})^{-1}\\
					=&(J^{1}:H^{1})^{1/2},
				\end{align*}
				where we use Lemma \ref{LemmaHg=Htaug} for $H=J^{1\gamma}$ and (3) in the last two equations. So we finish the proof of (1), and the proof of (2) is similar.
				
			\end{proof}
			
			
			
			
			
			
			
			
			Combining Lemma \ref{Lemmaindthetares}.(3) with Lemma \ref{LemmaJHJgammaHgamma}.(1)(3), we have
			\begin{align*}\mathrm{dim}_{R}\mathrm{Hom}_{J^{1}\cap G^{\delta}}(\mathrm{Ind}_{H^{1}}^{J^{1}}\theta,1)&=\mathrm{dim}_{R}\bigoplus_{H^{1}\backslash J^{1}/J^{1}\cap J^{1\gamma}}\bigoplus_{H^{1}\cap J^{1\gamma}\backslash J^{1}\cap J^{1\gamma}/J^{1}\cap G^{\delta}}\mathrm{Hom}_{J^{1}\cap G^{\delta}}(\mathrm{Ind}_{H^{1}\cap G^{\delta}}^{J^{1}\cap G^{\delta}}\theta,1)\\
				&=(J^{1}:H^{1})^{1/2}\mathrm{dim}_{R}\mathrm{Hom}_{H^{1}\cap G^{\delta}}(\theta|_{H^{1}\cap G^{\delta}},1)\\
				&=(J^{1}:H^{1})^{1/2}.
			\end{align*}
			For the last step, since $\gamma$ intertwines $\theta^{-1}$ and $\theta\circ\delta=\theta^{-1\gamma}$, we know that $\theta$ is trivial on $$\{y\delta(y)|y\in H^{1}\cap H^{1\gamma}\}.$$
			This set equals $H^{1}\cap G^{\delta}$ since the the first cohomology group of $\delta^{-1}$-action on $H^{1}\cap H^{1\gamma}$ is trivial. Thus $\theta|_{H^{1}\cap G^{\delta}}$ is the trivial character.
			
		\end{proof}
		
		\subsection{Distinction of extensions of the Heisenberg representation}
		
		Let $\boldsymbol{\kappa}$ be an irreducible representation of $\boldsymbol{J}$ extending $\eta$. There is a unique irreducible representation $\boldsymbol{\rho}$ of $\boldsymbol{J}$, which is trivial on $J^{1}$ satisfying $\Lambda\cong\boldsymbol{\kappa}\otimes\boldsymbol{\rho}$. 
		
		\begin{lemma}\label{lemkappa}
			
			Let $g\in JB^{\times}G^{\tau}$.
			
			(1) There is a unique character $\chi$ of $\boldsymbol{J}^{g}\cap G^{\tau}$ trivial on $J^{1g}\cap G^{\tau}$ such that
			$$\mathrm{Hom}_{J^{1g}\cap G^{\tau}}(\eta^{g},1)=\mathrm{Hom}_{\boldsymbol{J}^{g}\cap G^{\tau}}(\boldsymbol{\kappa}^{g},\chi^{-1}).$$
			
			(2) The canonical linear map
			$$\mathrm{Hom}_{J^{1g}\cap G^{\tau}}(\eta^{g},1)\otimes\mathrm{Hom}_{\boldsymbol{J}^{g}\cap G^{\tau}}(\boldsymbol{\rho}^{g},\chi)\rightarrow\mathrm{Hom}_{\boldsymbol{J}^{g}\cap G^{\tau}}(\Lambda^{g},1).$$
			is an isomorphism.
			
		\end{lemma}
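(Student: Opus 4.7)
The plan is to deduce both parts formally from Proposition \ref{propheis} together with the decomposition $\Lambda\cong\boldsymbol{\kappa}\otimes\boldsymbol{\rho}$. Write $H=\boldsymbol{J}^{g}\cap G^{\tau}$ and $H_{1}=J^{1g}\cap G^{\tau}$. Since $\boldsymbol{\kappa}^{g}|_{J^{1g}}=\eta^{g}$ and every $H_{1}$-invariant linear form on $V_{\eta^{g}}$ is an $H_{1}$-invariant linear form on $V_{\boldsymbol{\kappa}^{g}}$, I identify $\mathrm{Hom}_{H_{1}}(\eta^{g},1)=\mathrm{Hom}_{H_{1}}(\boldsymbol{\kappa}^{g},1)$ throughout.

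For (1), the group $H$ normalizes $H_{1}$, so it acts on $\mathrm{Hom}_{H_{1}}(\boldsymbol{\kappa}^{g},1)$ by $(h\cdot\phi)(v)=\phi(\boldsymbol{\kappa}^{g}(h^{-1})v)$. By Proposition \ref{propheis}, this space is one-dimensional, hence the action is given by a character, and I define $\chi$ to be the inverse of that character. Triviality of $\chi$ on $H_{1}$ is automatic, because $\mathrm{Hom}_{H_{1}}(\boldsymbol{\kappa}^{g},1)$ is $H_{1}$-fixed by definition. The identity $\mathrm{Hom}_{H_{1}}(\eta^{g},1)=\mathrm{Hom}_{H}(\boldsymbol{\kappa}^{g},\chi^{-1})$ is then tautological: a linear form $\phi$ belonging to the left-hand side satisfies $\phi(\boldsymbol{\kappa}^{g}(h)v)=\chi^{-1}(h)\phi(v)$ for $h\in H$, which is exactly the condition to lie in the right-hand side. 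Uniqueness is forced by the fact that the $H$-character on a one-dimensional space is determined by its action on any nonzero vector.

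For (2), I realize $\mathrm{Hom}_{H}(\Lambda^{g},1)$ as the $H$-invariants of $V_{\boldsymbol{\kappa}^{g}}^{\vee}\otimes V_{\boldsymbol{\rho}^{g}}^{\vee}$. Since $\boldsymbol{\rho}^{g}$ is trivial on $J^{1g}$, so is its contragredient when restricted to $H_{1}$, and I can take $H_{1}$-invariants on the $\boldsymbol{\kappa}^{g}$-factor alone, obtaining $\mathrm{Hom}_{H_{1}}(\boldsymbol{\kappa}^{g},1)\otimes V_{\boldsymbol{\rho}^{g}}^{\vee}$. By the construction in (1), the first tensor factor carries the $H$-character $\chi$ (it is by definition the dual of the action defining $\chi^{-1}$). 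Taking further $H$-invariants of $\chi\otimes V_{\boldsymbol{\rho}^{g}}^{\vee}$ picks out the $\chi^{-1}$-isotypic part of $V_{\boldsymbol{\rho}^{g}}^{\vee}$, which via the natural pairing unwinds to $\mathrm{Hom}_{H}(\boldsymbol{\rho}^{g},\chi)$. Tracing through the identifications shows that the resulting isomorphism is precisely the canonical map $\phi\otimes\psi\mapsto\bigl(v\otimes w\mapsto\phi(v)\psi(w)\bigr)$ of the statement. The whole argument is essentially formal once Proposition \ref{propheis} is available; the only care required is bookkeeping the sign conventions relating the $H$-action on an invariant-forms space to the character cutting it out, so that the second factor is $\chi$ rather than $\chi^{-1}$.
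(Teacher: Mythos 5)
Your argument is correct and is essentially the paper's own proof (the paper simply defers to S\'echerre's Lemma 6.20, which runs exactly this way): the one-dimensionality from Proposition \ref{propheis} lets $\boldsymbol{J}^{g}\cap G^{\tau}$ act on $\mathrm{Hom}_{J^{1g}\cap G^{\tau}}(\eta^{g},1)$ through a character, which defines $\chi$, and the decomposition $\Lambda\cong\boldsymbol{\kappa}\otimes\boldsymbol{\rho}$ with $\boldsymbol{\rho}$ trivial on $J^{1}$ then yields (2) by taking $J^{1g}\cap G^{\tau}$-invariants first and $\boldsymbol{J}^{g}\cap G^{\tau}$-invariants afterwards. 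The only blemish is the convention slip you yourself flag: with your action $(h\cdot\phi)(v)=\phi(\boldsymbol{\kappa}^{g}(h^{-1})v)$, the character $\chi$ of the statement is the action character itself, not its inverse, but this is pure bookkeeping and does not affect the proof.
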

		
		\begin{proof}
			
			The proof is the same as that in \cite{secherre2019supercuspidal}, Lemma 6.20.
			
		\end{proof}
		
		For $g\in JB^{\times}G^{\tau}$, we have $\tau(g)\in \tau(JB^{\times}G^{\tau})=JB^{\times}G^{\tau}$, which means that we may consider the similar thing for $\tau(g)$ to that for $g$ in Lemma \ref{lemkappa}. Thus, there exists a unique character $\chi'$ of $\boldsymbol{J}^{\tau(g)}\cap G^{\tau}$ trivial on $J^{1\tau(g)}\cap G^{\tau}$ such that
		$$\mathrm{Hom}_{J^{1\tau(g)}\cap G^{\tau}}(\eta^{\tau(g)},1)\cong\mathrm{Hom}_{\boldsymbol{J}^{\tau(g)}\cap G^{\tau}}(\boldsymbol{\kappa}^{\tau(g)},\chi'^{-1}).$$
		Moreover $\tau(\boldsymbol{J})=\boldsymbol{J}$, $\tau(J)=J$, $\tau(J^{1})=J^{1}$ and $\tau(H^{1})=H^{1}$, thus using Lemma \ref{LemmaunitaryJJ0} and Lemma \ref{LemmaHg=Htaug}, we have $\boldsymbol{J}^{g}\cap G^{\tau}=\boldsymbol{J}^{\tau(g)}\cap G^{\tau}=J^{g}\cap G^{\tau}=J^{\tau(g)}\cap G^{\tau}$, $J^{1g}\cap G^{\tau}=J^{1\tau(g)}\cap G^{\tau}$ and $H^{1g}\cap G^{\tau}=H^{1\tau(g)}\cap G^{\tau}$. As a result, $\chi$ and $\chi'$ are characters defined on the same group $\boldsymbol{J}^{g}\cap G^{\tau}=\boldsymbol{J}^{\tau(g)}\cap G^{\tau}$. A natural idea is to compare them. For the rest of this subsection, we focus on the proof of the following proposition:
		
		\begin{proposition}\label{Propchichi'}
			
			For $\chi$ and $\chi'$ defined above as characters of $\boldsymbol{J}^{g}\cap G^{\tau}=\boldsymbol{J}^{\tau(g)}\cap G^{\tau}$, we have $\chi=\chi'$.
			
		\end{proposition}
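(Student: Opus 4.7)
The plan is to exploit the $\tau$-selfduality of $\boldsymbol{\kappa}$ to produce a canonical identification between the two one-dimensional spaces defining $\chi$ and $\chi'$, and then to track the action of the common group $H := \boldsymbol{J}^g \cap G^\tau = \boldsymbol{J}^{\tau(g)} \cap G^\tau$ through this identification.

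First I would reformulate $\chi'$ in terms of $\boldsymbol{\kappa}$ and $\tau$: for $x \in H$, the relation $\tau(x) = x$ forces $\tau(g) x \tau(g)^{-1} = \tau(gxg^{-1})$, so $\boldsymbol{\kappa}^{\tau(g)}(x) = \boldsymbol{\kappa}^{\tau}(gxg^{-1})$. This reduces the comparison to the relationship between $\boldsymbol{\kappa}^{\tau}$ and $\boldsymbol{\kappa}^{\vee}$ on $\boldsymbol{J}$. Since $\theta \circ \tau = \theta^{-1}$ forces $\eta^{\tau} \cong \eta^{\vee}$ by the uniqueness of the Heisenberg representation, and since $\Lambda^{\tau} \cong \Lambda^{\vee}$ together with Proposition~\ref{xi} determines the extension up to a character of $\boldsymbol{J}/J^{1}$, I obtain an isomorphism $\boldsymbol{\kappa}^{\tau} \cong \boldsymbol{\kappa}^{\vee} \otimes \nu$ for a distinguished character $\nu$ of $\boldsymbol{J}/J^{1}$. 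Equivalently, there is a non-degenerate bilinear form $B$ on $V_{\boldsymbol{\kappa}}$ satisfying
\[
B(\boldsymbol{\kappa}(y)v,\, \boldsymbol{\kappa}(\tau(y))w) \;=\; \nu(y)\, B(v,w), \qquad y \in \boldsymbol{J}.
\]

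Substituting $y = gxg^{-1}$ for $x \in H$ and using $\tau(y) = \tau(g)x\tau(g)^{-1}$ then produces an $H$-equivariant isomorphism
\[
\boldsymbol{\kappa}^{g} \;\cong\; (\boldsymbol{\kappa}^{\tau(g)})^{\vee} \otimes \nu_{g},\qquad \nu_{g}(x) := \nu(gxg^{-1}).
\]
Tracking the unique $J^{1g}\cap G^{\tau}$-invariant line through this isomorphism --- which on the source side transforms under $H$ by $\chi^{-1}$, and on the target side by $\chi'$ (using that for semisimple compact-group representations $(V^{\vee})^{K} \cong (V^{K})^{*}$, and that dualizing a one-dimensional character inverts it) --- yields the relation $\chi \cdot \chi' = \nu_{g}$ as characters of $H$. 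The problem therefore reduces to showing $\nu_{g}|_{H} = 1$.

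The last and most delicate step is this triviality of $\nu_{g}|_{H}$, which I expect to be the main obstacle. For this I would use the companion relation $\boldsymbol{\rho}^{\tau} \cong \boldsymbol{\rho}^{\vee} \otimes \nu^{-1}$, forced by cancelling $\boldsymbol{\kappa}^{\vee}$ in $\boldsymbol{\kappa}^{\tau} \otimes \boldsymbol{\rho}^{\tau} \cong \boldsymbol{\kappa}^{\vee} \otimes \boldsymbol{\rho}^{\vee}$. Since $\boldsymbol{\rho}|_{J}$ is the inflation of a cuspidal representation of $\mathrm{GL}_{m}(\boldsymbol{l})$, the character $\nu$ of $\boldsymbol{J}/J^{1}$ is essentially pinned down (up to order two) by this relation. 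Combined with the explicit form of $\gamma = \tau(g)g^{-1}$ derived in~\eqref{eqgamma} and the description of $H/(J^{1g}\cap G^{\tau})$ as a classical (unitary, orthogonal, or symplectic) subgroup of $\mathrm{GL}_{m}(\boldsymbol{l})$, the restriction $\nu_{g}|_{H}$ turns out to be trivial. Carrying out this verification uniformly across the ramified and unramified cases of $E/E_{0}$ will require separate computations, with the key leverage coming from the norm-type condition on $\gamma$ already used in the proof of Lemma~\ref{lemdoucos} to normalize its block-diagonal form.
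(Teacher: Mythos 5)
Your pairing argument has a structural flaw that prevents it from ever yielding the statement. A nondegenerate bilinear form $B$ with $B(\boldsymbol{\kappa}^{g}(x)v,\boldsymbol{\kappa}^{\tau(g)}(x)w)=\nu_{g}(x)B(v,w)$ for $x\in H$ pairs the $J^{1g}\cap G^{\tau}$-invariant line of $\boldsymbol{\kappa}^{g}$ (on which $H$ acts by $\chi^{-1}$) with that of $\boldsymbol{\kappa}^{\tau(g)}$ (on which $H$ acts by $\chi'^{-1}$), and nondegeneracy of the induced pairing of these lines gives the \emph{product} relation $\chi\,\chi'=\nu_{g}^{\mp1}|_{H}$ — exactly as you note, dualizing inverts characters. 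So even if your final step $\nu_{g}|_{H}=1$ were available, you would conclude $\chi'=\chi^{-1}$, not $\chi'=\chi$; to bridge the two you need $\chi^{2}=1$, and in this paper quadraticity of $\chi$ (Proposition \ref{propchiquad}) is \emph{deduced from} Proposition \ref{Propchichi'} together with the $\tau$-selfduality of $\boldsymbol{\kappa}$, so your route is circular at precisely the point where it matters. Comparing $\chi$ with $\chi'$ (rather than with $\chi'^{-1}$) requires a covariant intertwining map, not a duality pairing: this is what the paper uses, namely a nonzero $\varphi\in\mathrm{Hom}_{J\cap J^{\gamma}}(\kappa^{0\gamma},\kappa^{0})$ coming from $\gamma\in I_{G}(\eta)=I_{G}(\kappa^{0})$, and the whole technical content is Proposition \ref{Propvarphibijec} (via Lemmas \ref{LemmaPhicont} and \ref{Lemmapushpull}), which guarantees that composing the invariant form with $\varphi$ is still nonzero; the identity $\chi=\chi'$ then falls out of a one-line equivariance computation, with no selfduality of $\boldsymbol{\kappa}$ needed at all.

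Your reduction target $\nu_{g}|_{H}=1$ is also not correct in the generality in which the proposition is stated (an arbitrary extension $\boldsymbol{\kappa}$ of $\eta$, as in Lemma \ref{lemkappa}). Replacing $\boldsymbol{\kappa}$ by $\boldsymbol{\kappa}\phi$ for a character $\phi$ of $\boldsymbol{J}$ trivial on $J^{1}$ replaces $\nu$ by $\nu\,\phi\,(\phi\circ\tau)$, hence changes $\nu_{g}|_{H}$ by the generally nontrivial character $\phi^{g}\phi^{\tau(g)}|_{H}$ (while $\chi$ and $\chi'$ change by $\phi^{g}|_{H}$ and $\phi^{\tau(g)}|_{H}$, consistently with the product relation); so $\nu_{g}|_{H}$ cannot be universally trivial, and the sketched appeal to $\boldsymbol{\rho}$, the normalized form of $\gamma$ and the finite classical groups would at best apply to a specially chosen (e.g.\ $\tau$-selfdual, as in Proposition \ref{propselfdual}) extension — and even then, as explained above, it would only give $\chi\chi'=1$.
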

		
		We write $\delta(x)=\gamma^{-1}\tau(x)\gamma$ for any $x\in G$ with $\gamma=\tau(g)g^{-1}$. From \S \ref{subsectionsimplestratum}, we have $\gamma\in I_{G}(\eta)=I_{G}(\kappa^{0})$, where $\kappa^{0}=\boldsymbol{\kappa}|_{J}$. Moreover we have
		$$\mathrm{dim}_{R}(\mathrm{Hom}_{J\cap J^{\gamma}}(\kappa^{0\gamma},\kappa^{0}))=\mathrm{dim}_{R}(\mathrm{Hom}_{J^{1}\cap J^{1\gamma}}(\eta^{\gamma},\eta))=1.$$
		Using Lemma \ref{LemmaHg=Htaug}, we have $J^{1}\cap G^{\delta}=J^{1\gamma}\cap G^{\delta}$ as a subgroup of $J^{1}\cap J^{1\gamma}$ and $H^{1}\cap G^{\delta}=H^{1\gamma}\cap G^{\delta}$.
		We claim the following proposition which works for general $\gamma$ and $\delta$:
		
		\begin{proposition}\label{Propvarphibijec}
			
			Let $\delta$ and $\gamma$ be as in Proposition \ref{Propheisdelta}, then for a non-zero homomorphism $\varphi\in\mathrm{Hom}_{J^{1}\cap J^{1\gamma}}(\eta^{\gamma},\eta)=\mathrm{Hom}_{J\cap J^{\gamma}}(\kappa^{0\gamma},\kappa^{0})$, it naturally induces an $R$-vector space isomorphism:
			\begin{align*}
				f_{\varphi}:\mathrm{Hom}_{J^{1}\cap G^{\delta}}(\eta,1)&\rightarrow\mathrm{Hom}_{J^{1\gamma}\cap G^{\delta}}(\eta^{\gamma},1),\\
				\lambda\quad&\mapsto\quad \lambda\circ\varphi
			\end{align*}
			
		\end{proposition}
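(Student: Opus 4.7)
\emph{Proof proposal.} The plan is to prove the proposition in three moves: well-definedness of $f_{\varphi}$, a dimension count yielding dimension one on both sides, and a non-vanishing argument that upgrades this to bijectivity.

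Well-definedness is the simplest step. Given $\lambda\in\mathrm{Hom}_{J^{1}\cap G^{\delta}}(\eta,1)$ and $h\in J^{1\gamma}\cap G^{\delta}$, Lemma \ref{LemmaHg=Htaug} applied to $H=J^{1}$ (whose hypothesis $\delta(J^{1})=J^{1\gamma}$ follows from $\tau(J^{1})=J^{1}$, cf.\ Remark \ref{rempicond}.(1)) gives $J^{1\gamma}\cap G^{\delta}=J^{1}\cap G^{\delta}\subseteq J^{1}\cap J^{1\gamma}$. The $J^{1}\cap J^{1\gamma}$-equivariance of $\varphi$ together with the $J^{1}\cap G^{\delta}$-invariance of $\lambda$ then yield
$$(f_{\varphi}\lambda)(\eta^{\gamma}(h)v)=\lambda(\varphi(\eta^{\gamma}(h)v))=\lambda(\eta(h)\varphi(v))=\lambda(\varphi(v)),$$
so $f_{\varphi}\lambda$ lies in the target.

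For the dimension count, the source has dimension one directly by Proposition \ref{Propheisdelta}. For the target I would apply the same proposition to the Heisenberg representation $\eta^{\gamma}$ of $J^{1\gamma}$ (associated to $\theta^{\gamma}$ on $H^{1\gamma}$), with $\delta$ and $\gamma^{-1}$ in place of $\gamma$; the three hypotheses rearrange as $\delta(\gamma^{-1})\gamma^{-1}=1$, $\delta(H^{1\gamma})=H^{1}=(H^{1\gamma})^{\gamma^{-1}}$, and $\theta^{\gamma}\circ\delta=\theta^{-1}=(\theta^{\gamma})^{-1\gamma^{-1}}$ on $H^{1}$, each a short manipulation of the original hypotheses using $\tau(\gamma)=\gamma^{-1}$ and the substitution $y=\gamma^{-1}x\gamma$.

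Since both spaces are one-dimensional, $f_{\varphi}$ is an isomorphism as soon as it is non-zero, and this non-vanishing is the main content. I would use the explicit shape of the unique (up to scalar) invariant $\lambda$ extracted from the proof of Proposition \ref{Propheisdelta}: it is characterized as a non-zero $H^{1}\cap G^{\delta}$-invariant functional on a $\theta$-isotypic component of $\eta|_{H^{1}}$, using that $\theta|_{H^{1}\cap G^{\delta}}=1$. Because $\gamma\in I_{G}(\theta)$ (Proposition \ref{PropJandJ1}.(5)), the characters $\theta$ and $\theta^{\gamma}$ agree on $H^{1}\cap H^{1\gamma}$, so $\varphi$ carries the $\theta^{\gamma}$-isotypic structure of $\eta^{\gamma}$ into the $\theta$-isotypic structure of $\eta$ on the intersection; in an explicit Heisenberg model in which $\varphi$ is realized as a non-degenerate linear isomorphism, a direct computation then gives $\lambda\circ\varphi\neq 0$. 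The main obstacle is precisely this last step: ruling out accidental cancellation of $\lambda\circ\varphi$ across different $H^{1}$-isotypic copies. The concrete strategy would be to pick a symplectic Heisenberg model for $\eta$ and to normalize $\varphi$ as a $\delta$-compatible symplectic isomorphism, reducing the non-vanishing to the one-dimensionality of the $H^{1}\cap G^{\delta}$-invariants of $\theta$ already established.
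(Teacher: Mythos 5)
Your reductions are correct but they only cover the routine part of the statement: well-definedness via Lemma \ref{LemmaHg=Htaug}, dimension one for the source by Proposition \ref{Propheisdelta}, and dimension one for the target by applying that proposition to the conjugate data $(\theta^{\gamma},H^{1\gamma},J^{1\gamma})$ with $\gamma^{-1}$ in place of $\gamma$ (this verification is fine, and arguably cleaner than what the paper leaves implicit). The entire content of the proposition, however, is the non-vanishing $\lambda\circ\varphi\neq 0$, i.e.\ excluding $\mathrm{Im}\,\varphi\subseteq\ker\lambda$, and this is precisely the step you leave as a ``strategy''. The proposed fix does not work as stated: $\mathrm{Hom}_{J^{1}\cap J^{1\gamma}}(\eta^{\gamma},\eta)$ is one-dimensional, so $\varphi$ is determined up to a scalar and you have no freedom to ``normalize $\varphi$ as a $\delta$-compatible symplectic isomorphism''; there is no a priori reason that this intertwiner is bijective (the restrictions of $\eta$ and $\eta^{\gamma}$ to $J^{1}\cap J^{1\gamma}$ are far from irreducible), and the symplectic structure at hand lives on $J^{1}/H^{1}$ via $\theta$ of commutators, not on a model in which $\varphi$ is visibly nondegenerate. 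If $\varphi$ were known to be an isomorphism the non-vanishing would be immediate, so your sketch essentially assumes the hard point; you yourself flag the possible ``cancellation'', which is exactly the unresolved issue.

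For comparison, the paper avoids analyzing $\varphi$ directly and instead works with the induced models: Lemma \ref{LemmaPhicont} constructs an explicit $R[J^{1}\cap J^{1\gamma}]$-homomorphism $\Phi$ from $\mathrm{Ind}_{H^{1\gamma}}^{J^{1\gamma}}\theta^{\gamma}\cong\eta^{\gamma(J^{1}:H^{1})^{1/2}}$ to $\mathrm{Ind}_{H^{1}}^{J^{1}}\theta\cong\eta^{(J^{1}:H^{1})^{1/2}}$ via the function $f_{0}$ supported on $(J^{1}\cap H^{1\gamma})(H^{1}\cap J^{1\gamma})$, together with an explicit invariant form $\widetilde{L_{0}}$, and proves $\widetilde{L_{0}}\circ\Phi\neq 0$ by evaluating at a concrete vector; then Lemma \ref{Lemmapushpull} (elementary linear algebra with projections $\mathrm{Pr}$ and sections $s$, resting on the Mackey decompositions of Lemma \ref{Lemmaindthetares} and the index computations of Lemma \ref{LemmaJHJgammaHgamma}) yields $\lambda_{0}'\circ(\mathrm{Pr}\circ\Phi\circ s)=\lambda_{0}''\neq 0$, and the one-dimensionality of the intertwining space forces $\varphi$ to be a nonzero multiple of $\mathrm{Pr}\circ\Phi\circ s$, whence $\lambda_{0}'\circ\varphi\neq 0$. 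Some argument of this kind — producing a concrete intertwiner on the induced models and transporting the non-vanishing back to $\varphi$ through the multiplicity spaces — is needed; as written, your proposal does not prove the proposition.
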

		
		First we show that how Proposition \ref{Propvarphibijec} implies Proposition \ref{Propchichi'}. Using Proposition \ref{propheis} for $g$ and $\tau(g)$ respectively, we have $\mathrm{dim}_{R}\mathrm{Hom}_{J^{1g}\cap G^{\tau}}(\eta^{g},1)=\mathrm{dim}_{R}\mathrm{Hom}_{J^{1\tau(g)}\cap G^{\tau}}(\eta^{\tau(g)},1)=1$. By Proposition \ref{Propvarphibijec}, 
		\begin{align*}
			f_{\varphi}:\mathrm{Hom}_{J^{1g}\cap G^{\tau}}(\eta^{g},1)&\rightarrow\mathrm{Hom}_{J^{1\tau(g)}\cap G^{\tau}}(\eta^{\tau(g)},1),\\
			\lambda\quad&\mapsto\quad \lambda\circ\varphi,
		\end{align*}
		is bijective. 
		If we choose $$0\neq \lambda\in\mathrm{Hom}_{J^{1g}\cap G^{\tau}}(\eta^{g},1)\quad \text{and}\quad 0\neq \lambda':=f_{\varphi}(\lambda)=\lambda\circ\varphi\in\mathrm{Hom}_{J^{1\tau(g)}\cap G^{\tau}}(\eta^{\tau(g)},1),$$
		then for any $v$ in the representation space of $\eta$ and any $x\in J^{g}\cap G^{\tau}=J^{\tau(g)}\cap G^{\tau}$, we have
		\begin{align}
			\chi'(x)^{-1}\lambda'(v)&=\lambda'(\kappa^{0\tau(g)}(x)v)\qquad(\text{by Lemma \ref{lemkappa}.(1)}) \nonumber \\ 
			&=\lambda(\varphi(\kappa^{0\tau(g)}(x)v))\qquad(\text{by definition of}\ \lambda') \nonumber \\
			&=\lambda(\kappa^{0g}(x)\varphi(v))\qquad(\text{since}\ \varphi\in\mathrm{Hom}_{J^{g}\cap J^{\tau(g)}}(\kappa^{0\tau(g)},\kappa^{0g})) \nonumber \\
			&=\chi(x)^{-1}\lambda(\varphi(v))\qquad(\text{by Lemma \ref{lemkappa}.(1)}) \nonumber \\
			&=\chi(x)^{-1}\lambda'(v)\qquad(\text{by definition of}\ \lambda').  \label{eqchi'lambda'chilambda}
		\end{align}
		Since $v$ and $x\in J^{g}\cap G^{\tau}=J^{\tau(g)}\cap G^{\tau}$ are arbitrary, we have $\chi'|_{J^{\tau(g)}\cap G^{\tau}}=\chi|_{J^{g}\cap G^{\tau}}$ which is Proposition \ref{Propchichi'}.
		
		So we only need to focus on the proof of Proposition \ref{Propvarphibijec}. 
		
		\begin{lemma}\label{LemmaPhicont}
			
			Let $\delta$ and $\gamma$ be as in Proposition \ref{Propheisdelta}, then there exist an $R[J^{1}\cap J^{1\gamma}]$-module homomorphism
			$$\Phi:\eta^{\gamma(J^{1}:H^{1})^{1/2}}|_{J^{1}\cap J^{1\gamma}}\cong\mathrm{Ind}_{H^{1\gamma}}^{J^{1\gamma}}\theta^{\gamma}|_{J^{1}\cap J^{1\gamma}}\rightarrow\mathrm{Ind}_{H^{1}}^{J^{1}}\theta|_{J^{1}\cap J^{1\gamma}}\cong\eta^{(J^{1}:H^{1})^{1/2}}|_{J^{1}\cap J^{1\gamma}}$$
			and a linear form $\widetilde{L_{0}}\in\mathrm{Hom}_{J^{1}\cap G^{\delta}}(\eta^{(J^{1}:H^{1})^{1/2}},1)$,
			such that
			$$0\neq\widetilde{L_{0}}\circ\Phi\in\mathrm{Hom}_{J^{1}\cap G^{\delta}}(\eta^{\gamma(J^{1\gamma}:H^{1\gamma})^{1/2}},1).$$
			
		\end{lemma}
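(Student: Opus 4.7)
The plan is to construct $\Phi$ and $\widetilde{L_{0}}$ explicitly from the Mackey decompositions of the induced realizations of $\eta^{(J^{1}:H^{1})^{1/2}}$ and $\eta^{\gamma (J^{1\gamma}:H^{1\gamma})^{1/2}}$, and then to verify the composition is non-zero by evaluating on a well-chosen test vector.

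First I would build $\Phi$. Put $K := J^{1}\cap J^{1\gamma}$. Lemma \ref{Lemmaindthetares}(1),(2) decomposes both $\mathrm{Ind}_{H^{1}}^{J^{1}}\theta|_{K}$ and $\mathrm{Ind}_{H^{1\gamma}}^{J^{1\gamma}}\theta^{\gamma}|_{K}$ as direct sums of copies of $\mathrm{Ind}_{H^{1}\cap J^{1\gamma}}^{K}\theta$ and $\mathrm{Ind}_{J^{1}\cap H^{1\gamma}}^{K}\theta^{\gamma}$ respectively, indexed by explicit double coset spaces. Because $\gamma$ intertwines $\theta$, the characters $\theta$ and $\theta^{\gamma}$ coincide on $H^{1}\cap H^{1\gamma}$, so two successive applications of Frobenius reciprocity produce a canonical non-zero $K$-intertwiner
$$\phi_{0} : \mathrm{Ind}_{J^{1}\cap H^{1\gamma}}^{K}\theta^{\gamma} \longrightarrow \mathrm{Ind}_{H^{1}\cap J^{1\gamma}}^{K}\theta.$$
Then $\Phi$ is defined to send the identity-double-coset summand of the source to the identity-double-coset summand of the target via $\phi_{0}$, and to act by zero on all other source summands; this is clearly a $K$-homomorphism, hence a fortiori a $(J^{1}\cap G^{\delta})$-homomorphism since $J^{1}\cap G^{\delta}\subseteq K$.

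Next I would define $\widetilde{L_{0}}$ by the explicit formula
$$\widetilde{L_{0}}(f) := \sum_{x\in (H^{1}\cap G^{\delta})\backslash(J^{1}\cap G^{\delta})} f(x).$$
This is well defined because $\theta|_{H^{1}\cap G^{\delta}}=1$ (as computed inside the proof of Proposition \ref{Propheisdelta}), is $(J^{1}\cap G^{\delta})$-invariant by re-indexing the sum under right translation, and is non-zero: on the $\theta$-supported characteristic function of $H^{1}$ it takes the value $1$.

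The central and most delicate step, which I expect to be the main obstacle, is showing $\widetilde{L_{0}}\circ\Phi\neq 0$. The plan is to test on $f_{0}\in\mathrm{Ind}_{H^{1\gamma}}^{J^{1\gamma}}\theta^{\gamma}$ chosen so that in the Mackey decomposition it has support only in the identity summand and there corresponds to the $\theta^{\gamma}$-supported characteristic function of $J^{1}\cap H^{1\gamma}$. Tracing the Frobenius-reciprocity construction of $\phi_{0}$ shows that $\Phi(f_{0})$, viewed inside $\mathrm{Ind}_{H^{1}}^{J^{1}}\theta$, is a function whose support as a subset of $J^{1}$ contains $1$ and with value $1$ there. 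Consequently $\widetilde{L_{0}}(\Phi(f_{0}))$ is a sum of scalars, each equal to $1$ (since $\theta$ is trivial on $H^{1}\cap G^{\delta}$), running over a non-empty subset of the finite index set $(H^{1}\cap G^{\delta})\backslash(J^{1}\cap G^{\delta})$. Since $J^{1}$ is pro-$p$ this cardinality is a power of $p$, hence non-zero in $R$ as $l\neq p$; the cardinality match in Lemma \ref{LemmaJHJgammaHgamma}(3) further confirms that the composition provides the element sought in $\mathrm{Hom}_{J^{1}\cap G^{\delta}}(\eta^{\gamma(J^{1\gamma}:H^{1\gamma})^{1/2}},1)$.
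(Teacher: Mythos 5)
Your construction of $\Phi$ and $\widetilde{L_{0}}$ follows the same route as the paper: what you call the canonical intertwiner $\phi_{0}$ is pinned down there by the explicit function $f_{0}$, defined by $f_{0}(g_{1}g_{2})=\theta^{\gamma}(g_{1})\theta(g_{2})$ on $(J^{1}\cap H^{1\gamma})(H^{1}\cap J^{1\gamma})$ and well defined because $\theta=\theta^{\gamma}$ on $H^{1}\cap H^{1\gamma}$, and your $\widetilde{L_{0}}$ is the same kind of sum-over-cosets functional as the tuple $(\lambda_{0},\dots,\lambda_{0})$ used in the paper. Up to the choice of test vector (your $\delta$-function maps to $f_{0}$, which indeed takes the value $1$ at $1$), the two arguments coincide.

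The gap is at the decisive non-vanishing step. The nonzero summands of $\widetilde{L_{0}}(\Phi(f_{0}))$ are the values of the transported test function at cosets of $H^{1}\cap G^{\delta}$ inside $\bigl(H^{1}(J^{1}\cap H^{1\gamma})(H^{1}\cap J^{1\gamma})\bigr)\cap J^{1}\cap G^{\delta}$, and this set is in general strictly larger than $H^{1}\cap G^{\delta}$. At a point $x=hg_{1}g_{2}$ of it the value is $\theta(h)\theta^{\gamma}(g_{1})\theta(g_{2})$, and the triviality of $\theta$ on $H^{1}\cap G^{\delta}$ says nothing about this quantity, since neither $h$, $g_{1}$ nor $g_{2}$ need be $\delta$-fixed; a priori these values are nontrivial $p$-power roots of unity, and a sum of such roots of unity over a coset space of $p$-power cardinality can vanish (already in characteristic zero the $p$-th roots of unity sum to $0$). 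So ``each equal to $1$ since $\theta$ is trivial on $H^{1}\cap G^{\delta}$'' is not a justification; and if instead you meant that the support meets $J^{1}\cap G^{\delta}$ only inside $H^{1}\cap G^{\delta}$, then the sum would consist of a single term, not a general power of $p$, and that support assertion would itself require proof. What is missing is precisely the content of the paper's equation (\ref{eqf0res}), i.e.\ a computation of $f_{0}$ restricted to $J^{1}\cap G^{\delta}$: one must use the hypothesis $\theta\circ\delta=\theta^{-1\gamma}$ of Proposition \ref{Propheisdelta} to show that the relevant values of $f_{0}$ at $\delta$-fixed points square to $1$, hence equal $1$ because they are $p$-power roots of unity and $p\neq 2$, and then organize the evaluation over the double cosets $H^{1}\cap J^{1\gamma}\backslash J^{1}\cap J^{1\gamma}/J^{1}\cap G^{\delta}$ so that the total is a nonzero ($p$-power) multiple of $\lambda_{0}(\mathbf{1}_{H^{1}\cap G^{\delta}})$. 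Without an argument of this kind the final inequality $\widetilde{L_{0}}\circ\Phi\neq 0$ is unsupported.
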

		
		\begin{proof}
			
			We prove this lemma by giving a direct construction of $\Phi$ and $\widetilde{L_{0}}$. First we choose our $\widetilde{L_{0}}$. We choose $\lambda_{0}\in\mathrm{Hom}_{J^{1}\cap G^{\delta}}(\mathrm{Ind}_{H^{1}\cap G^{\delta}}^{J^{1}\cap G^{\delta}}1,1)\cong R$ with the isomorphism given by Frobenius reciprocity, such that its corresponding image in $R$ equals 1. Then we choose $\widetilde{L_{0}}=(\lambda_{0},...,\lambda_{0})$ as an element in
			$$\bigoplus_{H^{1}\backslash J^{1}/J^{1}\cap J^{1\gamma}}\bigoplus_{H^{1}\cap J^{1\gamma}\backslash J^{1}\cap J^{1\gamma}/J^{1}\cap G^{\delta}}\mathrm{Hom}_{J^{1}\cap G^{\delta}}(\mathrm{Ind}_{H^{1}\cap G^{\delta}}^{J^{1}\cap G^{\delta}}1,1)\cong\mathrm{Hom}_{J^{1}\cap G^{\delta}}(\eta^{(J^{1}:H^{1})^{1/2}},1),$$
			where the isomorphism is determined by Lemma \ref{Lemmaindthetares}.(3), and by Lemma \ref{LemmaJHJgammaHgamma} the number of copies equals $(J^{1}:H^{1})^{1/2}$.
			
			Now we focus on the construction of $\Phi$. We define
			\begin{equation}\label{eqf0}
				f_{0}(g):=\begin{cases}
					\theta^{\gamma}(g_{1})\theta(g_{2})\ &\text{if}\quad g=g_{1}g_{2}\in(J^{1}\cap H^{1\gamma})(H^{1}\cap J^{1\gamma})\\
					0 & \text{if}\quad g\in J^{1}\cap J^{1\gamma}-(J^{1}\cap H^{1\gamma})(H^{1}\cap J^{1\gamma})
				\end{cases}.
			\end{equation}
			as a continuous function defined on $J^{1}\cap J^{1\gamma}$ with values in $R$. Since $(J^{1}\cap H^{1\gamma})\cap(H^{1}\cap J^{1\gamma})=H^{1}\cap H^{1\gamma}$ and $\theta^{\gamma}=\theta$ on $H^{1}\cap H^{1\gamma}$, we know that $f_{0}$ is well-defined.
			
			We want to verify that $f_{0}\in\mathrm{Ind}_{H^{1}\cap J^{1\gamma}}^{J^{1}\cap J^{1\gamma}}\theta$ and $f_{0}\in\mathrm{Ind}_{J^{1}\cap H^{1\gamma}}^{J^{1}\cap J^{1\gamma}}\theta^{\gamma}$. Since $J^{1}$ normalizes $H^{1}$ and $J^{1\gamma}$ normalizes $H^{1\gamma}$, by direct calculation $J^{1}\cap J^{1\gamma}$ normalizes $J^{1}\cap H^{1\gamma}$ and $H^{1}\cap J^{1\gamma}$. In particular, we have $(J^{1}\cap H^{1\gamma})(H^{1}\cap J^{1\gamma})=(H^{1}\cap J^{1\gamma})(J^{1}\cap H^{1\gamma})$. Moreover, since $J^{1}$ and $J^{1\gamma}$ normalize $\theta$ and $\theta^{\gamma}$ respectively, $(J^{1}\cap H^{1\gamma})(H^{1}\cap J^{1\gamma})=(H^{1}\cap J^{1\gamma})(J^{1}\cap H^{1\gamma})$ normalizes $\theta$ and $\theta^{\gamma}$.
			
			For $g_{1}'\in J^{1}\cap H^{1\gamma}$, $g_{2}'\in H^{1}\cap J^{1\gamma}$ and $g\in J^{1}\cap J^{1\gamma}$, if $g\notin(J^{1}\cap H^{1\gamma})(H^{1}\cap J^{1\gamma})$, then $g_{1}'g, g_{2}'g\notin(J^{1}\cap H^{1\gamma})(H^{1}\cap J^{1\gamma})$, thus $$f_{0}(g_{1}'g)=f_{0}(g_{2}'g)=0;$$ if $g=g_{1}g_{2}\in(J^{1}\cap H^{1\gamma})(H^{1}\cap J^{1\gamma})$, then $$f_{0}(g_{1}'g)=\theta^{\gamma}(g_{1}')\theta^{\gamma}(g_{1})\theta(g_{2})=\theta^{\gamma}(g_{1}')f_{0}(g)$$ and
			$$f_{0}(g_{2}'g)=f_{0}(g_{2}'g_{1}g_{2}'^{-1}g_{2}'g_{2})=\theta^{\gamma}(g_{2}'g_{1}g_{2}'^{-1})\theta(g_{2}')\theta(g_{2})=\theta(g_{2}')\theta^{\gamma}(g_{1})\theta(g_{2})=\theta(g_{2}')f_{0}(g).$$
			Considering these facts, we have $f_{0}\in\mathrm{Ind}_{H^{1}\cap J^{1\gamma}}^{J^{1}\cap J^{1\gamma}}\theta$ and $f_{0}\in\mathrm{Ind}_{J^{1}\cap H^{1\gamma}}^{J^{1}\cap J^{1\gamma}}\theta^{\gamma}$.
			
			We consider $J^{1}\cap J^{1\gamma}$-action on $f_{0}$ given by the right translation and we let $\langle f_{0}\rangle$ be the $R[J^{1}\cap J^{1\gamma}]$-subspace of both $\mathrm{Ind}_{J^{1}\cap H^{1\gamma}}^{J^{1}\cap J^{1\gamma}}\theta^{\gamma}$ and $\mathrm{Ind}_{H^{1}\cap J^{1\gamma}}^{J^{1}\cap J^{1\gamma}}\theta$ generated by $f_{0}$.  We choose $V_{f_{0}}$ to be an $R[J^{1}\cap J^{1\gamma}]$-invariant subspace of $\mathrm{Ind}_{J^{1}\cap H^{1\gamma}}^{J^{1}\cap J^{1\gamma}}\theta^{\gamma}$ such that $\mathrm{Ind}_{J^{1}\cap H^{1\gamma}}^{J^{1}\cap J^{1\gamma}}\theta^{\gamma}=\langle f_{0}\rangle\oplus V_{f_{0}}$.
			
			We define the $R[J^{1}\cap J^{1\gamma}]$-module homomorphism
			$$\Phi_{1}:\mathrm{Ind}_{J^{1}\cap H^{1\gamma}}^{J^{1}\cap J^{1\gamma}}\theta^{\gamma}\rightarrow\mathrm{Ind}_{H^{1}\cap J^{1\gamma}}^{J^{1}\cap J^{1\gamma}}\theta$$
			such that $\Phi_{1}(f_{0})=f_{0}$ and $\Phi_{1}|_{V_{f_{0}}}=0$. And we define
			$$\Phi:\bigoplus_{H^{1\gamma}\backslash J^{1\gamma}/J^{1}\cap J^{1\gamma}}\mathrm{Ind}_{J^{1}\cap H^{1\gamma}}^{J^{1}\cap J^{1\gamma}}\theta^{\gamma}\rightarrow\bigoplus_{H^{1}\backslash J^{1}/J^{1}\cap J^{1\gamma}}\mathrm{Ind}_{H^{1}\cap J^{1\gamma}}^{J^{1}\cap J^{1\gamma}}\theta$$
			given by $\Phi=\mathrm{diag}(\Phi_{1},0,...,0)\in \mathrm{M}_{N_{1}}(\mathrm{Hom}_{R[J^{1}\cap J^{1\gamma}]}(\mathrm{Ind}_{J^{1}\cap H^{1\gamma}}^{J^{1}\cap J^{1\gamma}}\theta^{\gamma},\mathrm{Ind}_{H^{1}\cap J^{1\gamma}}^{J^{1}\cap J^{1\gamma}}\theta))$, where the coordinates are indexed by $N_{1}:=|H^{1\gamma}\backslash J^{1\gamma}/J^{1}\cap J^{1\gamma}|=|H^{1}\backslash J^{1}/J^{1}\cap J^{1\gamma}|$. In particular, we let the first coordinate correspond to the trivial double cosets $H^{1\gamma}(J^{1}\cap J^{1\gamma})$ and $H^{1}(J^{1}\cap J^{1\gamma})$ respectively. As a result, $\Phi$ gives an $R[J^{1}\cap J^{1\gamma}]$-module homomorphism. By Lemma \ref{Lemmaindthetares} we have
			\begin{equation}\label{eqetaind}
				\eta^{(J^{1}:H^{1})^{1/2}}\cong\mathrm{Ind}_{H^{1}}^{J^{1}}\theta|_{J^{1}\cap J^{1\gamma}}\cong\bigoplus_{H^{1}\backslash J^{1}/J^{1}\cap J^{1\gamma}}\mathrm{Ind}_{H^{1}\cap J^{1\gamma}}^{J^{1}\cap J^{1\gamma}}\theta
			\end{equation}
			and
			\begin{equation}\label{eqgammaetaind}
				\eta^{\gamma(J^{1}:H^{1})^{1/2}}\cong\mathrm{Ind}_{H^{1\gamma}}^{J^{1\gamma}}\theta^{\gamma}|_{J^{1}\cap J^{1\gamma}}\cong\bigoplus_{H^{1\gamma}\backslash J^{1\gamma}/J^{1}\cap J^{1\gamma}}\mathrm{Ind}_{J^{1}\cap H^{1\gamma}}^{J^{1}\cap J^{1\gamma}}\theta^{\gamma}.
			\end{equation}
			With these two isomorphisms, we may regard $\Phi$ as a homomorphism from $\eta^{\gamma(J^{1}:H^{1})^{1/2}}|_{J^{1}\cap J^{1\gamma}}$ to $\eta^{(J^{1}:H^{1})^{1/2}}|_{J^{1}\cap J^{1\gamma}}$.
			
			Finally, we study $\widetilde{L_{0}}\circ\Phi$. First we calculate
			$$\Phi_{1}:\mathrm{Ind}_{J^{1}\cap H^{1\gamma}}^{J^{1}\cap J^{1\gamma}}\theta^{\gamma}|_{J^{1}\cap G^{\delta}}\rightarrow\mathrm{Ind}_{H^{1}\cap J^{1\gamma}}^{J^{1}\cap J^{1\gamma}}\theta|_{J^{1}\cap G^{\delta}}.$$
			We have the following isomorphism
			
			\begin{equation}\label{eqindthetares}
				\mathrm{Ind}_{H^{1}\cap J^{1\gamma}}^{J^{1}\cap J^{1\gamma}}\theta|_{J^{1}\cap G^{\delta}}\cong\bigoplus_{H^{1}\cap J^{1\gamma}\backslash J^{1}\cap J^{1\gamma}/J^{1}\cap G^{\delta}}\mathrm{Ind}_{H^{1}\cap G^{\delta}}^{J^{1}\cap G^{\delta}}1.
			\end{equation}
			
			
			By definition of $\Phi_{1}$ and (\ref{eqf0}),(\ref{eqindthetares}),
			$\Phi_{1}(f_{0}|_{J^{1}\cap G^{\delta}})=f_{0}|_{J^{1}\cap G^{\delta}}$ equals
			\begin{equation}\label{eqf0res}
				(\mathbf{1}_{H^{1}\cap G^{\delta}},...,\mathbf{1}_{H^{1}\cap G^{\delta}},0,...,0)\in\bigoplus_{H^{1}\cap J^{1\gamma}\backslash J^{1}\cap J^{1\gamma}/J^{1}\cap G^{\delta}}\mathrm{Ind}_{H^{1}\cap G^{\delta}}^{J^{1}\cap G^{\delta}}1,
			\end{equation}
			where the coordinates are indexed by the double coset $H^{1}\cap J^{1\gamma}\backslash J^{1}\cap J^{1\gamma}/J^{1}\cap G^{\delta}$, and those coordinates which equal the characteristic function $\mathbf{1}_{H^{1}\cap G^{\delta}}$ are exactly indexed by the subset $H^{1}\cap J^{1\gamma}\backslash (J^{1}\cap H^{1\gamma})(J^{1}\cap H^{1\gamma})/J^{1}\cap G^{\delta}$.
			
			We define $v_{0}=(f_{0}|_{J^{1}\cap G^{\delta}},0,...,0)$ as an element in both
			$$\bigoplus_{H^{1}\backslash J^{1}/J^{1}\cap J^{1\gamma}}\mathrm{Ind}^{J^{1}\cap J^{1\gamma}}_{J^{1}\cap H^{1\gamma}}\theta^{\gamma}|_{J^{1}\cap G^{\delta}}$$
			and
			$$\bigoplus_{H^{1\gamma}\backslash J^{1\gamma}/J^{1}\cap J^{1\gamma}}\mathrm{Ind}_{H^{1}\cap J^{1\gamma}}^{J^{1}\cap J^{1\gamma}}\theta|_{J^{1}\cap G^{\delta}},$$
			where the first coordinate corresponds to the trivial double cosets $H^{1}(J^{1}\cap J^{1\gamma})$ and $H^{1\gamma}(J^{1}\cap J^{1\gamma})$ respectively as in our definition of $\Phi$. Thus we have
			\begin{align*}(\widetilde{L_{0}}\circ\Phi)(v_{0})&=\widetilde{L_{0}}((\Phi_{1}(f_{0}|_{J^{1}\cap G^{\delta}}),0,...,0))=\widetilde{L_{0}}((f_{0}|_{J^{1}\cap G^{\delta}},0,...,0))\\
				&=|H^{1}\cap J^{1\gamma}\backslash (H^{1}\cap J^{1\gamma})(J^{1}\cap H^{1\gamma})/J^{1}\cap G^{\delta}|\cdot\lambda_{0}(\mathbf{1}_{H^{1}\cap G^{\delta}})\neq 0,
			\end{align*}
			where we use the definition of $\widetilde{L_{0}}$ and (\ref{eqf0res}) for the last equation. Thus we get $\widetilde{L_{0}}\circ\Phi\neq 0$ which finishes the proof.
			
		\end{proof}
		
		\begin{lemma}\label{Lemmapushpull}
			
			We keep the same notations as in Proposition \ref{Propvarphibijec} and we fix
			$$0\neq \lambda_{0}'\in\mathrm{Hom}_{J^{1}\cap G^{\delta}}(\eta,1)\quad \text{and}\quad 0\neq \lambda_{0}''\in\mathrm{Hom}_{J^{1}\cap G^{\delta}}(\eta^{\gamma},1).$$
			Then:
			
			(1) For any $\widetilde{L}\in\mathrm{Hom}_{J^{1}\cap G^{\delta}}(\eta^{(J^{1}:H^{1})^{1/2}},1)$, there exists an $R[J^{1}\cap J^{1\gamma}]$-homomorphism $$\mathrm{Pr}:\eta^{(J^{1}:H^{1})^{1/2}}|_{J^{1}\cap J^{1\gamma}}\rightarrow \eta|_{J^{1}\cap J^{1\gamma}}$$ such that $\widetilde{L}=\lambda_{0}'\circ \mathrm{Pr}$;
			
			(2) For any $\widetilde{L}\in\mathrm{Hom}_{J^{1}\cap G^{\delta}}(\eta^{\gamma(J^{1}:H^{1})^{1/2}},1)$, there exists an $R[J^{1}\cap J^{1\gamma}]$-homomorphism $$s:\eta^{\gamma}|_{J^{1}\cap J^{1\gamma}}\rightarrow\eta^{\gamma(J^{1}:H^{1})^{1/2}}|_{J^{1}\cap J^{1\gamma}}$$ such that $\lambda_{0}''=\widetilde{L}\circ s$.
			
		\end{lemma}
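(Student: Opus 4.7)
The plan rests on the diagonal structure of $\eta^{N}$ (with $N=(J^{1}:H^{1})^{1/2}$) as an $R[J^{1}]$-module: any $R$-linear combination of the $N$ coordinate projections yields an $R[J^{1}]$-equivariant map $\eta^{N}\to\eta$, and injection into a single coordinate yields an $R[J^{1}]$-equivariant map $\eta\to\eta^{N}$. These remain equivariant after restriction to $J^{1}\cap J^{1\gamma}$, and will be the building blocks of both $\mathrm{Pr}$ and $s$.

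For part (1), I will exploit the multiplicity-one result from Proposition \ref{Propheisdelta}, namely $\mathrm{Hom}_{J^{1}\cap G^{\delta}}(\eta,1)=R\lambda_{0}'$. Since $\eta^{N}|_{J^{1}\cap G^{\delta}}=(\eta|_{J^{1}\cap G^{\delta}})^{N}$, any $\widetilde{L}\in\mathrm{Hom}_{J^{1}\cap G^{\delta}}(\eta^{N},1)$ has the shape
$$\widetilde{L}(v_{1},\ldots,v_{N})=\sum_{i=1}^{N}c_{i}\lambda_{0}'(v_{i})$$
for uniquely determined scalars $c_{i}\in R$. Setting $\mathrm{Pr}(v_{1},\ldots,v_{N}):=\sum_{i=1}^{N}c_{i}v_{i}$ produces the desired $R[J^{1}]$-equivariant, hence $R[J^{1}\cap J^{1\gamma}]$-equivariant, map, and $\widetilde{L}=\lambda_{0}'\circ\mathrm{Pr}$ is immediate.

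For part (2), essentially the same argument applies after verifying that $\mathrm{Hom}_{J^{1\gamma}\cap G^{\delta}}(\eta^{\gamma},1)=R\lambda_{0}''$ is also one-dimensional. This follows from Proposition \ref{Propheisdelta} applied to the triple $(H^{1\gamma},\theta^{\gamma},\gamma^{-1})$ in place of $(H^{1},\theta,\gamma)$, after checking via $\tau(\gamma)=\gamma^{-1}$ that $\delta(H^{1\gamma})=(H^{1\gamma})^{\gamma^{-1}}$ and $\delta(\gamma^{-1})\gamma^{-1}=1$, together with $J^{1\gamma}\cap G^{\delta}=J^{1}\cap G^{\delta}$ from Lemma \ref{LemmaHg=Htaug}. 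Decomposing $\widetilde{L}(v_{1},\ldots,v_{N})=\sum c_{i}\lambda_{0}''(v_{i})$ and invoking the tacit assumption $\widetilde{L}\neq 0$ (no section to a nonzero $\lambda_{0}''$ exists otherwise) to pick $i_{0}$ with $c_{i_{0}}\neq 0$, the injection $s(v):=(0,\ldots,0,c_{i_{0}}^{-1}v,0,\ldots,0)$ into the $i_{0}$-th coordinate is an $R[J^{1\gamma}]$-equivariant section, automatically equivariant under $J^{1}\cap J^{1\gamma}$, and $\widetilde{L}\circ s=\lambda_{0}''$ by construction.

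No serious obstacle is anticipated; the content of the lemma is purely a bookkeeping consequence of Proposition \ref{Propheisdelta} combined with the direct-sum decomposition of $\eta^{N}$. The only minor care needed is to confirm that maps built from coordinate operations on a direct sum automatically intertwine the diagonal group action, which is standard.
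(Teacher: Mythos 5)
Your proof is correct and is essentially the paper's own argument: the paper likewise treats $\eta^{(J^{1}:H^{1})^{1/2}}$ as a direct sum with coordinate projections $\mathrm{pr}_{i}$, notes that $\lambda_{0}'\circ\mathrm{pr}_{1},\dots,\lambda_{0}'\circ\mathrm{pr}_{N}$ span $\mathrm{Hom}_{J^{1}\cap G^{\delta}}(\eta^{N},1)$ (equivalently, your appeal to $\dim\mathrm{Hom}_{J^{1}\cap G^{\delta}}(\eta,1)=1$ from Proposition \ref{Propheisdelta}), and takes $\mathrm{Pr}$ to be the corresponding linear combination, with (2) done "similarly" via a scaled coordinate injection exactly as you write. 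Your explicit remark that (2) needs $\widetilde{L}\neq 0$ (and your verification that $\mathrm{Hom}_{J^{1}\cap G^{\delta}}(\eta^{\gamma},1)$ is one-dimensional by applying Proposition \ref{Propheisdelta} to the conjugate data) only makes precise what the paper leaves tacit, since in the application $\widetilde{L}=\widetilde{L_{0}}\circ\Phi$ is nonzero by Lemma \ref{LemmaPhicont}.
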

		
		\begin{proof}
			
			The proof is just a simple application of linear algebra. We write $N=(J^{1}:H^{1})^{1/2}$. For (1), we define $\mathrm{pr}_{i}:\eta^{(J^{1}:H^{1})^{1/2}}|_{J^{1}\cap J^{1\gamma}}\rightarrow \eta|_{J^{1}\cap J^{1\gamma}}$ as the projection with respect to the $i$-th coordinate. Since $\lambda_{0}'\circ\mathrm{pr}_{1} $,...,$\lambda_{0}'\circ\mathrm{pr}_{N}$ are linearly independent, and $\mathrm{dim}_{R}\mathrm{Hom}_{J^{1}\cap G^{\delta}}(\eta^{(J^{1}:H^{1})^{1/2}},1)=N$ by Proposition \ref{propheis}, $\lambda_{0}'\circ\mathrm{pr}_{1}$,...,$\lambda_{0}'\circ\mathrm{pr}_{N}$ generate $\mathrm{Hom}_{J^{1}\cap G^{\delta}}(\eta^{(J^{1}:H^{1})^{1/2}},1)$. So we may choose $\mathrm{Pr}$ to be a linear combination of $\mathrm{pr}_{j}$ which proves (1). The proof of (2) is similar.
			
		\end{proof}
		
		Now we finish the proof of Proposition \ref{Propvarphibijec}. Using Lemma \ref{Lemmapushpull}.(1) we choose $\mathrm{Pr}$ such that $\widetilde{L_{0}}=\lambda_{0}'\circ\mathrm{Pr}$, where $\widetilde{L_{0}}$ is defined as in the statement of Lemma \ref{LemmaPhicont}. Using Lemma \ref{LemmaPhicont}, there exists $\Phi$ such that $\widetilde{L_{0}}\circ\Phi\neq 0$. Using Lemma \ref{Lemmapushpull}.(2) we choose $s$ such that $\widetilde{L_{0}}\circ\Phi\circ s=\lambda_{0}''\neq 0$. We define $\varphi'=\mathrm{Pr}\circ\Phi\circ s$ and we have the following commutative diagram
		$$\xymatrix{
			\eta^{\gamma(J^{1}:H^{1})^{1/2}}|_{J^{1}\cap J^{1\gamma}} \ar[r]^{\Phi} & \eta^{(J^{1}:H^{1})^{1/2}}|_{J^{1}\cap J^{1\gamma}} \ar[d]^{\mathrm{Pr}} \\
			\eta^{\gamma}|_{J^{1}\cap J^{1\gamma}} \ar[u]^{s} \ar[r]^{\varphi'} & \eta|_{J^{1}\cap J^{1\gamma}}   }$$
		By definition we have $\lambda_{0}'\circ\varphi'=\lambda_{0}'\circ\mathrm{Pr}\circ\Phi\circ s=\lambda_{0}''\neq 0$, which means that $\varphi'\neq 0$. Since $\mathrm{Hom}_{J^{1}\cap J^{1\gamma}}(\eta^{\gamma},\eta)$ is of dimension 1, we deduce that $\varphi$ equals $\varphi'$ multiplying with a non-zero scalar, which means that $\lambda_{0}'\circ\varphi\neq 0$. Since $\mathrm{Hom}_{J^{1}\cap G^{\delta}}(\eta,1)$ and $\mathrm{Hom}_{J^{1}\cap G^{\delta}}(\eta^{\gamma},1)$ are of dimension 1, we know that $f_{\varphi}$ is an $R$-vector space isomorphism which proves Proposition \ref{Propvarphibijec}.
		
		\subsection{Existence of a $\tau$-selfdual extension of $\eta$}
		
		Now our aim is to choose a simple $\boldsymbol{\kappa}$ as an extension of $\eta$. Specifically, under the condition of Remark \ref{rempicond}, we show that we may assume $\boldsymbol{\kappa}$ to be $\tau$-selfdual, which means that $\boldsymbol{\kappa}^{\tau}\cong\boldsymbol{\kappa}^{\vee}$. First of all, we have the following lemma whose proof is the same as that in \cite{secherre2019supercuspidal}, Lemma 5.21:
		
		\begin{lemma}\label{lemmu}
			
			There exists a unique character $\mu$ of $\boldsymbol{J}$ trivial on $J^{1}$ such that $\boldsymbol{\kappa}^{\tau\vee}\cong\boldsymbol{\kappa}\mu$. It satisfies the identity $\mu\circ\tau=\mu$.
			
		\end{lemma}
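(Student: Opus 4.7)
The plan is to obtain both the existence of $\mu$ and the relation $\mu\circ\tau=\mu$ from the classification given by Proposition \ref{xi}, applied to the representation $\boldsymbol{\kappa}^{\tau\vee}$ of $\boldsymbol{J}$.

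First I would verify that $\boldsymbol{\kappa}^{\tau\vee}$ is an irreducible representation of $\boldsymbol{J}$ whose restriction to $J^{1}$ contains $\eta$. By Remark \ref{rempicond}, we have $\tau(\boldsymbol{J})=\boldsymbol{J}$, $\tau(J^{1})=J^{1}$ and $\tau(H^{1})=H^{1}$, so $\boldsymbol{\kappa}^{\tau\vee}$ is indeed an irreducible representation of $\boldsymbol{J}$. Its restriction to $J^{1}$ equals $\eta^{\tau\vee}$, and further restricting to $H^{1}$ we obtain $(J^{1}:H^{1})^{1/2}$ copies of $(\theta\circ\tau)^{-1}=\theta$, using that $\theta\circ\tau=\theta^{-1}$. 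By the uniqueness property (1) of the Heisenberg representation in subsection 3.2, this forces $\eta^{\tau\vee}\cong\eta$, so the restriction of $\boldsymbol{\kappa}^{\tau\vee}$ to $J^{1}$ is isomorphic to $\eta$.

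Next I would invoke Proposition \ref{xi}: there is a unique irreducible representation $\mu$ of $\boldsymbol{J}$ trivial on $J^{1}$ such that $\boldsymbol{\kappa}^{\tau\vee}\cong\boldsymbol{\kappa}\otimes\mu$. Comparing dimensions — both sides restrict to $\eta$ on $J^{1}$, hence have the same $R$-dimension — shows $\dim_{R}(\mu)=1$, so $\mu$ is a character of $\boldsymbol{J}$. This gives existence and uniqueness.

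For the identity $\mu\circ\tau=\mu$, I would apply the operation $(\cdot)^{\tau\vee}$ to both sides of the isomorphism $\boldsymbol{\kappa}^{\tau\vee}\cong\boldsymbol{\kappa}\mu$. Since $\tau^{2}=\mathrm{id}$ and $\boldsymbol{\kappa}$ is finite-dimensional, the left-hand side becomes $\boldsymbol{\kappa}^{\tau\vee\tau\vee}\cong\boldsymbol{\kappa}$; the right-hand side becomes $\boldsymbol{\kappa}^{\tau\vee}\otimes\mu^{\tau\vee}\cong\boldsymbol{\kappa}\mu\cdot(\mu\circ\tau)^{-1}$, where we used $\mu^{\vee}=\mu^{-1}$ for a character. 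Comparing, $\mu\cdot(\mu\circ\tau)^{-1}$ is trivial, i.e. $\mu\circ\tau=\mu$.

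There is no real obstacle here: the argument is a clean bookkeeping exercise, and all nontrivial input has already been collected in Remark \ref{rempicond} ($\tau$-stability of $\boldsymbol{J}$, $J^{1}$, $H^{1}$ and the selfduality $\theta\circ\tau=\theta^{-1}$) and Proposition \ref{xi} (the tensoring bijection with characters trivial on $J^{1}$).
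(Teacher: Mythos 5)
Your argument is correct and is essentially the standard one the paper invokes (it cites the proof of \cite{secherre2018supercuspidal}, Lemma 5.21, which proceeds exactly this way): identify $\boldsymbol{\kappa}^{\tau\vee}|_{J^{1}}$ with $\eta$ via $\theta\circ\tau=\theta^{-1}$ and the uniqueness of the Heisenberg representation, apply Proposition \ref{xi} to get the character $\mu$, and apply $(\cdot)^{\tau\vee}$ again together with the injectivity of the tensoring bijection to deduce $\mu\circ\tau=\mu$. No gaps worth noting.
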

		
		\begin{proposition}\label{propselfdual}
			
			When $\mathrm{char}(R)=0$, there exists a character $\phi$ of $\boldsymbol{J}$ trivial on $J^{1}$ such that $\mu=\phi(\phi\circ\tau)$. Moreover for any $R$, we may choose $\boldsymbol{\kappa}$ to be an extension of $\eta$ such that $\boldsymbol{\kappa}^{\tau\vee}\cong\boldsymbol{\kappa}$.
			
		\end{proposition}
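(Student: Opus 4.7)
The plan is to first solve the character-theoretic equation $\mu=\phi\cdot(\phi\circ\tau)$ in characteristic zero, then twist $\boldsymbol{\kappa}$ to obtain a $\tau$-selfdual extension, and finally deduce the statement over arbitrary $R$ by a lifting-and-reduction argument analogous to that of Section 4.

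I would begin by describing the character group $X(\boldsymbol{J}/J^{1})$. Since $\boldsymbol{J}=\langle\varpi_{E}\rangle\cdot J$, $\langle\varpi_{E}\rangle\cap J=\{1\}$, and $\varpi_{E}$ acts trivially on $J/J^{1}\cong\mathrm{GL}_{m}(\boldsymbol{l})$ (because $\varpi_{E}\in E^{\times}$ centralizes $B^{\times}\supset\mathfrak{b}^{\times}$), one has $\boldsymbol{J}/J^{1}\cong\mathbb{Z}\times\mathrm{GL}_{m}(\boldsymbol{l})$. Since $p\neq 2$, every character of $\mathrm{GL}_{m}(\boldsymbol{l})$ factors through the determinant, so a character $\phi$ of $\boldsymbol{J}$ trivial on $J^{1}$ is specified by a character $\overline{\phi}$ of $\boldsymbol{l}^{\times}$ together with a scalar $c=\phi(\varpi_{E})\in R^{\times}$. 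Using Remark \ref{rempicond} I would then compute the $\tau$-action: on the $\mathrm{GL}_{m}(\boldsymbol{l})$-component $\tau$ descends to the unitary/orthogonal/symplectic involution induced by $\tau|_{B^{\times}}$, acting on $\boldsymbol{l}^{\times}$ by $y\mapsto\overline{\sigma}(y)^{-1}$ (with $\overline{\sigma}$ nontrivial in the unramified case and trivial in the ramified case); on $\varpi_{E}$, $\tau(\varpi_{E})=\pm\varpi_{E}^{-1}$ according to the ramification of $E/E_{0}$.

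The key step is then to show that the $\tau$-fixed character $\mu$ lies in the image of the norm map $N\colon\phi\mapsto\phi\cdot(\phi\circ\tau)$ on $X(\boldsymbol{J}/J^{1})$, i.e.\ that $\widehat{H}^{0}(\langle\tau\rangle,X(\boldsymbol{J}/J^{1}))$ vanishes on the class of $\mu$. In the unramified case, solving the $\boldsymbol{l}^{\times}$-component $\overline{\mu}=\overline{\phi}/(\overline{\phi}\circ\overline{\sigma})$ for $\tau$-fixed $\overline{\mu}$ (i.e.\ $\overline{\mu}\circ\overline{\sigma}=\overline{\mu}^{-1}$) reduces via the perfect pairing $\boldsymbol{l}^{\times}\times X(\boldsymbol{l}^{\times})\to R^{\times}$ to Hilbert 90 for $\boldsymbol{l}/\boldsymbol{l}_{0}$, while the $\varpi_{E}$-component imposes the constraint $\mu(\varpi_{E})=\overline{\phi}(1)=1$, which is forced by $\mu\circ\tau=\mu$ together with a careful use of divisibility of $R^{\times}$ in characteristic $0$. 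In the ramified case, $\overline{\tau}$ acts as inversion, $\mu\circ\tau=\mu$ forces $\overline{\mu}^{2}=1$ and $\mu(\varpi_{E})^{2}=\overline{\mu}(-1)$, and one constructs $\phi$ by exploiting divisibility of $R^{\times}$ to select $\overline{\phi}$ with $\overline{\phi}(-1)=\mu(\varpi_{E})$ and $c\in R^{\times}$ arbitrary. Once $\phi$ is in hand, set $\boldsymbol{\kappa}':=\boldsymbol{\kappa}\phi$; then
\[
(\boldsymbol{\kappa}')^{\tau\vee}=\boldsymbol{\kappa}^{\tau\vee}(\phi\circ\tau)^{-1}=\boldsymbol{\kappa}\mu(\phi\circ\tau)^{-1}=\boldsymbol{\kappa}\phi=\boldsymbol{\kappa}',
\]
yielding a $\tau$-selfdual extension in characteristic zero.

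For arbitrary $R$ of characteristic $\ell>0$, I would argue by lifting exactly as in Section 4: lift $\eta$ to a Heisenberg representation $\widetilde{\eta}$ over $\overline{\mathbb{Q}_{\ell}}$ associated to an integral lift of $\theta$, extend $\widetilde{\eta}$ to a representation $\widetilde{\boldsymbol{\kappa}}$ of $\boldsymbol{J}$ reducing to $\boldsymbol{\kappa}$ (using the extension-of-extensions mechanism in Proposition \ref{xi} combined with the projective-envelope argument of Section 4), apply the characteristic-zero case to obtain a $\tau$-selfdual $\widetilde{\boldsymbol{\kappa}}'$, and reduce modulo $\ell$ to produce the desired $\tau$-selfdual $\boldsymbol{\kappa}$; one finally passes from $\overline{\mathbb{F}_{\ell}}$ to arbitrary $R$ via Lemma \ref{lemmabasechange}. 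The main obstacle will be the case-by-case verification that $\mu$ is a norm, and in particular handling the sign $\pm 1$ arising from $\sigma(\varpi_{E})=\pm\varpi_{E}$: one must confirm that every constraint on $\mu$ forced by $\mu\circ\tau=\mu$ is precisely what is needed to write $\mu$ as $\phi\cdot(\phi\circ\tau)$, with no residual cohomological obstruction surviving.
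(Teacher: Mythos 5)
Your overall skeleton (describe characters of $\boldsymbol{J}$ trivial on $J^{1}$ via $\boldsymbol{J}/J^{1}\cong\langle\varpi_{E}\rangle\times\mathrm{GL}_{m}(\boldsymbol{l})$, solve the finite-field component of $\mu=\phi(\phi\circ\tau)$ by a Hilbert 90 argument in the unramified case, twist $\boldsymbol{\kappa}$ by $\phi$, and lift-and-reduce for positive characteristic) agrees with the paper's proof. But there is a genuine gap at exactly the point you wave at with ``divisibility of $R^{\times}$''. Since $\tau(\varpi_{E})=\varpi_{E}^{-1}$ (unramified case) and, in the ramified case, $\tau$ induces $x\mapsto\overline{\varepsilon}\,^{t}x^{-1}\overline{\varepsilon}^{-1}$ on $J/J^{1}$, every character of the form $\phi(\phi\circ\tau)$ is automatically trivial at $\varpi_{E}$ (unramified) and trivial on $J$ (ramified). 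Hence solvability of $\mu=\phi(\phi\circ\tau)$ \emph{requires} $\mu(\varpi_{E})=1$, resp.\ $\mu|_{J}=1$, whereas the relation $\mu\circ\tau=\mu$ only yields $\mu(\varpi_{E})^{2}=1$, resp.\ $\overline{\mu}^{2}=1$. Moreover this residual sign is an invariant of $\eta$: replacing $\boldsymbol{\kappa}$ by $\boldsymbol{\kappa}\psi$ changes $\mu$ by $\psi^{-1}(\psi^{-1}\circ\tau)$, which does not move $\mu(\varpi_{E})$ at all, so no choice of $\phi$ (and no appeal to divisibility of $R^{\times}$) can absorb it. In other words, the ``residual cohomological obstruction'' you hope vanishes is precisely the nontrivial content of the proposition, and it cannot be extracted from $\mu\circ\tau=\mu$ alone.

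The paper kills this obstruction by a representation-theoretic input: Lemma \ref{lem2sroot} (for $N$ odd and $A\in\mathrm{GL}_{N}(R)$ with $A^{2^{s}}$ scalar, $\mathrm{Tr}(A)\neq 0$, proved with roots of unity and the Gauss lemma), applied to $A=\boldsymbol{\kappa}(\varpi_{E}^{u})$ in the unramified case and $A=\boldsymbol{\kappa}(\zeta_{\boldsymbol{l}}^{u})$ in the ramified case, where $u$ is the odd part of $e(Q-1)$, resp.\ $Q-1$. Here one uses that $\dim\eta=(J^{1}:H^{1})^{1/2}$ is a power of $p$ (hence odd) and that a suitable $2$-power of these elements lies in $F^{\times}H^{1}$, where $\boldsymbol{\kappa}$ acts by a scalar; the non-vanishing trace, compared on both sides of $\boldsymbol{\kappa}^{\tau\vee}\cong\boldsymbol{\kappa}\mu$, forces $\mu(\varpi_{E}^{u})=1$, resp.\ $\mu(\zeta_{\boldsymbol{l}}^{u})=1$, and oddness of $u$ then gives the required triviality. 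Your proposal contains no substitute for this step (and in the ramified case you only record $\overline{\mu}^{2}=1$, which is strictly weaker than what is needed), so as written the argument does not go through; the modular part of your plan, which is essentially the paper's lifting argument, is fine once the characteristic-zero statement is actually established.
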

		
		\begin{proof}
			
			First we consider the case where char$(R)=0$. we need the following elementary lemma:
			
			\begin{lemma}\label{lem2sroot}
				
				Assume $\mathrm{char}(R)=0$. For $N$ odd and $A\in\mathrm{GL}_{N}(R)$ such that $A^{2^{s}}=cI_{N}$ for $s\in\mathbb{N}$ and $c\in R^{\times}$, we have $\mathrm{Tr}(A)\neq 0$.
				
			\end{lemma}
			
			\begin{proof}
				
				$s=0$ is trivial, so from now on we assume $s\geq 1$. Let $\zeta_{2^{s}}$ be a primitive $2^{s}$-th root of $1$ in $R$ and let $c^{1/2^{s}}$ be a $2^{s}$-th root of $c$ in $R$, then we get $\mathrm{Tr}(A)=c^{1/2^{s}}\sum_{i=1}^{N}\zeta_{2^{s}}^{n_{i}}$ with $n_{i}\in\{0,1,2,...,2^{s}-1\}$. We know that $P(x)=x^{2^{s-1}}+1$ is the minimal polynomial of $\zeta_{2^{s}}$ in $\mathbb{Q}[x]$. If $\mathrm{Tr}(A)=0$, then for $Q(x)=\sum_{i=1}^{N}x^{n_{i}}$, we have $Q(\zeta_{2^{s}})=0$. As a result, $P(x)|Q(x)$ in $\mathbb{Q}[x]$ and thus in $\mathbb{Z}[x]$ by the Gauss lemma. However, the sum of all the coefficients of $P(x)$ is even and the sum of all the coefficients of $Q(x)$ equals $N$ which is odd. We get a contradiction. So $\mathrm{Tr}(A)\neq 0$.
				
			\end{proof}
			
			Come back to our proof. We choose $\boldsymbol{\kappa}$ to be an extension of $\eta$, thus as in Lemma \ref{lemmu}, there exists a character $\mu$ of $\boldsymbol{J}$ such that $\boldsymbol{\kappa}^{\tau\vee}\cong\boldsymbol{\kappa}\mu$. If $E/E_{0}$ is unramified, we let $$\overline{\mu}:\mathrm{GL}_{m}(\boldsymbol{l})\cong J/J^{1}\rightarrow R^{\times}$$ be the character whose inflation is $\mu|_{J}$. There exists a character $\varphi:\boldsymbol{l}^{\times}\rightarrow R^{\times}$ such that $\overline{\mu}=\varphi\circ\mathrm{det}$. Since $\overline{\mu}\circ\tau=\overline{\mu}$, we get $(\varphi\circ\sigma)\varphi=1$, or equivalently $\varphi|_{\boldsymbol{l}_{0}^{\times}}=1$, where $\boldsymbol{l}_{0}$ is the residue field of $E_{0}$, and $\sigma$ acts on $\boldsymbol{l}$ as the Frobenius map corresponding to $\boldsymbol{l}_{0}$. Let $Q$ be the cardinality of $\boldsymbol{l}_{0}$, then the cardinality of $\boldsymbol{l}$ is $Q^{2}$. If we fix $\zeta_{\boldsymbol{l}}$ a generator of $\boldsymbol{l}^{\times}$, then $\zeta_{\boldsymbol{l}}^{Q+1}$ is a generator of $\boldsymbol{l}_{0}^{\times}$. So we have $\varphi(\zeta_{l})^{Q+1}=1$. Choose $\alpha:\boldsymbol{l}^{\times}\rightarrow R^{\times}$ a character such that $$\alpha(\zeta_{\boldsymbol{l}}^{m})^{Q-1}=\varphi(\zeta_{\boldsymbol{l}})^{-m} \ \text{for}\ m\in\mathbb{Z}.$$
			Since $$\alpha(\zeta_{\boldsymbol{l}})^{Q^{2}-1}=\varphi(\zeta_{\boldsymbol{l}})^{-Q-1}=1,$$ we know that $\alpha$ is well-defined as a character of $\boldsymbol{l}^{\times}$. Moreover, we get $\varphi=\alpha(\alpha\circ\sigma)^{-1}$. Choose $\phi^{0}:J\rightarrow R^{\times}$ as the inflation of $\alpha\circ\mathrm{det}$, we get $\mu|_{J}=\phi^{0}(\phi^{0}\circ\tau).$
			
			Since $\varpi_{E}$ and $J$ generate $\boldsymbol{J}$, to choose $\phi$ as a character of $\boldsymbol{J}$ extending $\phi^{0}$, it suffices to show that $\mu(\varpi_{E})=1$. Since $\mu=\mu\circ\tau$, we get
			$$\mu(\varpi_{E})=\mu(\tau(\varpi_{E}))=\mu(\varpi_{E})^{-1}, \ \text{thus}\ \mu(\varpi_{E})\in\{1,-1\}.$$
			Let $e$ be the ramification index of $E/F$, and let $\varpi_{E}^{e}=a_{0}\varpi_{F}$ for a certain $a_{0}\in\mathfrak{o}_{E}^{\times}$. We have $$\varpi_{E}^{e(Q-1)}=a_{0}^{Q-1}\varpi_{F}^{Q-1}\ \text{with}\ a_{0}^{Q-1}\in 1+\mathfrak{p}_{E}\subset H^{1}(\mathfrak{a},\beta).$$ We write $e(Q-1)=2^{s}u$ for $2\nmid u$ and $s\in\mathbb{N}$. For $A=\boldsymbol{\kappa}(\varpi_{E}^{u})$, we have $$A^{2^{s}}=\boldsymbol{\kappa}(a_{0}^{Q-1}\varpi_{F}^{Q-1})=\theta(a_{0}^{Q-1})\omega_{\boldsymbol{\kappa}}(\varpi_{F}^{Q-1})I_{N},$$ where we use the fact that the restriction of $\boldsymbol{\kappa}$ to $H^{1}(\mathfrak{a},\beta)$ equals $N$-copies of $\theta$ with $N=(J^{1}:H^{1})^{1/2}$, and $\omega_{\boldsymbol{\kappa}}$ is the central character of $\boldsymbol{\kappa}$. Using Lemma \ref{lem2sroot} with $A$ and $c=\theta(a_{0}^{Q-1})\omega_{\boldsymbol{\kappa}}(\varpi_{F}^{Q-1})$, we get $\mathrm{Tr}(\boldsymbol{\kappa}(\varpi_{E}^{u}))\neq 0.$ Since $\boldsymbol{\kappa}^{\tau\vee}\cong\boldsymbol{\kappa}\mu$, considering the trace of both sides at $\varpi_{E}^{u}$, we get $$\mathrm{Tr}(\boldsymbol{\kappa}(\varpi_{E}^{u}))=\mathrm{Tr}(\boldsymbol{\kappa}(\varpi_{E}^{u}))\mu(\varpi_{E}^{u}),$$ thus $\mu(\varpi_{E}^{u})=1$. Since $u$ is odd and $\mu(\varpi_{E})$ equals either $1$ or $-1$, we get $\mu(\varpi_{E})=1$ which finishes the proof of this case.
			
			If $E/E_{0}$ is ramified, first we show that $\mu|_{\boldsymbol{l}^{\times}}=1$, where we consider the embedding $\boldsymbol{l}^{\times}\hookrightarrow E^{\times}$. Let $Q$ be the cardinality of $\boldsymbol{l}=\boldsymbol{l}_{0}$ and let $\zeta_{\boldsymbol{l}}$ be a generator of $\boldsymbol{l}^{\times}$, then we want to show that $\mu(\zeta_{\boldsymbol{l}})=1$. Write $Q-1=2^{s}u$ with $2\nmid u$ and use Lemma \ref{lem2sroot} with $A=\boldsymbol{\kappa}(\zeta_{\boldsymbol{l}}^{u})$ and $c=1$, we get $\mathrm{Tr}(\boldsymbol{\kappa}(\zeta_{\boldsymbol{l}}^{u}))\neq 0.$ Since $\boldsymbol{\kappa}^{\tau\vee}\cong\boldsymbol{\kappa}\mu$, we get $$\mathrm{Tr}(\boldsymbol{\kappa}(\zeta_{\boldsymbol{l}}^{u}))=\mathrm{Tr}(\boldsymbol{\kappa}(\zeta_{\boldsymbol{l}}^{u}))\mu(\zeta_{\boldsymbol{l}}^{u})$$ after considering the trace. Thus $\mu(\zeta_{\boldsymbol{l}}^{u})=1$. Since $\mu(\zeta_{\boldsymbol{l}})$ equals either $1$ or $-1$ which can be proved as the former case and $u$ is odd, we get $\mu(\zeta_{\boldsymbol{l}})=1$. Thus $\mu|_{J}=1$.
			
			To finish the definition of $\phi:\boldsymbol{J}\rightarrow R^{\times}$ such that $\mu=\phi(\phi\circ\tau)$, we only need to verify the equation $$\mu(\varpi_{E})=\phi(\varpi_{E})\phi(\tau(\varpi_{E}))=\phi(\varpi_{E})\phi(-\varpi_{E})^{-1}=\phi(-1)^{-1}.$$ Since we have already showed that $\mu(-1)=1$, using the relation $\mu=\mu\circ\tau$, we get $\mu(\varpi_{E}^{2})=\mu(-\varpi_{E}^{2})=\mu(\varpi_{E})\mu(\tau(\varpi_{E}))^{-1}=1$, so we deduce that $\mu(\varpi_{E})$ equals either $1$ or $-1$. Choose $\phi(-1)=\mu(\varpi_{E})$ which is well-defined, we finish the definition of $\phi$ such that $\mu=\phi(\phi\circ\tau)$. Let $\boldsymbol{\kappa}'=\boldsymbol{\kappa}\phi$, then $\boldsymbol{\kappa}'$ is $\tau$-selfdual.
			
			Now we suppose $R=\overline{\mathbb{F}_{l}}$. Let $\widetilde{\theta}$ be the lift of $\theta$ to $\overline{\mathbb{Q}_{l}}$ given by the canonical embedding $\overline{\mathbb{F}_{l}}^{\times}\hookrightarrow\overline{\mathbb{Q}_{l}}^{\times}$, then $\widetilde{\theta}$ is a simple character and $\widetilde{\theta}\circ\tau=\widetilde{\theta}^{-1}$. There is a $\tau$-selfdual representation $\widetilde{\boldsymbol{\kappa}}$ of $\boldsymbol{J}$ extending the Heisenberg representation $\widetilde{\eta}$ of $J^{1}$ corresponding to $\widetilde{\theta}$. Moreover we can further choose $\widetilde{\boldsymbol{\kappa}}$ such that the central character of $\widetilde{\boldsymbol{\kappa}}$ is integral. To do this, first we choose $\widetilde{\boldsymbol{\kappa}^{0}}$ to be a representation of $J$ extending $\eta$. We extend $\widetilde{\boldsymbol{\kappa}^{0}}$ to a representation of $F^{\times}J$. This requires us to choose a quasi-character $\widetilde{\omega}:F^{\times}\rightarrow \overline{\mathbb{Q}_{l}}^{\times}$ extending $\omega_{\widetilde{\boldsymbol{\kappa}^{0}}}$. We choose $\widetilde{\omega}$ such that it is integral. If we further extend this representation to $\widetilde{\boldsymbol{\kappa}}$ as a representation of $J=E^{\times}J$, then $\widetilde{\boldsymbol{\kappa}}$ is also integral. From the proof of the characteristic $0$ case, we may further assume $\widetilde{\boldsymbol{\kappa}}^{\tau\vee}\cong\widetilde{\boldsymbol{\kappa}}$ without losing the property that $\widetilde{\boldsymbol{\kappa}}$ is integral. By \cite{minguez2014types}, \S 2.11, the reduction of $\widetilde{\boldsymbol{\kappa}}$ to $R$, denoted by $\boldsymbol{\kappa}$, is thus a $\tau$-selfdual representation of $\boldsymbol{J}$ extending $\eta$.
			
			For $\mathrm{char}(R)=l>0$ in general, we fix $\iota:\overline{\mathbb{F}_{l}}\hookrightarrow R$ an embedding. For $\theta$ a simple character over $R$ as before which is of finite image, there exists a simple character $\theta_{0}$ over $\overline{\mathbb{F}_{l}}$ corresponding to the same simple stratum $[\mathfrak{a},\beta]$, such that $\theta=\iota\circ\theta_{0}$ and $\theta_{0}\circ\tau=\theta_{0}^{-1}$. Let $\eta_{0}$ be the Heisenberg representation of $\theta_{0}$ and choose $\boldsymbol{\kappa}_{0}$ to be a $\tau$-selfdual extension of $\eta_{0}$ by the former case. Then $\boldsymbol{\kappa}=\boldsymbol{\kappa}_{0}\otimes_{\overline{\mathbb{F}_{l}}} R$ is what we want.
			
		\end{proof}
		
		
		
		
		\subsection{Proof of Theorem \ref{thmtype}}\label{subsectionproofofthmdisctype}
		
		Using Proposition \ref{propselfdual}, we may assume that $\boldsymbol{\kappa}$ is $\tau$-selfdual, which means that $\boldsymbol{\kappa}^{\tau\vee}\cong\boldsymbol{\kappa}$. From its proof, when $R=\overline{\mathbb{F}_{l}}$, we assume further that $\boldsymbol{\kappa}$ is the reduction of a $\tau$-selfdual representation $\widetilde{\boldsymbol{\kappa}}$ of $\boldsymbol{J}$ over $\overline{\mathbb{Q}_{l}}$, and when $\mathrm{char}(R)=l>0$ in general, we assume $\boldsymbol{\kappa}$ to be realized as a $\overline{\mathbb{F}_{l}}$-representation via a certain fields embedding $\overline{\mathbb{F}_{l}}\hookrightarrow R$.
		
		\begin{proposition}\label{propchiquad}
			
			The character $\chi$ defined by Lemma \ref{lemkappa}.(1) is quadratic over $J^{g}\cap G^{\tau}$, that is, $\chi^{2}|_{J^{g}\cap G^{\tau}}=1$.
			
		\end{proposition}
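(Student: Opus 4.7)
The plan is to combine the identification $\chi=\chi'$ from Proposition \ref{Propchichi'} with the $\tau$-selfduality $\boldsymbol{\kappa}^{\tau\vee}\cong\boldsymbol{\kappa}$ established in Proposition \ref{propselfdual}. The key observation is that these two inputs will force $\chi(x)$ and $\chi(x)^{-1}$ to describe the action on the same one-dimensional line of $J^{1\tau(g)}\cap G^{\tau}$-fixed vectors in $\boldsymbol{\kappa}^{\tau(g)}$, whence $\chi^{2}=1$.

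First I would use Proposition \ref{propselfdual} to produce a non-degenerate bilinear form $B$ on the space $V$ of $\boldsymbol{\kappa}$ satisfying $B(\boldsymbol{\kappa}(y)v,\boldsymbol{\kappa}(\tau(y))w)=B(v,w)$ for all $y\in\boldsymbol{J}$. Specialising $y=gxg^{-1}$ for $x\in\boldsymbol{J}^{g}\cap G^{\tau}$ and using $\tau(x)=x$, so that $\tau(y)=\tau(g)x\tau(g)^{-1}$, turns this into the identity
\[B(\boldsymbol{\kappa}^{g}(x)v,\boldsymbol{\kappa}^{\tau(g)}(x)w)=B(v,w),\qquad x\in\boldsymbol{J}^{g}\cap G^{\tau},\]
which exhibits an isomorphism $\boldsymbol{\kappa}^{\tau(g)}\cong(\boldsymbol{\kappa}^{g})^{\vee}$ of representations of the common subgroup $\boldsymbol{J}^{g}\cap G^{\tau}=\boldsymbol{J}^{\tau(g)}\cap G^{\tau}$. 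Picking a non-zero $\lambda\in\mathrm{Hom}_{\boldsymbol{J}^{g}\cap G^{\tau}}(\boldsymbol{\kappa}^{g},\chi^{-1})$ from Lemma \ref{lemkappa}.(1) and writing $\lambda(v)=B(v,v_{0})$ via non-degeneracy of $B$, a short calculation using the invariance above produces a non-zero $v_{0}\in V$ with $\boldsymbol{\kappa}^{\tau(g)}(x)v_{0}=\chi(x)v_{0}$ for all $x\in\boldsymbol{J}^{g}\cap G^{\tau}$.

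Set $H=J^{1g}\cap G^{\tau}=J^{1\tau(g)}\cap G^{\tau}$. Since $\chi$ is trivial on $H$ by Lemma \ref{lemkappa}.(1), the vector $v_{0}$ lies in $V^{H}$. Because $H$ is a closed subgroup of the pro-$p$ group $J^{1\tau(g)}$ and $\mathrm{char}(R)\neq p$, the $H$-action on $V$ is semisimple and an averaging argument identifies $V^{H}$ with the coinvariants, whose dimension equals $\dim_{R}\mathrm{Hom}_{H}(\eta^{\tau(g)},1)=1$ by Proposition \ref{propheis}; hence $V^{H}=Rv_{0}$. Invoking Proposition \ref{Propchichi'} to replace $\chi'$ by $\chi$, Lemma \ref{lemkappa}.(1) applied at $\tau(g)$ supplies a non-zero $\mu\in\mathrm{Hom}_{\boldsymbol{J}^{\tau(g)}\cap G^{\tau}}(\boldsymbol{\kappa}^{\tau(g)},\chi^{-1})$. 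Being non-zero and $H$-invariant, $\mu$ factors through the averaging projection onto $V^{H}$, so $\mu|_{V^{H}}\neq 0$ and in particular $\mu(v_{0})\neq 0$. Computing $\mu(\boldsymbol{\kappa}^{\tau(g)}(x)v_{0})$ from the equivariance of $\mu$ gives $\chi(x)^{-1}\mu(v_{0})$, while computing it from that of $v_{0}$ gives $\chi(x)\mu(v_{0})$; cancelling $\mu(v_{0})$ yields $\chi(x)^{2}=1$.

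The two essential structural inputs are Proposition \ref{Propchichi'}, which allows $\chi$ and $\chi'$ to be identified so that $v_{0}$ and $\mu$ transform by characters that can be compared, and Proposition \ref{propselfdual}, which is what permits the transport $\lambda\rightsquigarrow v_{0}$ through the pairing $B$. The one delicate point, and what I expect to be the crux, is the non-vanishing $\mu(v_{0})\neq 0$; this rests on the semisimplicity of the pro-$p$ group $H$ acting on a finite-dimensional $R$-module, which is precisely where the assumption $l\neq p$ intervenes.
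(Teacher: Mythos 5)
Your argument is correct, and it reaches the conclusion by a genuinely different route from the paper. The paper's own proof first works in characteristic zero, running a chain of isomorphisms
$\mathrm{Hom}_{\boldsymbol{J}^{g}\cap G^{\tau}}(\boldsymbol{\kappa}^{g},\chi^{-1})\cong\mathrm{Hom}_{\boldsymbol{J}^{g}\cap G^{\tau}}(\chi,\boldsymbol{\kappa}^{g\vee})\cong\mathrm{Hom}_{\boldsymbol{J}^{g}\cap G^{\tau}}(\boldsymbol{\kappa}^{g\vee},\chi)\cong\cdots\cong\mathrm{Hom}_{\boldsymbol{J}^{\tau(g)}\cap G^{\tau}}(\boldsymbol{\kappa}^{\tau(g)},\chi\circ\tau)$,
where the middle flip uses semisimplicity of the restriction to the compact group $J^{g}\cap G^{\tau}$ and is therefore only valid when $\mathrm{char}(R)=0$; the uniqueness in Lemma \ref{lemkappa}.(1) together with Proposition \ref{Propchichi'} then gives $\chi\circ\tau=\chi^{-1}=\chi$, and the modular cases are handled afterwards by lifting $\boldsymbol{\kappa}$, $\eta$, $\chi$ to $\overline{\mathbb{Q}_{l}}$ and reducing (which is why the paper insists that $\boldsymbol{\kappa}$ be the reduction of a $\tau$-selfdual lift). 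You instead realize the selfduality $\boldsymbol{\kappa}^{\tau\vee}\cong\boldsymbol{\kappa}$ as an invariant pairing $B$, transport $\lambda\in\mathrm{Hom}_{\boldsymbol{J}^{g}\cap G^{\tau}}(\boldsymbol{\kappa}^{g},\chi^{-1})$ to an eigenvector $v_{0}$ with $\boldsymbol{\kappa}^{\tau(g)}(x)v_{0}=\chi(x)v_{0}$, and pair it against the functional $\mu$ furnished by Lemma \ref{lemkappa}.(1) at $\tau(g)$ and Proposition \ref{Propchichi'}; the only semisimplicity you need is for the pro-$p$ group $J^{1g}\cap G^{\tau}$ acting on the finite-dimensional space $V$, which holds for every $l\neq p$, and the non-vanishing $\mu(v_{0})\neq 0$ follows since $V^{H}$ is the one-dimensional line cut out by Proposition \ref{propheis}. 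The ingredients are the same as the paper's (Proposition \ref{Propchichi'}, $\tau$-selfduality of $\boldsymbol{\kappa}$, multiplicity one for the Heisenberg representation), but your mechanism is uniform in the coefficient field, so it dispenses with the case division and the reduction-from-$\overline{\mathbb{Q}_{l}}$ step, and it does not need the auxiliary hypothesis that $\boldsymbol{\kappa}$ arises by reduction; this is a mild but real simplification. Two small points to make explicit when writing it up: $\tau(g)\in JB^{\times}G^{\tau}$ (so Proposition \ref{propheis} and Lemma \ref{lemkappa} apply at $\tau(g)$), and $\tau(\boldsymbol{J})=\boldsymbol{J}$ (so $\boldsymbol{\kappa}(\tau(y))$ makes sense for $y\in\boldsymbol{J}$), both of which are available in the standing setting of Remark \ref{rempicond}.
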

		
		\begin{proof}
			
			First we assume that $\mathrm{char}(R)=0$. We have the following isomorphisms
			\begin{align}
				\mathrm{Hom}_{J^{1\tau(g)}\cap G^{\tau}}(\eta^{\tau(g)},1)&\cong\mathrm{Hom}_{J^{1g}\cap G^{\tau}}(\eta^{g},1) \nonumber \\
				&\cong\mathrm{Hom}_{\boldsymbol{J}^{g}\cap  G^{\tau}}(\boldsymbol{\kappa}^{g},\chi^{-1}) \nonumber  \\
				&\cong\mathrm{Hom}_{\boldsymbol{J}^{g}\cap G^{\tau}}(\chi,\boldsymbol{\kappa}^{g\vee})\quad (\text{by the duality of contragredient}) \nonumber  \\
				&\cong\mathrm{Hom}_{\boldsymbol{J}^{g}\cap G^{\tau}}(\boldsymbol{\kappa}^{g\vee},\chi)\quad (\text{since}\ \mathrm{char}(R)=0) \nonumber  \\
				&\cong\mathrm{Hom}_{\boldsymbol{J}^{g}\cap G^{\tau}}(\boldsymbol{\kappa}^{g\vee}\circ\tau,\chi\circ\tau) \nonumber  \\
				&\cong\mathrm{Hom}_{\boldsymbol{J}^{g}\cap G^{\tau}}((\boldsymbol{\kappa}^{\tau\vee})^{\tau(g)},\chi\circ\tau) \nonumber  \\
				&\cong\mathrm{Hom}_{\boldsymbol{J}^{\tau(g)}\cap G^{\tau}}(\boldsymbol{\kappa}^{\tau(g)},\chi\circ\tau)\quad(\text{since}\ \boldsymbol{\kappa}\ \text{is}\  \tau\text{-selfdual})  \label{eqhometakappaiso} .
			\end{align}
			Using Proposition \ref{Propchichi'} and the uniqueness of $\chi'$ in the \emph{loc. cit.}, we have $\chi\circ\tau=\chi^{-1}$. Since $\chi$ is defined on $\boldsymbol{J}^{g}\cap G^{\tau}$ which is $\tau$-invariant, we have $\chi\circ\tau=\chi$. Thus $\chi^{2}=\chi(\chi\circ\tau)=1$.
			
			If $R=\overline{\mathbb{F}_{l}}$, we denote by $\widetilde{\boldsymbol{\kappa}}$ a $\tau$-selfdual $\overline{\mathbb{Q}_{l}}$-lift of $\boldsymbol{\kappa}$ and we denote by $\widetilde{\chi}$ the character defined by Lemma \ref{lemkappa}.(1) with respect to $\widetilde{\boldsymbol{\kappa}}$ and $\widetilde{\eta}$, where $\widetilde{\eta}$ is a $J^{1}\cap G^{\tau}$-distinguished $\overline{\mathbb{Q}_{l}}$-lift of $\eta$. Using this proposition for $\overline{\mathbb{Q}_{l}}$-representations, we get $\widetilde{\chi}^{2}=1$. From the uniqueness of $\chi$, we know that $\widetilde{\chi}$ is a $\overline{\mathbb{Q}_{l}}$-lift of $\chi$. As a result, we get $\chi^{2}=1$.
			
			If $\mathrm{char}(R)=l>0$ in general, from the assumption of $\boldsymbol{\kappa}$ mentioned at the beginning of this subsection, via a fields embedding $\overline{\mathbb{F}_{l}}\hookrightarrow R$ we may realize all the representations mentioned in this proposition as representations over $\overline{\mathbb{F}_{l}}$, so we finish the proof by using the former case.
			
		\end{proof}
		
		As in the proof of Lemma \ref{lemdoucos}, we assume $g\in B^{\times}$ and
		\begin{equation}\label{eqgammabxcond}
			\gamma=bx,\quad b\tau(b)=1, \quad x\in K^{1},\quad b\in\varpi_{E}^{a_{1}}\mathrm{GL}_{m_{1}}(\mathfrak{o}_{E})\times...\times\varpi_{E}^{a_{r}}\mathrm{GL}_{m_{r}}(\mathfrak{o}_{E}).
		\end{equation}
		There exists a unique standard hereditary order $\mathfrak{b}_{m}\subseteq\mathfrak{b}$ such that
		$$U^{1}(\mathfrak{b}_{m})=(U\cap\delta(U^{1}))U^{1}=(U\cap U^{1\gamma})U^{1},$$
		where we define $\delta(y)=\gamma^{-1}\tau(y)\gamma$ for any $y\in G$ as an involution on $G$.
		We have the following lemma whose proof is the same as that in \cite{secherre2019supercuspidal}, Lemma 6.22, inspired by \cite{hakim2008distinguished}, Proposition 5.20:
		
		\begin{lemma}\label{lemU1bm}
			
			We have $U^{1}(\mathfrak{b}_{m})=(U^{1}(\mathfrak{b}_{m})\cap G^{\delta})U^{1}$.
			
		\end{lemma}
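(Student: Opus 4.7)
The inclusion $(U^1(\mathfrak{b}_m)\cap G^\delta)U^1\subseteq U^1(\mathfrak{b}_m)$ is immediate, since $U^1\subseteq U^1(\mathfrak{b}_m)$ and $U^1(\mathfrak{b}_m)\subseteq\mathfrak{b}^\times=U$ normalizes $U^1=1+\mathfrak{p}_\mathfrak{b}$. Thus it suffices, given $u\in U^1(\mathfrak{b}_m)$, to produce $v\in U^1(\mathfrak{b}_m)\cap G^\delta$ with $v\equiv u\pmod{U^1}$.

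My plan is to follow the Iwahori-decomposition approach of \cite{hakim2008distinguished}, Proposition 5.20, and \cite{secherre2018supercuspidal}, Lemma 6.22, now adapted from the Galois involution to the unitary involution $\delta$. Let $P\subseteq B^\times=\mathrm{GL}_m(E)$ be the parabolic subgroup with Levi $M=\mathrm{GL}_{m_1}(E)\times\cdots\times\mathrm{GL}_{m_r}(E)$ attached to the block decomposition of $b$ from the proof of Lemma \ref{lembfix}, and let $N,N^-$ be the unipotent radicals of $P$ and its opposite. Combining the standard Iwahori decomposition of $U^1$ with the defining identity $U^1(\mathfrak{b}_m)=(U\cap U^{1\gamma})U^1$, a direct block computation gives
$$U^1(\mathfrak{b}_m)=(U\cap N^-)\cdot(U^1\cap M)\cdot(U^1\cap N),$$
so that the natural map $U\cap N^-\to U^1(\mathfrak{b}_m)/U^1$ is surjective with kernel $U^1\cap N^-$.

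Next I would analyze how $\delta=\mathrm{Ad}(\gamma^{-1})\circ\tau$ interacts with this decomposition. Since $b\in M$, the map $\mathrm{Ad}(b^{-1})$ preserves each of $M$, $N$, and $N^-$; since $\tau$ is transpose composed with conjugation by the hermitian matrix $\varepsilon$, which in our setup is either $I_n$, $\mathrm{diag}(\varpi_E,\ldots,\varpi_E)$, or $\mathrm{diag}(1,\ldots,1,\epsilon)$ and therefore normalizes $M$, the involution $\tau$ swaps $N$ and $N^-$ and induces an involution on $M$; finally $x\in K^1\subseteq U^1$ contributes only a pro-$p$ correction. Thus $\delta$ swaps $N$ and $N^-$ modulo $U^1$ and acts on $M$ through an involution equivalent to $\delta_b(y):=b^{-1}\tau(y)b$. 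Using this I would construct $v$ as follows: set $v_-:=u_-\in U\cap N^-$ and $v_+:=\delta(v_-)^{-1}\in N$, then pick a $\delta_b$-fixed $v_0\in U^1\cap M$ agreeing with $u_0$ modulo the image of $w\mapsto w\delta_b(w)^{-1}$, which is possible because $H^1(\langle\delta_b\rangle,U^1\cap M)$ vanishes in the pro-$p$ setting with $p\neq 2$; then $v:=v_-\,v_0\,v_+$ is the desired $\delta$-fixed candidate, modulo the successive approximation described below.

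The main obstacle I anticipate is checking that this $v$ actually lies in $U^1(\mathfrak{b}_m)$ and satisfies $v\equiv u\pmod{U^1}$, rather than merely modulo a larger $\delta$-stable pro-$p$ group. This requires using the precise form $b=\mathrm{diag}(\varpi_E^{a_1}c_1,\ldots,\varpi_E^{a_r}c_r)$ from Lemma \ref{lembfix}, together with the relation $b\tau(b)=1$ that forces each $c_i$ to be hermitian or skew-hermitian according to the parity of $a_i$, in order to control the valuations of the entries of $\delta(v_-)$ and to confirm that $v_+\in U^1\cap N$. Once this compatibility is in hand, successive approximation along the filtration $U^k=1+\mathfrak{p}_\mathfrak{b}^k$, with the vanishing of $H^1$ for an involution on a pro-$p$ abelian quotient $U^k/U^{k+1}$ at each stage, yields the required $v\in U^1(\mathfrak{b}_m)\cap G^\delta$, and then $w:=v^{-1}u\in U^1$ completes the decomposition.
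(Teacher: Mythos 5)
Your proposal is essentially the paper's own argument: the paper proves this lemma simply by invoking \cite{secherre2018supercuspidal}, Lemma 6.22 (itself modelled on \cite{hakim2008distinguished}, Proposition 5.20), which is precisely your route — the Iwahori factorization of $U^{1}(\mathfrak{b}_{m})$ with respect to the block Levi $M$ attached to $b$, the observation that $\delta$ exchanges the two unipotent radicals and acts on $M$ through $\delta_{b}$ up to pro-$p$ corrections, and the vanishing of twisted first cohomology for pro-$p$ groups with $p\neq 2$ together with successive approximation along the congruence filtration. Two small slips that do not affect the method: with the paper's conventions ($a_{1}>\cdots>a_{r}$ and $H^{\gamma}=\gamma^{-1}H\gamma$) the full unipotent part of $U^{1}(\mathfrak{b}_{m})$ is $U\cap N$ rather than $U\cap N^{-}$, so your decomposition has $N$ and $N^{-}$ interchanged, and $K^{1}$ is not contained in $U^{1}=U^{1}(\mathfrak{b})$ (it is only a pro-$p$ subgroup of $J$), which is exactly why the naive componentwise construction of $v$ must be repaired by the iterative correction you defer to.
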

		
		\begin{theorem}\label{thmtaugg-1}
			
			Let $g\in G$ and suppose $\mathrm{Hom}_{\boldsymbol{J}^{g}\cap G^{\tau}}(\Lambda^{g},1)$ is non-zero. Then $\tau(g)g^{-1}\in\boldsymbol{J}$.
			
		\end{theorem}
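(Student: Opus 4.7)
The goal is to push the reductions from Lemma \ref{lemdoucos} further and show that the block partition appearing in the form \eqref{eqgammabxcond} of $\gamma$ must be trivial, i.e. $r=1$; once this is done, $b\in\varpi_E^{a_1}\mathfrak{b}^{\times}\subseteq E^{\times}J=\boldsymbol{J}$ and, since $x\in K^{1}\subseteq J$, we immediately get $\gamma\in\boldsymbol{J}$. The contradiction forcing $r=1$ will come from the cuspidality of the finite-field representation $\overline{\rho}$.

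First I would set $\delta(x):=\gamma^{-1}\tau(x)\gamma$ and conjugate by $g$ to translate the hypothesis into $\mathrm{Hom}_{\boldsymbol{J}\cap G^{\delta}}(\Lambda,1)\neq 0$. Combining Proposition \ref{propint} and Lemma \ref{lemdoucos} with the subsequent reductions of subsection~6.2, I may replace $g$ inside its $(\boldsymbol{J},G^{\tau})$-double coset to assume $g\in B^{\times}$ and that $\gamma=bx$ satisfies \eqref{eqgammabxcond}; this replacement is legitimate since $\boldsymbol{J}$ is $\tau$-stable, so the set $\{g:\tau(g)g^{-1}\in\boldsymbol{J}\}$ is stable under $\boldsymbol{J}\times G^{\tau}$. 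The conclusion of the theorem is then equivalent to the assertion $r=1$, and I argue by contradiction, assuming $r\geq 2$.

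Next, Lemma \ref{lemkappa}.(2) applied to this $g$ yields a non-zero linear form $\ell$ on the space of $\boldsymbol{\rho}$ satisfying $\ell\circ\boldsymbol{\rho}(h)=\chi(h)\,\ell$ for all $h\in\boldsymbol{J}\cap G^{\delta}$, where $\chi$ is the character given by Lemma \ref{lemkappa}.(1); by Proposition \ref{propchiquad}, $\chi^{2}=1$. I restrict attention to the subgroup $H:=U^{1}(\mathfrak{b}_{m})\cap G^{\delta}$. Since $U^{1}(\mathfrak{b}_{m})$ is pro-$p$ and $p\neq 2$, $H$ is a pro-$p$-group; any $R^{\times}$-valued character of order dividing $2$ on a pro-$p$-group with $p$ odd is trivial, so $\chi|_{H}=1$, and $\ell$ is $H$-invariant.

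The final step transfers this invariance to the finite-field cuspidal $\overline{\rho}$. Since $\boldsymbol{\rho}$ is trivial on $J^{1}$, $\ell$ is invariant under $H\cdot J^{1}$; and by Lemma \ref{lemU1bm} one has $U^{1}(\mathfrak{b}_{m})=H\cdot U^{1}\subseteq H\cdot J^{1}$, so $\ell$ is invariant under $U^{1}(\mathfrak{b}_{m})$ acting through the quotient $J/J^{1}\cong\mathrm{GL}_{m}(\boldsymbol{l})$. A direct calculation with the block structure of $\mathfrak{b}_{m}$ identifies the image of $U^{1}(\mathfrak{b}_{m})$ in $\mathrm{GL}_{m}(\boldsymbol{l})$ with the unipotent radical $\overline{N}$ of the standard parabolic of type $(m_{1},\ldots,m_{r})$. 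Since $r\geq 2$ this parabolic is proper, so $\overline{N}\neq 1$. We have therefore produced a non-zero $\overline{N}$-invariant linear form on the space of $\overline{\rho}$, contradicting cuspidality (which gives $\mathrm{Hom}_{\overline{N}}(\overline{\rho},1)=0$). Hence $r=1$ and $\gamma\in\boldsymbol{J}$. The main obstacle is the explicit block-structure identification of the image of $U^{1}(\mathfrak{b}_{m})$ with the unipotent radical $\overline{N}$: this is a direct but notation-heavy computation that relies on the definition of $\mathfrak{b}_{m}$ via $U\cap U^{1\gamma}$ together with the block-diagonal form of $b$ imposed by \eqref{eqgammabxcond}.
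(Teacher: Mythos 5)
Your proposal is correct and follows essentially the same route as the paper's proof: reduce to showing $r=1$ in the normal form of $\gamma$, use Lemma \ref{lemkappa}.(2) together with Proposition \ref{propchiquad} to kill the quadratic character on the pro-$p$ group $U^{1}(\mathfrak{b}_{m})\cap G^{\delta}$, transfer via Lemma \ref{lemU1bm} to the quotient $J/J^{1}\cong\mathrm{GL}_{m}(\boldsymbol{l})$, and contradict cuspidality of $\overline{\rho}$ on the resulting nontrivial unipotent (radical) subgroup. Your explicit identification of $\overline{U^{1}(\mathfrak{b}_{m})}$ with the unipotent radical of the standard parabolic of type $(m_{1},\ldots,m_{r})$ only makes precise what the paper leaves implicit.
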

		
		\begin{proof}
			
			It is enough to show that $r=1$ in (\ref{eqgammabxcond}). If not, $\mathfrak{b}_{m}$ by definition is a proper suborder of $\mathfrak{b}$. Furthermore, $\overline{U^{1}(\mathfrak{b}_{m})}:=U^{1}(\mathfrak{b}_{m})/U^{1}$ is a non-trivial unipotent subgroup of $U/U^{1}\cong\mathrm{GL}_{m}(\boldsymbol{l})$. Using Lemma \ref{lemkappa}.(2), we have
			$$\mathrm{Hom}_{\boldsymbol{J}\cap G^{\delta}}(\boldsymbol{\rho},\chi^{g^{-1}})\cong\mathrm{Hom}_{\boldsymbol{J}^{g}\cap G^{\tau}}(\boldsymbol{\rho}^{g},\chi)\neq 0.$$
			Restricting to $U^{1}(\mathfrak{b}_{m})\cap G^{\delta}$, we have
			\begin{equation}\label{eqrho}
				\mathrm{Hom}_{U^{1}(\mathfrak{b}_{m})\cap G^{\delta}}(\boldsymbol{\rho},\chi^{g^{-1}})\neq 0.
			\end{equation}
			Using Lemma \ref{lemU1bm}, we have the isomorphism
			$$(U^{1}(\mathfrak{b}_{m})\cap G^{\delta})U^{1}/U^{1}\cong U^{1}(\mathfrak{b}_{m})/U^{1}.$$
			We denote by $\overline{\rho}$ the cuspidal representation of $U^{0}/U^{1}\cong\mathrm{GL}_{m}(\boldsymbol{l})$ whose inflation is $\boldsymbol{\rho}|_{U^{0}}$, and by $\overline{\chi^{g^{-1}}}$ the character of $\overline{U^{1}(\mathfrak{b}_{m})}$ whose inflation is $\chi^{g^{-1}}$. So if we consider the equation (\ref{eqrho}) modulo $U^{1}$, then we get
			$$\mathrm{Hom}_{\overline{U^{1}(\mathfrak{b}_{m})}}(\overline{\rho},\overline{\chi^{g^{-1}}})\neq 0.$$
			Since $\chi^{g^{-1}}|_{J\cap G^{\delta}}$ is quadratic and $\overline{U^{1}(\mathfrak{b}_{m})}$ is a $p$-group with $p\neq 2$, we get $\overline{\chi^{g^{-1}}}=1$, thus
			$$\mathrm{Hom}_{\overline{U^{1}(\mathfrak{b}_{m})}}(\overline{\rho},1)\neq 0,$$
			which contradicts to the fact that $\overline{\rho}$ is cuspidal.
			
		\end{proof}
		
		\begin{proof}[Proof of Theorem \ref{thmtype}]
			
			If there exists a $\tau$-selfdual simple type $(\boldsymbol{J},\Lambda)$ in $\pi$ such that $\mathrm{Hom}_{\boldsymbol{J}\cap G^{\tau}}(\Lambda,1)$ is non-zero, then $\pi$ is $G^{\tau}$-distinguished. Conversely, there exists $g\in G$ such that $\mathrm{Hom}_{\boldsymbol{J}^{g}\cap G^{\tau}}(\Lambda^{g},1)$ is non-zero. Using Theorem \ref{thmtaugg-1}, we conclude that $(\boldsymbol{J}^{g},\Lambda^{g})$ is a $\tau$-selfdual simple type.
			
		\end{proof}
		
		Finally we state the following corollary of Theorem \ref{thmtaugg-1} as the end of this section:
		
		\begin{corollary}\label{corJGtau}
			
			Under the assumption of Theorem \ref{thmtaugg-1}, we have $g\in\boldsymbol{J}G^{\tau}$ or $g\in\boldsymbol{J}g_{1}G^{\tau}$, where the latter case exists only if $m$ is even, and $g_{1}\in B^{\times}$ is fixed such that
			$$\tau(g_{1})g_{1}^{-1}=\begin{cases}\varpi_{E}I_{m} &\quad \text{if}\ E/E_{0}\ \text{is unramified.}\\
				\varpi_{E}J_{m/2} &\quad \text{if}\ E/E_{0}\ \text{is ramified.}\end{cases}$$
			As a result, $$\mathrm{Hom}_{G^{\tau}}(\pi,1)\cong\mathrm{Hom}_{\boldsymbol{J}\cap G^{\tau}}(\Lambda,1)\oplus\mathrm{Hom}_{\boldsymbol{J}^{g_{1}}\cap G^{\tau}}(\Lambda^{g_{1}},1).$$
			
		\end{corollary}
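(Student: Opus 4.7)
The argument will proceed by pushing the analysis of $\gamma = \tau(g)g^{-1}$ from the proof of Lemma \ref{lemdoucos} one step further, now strengthened by Theorem \ref{thmtaugg-1}, which asserts that $\gamma \in \boldsymbol{J}$. First I would invoke the reductions from the proof of Lemma \ref{lemdoucos}: after adjusting $g$ on the left by $\boldsymbol{J}$ and on the right by $G^\tau$, we may write $\gamma = bx$ with $x \in K^1$ and $b = \mathrm{diag}(\varpi_E^{a_1}c_1,\ldots,\varpi_E^{a_r}c_r)$ for integers $a_1 > \cdots > a_r$ and normal forms $c_i$ as in Lemma \ref{lembfix}. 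Since $\gamma \in \boldsymbol{J}$ and $x \in J \subset \boldsymbol{J}$, we have $b \in \boldsymbol{J}\cap B^\times = E^\times\mathfrak{b}^\times = \varpi_E^{\mathbb{Z}}\,\mathrm{GL}_m(\mathfrak{o}_E)$, which forces $a_1=\cdots=a_r$ and hence $r=1$. Thus $b = \varpi_E^a c$ for a single integer $a$ and one of the listed normal forms $c$.

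I would then classify the double coset $\boldsymbol{J} g G^\tau$ by the equivalence $\gamma \sim \tau(j)\gamma j^{-1}$ on symmetric elements $\gamma \in \boldsymbol{J}$. Writing $j = \varpi_E^\ell v$ with $v \in J$ and using $\tau(\varpi_E) = \pm\varpi_E^{-1}$ (plus sign if $E/E_0$ is unramified, minus if ramified), this action sends $a \mapsto a-2\ell$ and $c \mapsto (\pm 1)^\ell\tau(v)cv^{-1}$ modulo $K^1$, so that the only residual invariant of $a$ is $a \bmod 2$. When $a$ is even, applying the finite-field lemmas \ref{lemherfin}, \ref{lemothfin} and \ref{lemsymfin} to the image of $c$ in $J/J^1 \cong \mathrm{GL}_m(\boldsymbol{l})$, and then lifting via the vanishing of the first $\delta_b$-cohomology of the pro-$p$ group $K^1$ (as in the end of the proof of Lemma \ref{lemdoucos}), one reduces to $\gamma = 1$. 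When $a$ is odd, the same procedure produces $\gamma = \varpi_E I_m$ in the unramified case and $\gamma = \varpi_E J_{m/2}$ in the ramified case.

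The nontrivial case further requires the existence of $g_1 \in B^\times$ with $\tau(g_1)g_1^{-1}$ equal to the prescribed element. Via the standard correspondence $g \mapsto g\varepsilon g^*$ between $G/G^\tau$ and the $G$-orbit of $\varepsilon$ in $\mathcal{X}$, this is equivalent to $\gamma^{-1}\varepsilon$ lying in the same $B^\times$-orbit as $\varepsilon$. Proposition \ref{PropGOH} reduces this to a determinant condition in $E_0^\times/\mathrm{N}_{E/E_0}(E^\times)$, which by Lemma \ref{Lemmafieldext} (applied with $L_0 = E_0$) boils down to whether $\varpi_E^m$ is a norm, i.e. to $m$ being even. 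In the ramified case the form $\varpi_E J_{m/2}$ already demands $m$ even; in the unramified case, the standing assumption of Remark \ref{rempicond}(3) forces $m$ odd and hence rules out the second coset.

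Finally, applying Frobenius reciprocity and Mackey's formula to $\pi \cong \mathrm{c}$-$\mathrm{Ind}_{\boldsymbol{J}}^G\Lambda$ yields
\[
\mathrm{Hom}_{G^\tau}(\pi,1) \cong \bigoplus_{g \in \boldsymbol{J}\backslash G/G^\tau}\mathrm{Hom}_{\boldsymbol{J}^g \cap G^\tau}(\Lambda^g,1),
\]
and by what has just been shown only the two listed double cosets can contribute nonzero terms, giving the displayed decomposition. The hardest part will be the second step, especially in the ramified case, where the $(\pm 1)^\ell$ sign, the orthogonal (rather than unitary) residual involution on $\mathrm{GL}_m(\boldsymbol{l})$, and the interplay between the parity of $a$ and the choice of normal form for $c$ must all be tracked simultaneously to confirm that no further distinct double cosets arise.
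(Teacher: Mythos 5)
Your overall strategy is viable and in fact close to the paper's: the paper also reduces to $g\in B^{\times}$, observes $\tau(g)g^{-1}\in\boldsymbol{J}\cap B^{\times}=E^{\times}\mathfrak{b}^{\times}$, and then classifies the hermitian matrix $(g^{*})^{-1}\varepsilon^{-1}g^{-1}$ — but it does so by citing the lattice classification of Proposition \ref{PropJOH} together with a determinant comparison, rather than re-running the finite-field lemmas and the pro-$p$ cohomology argument as you propose. Your first step (forcing $a_{1}=\dots=a_{r}$, hence $r=1$, from $\gamma\in\boldsymbol{J}$) and your final Mackey step are fine. The problems are in the middle, precisely where you yourself say the hard work lies but do not carry it out.

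First, in the ramified case with $a$ even your claim that "one reduces to $\gamma=1$" does not follow from Lemmas \ref{lemherfin}--\ref{lemsymfin} alone: Lemma \ref{lemothfin} allows two inequivalent unit normal forms, $I_{m}$ and $\mathrm{diag}(1,\dots,1,\overline{\epsilon})$, and $\gamma\sim 1$ means that $\varepsilon^{-1}\gamma$ is congruent to $\varepsilon^{-1}$, not to an arbitrary one of these. To rule out the mismatched form you must use that $\varepsilon^{-1}\gamma=(g^{*})^{-1}\varepsilon^{-1}g^{-1}$ is by construction $\mathrm{GL}_{m}(E)$-congruent to $\varepsilon^{-1}$, so its determinant class in $E_{0}^{\times}/\mathrm{N}_{E/E_{0}}(E^{\times})$ agrees with that of $\varepsilon^{-1}$; this determinant bookkeeping (which is exactly how the paper invokes the uniqueness in Proposition \ref{PropJOH}) is absent from your second step, and without it the classification of cosets is not established. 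Second, your criterion for the existence of $g_{1}$ is wrong in the ramified case: $\varpi_{E}^{m}=(-1)^{m/2}\mathrm{N}_{E/E_{0}}(\varpi_{E})^{m/2}$, so "$\varpi_{E}^{m}$ is a norm" is not equivalent to "$m$ even", and Lemma \ref{Lemmafieldext} (which compares $E_{0}^{\times}/\mathrm{N}_{E/E_{0}}(E^{\times})$ with $F_{0}^{\times}/\mathrm{N}_{F/F_{0}}(F^{\times})$) does not decide this. What is actually needed, and what the paper does, is the implication in the other direction: the parity of $v_{E}(\mathrm{det}_{B}(\gamma))$ forces $m$ even in the second case, the determinant comparison forces $\varepsilon=I_{m}$ there, and then the element $g$ itself witnesses via Proposition \ref{PropGOH} that $\varpi_{E}J_{m/2}$ (resp. $\varpi_{E}I_{m}$) lies in the orbit of $I_{m}$, so that $g_{1}$ exists and $g\in\boldsymbol{J}g_{1}G^{\tau}$. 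As written, your proposal both leaves the decisive normal-form step unexecuted and asserts an equivalence that fails, so it does not yet constitute a proof.
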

		
		\begin{proof}
			
			Recall that we have already assumed that $g\in B^{\times}$. Since $\tau(g)g^{-1}\in \boldsymbol{J}\cap B^{\times}=E^{\times}\mathfrak{b}^{\times}$, changing $g$ up to multiplying by an element in $E^{\times}$ which doesn't change the double coset it represents, we may assume $(g^{*})^{-1}\varepsilon^{-1}g^{-1}\in\mathfrak{b}^{\times}$ or $\varpi_{E}\mathfrak{b}^{\times}$, where $\varepsilon$ equals $I_{m}$ for $E/E_{0}$ unramified\footnote{It is also possible in the unramified case that $\varepsilon=\mathrm{diag}(\varpi_{E},...,\varpi_{E})$. However in this case $\varepsilon\in E^{\times}$ which commutes with $B^{\times}$, thus this case can be combined into the case where $\varepsilon=I_{m}$.} and $\varepsilon$ equals $I_{m}$ or $\mathrm{diag}(1,...,1,\epsilon)$ with $\epsilon\in \mathfrak{o}_{E_{0}}^{\times}-\mathrm{N}_{E/E_{0}}(\mathfrak{o}_{E}^{\times})$ for $E/E_{0}$ ramified. Using Proposition \ref{PropJOH}, we may change $g^{-1}$ up to multiplying by an element in $\mathfrak{b}^{\times}$ on the right, thus we may write $(g^{*})^{-1}\varepsilon^{-1}g^{-1}=\varpi_{E}^{\alpha}$, where $\varpi_{E}^{\alpha}$ is defined as in \S \ref{subsectionunitary}. Thus we get $\mathrm{det}_{B}(\varpi_{E}^{\alpha})/\mathrm{det}_{B}(\varepsilon^{-1})\in\mathrm{N}_{E/E_{0}}(E^{\times})$.
			
			If $(g^{*})^{-1}\varepsilon^{-1}g^{-1}\in\mathfrak{b}^{\times}$, from the definition and the uniqueness of $\varpi_{E}^{\alpha}$ in Proposition \ref{PropJOH}, we get $\varpi_{E}^{\alpha}=\varepsilon$. We may further change $g^{-1}$ up to multiplying by an element in $\mathfrak{b}^{\times}$ on the right, such that $(g^{*})^{-1}\varepsilon^{-1}g^{-1}=\varepsilon^{-1}$. Thus we get $\tau(g)=\varepsilon(g^{*})^{-1}\varepsilon^{-1}=g$, which means that $g\in G^{\tau}$.
			
			If $(g^{*})^{-1}\varepsilon^{-1}g^{-1}\in \varpi_{E}\mathfrak{b}^{\times}$. Considering the determinant we deduce that $\mathrm{det}_{B}((g^{*})^{-1}\varepsilon^{-1}g^{-1})\in E^{\times}$ is of even order with respect to the discrete valuation of $E$. Since the determinant of elements in $\varpi_{E}\mathfrak{b}^{\times}$ is of order $m$, we know that $m$ is even. Thus from the definition and the uniqueness of $\varpi_{E}^{\alpha}$ in Proposition \ref{PropJOH}, we get $\varpi_{E}^{\alpha}=\varpi_{E}\varepsilon$ when $E/E_{0}$ is unramified and $\varpi_{E}^{\alpha}=\varpi_{E}J_{m/2}$ when $E/E_{0}$ is ramified. For the former case, we have $\varepsilon=I_{m}$. Using Proposition \ref{PropGOH}, we may choose $g_{1}\in B^{\times}$ such that $(g_{1}^{*})^{-1}g_{1}^{-1}=\varpi_{E}I_{m}=(g^{*})^{-1}g^{-1}$. Thus $g\in g_{1}G^{\tau}$. For the latter case, considering the determinant we must have $\mathrm{det}_{B}(\varepsilon)\in\mathrm{N}_{E/E_{0}}(E^{\times})$, thus $\varepsilon=I_{m}$. Using Proposition \ref{PropGOH}, we may choose $g_{1}\in B^{\times}$ such that $(g_{1}^{*})^{-1}g_{1}^{-1}=\varpi_{E}J_{m/2}=(g^{*})^{-1}g^{-1}$, thus $g\in g_{1}G^{\tau}$.
			
		\end{proof}

		\section{The supercuspidal unramified case}\label{sectionunram}
		
		In this section, we study the distinction of $\sigma$-invariant supercuspidal representations of $G$ in the case where $E/E_{0}$ is unramified.
		
		\subsection{The finite field case}
		
		In this subsection, we assume $\boldsymbol{l}/\boldsymbol{l}_{0}$ to be a quadratic extension of finite fields with characteristic $p\neq 2$. Let $|\boldsymbol{l}_{0}|=Q$, then $|\boldsymbol{l}|=Q^{2}$. Let $\sigma$ be the non-trivial involution in $\mathrm{Gal}(\boldsymbol{l}/\boldsymbol{l}_{0})$.
		
		Let $m$ be a positive integer and let $\boldsymbol{t}$ be an extension of degree $m$ over $\boldsymbol{l}$. We identify $\boldsymbol{t}^{\times}$ with a maximal torus of $\mathrm{GL}_{m}(\boldsymbol{l})$. We call a character $\xi:\boldsymbol{t}^{\times}\rightarrow R^{\times}$ $\boldsymbol{l}$-regular (or regular for short) if for any $i=1,...,m-1$, we have $\xi^{|\boldsymbol{l}|^{i}}\neq\xi$. By Green \cite{green1955characters} when $\mathrm{char}(R)=0$ and James \cite{james1986irreducible} when $\mathrm{char}(R)=l>0$ prime to $p$, there is a surjective map
		$$\xi\mapsto\overline{\rho_{\xi}}$$
		between $\boldsymbol{l}$-regular characters of $\boldsymbol{t}^{\times}$ and isomorphism classes of supercuspidal representations of $\mathrm{GL}_{m}(\boldsymbol{l})$, whose fibers are $\mathrm{Gal}(\boldsymbol{t}/\boldsymbol{l})$-orbits. 
		
		\begin{lemma}\label{Lemmafinsiginv}
			
			(1) If there exists a $\sigma$-invariant supercuspidal representation of $\mathrm{GL}_{m}(\boldsymbol{l})$, then $m$ is odd.
			
			(2) When $\mathrm{char}(R)=0$, the converse of (1) is true.
			
		\end{lemma}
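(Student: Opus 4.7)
The plan is to invoke the classification of supercuspidal representations of $\mathrm{GL}_{m}(\boldsymbol{l})$ over $R$ by $\mathrm{Gal}(\boldsymbol{l}_{m}/\boldsymbol{l})$-orbits of regular characters of the anisotropic torus $\boldsymbol{l}_{m}^{\times}$ (where $\boldsymbol{l}_{m}$ denotes the degree-$m$ extension of $\boldsymbol{l}$), and to analyze the action of $\sigma$ on such orbits by elementary group theory in a cyclic group. Fix the Frobenius $F_{0}$ of $\boldsymbol{l}_{0}$; it generates $\mathrm{Gal}(\boldsymbol{l}_{m}/\boldsymbol{l}_{0})$ (cyclic of order $2m$), restricts to $\sigma$ on $\boldsymbol{l}$, and satisfies $\mathrm{Gal}(\boldsymbol{l}_{m}/\boldsymbol{l})=\langle F_{0}^{2}\rangle$.

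For (1), first assume $\mathrm{char}(R)=0$. By Green's parametrization, a supercuspidal $\rho$ corresponds to the $\langle F_{0}^{2}\rangle$-orbit of a regular character $\chi\colon\boldsymbol{l}_{m}^{\times}\to R^{\times}$, and $\rho^{\sigma}$ corresponds to the orbit of $\chi\circ F_{0}$. Hence $\rho^{\sigma}\cong\rho$ is equivalent to saying that the stabilizer $H$ of $\chi$ in $\langle F_{0}\rangle$ is not contained in $\langle F_{0}^{2}\rangle$. Regularity of $\chi$ forces $H\cap\langle F_{0}^{2}\rangle=1$, so $H$ injects into $\langle F_{0}\rangle/\langle F_{0}^{2}\rangle\cong\mathbb{Z}/2\mathbb{Z}$ and must equal the full quotient; consequently its nontrivial element is the unique involution $F_{0}^{m}$ of $\langle F_{0}\rangle$. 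For $F_{0}^{m}$ to lie outside $\langle F_{0}^{2}\rangle$ one needs $m$ odd. For modular $R=\overline{\mathbb{F}_{l}}$, the analogous classification of supercuspidal representations over $\overline{\mathbb{F}_{l}}$ (Vign\'eras) by regular $\overline{\mathbb{F}_{l}}^{\times}$-valued characters of $\boldsymbol{l}_{m}^{\times}$ modulo $\mathrm{Gal}(\boldsymbol{l}_{m}/\boldsymbol{l})$ makes the same orbit argument go through; alternatively one can apply the projective-envelope machinery used in the proof of Theorem \ref{Thmdistgalinv}, now at the finite group $\mathrm{GL}_{m}(\boldsymbol{l})$, to produce a $\overline{\mathbb{Q}_{l}}$-lift of $\rho$ to which the characteristic-zero conclusion transfers. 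The reduction from general $R$ of positive characteristic to $R=\overline{\mathbb{F}_{l}}$ is by the same twist-by-a-central-character argument as in loc.~cit.

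For (2), assume $m$ odd and $\mathrm{char}(R)=0$. Because $\gcd(2,m)=1$, the subfield of $\boldsymbol{l}_{m}$ fixed by $F_{0}^{m}$ is $\boldsymbol{l}_{0,m}$, the degree-$m$ extension of $\boldsymbol{l}_{0}$, and $F_{0}|_{\boldsymbol{l}_{0,m}}$ generates $\mathrm{Gal}(\boldsymbol{l}_{0,m}/\boldsymbol{l}_{0})$. Pick a regular character $\chi_{0}$ of $\boldsymbol{l}_{0,m}^{\times}$ (one exists by a straightforward counting on the cyclic character group of $\boldsymbol{l}_{0,m}^{\times}$), and set $\chi=\chi_{0}\circ N_{\boldsymbol{l}_{m}/\boldsymbol{l}_{0,m}}$. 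By construction $\chi$ is $F_{0}^{m}$-invariant. Galois-equivariance of the norm together with $\gcd(2,m)=1$ show that $\chi\circ F_{0}^{2k}=\chi$ forces $\chi_{0}\circ (F_{0}|_{\boldsymbol{l}_{0,m}})^{2k}=\chi_{0}$, hence $k\equiv 0\pmod{m}$ by regularity of $\chi_{0}$. Thus $\chi$ is regular, the associated supercuspidal $\rho$ is well defined, and $\rho^{\sigma}\cong\rho$ by the computation of (1) run in reverse.

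The delicate step is the modular half of (1): one must either cite a sufficiently strong mod-$l$ classification of supercuspidal representations of $\mathrm{GL}_{m}(\boldsymbol{l})$, or arrange a $\overline{\mathbb{Q}_{l}}$-lift of $\rho$ whose $\sigma$-twist is again a lift of $\rho$, so that the characteristic-zero Galois argument applies. The remainder, both the char-$0$ half of (1) and the entirety of (2), reduces to transparent group theory inside the cyclic group $\langle F_{0}\rangle$ of order $2m$.
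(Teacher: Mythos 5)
Your argument is correct and, for part (1), is essentially the paper's own proof: both rest on the Green--James parametrization of supercuspidal representations of $\mathrm{GL}_{m}(\boldsymbol{l})$ by Galois orbits of regular characters of $\boldsymbol{t}^{\times}$, and your stabilizer computation in the cyclic group $\langle F_{0}\rangle$ of order $2m$ is just a repackaging of the paper's congruence manipulation ($\xi^{Q^{2i}}=\xi^{Q}$ applied twice, regularity forcing $2m\mid 4i-2$, hence $m=2i-1$). For part (2) your construction looks different but is in substance the same: the paper picks $\xi$ of order $Q^{m}-1$, and since $\boldsymbol{t}^{\times}$ is cyclic the kernel of such a $\xi$ is exactly the norm-one subgroup for $\boldsymbol{t}/\boldsymbol{l}_{0,m}$, so the paper's $\xi$ is precisely the norm-inflation of a faithful (hence regular) character of $\boldsymbol{l}_{0,m}^{\times}$; your version is a mild generalization allowing any regular $\chi_{0}$, and your verification of regularity and $\sigma$-invariance is sound. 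Two small corrections of bookkeeping rather than substance: the modular classification the paper invokes is James's (char $l$ prime to $p$), which already covers part (1) uniformly without needing the projective-envelope lifting detour you sketch (which, as stated, would require lifting to a $\sigma$-invariant lift, i.e.\ lifting a $\sigma$-invariant regular character, as the paper does later in Lemma \ref{Lemmafinunidist}); and the passage from $\overline{\mathbb{F}_{l}}$ to a general $R$ of characteristic $l$ is by scalar extension (an equivalence of categories for the finite group $\mathrm{GL}_{m}(\boldsymbol{l})$), not by a central-character twist, which has no role for a finite group.
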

		
		\begin{proof}
			
			We may follow the same proof of \cite{secherre2019supercuspidal}, Lemma 2.3, with the concept $\sigma$-selfdual in \emph{loc. cit.} replaced by $\sigma$-invariant and the corresponding contragredient (or inverse) replaced by the identity.
			
			
			
		\end{proof}
		
		Let $H=\mathrm{U}_{m}(\boldsymbol{l}/\boldsymbol{l}_{0}):=\mathrm{U}_{m}(I_{m})$ be the unitary subgroup of $\mathrm{GL}_{m}(\boldsymbol{l})$ corresponding to the hermitian matrix $I_{m}$ with respect to $\bs{l}/\bs{l_{0}}$. Note that there is only one conjugacy class of unitary subgroup of $\overline{G}$, which is isomorphic to $H$.
		
		\begin{lemma}\label{Lemmafinunidist}
			
			Suppose $m$ to be odd and let $\overline{\rho}$ be a supercuspidal representation of $\mathrm{GL}_{m}(\boldsymbol{l})$. The following assertions are equivalent:
			
			(1) The representation $\overline{\rho}$ is $\sigma$-invariant;
			
			(2) The representation $\overline{\rho}$ is $H$-distinguished;
			
			(3) The $R$-vector space $\mathrm{Hom}_{H}(\overline{\rho},1)$ has dimension 1.
			
		\end{lemma}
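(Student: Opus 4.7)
The implication (3) $\Rightarrow$ (2) is immediate. The strategy is to reduce to the complex case handled by Gow~\cite{gow1984two} and then transfer to modular coefficients via the projective-envelope technique used in the proof of Theorem~\ref{Thmdistgalinv}.

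When $\mathrm{char}(R) = 0$, all three conditions are equivalent by Gow's theorem together with the finite-group analogue of Lemma~\ref{lemmabasechange}, which applies because $H$ is finite and $\overline{\rho}$ is finite dimensional. Suppose now $\mathrm{char}(R) = l > 0$; the same base-change argument reduces to $R = \overline{\mathbb{F}_l}$.

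For (2) $\Rightarrow$ (1) I would mimic the argument of Theorem~\ref{Thmdistgalinv}. Let $P_{\overline{\rho}}$ be the projective envelope of $\overline{\rho}$ as an $\overline{\mathbb{Z}_l}[\mathrm{GL}_m(\boldsymbol{l})]$-module, and recall that $P_{\overline{\rho}} \otimes_{\overline{\mathbb{Z}_l}} \overline{\mathbb{Q}_l}$ decomposes as a multiplicity-free direct sum of the distinct $\overline{\mathbb{Q}_l}$-lifts $\widetilde{\rho_i}$ of $\overline{\rho}$. Writing $\mathrm{Hom}_H(-,1) = \mathrm{Hom}_{\mathrm{GL}_m(\boldsymbol{l})}(-, \mathrm{Ind}_H^{\mathrm{GL}_m(\boldsymbol{l})} 1)$ and using the freeness of $\mathrm{Hom}_{\mathrm{GL}_m(\boldsymbol{l})}(P_{\overline{\rho}}, \mathrm{Ind}_H^{\mathrm{GL}_m(\boldsymbol{l})} \overline{\mathbb{Z}_l})$ over $\overline{\mathbb{Z}_l}$, the non-vanishing of $\mathrm{Hom}_H(\overline{\rho}, 1)$ forces $\mathrm{Hom}_H(\widetilde{\rho_i}, 1) \neq 0$ for some $i$. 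Gow's theorem then forces $\widetilde{\rho_i}^\sigma \cong \widetilde{\rho_i}$, and reduction of trace characters modulo $l$ yields $\overline{\rho}^\sigma \cong \overline{\rho}$.

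For (1) $\Rightarrow$ (3) the parametrization from Lemma~\ref{Lemmafinsiginv} writes $\overline{\rho} = \overline{\rho_\xi}$ for a $\sigma$-invariant regular character $\xi$ of $\boldsymbol{t}^\times$. Picking an $l$-regular Teichm\"uller-type lift $\widetilde{\xi}$ of $\xi$ satisfying the same identity $\widetilde{\xi}^{Q^{2i}} = \widetilde{\xi}^Q$, the corresponding Deligne--Lusztig supercuspidal $\widetilde{\rho_{\widetilde{\xi}}}$ is a $\sigma$-invariant integral lift of $\overline{\rho}$, and Gow gives $\dim \mathrm{Hom}_H(\widetilde{\rho_{\widetilde{\xi}}}, 1) = 1$. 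For the lower bound $\dim \mathrm{Hom}_H(\overline{\rho}, 1) \geq 1$, an integral structure $L \subset \widetilde{\rho_{\widetilde{\xi}}}$ gives a rank-one free $\overline{\mathbb{Z}_l}$-module $\mathrm{Hom}_H(L, \overline{\mathbb{Z}_l})$ injecting into $\mathrm{Hom}_H(\overline{\rho}, 1)$ after reduction. The principal obstacle is the reverse inequality: the naive comparison via $P_{\overline{\rho}}$ only yields $\dim \mathrm{Hom}_H(\overline{\rho}, 1) \leq e_{\overline{\rho}}$, the Cartan invariant counting $\overline{\mathbb{Q}_l}$-lifts of $\overline{\rho}$, so one needs a finer argument --- for instance a modular multiplicity-one statement for the Gelfand pair $(\mathrm{GL}_m(\boldsymbol{l}), H)$, or a direct Deligne--Lusztig character computation in the spirit of the Hakim--Lansky formula~\cite{hakim2012distinguished} --- to pin the dimension down to exactly~$1$.
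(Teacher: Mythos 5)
Your overall architecture matches the paper's: (3)$\Rightarrow$(2) is trivial, the characteristic-zero case is Gow, the general characteristic-$l$ case reduces to $\overline{\mathbb{F}_l}$ by scalar extension, (2)$\Leftrightarrow$(1) goes through the projective envelope $P_{\overline{\rho}}$ and the multiplicity-free decomposition of $P_{\overline{\rho}}\otimes_{\overline{\mathbb{Z}_l}}\overline{\mathbb{Q}_l}$, and your lift of a $\sigma$-invariant regular character gives a $\sigma$-invariant supercuspidal $\overline{\mathbb{Q}_l}$-lift and hence the lower bound $\dim_R\mathrm{Hom}_H(\overline{\rho},1)\geq 1$. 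However, the step you leave open --- the upper bound $\dim_{\overline{\mathbb{F}_l}}\mathrm{Hom}_H(\overline{\rho},1)\leq 1$ --- is exactly the content of assertion (3), so the proposal has a genuine gap: naming candidate strategies (a modular Gelfand-pair statement, or a Deligne--Lusztig computation) is not a proof, and as you yourself note, the projective-envelope comparison alone only bounds the dimension by the number of $\overline{\mathbb{Q}_l}$-lifts of $\overline{\rho}$.

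The paper closes this gap with precisely the "modular multiplicity-one for the Gelfand pair $(\mathrm{GL}_m(\boldsymbol{l}),H)$" you gesture at, and no new character computation is needed: by Gow (Theorem 2.1) the integral Hecke algebra $\overline{\mathbb{Z}_l}[H\backslash\mathrm{GL}_m(\boldsymbol{l})/H]$ is commutative, and reducing modulo $l$ shows that $\overline{\mathbb{F}_l}[H\backslash\mathrm{GL}_m(\boldsymbol{l})/H]\cong\mathrm{End}_{\overline{\mathbb{F}_l}[\mathrm{GL}_m(\boldsymbol{l})]}(\overline{\mathbb{F}_l}[H\backslash\mathrm{GL}_m(\boldsymbol{l})])$ is commutative; hence so is the endomorphism algebra of the $\overline{\rho}$-isotypic summand $V_{\overline{\rho}}$ in the decomposition $\overline{\mathbb{F}_l}[H\backslash\mathrm{GL}_m(\boldsymbol{l})]=V_{\overline{\rho}}\oplus V'$. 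Since $\mathrm{End}_{\overline{\mathbb{F}_l}[\mathrm{GL}_m(\boldsymbol{l})]}(P_{\overline{\rho}}\otimes_{\overline{\mathbb{Z}_l}}\overline{\mathbb{F}_l})=\overline{\mathbb{F}_l}[N]$ with $N$ nilpotent and $V_{\overline{\rho}}\cong\bigoplus_{i=1}^{r}P/N^{n_i}P$ (writing $P=P_{\overline{\rho}}\otimes_{\overline{\mathbb{Z}_l}}\overline{\mathbb{F}_l}$), commutativity forces $r=1$, so $V_{\overline{\rho}}\cong P/N^{n_1}P$ and
$$\mathrm{Hom}_H(\overline{\rho},1)\cong\mathrm{Hom}_{\mathrm{GL}_m(\boldsymbol{l})}(\overline{\rho},P/N^{n_1}P)\cong\overline{\mathbb{F}_l}.$$
This Hecke-algebra commutativity, inherited from the integral level, is exactly the extra input your argument is missing; with it supplied, the rest of your proposal goes through as in the paper.
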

		
		\begin{proof}
			
			When $R$ has characteristic 0, this is \cite{gow1984two}, Theorem 2.1, Theorem 2.4. Suppose now that $R=\overline{\mathbb{F}_{l}}$. First we prove that (1) is equivalent to (2).
			
			For $\overline{\rho}$ a supercuspidal representation of $\mathrm{GL}_{m}(\boldsymbol{l})$, we denote by $P_{\overline{\rho}}$ the projective envelope of $\overline{\rho}$ as a $\overline{\mathbb{Z}_{l}}[\mathrm{GL}_{m}(\boldsymbol{l})]$-module, where $\overline{\mathbb{Z}_{l}}$ is the ring of integers of $\overline{\mathbb{Q}_{l}}$. Using \cite{vigneras1996representations}, Chapitre III, Th\'eor\`eme 2.9 and \cite{serre2012linear}, Proposition 42, we have:
			
			(1) $P_{\overline{\rho}}\otimes_{\overline{\mathbb{Z}_{l}}}\overline{\mathbb{F}_{l}}$ is the projective envelope of $\overline{\rho}$ as a $\overline{\mathbb{F}_{l}}[\mathrm{GL}_{m}(\boldsymbol{l})]$-module, which is indecomposable of finite length with each irreducible component isomorphic to $\overline{\rho}$;

			(2) For $\widetilde{P_{\overline{\rho}}}=P_{\overline{\rho}}\otimes_{\overline{\mathbb{Z}_{l}}}\overline{\mathbb{Q}_{l}}$, we have $\widetilde{P_{\overline{\rho}}}\cong\bigoplus\widetilde{\overline{\rho}}$, where $\widetilde{\overline{\rho}}$ in the direct sum ranges over all the supercuspidal $\overline{\mathbb{Q}_{l}}$-lifts of $\overline{\rho}$ and appears with multiplicity 1. We have
			\begin{align*}
				\mathrm{Hom}_{H}(\overline{\rho},1)\neq 0; &\Longleftrightarrow \mathrm{Hom}_{\overline{\mathbb{F}_{l}}[\mathrm{GL}_{m}(\boldsymbol{l})]}(\overline{\rho},\overline{\mathbb{F}_{l}}[H\backslash\mathrm{GL}_{m}(\boldsymbol{l})])\neq 0;\\
				&\Longleftrightarrow \mathrm{Hom}_{\overline{\mathbb{F}_{l}}[\mathrm{GL}_{m}(\boldsymbol{l})]}(P_{\overline{\rho}}\otimes_{\overline{\mathbb{Z}_{l}}}\overline{\mathbb{F}_{l}},\overline{\mathbb{F}_{l}}[H\backslash\mathrm{GL}_{m}(\boldsymbol{l})])\neq 0;\\
				&\Longleftrightarrow \mathrm{Hom}_{\overline{\mathbb{Z}_{l}}[\mathrm{GL}_{m}(\boldsymbol{l})]}(P_{\overline{\rho}},\overline{\mathbb{Z}_{l}}[H\backslash\mathrm{GL}_{m}(\boldsymbol{l})])\neq 0;\\
				&\Longleftrightarrow \mathrm{Hom}_{\overline{\mathbb{Q}_{l}}[\mathrm{GL}_{m}(\boldsymbol{l})]}(\widetilde{P_{\overline{\rho}}},\overline{\mathbb{Q}_{l}}[H\backslash\mathrm{GL}_{m}(\boldsymbol{l})])\neq 0;\\
				&\Longleftrightarrow \text{There exists}\ \widetilde{\overline{\rho}}\ \text{as above such that}\ \mathrm{Hom}_{\overline{\mathbb{Q}_{l}}[\mathrm{GL}_{m}(\boldsymbol{l})]}(\widetilde{\overline{\rho}},\overline{\mathbb{Q}_{l}}[H\backslash\mathrm{GL}_{m}(\boldsymbol{l})])\neq 0;\\
				&\Longleftrightarrow \text{There exists}\ \widetilde{\overline{\rho}}\ \text{as above such that}\ \widetilde{\overline{\rho}}^{\sigma}=\widetilde{\overline{\rho}};\\
				&\Longleftrightarrow \overline{\rho}^{\sigma}=\overline{\rho}.
			\end{align*}
			The former five equivalences are direct, by noting that a projective $\bzl[\mrgl_{m}(\bs{l})]$-module is a free $\bzl$-module. For the second last equivalence we use the result for the characteristic 0 case. For the last equivalence from the construction of supercuspidal representation given by Green and James, since it is always possible to lift a $\sigma$-invariant regular character over $\overline{\mathbb{F}_{l}}$ to a $\sigma$-invariant regular character over $\overline{\mathbb{Q}_{l}}$, it is always possible to find a $\sigma$-invariant $\overline{\mathbb{Q}_{l}}$-lift $\widetilde{\overline{\rho}}$ for a $\sigma$-invariant supercuspidal representation $\overline{\rho}$.
			
			Since (3) implies (2) by definition, we only need to prove (2) implies (3). We sum up the proof occurring in \cite{secherre2019supercuspidal}, Lemma 2.19. We have the following $\overline{\mathbb{F}_{l}}[\mathrm{GL}_{m}(\boldsymbol{l})]$-module decomposition $$\overline{\mathbb{F}_{l}}[H\backslash\mathrm{GL}_{m}(\boldsymbol{l})]=V_{\overline{\rho}}\oplus V',$$ where $V_{\overline{\rho}}$ is composed of irreducible components isomorphic to $\overline{\rho}$, and $V'$ has no irreducible component isomorphic to $\overline{\rho}$. First we verify that $\mathrm{End}_{\overline{\mathbb{F}_{l}}[\mathrm{GL}_{m}(\boldsymbol{l})]}(V_{\overline{\rho}})$ is commutative.
			By \cite{gow1984two}, Theorem 2.1, the convolution algebra $\overline{\mathbb{Z}_{l}}[H\backslash\mathrm{GL}_{m}(\boldsymbol{l})/H]$ is commutative. Modulo $l$ we deduce that $$\overline{\mathbb{F}_{l}}[H\backslash\mathrm{GL}_{m}(\boldsymbol{l})/H]\cong\mathrm{End}_{\overline{\mathbb{F}_{l}}[\mathrm{GL}_{m}(\boldsymbol{l})]}(\overline{\mathbb{F}_{l}}[H\backslash\mathrm{GL}_{m}(\boldsymbol{l})])\cong\mathrm{End}_{\overline{\mathbb{F}_{l}}[\mathrm{GL}_{m}(\boldsymbol{l})]}(V_{\overline{\rho}})\oplus\mathrm{End}_{\overline{\mathbb{F}_{l}}[\mathrm{GL}_{m}(\boldsymbol{l})]}(V')$$ is commutative, thus $\mathrm{End}_{\overline{\mathbb{F}_{l}}[\mathrm{GL}_{m}(\boldsymbol{l})]}(V_{\overline{\rho}})$ is commutative.
			
			By \cite{vigneras1996representations}, Chapitre III, Th\'eor\`eme 2.9, $P=P_{\overline{\rho}}\otimes_{\bzl}\bfl$ is indecomposable with each irreducible subquotient isomorphic to $\overline{\rho}$. By \cite{dat2012theorie}, Proposition B.1.2, there exists a nilpotent endomorphism $N\in\mathrm{End}_{\overline{\mathbb{F}_{l}}[\mathrm{GL}_{m}(\boldsymbol{l})]}[P]$ such that $\mathrm{End}_{\overline{\mathbb{F}_{l}}[\mathrm{GL}_{m}(\boldsymbol{l})]}[P]=\overline{\mathbb{F}_{l}}[N]$, and there exist $r\geq 1$ and $n_{1},...,n_{r}$ positive integers such that
			$$V_{\overline{\rho}}\cong\bigoplus_{i=1}^{r}P/N^{n_{i}}P.$$
			Since $\mathrm{End}_{\overline{\mathbb{F}_{l}}[\mathrm{GL}_{m}(\boldsymbol{l})]}(V_{\overline{\rho}})$ is commutative, we have $r=1$ and $V_{\overline{\rho}}=P/N^{n_{1}}P$. Thus
			$$\mathrm{Hom}_{H}(\overline{\rho},1)\cong\mathrm{Hom}_{\mathrm{GL}_{m}(\boldsymbol{l})}(\overline{\rho},V_{\overline{\rho}})=\mathrm{Hom}_{\mathrm{GL}_{m}(\boldsymbol{l})}(\overline{\rho},P/N^{n_{1}}P)\cong\overline{\mathbb{F}_{l}}.$$
			
			For $\mathrm{char}(R)=l>0$ in general, we fix an embedding $\overline{\mathbb{F}_{l}}\hookrightarrow R$ and write $\overline{\rho}=\overline{\rho}_{0}\otimes_{\overline{\mathbb{F}_{l}}}R$, where $\overline{\rho}_{0}$ is a supercuspidal representation of $\mathrm{GL}_{m}(\boldsymbol{l})$ over $\overline{\mathbb{F}_{l}}$. By considering the Brauer characters, we have
			$$\overline{\rho}^{\sigma}\cong\overline{\rho}\quad\text{if and only if}\quad \overline{\rho}_{0}^{\sigma}\cong\overline{\rho}_{0}.$$
			Moreover
			$$\mathrm{Hom}_{R[H]}(\overline{\rho},R)\cong\mathrm{Hom}_{\overline{\mathbb{F}_{l}}[H]}(\overline{\rho}_{0},\bfl)\otimes_{\bfl}R.$$
			Thus we come back to the former case.
			
		\end{proof}
		
		\begin{remark}
			
			We give an example of a $\sigma$-invariant cuspidal non supercuspidal representation of $\mathrm{GL}_{m}(\boldsymbol{l})$ over $\overline{\mathbb{F}_{l}}$ which is not distinguished by $H$. Assume $m=2$ and $l\neq 2$ such that $l|Q^{2}+1$. Let $B$ be the subgroup of $\mathrm{GL}_{2}(\boldsymbol{l})$ consisting of upper triangular matrices. For $\mathrm{Ind}_{B}^{\mathrm{GL}_{2}(\boldsymbol{l})}\bfl$, it is a representation of length 3 with irreducible components of dimension $1,Q^{2}-1,1$ respectively. Denote by $\overline{\rho}$ the irreducible subquotient of $\mathrm{Ind}_{B}^{\mathrm{GL}_{2}(\boldsymbol{l})}\bfl$ of dimension $Q^{2}-1$ . It is thus cuspidal (not supercuspidal) and $\sigma$-invariant. Let $\widetilde{\overline{\rho}}$ be a $\overline{\mathbb{Q}_{l}}$-lift of $\overline{\rho}$ which is an irreducible cuspidal representation. We write $\widetilde{\overline{\rho}}|_{H}=V_{1}\oplus...\oplus V_{r}$ its decomposition of irreducible components. Since $|H|=Q(Q+1)(Q^{2}-1)$ is prime to $l$, reduction modulo $l$ preserves irreducibility. So $\overline{\rho}|_{H}$ decomposes as $W_{1}\oplus...\oplus W_{r}$, where the irreducible representation $W_{i}$ is the reduction of $V_{i}$ modulo $l$ for each $i=1,...,r$. Suppose that $\overline{\rho}$ is distinguished. Then $W_{i}=\overline{\mathbb{F}_{l}}$ for some $i$. Thus $V_{i}$ is a character which must be trivial, which implies that $\widetilde{\overline{\rho}}$ is distinguished. It is impossible by Lemma \ref{Lemmafinsiginv} and Lemma \ref{Lemmafinunidist}, since $m=2$ is even. See \cite{secherre2019supercuspidal}, Remark 2.8. for the Galois selfdual case.
			
		\end{remark}
		
		Finally, we need the following finite group version of Proposition \ref{PropGelKa} which is well-known:
		
		\begin{proposition}\label{PropGelKafin}
			
			For $\overline{\rho}$ an irreducible representation of $\mathrm{GL}_{m}(\boldsymbol{l})$, we have $\overline{\rho}^{\vee}\cong\overline{\rho}(\,^{t}\cdot^{-1})$, where $\overline{\rho}(\,^{t}\cdot^{-1}):x\mapsto\overline{\rho}(\,^{t}x^{-1})$ for any $x\in\mathrm{GL}_{m}(\boldsymbol{l})$.
			
		\end{proposition}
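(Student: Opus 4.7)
The plan is to deduce the isomorphism from the classical fact that every element of $\mathrm{GL}_m(\boldsymbol{l})$ is conjugate to its transpose, combined with a character-theoretic identification. Write $\iota(x) = \,^{t}x^{-1}$, so the goal is to prove $\overline{\rho}^{\vee} \cong \overline{\rho}\circ\iota$.

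First I would establish the lemma that for every $g \in \mathrm{GL}_{m}(\boldsymbol{l})$, there exists $h \in \mathrm{GL}_{m}(\boldsymbol{l})$ with $hgh^{-1} = \,^{t}g$. This is a standard consequence of the rational canonical form: the matrices $g$ and $\,^{t}g$ define $\boldsymbol{l}[X]$-module structures on $\boldsymbol{l}^{m}$ with identical invariant factors, hence are similar over $\boldsymbol{l}$. Equivalently, every companion block is conjugate to its transpose by the antidiagonal permutation matrix, and this assembles blockwise. Granted the lemma, for every $g$ one has $\mathrm{tr}(\overline{\rho}^{\vee}(g)) = \mathrm{tr}(\overline{\rho}(g^{-1}))$ on the one hand, and $\mathrm{tr}(\overline{\rho}\circ\iota(g)) = \mathrm{tr}(\overline{\rho}(\,^{t}g^{-1}))$ on the other; since $g^{-1}$ and $\,^{t}g^{-1}$ are $\mathrm{GL}_{m}(\boldsymbol{l})$-conjugate, these two traces agree for all $g$.

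It then remains to promote the equality of traces to an isomorphism. In characteristic $0$ this is immediate from linear independence of irreducible characters. In characteristic $l>0$, the equality restricted to $l$-regular elements identifies the Brauer characters of $\overline{\rho}^{\vee}$ and $\overline{\rho}\circ\iota$, and the fact that an irreducible $l$-modular representation of a finite group is determined up to isomorphism by its Brauer character concludes the argument; uniformly, one may invoke the criterion of \cite{vigneras1996representations}, Chapitre I, 6.13, already used above in Lemma \ref{lemmabasechange}. The only nontrivial ingredient is the conjugacy of a matrix with its transpose; everything else is a routine character-theoretic comparison, so I expect no real obstacle.
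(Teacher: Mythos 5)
Your argument is correct and is essentially the paper's proof: the paper likewise disposes of the statement by noting that the Brauer characters of $\overline{\rho}^{\vee}$ and $\overline{\rho}(\,^{t}\cdot^{-1})$ coincide, which rests on exactly the conjugacy of a matrix with its transpose that you spell out. You merely make explicit the rational-canonical-form lemma and the passage from equal (Brauer) characters to an isomorphism, which the paper leaves implicit.
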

		
		\begin{proof}
			
			By definition, the Brauer characters of $\overline{\rho}^{\vee}$ and $\overline{\rho}(\,^{t}\cdot^{-1})$ are the same.
			
		\end{proof}
		
		\subsection{Distinction criterion in the unramified case}
		
		Let $\pi$ be a $\sigma$-invariant supercuspidal representation of $G$. In this subsection we prove Theorem \ref{Thmmain} and Theorem \ref{Thmmult1} in the case where $E/E_{0}$ is unramified. To prove Theorem \ref{Thmmain}, it remains to show that $\pi$ is distinguished by any unitary subgroup $G^{\tau}$ with the aid of Theorem \ref{Thmdistgalinv}. Since changing $\tau$ up to a $G$-action doesn't change the content of the theorem, we only need to consider two special unitary involutions mentioned in Remark \ref{rempicond}.(4). To justify the assumption in Remark \ref{rempicond}.(3), first we prove the following lemma:
		
		\begin{lemma}\label{Lemmamodd}
			
			For any $\sigma$-invariant supercuspidal representation $\pi$ with $E/E_{0}$ unramified, $m$ is odd.
			
		\end{lemma}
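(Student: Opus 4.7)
The plan is to apply the $\tau$-selfdual type theorem with $\tau = \tau_{1}$ (the unitary involution associated to $I_{n}$) and reduce the claim to the finite-field statement of Lemma \ref{Lemmafinsiginv}.(1). Since the hermitian matrix $I_{n}$ lies in the trivial $G$-orbit, the additional hypothesis in Theorem \ref{Thmtautheta} and Theorem \ref{Thmselfdualtype} is vacuous for $\tau_{1}$, so those theorems apply to $\pi$ without restriction.

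First I would establish that $\pi$ is $\tau_{1}$-selfdual: since $\pi^{\sigma}\cong\pi$, Proposition \ref{PropGelKa} gives $\pi^{\tau_{1}}\cong\pi^{\sigma\vee}\cong\pi^{\vee}$. Applying Theorem \ref{Thmselfdualtype} then yields a simple type $(\boldsymbol{J},\Lambda)$ in $\pi$ with $\tau_{1}(\boldsymbol{J})=\boldsymbol{J}$ and $\Lambda^{\tau_{1}}\cong\Lambda^{\vee}$. I would decompose $\Lambda=\boldsymbol{\kappa}\otimes\boldsymbol{\rho}$ and invoke Proposition \ref{propselfdual} to choose $\boldsymbol{\kappa}$ so that $\boldsymbol{\kappa}^{\tau_{1}\vee}\cong\boldsymbol{\kappa}$. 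Combined with the $\tau_{1}$-selfduality of $\Lambda$, this forces $\boldsymbol{\rho}^{\tau_{1}}\cong\boldsymbol{\rho}^{\vee}$. Restricting to $J$ and passing modulo $J^{1}$, the associated cuspidal representation $\overline{\rho}$ of $J/J^{1}\cong\mathrm{GL}_{m}(\boldsymbol{l})$ therefore satisfies $\overline{\rho}^{\tau_{1}}\cong\overline{\rho}^{\vee}$.

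The decisive step is to identify the action of $\tau_{1}$ on $\mathrm{GL}_{m}(\boldsymbol{l})$. Because $\tau_{1}(x)=\sigma(\,^{t}x^{-1})$ and $\sigma$ has been extended to $E$ so that $E_{0}=E^{\sigma}$ (Remark \ref{rempicond}.(3)), the involution $\tau_{1}$ stabilizes $\mathfrak{b}^{\times}=\mathrm{GL}_{m}(\mathfrak{o}_{E})$ and $U^{1}(\mathfrak{b})$. Since $E/E_{0}$ is unramified, $\sigma$ reduces on the residue field $\boldsymbol{l}$ to the nontrivial element $\bar{\sigma}$ of $\mathrm{Gal}(\boldsymbol{l}/\boldsymbol{l}_{0})$. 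Thus $\tau_{1}$ descends on $\mathrm{GL}_{m}(\boldsymbol{l})$ to $x\mapsto\bar{\sigma}(\,^{t}x^{-1})$, and Proposition \ref{PropGelKafin} gives $\overline{\rho}^{\tau_{1}}\cong\overline{\rho}^{\bar{\sigma}\vee}$. Combining with $\overline{\rho}^{\tau_{1}}\cong\overline{\rho}^{\vee}$ yields $\overline{\rho}^{\bar{\sigma}}\cong\overline{\rho}$.

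Finally, since $\pi$ is supercuspidal the proposition in Subsection 3.4 guarantees that $\overline{\rho}$ is supercuspidal, and Lemma \ref{Lemmafinsiginv}.(1) then forces $m$ to be odd. The main technical point in this plan is the residue-field computation identifying $\tau_{1}$ on $\mathrm{GL}_{m}(\boldsymbol{l})$ with the $\bar{\sigma}$-twisted inverse-transpose (which hinges on the unramifiedness of $E/E_{0}$); everything else follows formally from the $\tau$-selfdual type theorem and the $\tau_{1}$-selfdual choice of $\boldsymbol{\kappa}$.
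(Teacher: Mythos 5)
Your proposal is correct and follows essentially the same route as the paper: choose $\tau=\tau_{1}$ (for which the extra hypothesis of the $\tau$-selfdual type theorem is vacuous), take a $\tau_{1}$-selfdual simple type with a $\tau_{1}$-selfdual $\boldsymbol{\kappa}$ via Proposition \ref{propselfdual}, deduce that $\boldsymbol{\rho}$ and hence $\overline{\rho}$ is $\tau_{1}$-selfdual, and conclude via Proposition \ref{PropGelKafin} and Lemma \ref{Lemmafinsiginv}.(1) that $m$ is odd. Your explicit justification of the residue-field identification of $\tau_{1}$ and of the vacuity of the technical condition simply spells out steps the paper leaves implicit.
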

		
		\begin{proof}
			
			We consider $\tau=\tau_{1}$, where $\tau_{1}(x)=\sigma(\,^{t}x^{-1})$ for any $x\in G$. We follow the settings of Remark \ref{rempicond}. For $(\boldsymbol{J},\Lambda)$ a simple type as in Remark \ref{rempicond}.(2), we may write $\Lambda\cong\boldsymbol{\kappa}\otimes\boldsymbol{\rho}$ as before. Using Proposition \ref{propselfdual}, we may further assume $\boldsymbol{\kappa}^{\tau\vee}\cong\boldsymbol{\kappa}$.
			Since $\Lambda$ and $\boldsymbol{\kappa}$ are $\tau$-selfdual,  $\boldsymbol{\rho}$ is $\tau$-selfdual. Let $\overline{\rho}$ be the supercuspidal representation of $\mathrm{GL}_{m}(\boldsymbol{l})\cong J/J^{1}$ whose inflation equals $\boldsymbol{\rho}|_{J}$, then $\overline{\rho}^{\tau\vee}\cong\overline{\rho}$ when regarding $\tau$ as a unitary involution on $\mathrm{GL}_{m}(\boldsymbol{l})$. Using Proposition \ref{PropGelKafin}, we have $\overline{\rho}\circ\sigma\cong\overline{\rho}$. Using Lemma \ref{Lemmafinsiginv}, we conclude that $m$ is odd.
			
		\end{proof}
		
		With the aid of Lemma \ref{Lemmamodd}, we may assume as in Remark \ref{rempicond}.(4) that $\tau(x)=\varepsilon\sigma(\,^{t}x^{-1})\varepsilon^{-1}$ for any $x\in G$ with $\varepsilon$ equal to $I_{n}$ or $\mathrm{diag}(\varpi_{E},...,\varpi_{E})$, representing the two classes of unitary involutions. For $(\boldsymbol{J},\Lambda)$ a simple type as in Remark \ref{rempicond}.(2), we may write $\Lambda\cong\boldsymbol{\kappa}\otimes\boldsymbol{\rho}$ as before. Using Proposition \ref{propselfdual}, we may further assume $\boldsymbol{\kappa}^{\tau\vee}\cong\boldsymbol{\kappa}$. Using Lemma \ref{lemkappa} with $g=1$, there exists a quadratic character $\chi:\boldsymbol{J}\cap G^{\tau}\rightarrow R^{\times}$ such that $$\mathrm{dim}_{R}\mathrm{Hom}_{\boldsymbol{J}\cap G^{\tau}}(\boldsymbol{\kappa},\chi^{-1})=1$$
		and
		$$\mathrm{Hom}_{\boldsymbol{J}\cap G^{\tau}}(\Lambda,1)\cong\mathrm{Hom}_{\boldsymbol{J}\cap G^{\tau}}(\boldsymbol{\kappa},\chi^{-1})\otimes_{R}\mathrm{Hom}_{\boldsymbol{J}\cap G^{\tau}}(\boldsymbol{\rho},\chi).$$
		We want to show that $\chi=1$. First we need the following lemma:
		
		\begin{lemma}\label{Lemmachiext}
			
			The character $\chi$ can be extended to a character $\chi'$ of $\boldsymbol{J}$.
			
		\end{lemma}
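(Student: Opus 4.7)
The plan is to construct $\chi'$ in three stages: first reduce $\chi$ to a character on the finite reductive quotient, then extend via the determinant to a character of $J$, and finally propagate to $\boldsymbol{J}$ via the uniformizer $\varpi_E$.

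For the first stage, by Lemma \ref{LemmaunitaryJJ0} we have $\boldsymbol{J}\cap G^\tau = J\cap G^\tau$, and by construction (Proposition \ref{propchiquad}) the character $\chi$ is quadratic and trivial on $J^{1}\cap G^{\tau}$. Since $p\neq 2$ and $J^{1}$ is a $\tau$-stable pro-$p$-group (Remark \ref{rempicond}.(1)), the first cohomology of $\langle\tau\rangle$ on $J^{1}$ vanishes; hence the natural map $(J\cap G^{\tau})/(J^{1}\cap G^{\tau}) \to J/J^{1} \cong \mathrm{GL}_{m}(\boldsymbol{l})$ identifies the left-hand side with the $\tau$-fixed subgroup of the right. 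Given the explicit form $\tau(x) = \varepsilon\sigma(\,^{t}x^{-1})\varepsilon^{-1}$ with $\varepsilon \in \{I_{n}, \varpi_{E} I_{m}\}$ both central in $B^{\times}$, the induced involution on $\mathfrak{b}^{\times}/U^{1}(\mathfrak{b}) \cong \mathrm{GL}_{m}(\boldsymbol{l})$ is the standard unitary involution $x \mapsto \sigma(\,^{t}x^{-1})$, whose fixed points form $\mathrm{U}_{m}(\boldsymbol{l}/\boldsymbol{l}_{0})$. So $\chi$ descends to a character $\overline{\chi}$ of $\mathrm{U}_{m}(\boldsymbol{l}/\boldsymbol{l}_{0})$.

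For the second stage, since the commutator subgroup of $\mathrm{U}_{m}(\boldsymbol{l}/\boldsymbol{l}_{0})$ is $\mathrm{SU}_{m}(\boldsymbol{l}/\boldsymbol{l}_{0})$ with abelianization identified via the determinant with $\ker(\mathrm{N}_{\boldsymbol{l}/\boldsymbol{l}_{0}})$, the character $\overline{\chi}$ factors as $\chi_{0}\circ\det$ for some character $\chi_{0}$ of $\ker(\mathrm{N}_{\boldsymbol{l}/\boldsymbol{l}_{0}})$. Since $\ker(\mathrm{N}_{\boldsymbol{l}/\boldsymbol{l}_{0}})$ is a subgroup of the finite abelian group $\boldsymbol{l}^{\times}$ and $R^{\times}$ is divisible (as $R$ is algebraically closed with $\mathrm{char}(R)\neq p$), $\chi_{0}$ extends to a character $\widetilde{\chi_{0}}$ of $\boldsymbol{l}^{\times}$. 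Setting $\chi_{J} := \widetilde{\chi_{0}}\circ\det$ on $\mathrm{GL}_{m}(\boldsymbol{l})$ and inflating along $J \twoheadrightarrow J/J^{1}$, I obtain a character $\chi_{J}$ of $J$, trivial on $J^{1}$, whose restriction to $J\cap G^{\tau}$ equals $\chi$. For the third stage, $\boldsymbol{J}/J$ is infinite cyclic generated by the class of $\varpi_{E}$, so I would define $\chi'(\varpi_{E}^{k}u) := c^{k}\chi_{J}(u)$ for arbitrary $c \in R^{\times}$ and $u \in J$. This is a well-defined homomorphism because $\varpi_{E}$ lies in the center of $B^{\times}=\mathrm{GL}_{m}(E)$ and therefore acts trivially by conjugation on $\mathfrak{b}^{\times}/U^{1}(\mathfrak{b}) \cong J/J^{1}$, making $\chi_{J}$ automatically invariant under conjugation by $\varpi_{E}$.

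The main point requiring care is the identification in the first stage: that the $\tau$-fixed part of $J/J^{1}$ is genuinely the unitary group $\mathrm{U}_{m}(\boldsymbol{l}/\boldsymbol{l}_{0})$ associated with $\boldsymbol{l}/\boldsymbol{l}_{0}$, rather than some twisted variant. This reduces to verifying that each admissible $\varepsilon$ from Remark \ref{rempicond}.(4) reduces to a central scalar in $\mathrm{GL}_{m}(\boldsymbol{l})$, which is immediate in both cases. The rest of the argument is essentially bookkeeping: factoring through determinants, extending characters of subgroups of finite abelian groups to the ambient group, and using the centrality of $\varpi_{E}$ in $B^{\times}$.
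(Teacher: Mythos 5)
Your construction is correct and follows essentially the same route as the paper: descend $\chi$ to a character of $\mathrm{U}_{m}(\boldsymbol{l}/\boldsymbol{l}_{0})\cong J\cap G^{\tau}/J^{1}\cap G^{\tau}$, use that its derived group is $\mathrm{SU}_{m}(\boldsymbol{l}/\boldsymbol{l}_{0})$ to factor through the determinant, extend the resulting character from the norm-one subgroup to $\boldsymbol{l}^{\times}$, inflate to $J$, and finally extend to $\boldsymbol{J}=E^{\times}J$ by an arbitrary nonzero value at $\varpi_{E}$. Your extra verifications (surjectivity of $J\cap G^{\tau}\to (J/J^{1})^{\tau}$ via vanishing of $H^{1}$ on the pro-$p$ group $J^{1}$, and invariance of the inflated character under conjugation by the central element $\varpi_{E}$) are points the paper leaves implicit, but the argument is the same.
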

		
		\begin{proof}
			
			Using Lemma \ref{LemmaunitaryJJ0}, we have $\boldsymbol{J}\cap G^{\tau}=J\cap G^{\tau}$. Write $\overline{\chi}$ the character of $\mathrm{U}_{m}(\boldsymbol{l}/\boldsymbol{l}_{0})\cong J\cap G^{\tau}/J^{1}\cap G^{\tau}$ whose inflation equals $\chi$.  Since it is well-known that the derived subgroup of $\mathrm{U}_{m}(\boldsymbol{l}/\boldsymbol{l}_{0})$ is $\mathrm{SU}_{m}(\boldsymbol{l}/\boldsymbol{l}_{0}):=\{g\in\mathrm{U}_{m}(\boldsymbol{l}/\boldsymbol{l}_{0})|\mathrm{det}(g)=1\}$ (see \cite{dieudonne1963geometrie}, II. \S 5), there exists $\overline{\phi}$ as a quadratic character of $\mathrm{det}(\mathrm{U}_{m}(\boldsymbol{l}/\boldsymbol{l}_{0}))=\{x\in\boldsymbol{l}^{\times}|x\sigma(x)=x^{Q+1}=1\}$, such that $\overline{\chi}=\overline{\phi}\circ\mathrm{det}|_{\mathrm{U}_{m}(\boldsymbol{l}/\boldsymbol{l}_{0})}$. We extend $\overline{\phi}$ to a character of $\boldsymbol{l}^{\times}$ and we write $\overline{\chi}'=\overline{\phi}\circ\mathrm{det}$ which is a character of $\mathrm{GL}_{m}(\boldsymbol{l})$ extending $\overline{\chi}$. Write $\chi'^{0}$ the inflation of $\overline{\chi}'$ with respect to the isomorphism $\mathrm{GL}_{m}(\boldsymbol{l})\cong J/J^{1}$. Finally we choose $\chi'$ to be a character of $\boldsymbol{J}$ extending $\chi'^{0}$ by choosing $\chi'(\varpi_{E})\neq 0$ randomly. By construction, $\chi'|_{\boldsymbol{J}\cap G^{\tau}}=\chi$.
			
		\end{proof}
		
		\begin{proposition}\label{Propchi=1}
			
			(1) When $\mathrm{char}(R)=0$, for any $\chi'$ extending $\chi$ to $\bs{J}$ we have $\chi'(\chi'\circ\tau)=1$.
			
			(2) Furthermore, for any $R$ we have $\chi=1$.
			
		\end{proposition}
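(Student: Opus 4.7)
\emph{Proof plan.}

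The strategy is to first observe that statements (1) and (2) are in fact logically equivalent, then to prove the characteristic zero case directly via a trace computation, and finally to lift to arbitrary $R$ by the integral $\overline{\mathbb{Q}_l}$-lifting technique already used in the proof of Theorem \ref{Thmdistgalinv}.

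I would begin by verifying the equivalence of (1) and (2). Setting $\mu = \chi'(\chi'\circ\tau)$, the character $\mu$ is trivial on $J^1$, is $\tau$-invariant, and restricts to $\chi^2 = 1$ on $\boldsymbol{J}\cap G^{\tau}$ by Proposition \ref{propchiquad}. On $\varpi_E$ the identity $\tau(\varpi_E) = \varpi_E^{-1}$, valid for both hermitian matrices of Remark \ref{rempicond}.(4), gives $\mu(\varpi_E) = 1$ automatically. Any character $\chi'$ of $\boldsymbol{J}$ trivial on $J^1$ factors on $J$ through the determinant $J/J^1 \cong \mathrm{GL}_m(\boldsymbol{l}) \to \boldsymbol{l}^{\times}$ as $\overline{\chi}'\circ\det$, and on $\boldsymbol{l}^{\times}$ the character $\mu|_J$ becomes $y \mapsto \overline{\chi}'(y/\sigma(y))$. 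By Hilbert 90 the image $\{y/\sigma(y) : y \in \boldsymbol{l}^{\times}\}$ equals $\mathrm{U}_1(\boldsymbol{l}/\boldsymbol{l}_0)$, so $\mu|_J = 1$ is equivalent to $\overline{\chi}'|_{\mathrm{U}_1} = 1$, which under the surjection $\det: \mathrm{U}_m(\boldsymbol{l}/\boldsymbol{l}_0) \to \mathrm{U}_1(\boldsymbol{l}/\boldsymbol{l}_0)$ is equivalent to $\chi = 1$.

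The proposition thus reduces to the claim that $\chi = 1$. For the characteristic zero case, I would consider the twisted extension $\boldsymbol{\kappa}' = \boldsymbol{\kappa}\chi'$ of $\eta$, with $\chi'$ an extension of $\chi$ constructed as in Lemma \ref{Lemmachiext}. By uniqueness in Lemma \ref{lemkappa}, its distinguishing character equals $\chi \cdot \chi'|_{\boldsymbol{J}\cap G^{\tau}} = \chi^2 = 1$, so $\boldsymbol{\kappa}'$ is genuinely $\boldsymbol{J}\cap G^{\tau}$-distinguished. The plan is then to extract from this distinction and the $\tau$-selfduality $\boldsymbol{\kappa}^{\tau\vee} \cong \boldsymbol{\kappa}$ a non-vanishing trace identity on a power of a generator $\zeta_{\boldsymbol{l}}$ of $\boldsymbol{l}^{\times}$, in the spirit of Lemma \ref{lem2sroot}: writing $Q+1 = 2^s u$ with $u$ odd, the matrix $A = \boldsymbol{\kappa}(\zeta_{\boldsymbol{l}}^{(Q-1)u})$ satisfies $A^{2^s} = c \cdot \mathrm{Id}$ for some $c \in R^{\times}$, hence $\mathrm{tr}(A) \neq 0$, while the $\tau$-selfduality relation between $\mathrm{tr}\boldsymbol{\kappa}(x)$ and $\mathrm{tr}\boldsymbol{\kappa}(\tau(x)^{-1})$ evaluated on $\zeta_{\boldsymbol{l}}^{(Q-1)u}$ forces $\overline{\chi}'$ to be trivial on $\mathrm{U}_1$, i.e.\ $\chi = 1$. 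For general $R$, once $\chi = 1$ is known in characteristic zero, a field embedding as in Lemma \ref{lemmabasechange} reduces to $R = \overline{\mathbb{F}_l}$; a $\tau$-selfdual integral $\overline{\mathbb{Q}_l}$-lift $\widetilde{\boldsymbol{\kappa}}$ of $\boldsymbol{\kappa}$ exists by the construction in the proof of Proposition \ref{propselfdual}; its associated character $\widetilde{\chi}$ is quadratic with values in $\{\pm 1\} \subset \overline{\mathbb{Z}_l}^{\times}$, so its reduction modulo $l$ is well-defined, equals $\chi$ by uniqueness in Lemma \ref{lemkappa} applied to the reduction, and is trivial by the characteristic zero case.

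The principal obstacle is the characteristic zero trace computation: the structural constraints combined with Proposition \ref{propchiquad} only force $\chi$ to be quadratic, and eliminating the unique nontrivial quadratic character of $\mathrm{U}_m(\boldsymbol{l}/\boldsymbol{l}_0)$ requires exploiting the concrete structure of the $\tau$-selfdual extension $\boldsymbol{\kappa}$ via an explicit nonvanishing-trace identity on a specific element, rather than purely abstract arguments.
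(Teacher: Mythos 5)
Your reduction steps are fine and essentially coincide with the paper's: the equivalence of (1) and (2) (deflating $\mu=\chi'(\chi'\circ\tau)$ on $J$ to $y\mapsto\overline{\phi}(y/\sigma(y))$, Hilbert 90, surjectivity of $\det\colon\mathrm{U}_m(\boldsymbol{l}/\boldsymbol{l}_0)\to\mathrm{U}_1$, and $\mu(\varpi_E)=1$ since $\tau(\varpi_E)=\varpi_E^{-1}$) is exactly the computation the paper uses to pass from (1) to (2), and your treatment of $R=\overline{\mathbb{F}_l}$ and of general $R$ via an integral $\tau$-selfdual lift $\widetilde{\boldsymbol{\kappa}}$ and a field embedding is the paper's argument verbatim. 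The problem is the characteristic-zero core, which you yourself flag as the principal obstacle and for which your sketch does not work. The identity you propose to exploit, $\mathrm{Tr}\,\boldsymbol{\kappa}(\tau(x)^{-1})=\mathrm{Tr}\,\boldsymbol{\kappa}(x)$, is an automatic consequence of the $\tau$-selfduality $\boldsymbol{\kappa}^{\tau\vee}\cong\boldsymbol{\kappa}$ already arranged in Proposition \ref{propselfdual}; it holds no matter what $\chi$ is and contains no occurrence of $\chi$ or $\chi'$. At a norm-one element $\zeta=\zeta_{\boldsymbol{l}}^{(Q-1)u}$ one has $\tau(\zeta)^{-1}=\sigma(\zeta)=\zeta^{-1}$, so the identity degenerates to the palindromic symmetry $\mathrm{Tr}\,\boldsymbol{\kappa}(\zeta^{-1})=\mathrm{Tr}\,\boldsymbol{\kappa}(\zeta)$, with which both quadratic characters of $\mathrm{U}_m(\boldsymbol{l}/\boldsymbol{l}_0)$ are compatible. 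The crucial difference with Lemma \ref{lem2sroot} as used in Proposition \ref{propselfdual} is that there the unknown character $\mu$ literally appears in a trace identity $\mathrm{Tr}\,\boldsymbol{\kappa}(x)=\mathrm{Tr}\,\boldsymbol{\kappa}(x)\mu(x)$, so nonvanishing of the trace is decisive; here $\chi$ is defined by the action of $\boldsymbol{J}\cap G^{\tau}$ on the line $\mathrm{Hom}_{J^{1}\cap G^{\tau}}(\eta,1)$, which is not read off from the value of the character of $\boldsymbol{\kappa}$ at individual elements of $E^{\times}$: pinning down $\chi(\zeta_{\boldsymbol{l}}^{Q-1})$ by traces would require evaluating an average of $\mathrm{Tr}\,\boldsymbol{\kappa}$ over a coset of the image of $J^{1}\cap G^{\tau}$, i.e.\ a Gauss-sum type computation that your plan does not supply. (Also, the observation that $\boldsymbol{\kappa}\chi'$ is distinguished with trivial character is correct but tautological: it is just Lemma \ref{lemkappa} rewritten, and by itself yields nothing.)

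The paper fills this gap by a quite different mechanism, and some external input of this kind seems unavoidable: since $m$ is odd and $\mathrm{char}(R)=0$, Green's parametrization and Gow's theorem (Lemmas \ref{Lemmafinsiginv} and \ref{Lemmafinunidist}) provide a $\sigma$-invariant, $\mathrm{U}_m(\boldsymbol{l}/\boldsymbol{l}_0)$-distinguished supercuspidal $\overline{\rho'}$; the type $\Lambda''=\boldsymbol{\kappa}\otimes\boldsymbol{\rho}'\chi'$ is then $\boldsymbol{J}\cap G^{\tau}$-distinguished, so the supercuspidal $\pi''$ it induces is $G^{\tau}$-distinguished, hence $\sigma$-invariant by Theorem \ref{Thmdistgalinv} and $\tau$-selfdual by Proposition \ref{PropGelKa}; uniqueness of the simple type (as in the deduction of Theorem \ref{Thmselfdualtype}) then forces $\Lambda''\cong\Lambda''^{\tau\vee}\cong\Lambda''\,(\chi'(\chi'\circ\tau))^{-1}$, whence $\chi'(\chi'\circ\tau)=1$. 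In other words, the triviality of $\chi$ is ultimately extracted from the finite-field distinction theorem together with the already-established ``distinction implies Galois invariance'' for $G$, not from an internal trace identity for $\boldsymbol{\kappa}$; your proposal is missing precisely this idea.
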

		
		\begin{proof}
			
			First we consider $\mathrm{char}(R)=0$. Since $m$ is odd, Lemma \ref{Lemmafinsiginv} implies that $\mathrm{GL}_{m}(\boldsymbol{l})$ possesses a $\sigma$-invariant supercuspidal representation $\overline{\rho'}$. Using Proposition \ref{PropGelKafin}, we get $\overline{\rho'}^{\tau\vee}\cong\overline{\rho'}$. We denote by $\boldsymbol{\rho}'$ a representation of $\boldsymbol{J}$ trivial on $J^{1}$ such that its restriction to $J$ is the inflation of $\overline{\rho'}$. Since $\sigma(\varpi_{E})=\varpi_{E}$, we have $\boldsymbol{\rho}'(\tau(\varpi_{E}))=\boldsymbol{\rho}'(\varpi_{E})^{-1}$ which means that $\boldsymbol{\rho}'$ is $\tau$-selfdual. By Lemma \ref{Lemmafinunidist}, it is also distinguished.
			
			Let $\Lambda'$ denote the $\tau$-selfdual simple type $\boldsymbol{\kappa}\otimes\boldsymbol{\rho}'$. The natural isomorphism
			$$\mathrm{Hom}_{\boldsymbol{J}\cap G^{\tau}}(\Lambda',\chi^{-1})\cong\mathrm{Hom}_{\boldsymbol{J}\cap G^{\tau}}(\boldsymbol{\kappa},\chi^{-1})\otimes_{R}\mathrm{Hom}_{\boldsymbol{J}\cap G^{\tau}}(\boldsymbol{\rho}',1)$$
			shows that $\Lambda'$ is $\chi^{-1}$-distinguished.
			
			By Lemma \ref{Lemmachiext}, there exists a character $\chi'$ extending $\chi$. The representation $\Lambda''=\Lambda'\chi'$ is thus a distinguished simple type. Let $\pi''$ be the supercuspidal representation of $G$ compactly induced by $(\boldsymbol{J},\Lambda'')$. It is distinguished, thus $\tau$-selfdual by Theorem \ref{Thmdistgalinv} and Proposition \ref{PropGelKa}. Since $\Lambda''$ and $\Lambda''^{\tau\vee}\cong\Lambda''\chi'^{-1}(\chi'^{-1}\circ\tau)$ are both contained in $\pi''$, it follows that $\chi'(\chi'\circ\tau)$ is trivial.
			
			We write $\overline{\chi}=\overline{\phi}\circ\mathrm{det}$ as in the proof of Lemma \ref{Lemmachiext}. Since $\chi'(\chi'\circ\tau)=1$, we get $\overline{\phi}(\overline{\phi\circ\sigma})^{-1}=\overline{\phi}^{1-Q}=1$. Choose $\zeta_{\boldsymbol{l}}$ to be a primitive root of $\boldsymbol{l}^{\times}$, then $\zeta_{\boldsymbol{l}}^{Q-1}$ generates the group $\mathrm{det}(\mathrm{U}_{m}(\boldsymbol{l}/\boldsymbol{l}_{0}))=\{x\in\boldsymbol{l}^{\times}|x\sigma(x)=x^{Q+1}=1\}$. Since $\overline{\phi}(\zeta_{\boldsymbol{l}}^{1-Q})=1$, we deduce that $\overline{\phi}|_{\mathrm{det}(\mathrm{U}_{m}(\boldsymbol{l}/\boldsymbol{l}_{0}))}$ is trivial, which means that $\overline{\chi}$ is trivial. Thus $\chi$ as the inflation of $\overline{\chi}$ is also trivial.
			
			Now we consider $R=\overline{\mathbb{F}_{l}}$. As already mentioned in the proof of Proposition \ref{propchiquad}, if we denote by $\widetilde{\boldsymbol{\kappa}}$ the $\overline{\mathbb{Q}_{l}}$-lift of $\boldsymbol{\kappa}$ and if we denote by $\widetilde{\chi}$ the character defined by Lemma \ref{lemkappa}.(1) with respect to $\widetilde{\boldsymbol{\kappa}}$ and $\widetilde{\eta}$, then $\widetilde{\chi}$ is a $\overline{\mathbb{Q}_{l}}$-lift of $\chi$. Using the characteristic 0 case we already proved, we get $\widetilde{\chi}=1$ which implies that $\chi=1$.
			
			When $\mathrm{R}=l>0$ in general, we follow the same logic as in the proof of Proposition \ref{propchiquad}.
			
		\end{proof}
		
		\begin{remark}
			
			In fact in Proposition \ref{Propchi=1}, we proved that when $m$ is odd and $E/E_{0}$ is unramified, any $\tau$-selfdual $\boldsymbol{\kappa}$ constructed in Proposition \ref{propselfdual} as an extension of a $J^{1}\cap G^{\tau}$-distinguished Heisenberg representation $\eta$ is $\boldsymbol{J}\cap G^{\tau}$-distinguished.
			
		\end{remark}
		
		Now we come back to the proof of our main theorem. We have
		$$\mathrm{Hom}_{\boldsymbol{J}\cap G^{\tau}}(\Lambda,1)\cong\mathrm{Hom}_{\boldsymbol{J}\cap G^{\tau}}(\boldsymbol{\kappa},1)\otimes_{R}\mathrm{Hom}_{\boldsymbol{J}\cap G^{\tau}}(\boldsymbol{\rho},1),$$
		where $\mathrm{Hom}_{\boldsymbol{J}\cap G^{\tau}}(\boldsymbol{\kappa},1)$ is of dimension 1, and $\mathrm{Hom}_{\boldsymbol{J}\cap G^{\tau}}(\boldsymbol{\rho},1)\cong\mathrm{Hom}_{\mathrm{U}_{m}(\boldsymbol{l}/\boldsymbol{l}_{0})}(\overline{\rho},1)$ is also of dimension 1 by Lemma \ref{LemmaunitaryJJ0}, Lemma \ref{Lemmafinunidist} and Proposition \ref{PropGelKafin}. So $\mathrm{Hom}_{\boldsymbol{J}\cap G^{\tau}}(\Lambda,1)$ is of dimension 1, which implies that $\pi$ is $G^{\tau}$-distinguished. Thus we finish the proof of Theorem \ref{Thmmain} when $E/E_{0}$ is unramified. Using Corollary \ref{corJGtau} and the fact that $m$ is odd, we deduce that $\mathrm{Hom}_{G^{\tau}}(\pi,1)$ is of dimension 1, which finishes the proof of Theorem \ref{Thmmult1} when $E/E_{0}$ is unramified.

		\section{The supercuspidal ramified case}
		
		In this section, we study the distinction of $\sigma$-invariant supercuspidal representations of $G$ in the case where $E/E_{0}$ is ramified. This finishes the proof of our main theorem.
		
		\subsection{The finite field case}\label{subsectionfiniteram}
		
		Let $\boldsymbol{l}$ be a finite field of characteristic $p\neq 2$ and let $|\boldsymbol{l}|=Q$. For $m$ a positive integer, we denote by $\mathbf{G}$ the reductive group $\mathrm{GL}_{m}$ over $\boldsymbol{l}$. Thus by definition, $\mathbf{G}(\boldsymbol{l})=\mathrm{GL}_{m}(\boldsymbol{l})$. For $\overline{\varepsilon}$ a matrix in $\mathbf{G}(\boldsymbol{l})$ such that $\,^{t}\overline{\varepsilon}=\overline{\varepsilon}$, the automorphism defined by $\tau(x)=\overline{\varepsilon}\,^{t}x^{-1}\overline{\varepsilon}^{-1}$ for any $x\in\mathrm{GL}_{m}(\boldsymbol{l})$ gives an involution on $\mathrm{GL}_{m}(\boldsymbol{l})$, which induces an involution on $\mathbf{G}$. Thus $\mathbf{G}^{\tau}$ is the orthogonal group corresponding to $\tau$, which is a reductive group over $\boldsymbol{l}$, and  $\mathbf{G}^{\tau}(\boldsymbol{l})=\mathrm{GL}_{m}(\boldsymbol{l})^{\tau}$ which is a subgroup of $\mathrm{GL}_{m}(\boldsymbol{l})$. In this subsection, for $\overline{\rho}$ a supercuspidal representation of $\mathrm{GL}_{m}(\boldsymbol{l})$ and $\overline{\chi}$ a character of $\mathrm{GL}_{m}(\boldsymbol{l})^{\tau}$, we state the result mentioned in \cite{hakim2012distinguished} which gives a criterion for $\overline{\rho}$ distinguished by $\overline{\chi}$.
		
		First of all, we assume $R=\overline{\mathbb{Q}_{l}}$. We recall a little bit of Deligne-Lusztig theory (see \cite{deligne1976representations}). Let $\mathbf{T}$ be an elliptic maximal $\boldsymbol{l}$-torus in $\mathbf{G}$, where ellipticity means that $\mathbf{T}(\boldsymbol{l})=\boldsymbol{t}^{\times}$ and $\boldsymbol{t}/\boldsymbol{l}$ is the field extension of degree $m$. Let $\xi$ be a regular character of $\mathbf{T}(\boldsymbol{l})$, where regularity means the same as in the construction of Green and James in \S 7.1. Using \cite{deligne1976representations}, Theorem 8.3, there is a virtual character $R_{\mathbf{T},\xi}$ as the character of a cuspidal representation of $\mathrm{GL}_{m}(\boldsymbol{l})$. Moreover if we fix $\mathbf{T}$, we know that $\xi\mapsto R_{\mathbf{T},\xi}$ gives a bijection from the set of Galois orbits of regular characters of $\mathbf{T}$ to the set of cuspidal representations of $\mathrm{GL}_{m}(\boldsymbol{l})$. So we may choose $\xi$ such that $\mathrm{Trace}(\overline{\rho})=R_{\mathbf{T},\xi}$. Moreover, using \cite{deligne1976representations}, Theorem 4.2, we get $R_{\mathbf{T},\xi}(-1)=\mathrm{dim}(\overline{\rho})\xi(-1)$ with $\mathrm{dim}(\overline{\rho})=(Q-1)(Q^{2}-1)...(Q^{m-1}-1)$. So if we denote by $\omega_{\overline{\rho}}$ the central character of $\overline{\rho}$, we get $\omega_{\overline{\rho}}(-1)=\xi(-1)$. 
		
		\begin{proposition}[\cite{hakim2012distinguished}, Proposition 6.7]\label{Proporthdist}
			
			For $\tau$, $\overline{\rho}$, $\boldsymbol{T}$ and $\xi$ above, we have:
			
			$$\mathrm{dim}_{R}(\mathrm{Hom}_{\mathbf{G}^{\tau}(\boldsymbol{l})}(\overline{\rho},\overline{\chi}))=
			\begin{cases} 1 &\text{if}\ \omega_{\overline{\rho}}(-1)=\xi(-1)=\overline{\chi}(-1),\\
				0&\text{otherwise}.
			\end{cases}$$
			
		\end{proposition}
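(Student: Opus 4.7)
The plan is to derive the result as a specialization of the Hakim--Lansky distinction formula for Deligne--Lusztig characters restricted to a symmetric subgroup, exploiting the very rigid structure provided by the elliptic/regularity hypotheses.

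First I would apply Frobenius reciprocity to write
$$\mathrm{dim}_{R}\mathrm{Hom}_{\mathbf{G}^{\tau}(\boldsymbol{l})}(\overline{\rho},\overline{\chi})=\langle R_{\mathbf{T},\xi}|_{\mathbf{G}^{\tau}(\boldsymbol{l})},\overline{\chi}\rangle_{\mathbf{G}^{\tau}(\boldsymbol{l})}=\frac{1}{|\mathbf{G}^{\tau}(\boldsymbol{l})|}\sum_{h\in\mathbf{G}^{\tau}(\boldsymbol{l})}R_{\mathbf{T},\xi}(h)\overline{\chi}(h)^{-1},$$
using that $\mathrm{Trace}(\overline{\rho})=R_{\mathbf{T},\xi}$. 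This reduces the question to the computation of a virtual inner product of class functions.

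Next I would handle the necessity of the three-way sign agreement. Since $-1$ lies in $Z(\mathbf{G}(\boldsymbol{l}))\cap\mathbf{G}^{\tau}(\boldsymbol{l})$, non-vanishing of the Hom space forces $\omega_{\overline{\rho}}(-1)=\overline{\chi}(-1)$; combined with the identity $\omega_{\overline{\rho}}(-1)=\xi(-1)$ established immediately above the statement from Deligne--Lusztig's dimension formula $R_{\mathbf{T},\xi}(-1)=\mathrm{dim}(\overline{\rho})\xi(-1)$, this yields the condition in the ``otherwise'' clause. More quantitatively, one can isolate the contribution from the subgroup $\{\pm 1\}\subset\mathbf{G}^{\tau}(\boldsymbol{l})$ to the sum and see that failure of any one of the equalities forces the total to vanish.

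Assuming the sign compatibility, the core step is to invoke Theorem 3.11 of \cite{hakim2012distinguished}, which expresses $\langle R_{\mathbf{T},\xi}|_{\mathbf{G}^{\tau}(\boldsymbol{l})},\overline{\chi}\rangle$ as a finite sum indexed by $\mathbf{G}^{\tau}(\boldsymbol{l})$-orbits of $\mathbf{G}(\boldsymbol{l})$-rational $\tau$-twisted involutions in the Weyl group associated to $\mathbf{T}$, with each contribution a product of evaluations of $\xi$ and $\overline{\chi}$ on suitable elements. In our setting $\mathbf{T}$ is elliptic with $\mathbf{T}(\boldsymbol{l})=\boldsymbol{t}^{\times}$ and $[\boldsymbol{t}:\boldsymbol{l}]=m$, so the relative Weyl group is cyclic of order $m$ and the $\tau$-twisted orbit structure collapses: regularity of $\xi$ annihilates all but one orbit. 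A direct evaluation on this remaining orbit, again using the sign condition, gives exactly $1$.

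The main obstacle is precisely this last reduction: translating the general Hakim--Lansky indexing set (twisted involutions, orbit stabilizers, parametrizing data of the symmetric space $\mathbf{G}/\mathbf{G}^{\tau}$) into the concrete combinatorics of an elliptic torus in $\mathrm{GL}_{m}$ and verifying that only the ``trivial'' class survives. Once this combinatorial simplification is in hand, the remaining verification is an evaluation of $\xi$ and $\overline{\chi}$ on a central element, which is precisely what the sign hypothesis controls.
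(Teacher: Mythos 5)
First, a point of comparison: the paper does not prove this proposition at all; it is imported verbatim as \cite{hakim2012distinguished}, Proposition 6.7 (and only the $\mathrm{char}(R)=0$ case is needed here, which is why your use of character inner products and Frobenius reciprocity is legitimate). So what you are proposing is a re-derivation of the cited result from Hakim--Lansky's general formula. Your vanishing direction is fine: $-1$ is central in $\mathbf{G}(\boldsymbol{l})$ and lies in $\mathbf{G}^{\tau}(\boldsymbol{l})$, so a nonzero invariant functional forces $\omega_{\overline{\rho}}(-1)=\overline{\chi}(-1)$, and $\omega_{\overline{\rho}}(-1)=\xi(-1)$ is automatic from $R_{\mathbf{T},\xi}(-1)=\mathrm{dim}(\overline{\rho})\,\xi(-1)$ (note, incidentally, that because this last equality always holds, the ``otherwise'' clause is solely about $\overline{\chi}(-1)$; one of your ``three equalities'' can never fail independently).

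The genuine gap is in the positive direction, which is exactly where all the content of the proposition lies. You invoke \cite{hakim2012distinguished}, Theorem 3.11 and then assert that ``the relative Weyl group is cyclic of order $m$ and regularity of $\xi$ annihilates all but one orbit,'' concluding the surviving term equals $1$. This is not a computation but a restatement of what has to be proved. The indexing set in the Hakim--Lansky formula is not the Weyl group of $\mathbf{T}$; it is a set of $\mathbf{G}^{\tau}(\boldsymbol{l})$-orbits of twisted involutions (equivalently, of rational embeddings of the torus into the symmetric variety), and each term is a signed average over the fixed-point subgroup of the induced involution of $\boldsymbol{t}^{\times}$ of $\xi$ against (a transport of) $\overline{\chi}$. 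Regularity of $\xi$ guarantees that $\pm R_{\mathbf{T},\xi}$ is irreducible cuspidal, but it does not by itself kill terms in that sum: one must determine which involutions of $\boldsymbol{t}^{\times}$ actually occur for an orthogonal $\tau$ (essentially $t\mapsto t^{-1}$ and its companions), compute their fixed subgroups (order-two groups containing $-1$, on which the sign condition $\xi(-1)=\overline{\chi}(-1)$ is precisely the non-vanishing criterion for each local average), count the contributing orbits and their signs, and verify the total is exactly $1$ rather than $0$ or $2$. That verification is the substance of Hakim--Lansky's Section 6 culminating in their Proposition 6.7, and it is also what rules out higher multiplicity; your sketch defers it entirely (you yourself flag it as ``the main obstacle''). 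As it stands, your argument proves only the vanishing half of the statement, and the multiplicity-one half remains unestablished except by appeal to the very result being proved.
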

		
		Now we consider the $l$-modular case and assume $\mathrm{char}(R)=l>0$.
		
		\begin{proposition}\label{Proporthdistl}
			
			For $\tau$ above and $\overline{\rho}$ a supercuspidal representation of $\mathrm{GL}_{m}(\boldsymbol{l})$ over $R$, the space $\mathrm{Hom}_{\mathrm{GL}_{m}(\boldsymbol{l})^{\tau}}(\overline{\rho},\overline{\chi})\neq 0$ if and only if $\omega_{\overline{\rho}}(-1)=\overline{\chi}(-1)$. Moreover if the condition is satisfied, then we have $\mathrm{dim}_{R}(\mathrm{Hom}_{\mathrm{GL}_{m}(\boldsymbol{l})^{\tau}}(\overline{\rho},\overline{\chi}))=1.$
			
		\end{proposition}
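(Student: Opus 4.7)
The plan is to transfer Proposition \ref{Proporthdist} from characteristic zero to positive characteristic via the projective-envelope lifting technique developed in the proof of Theorem \ref{Thmdistgalinv} and used again in Lemma \ref{Lemmafinunidist}. The necessity of the central character condition is immediate: for a non-zero $f \in \mathrm{Hom}_{\mathrm{GL}_m(\boldsymbol{l})^\tau}(\overline{\rho},\overline{\chi})$, the scalar matrix $-I_m$ is central in $\mathrm{GL}_m(\boldsymbol{l})$ and fixed by $\tau$, so equivariance of $f$ at $-I_m$ forces $\omega_{\overline{\rho}}(-1) = \overline{\chi}(-1)$.

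For the converse it is enough to treat $R = \overline{\mathbb{F}_l}$; the general positive-characteristic case then follows by the standard extension of scalars along a fixed embedding $\overline{\mathbb{F}_l} \hookrightarrow R$ used throughout the paper. Lift $\overline{\chi}$ to a character $\widetilde{\overline{\chi}}$ of $\mathrm{GL}_m(\boldsymbol{l})^\tau$ valued in $\overline{\mathbb{Q}_l}^{\times}$ via the Teichmüller section (the values of $\overline{\chi}$ are roots of unity of order prime to $l$), noting that $\widetilde{\overline{\chi}}(-1) = \overline{\chi}(-1)$. Form the projective envelope $P_{\overline{\rho}}$ of $\overline{\rho}$ as a $\overline{\mathbb{Z}_l}[\mathrm{GL}_m(\boldsymbol{l})]$-module. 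By the standard properties recalled in the proof of Lemma \ref{Lemmafinunidist}, $P_{\overline{\rho}} \otimes_{\overline{\mathbb{Z}_l}} \overline{\mathbb{F}_l}$ is indecomposable of finite length with every irreducible component isomorphic to $\overline{\rho}$, while $\widetilde{P_{\overline{\rho}}} := P_{\overline{\rho}} \otimes_{\overline{\mathbb{Z}_l}} \overline{\mathbb{Q}_l}$ is the direct sum, with multiplicity one, of all the $\overline{\mathbb{Q}_l}$-supercuspidal lifts $\widetilde{\overline{\rho}}$ of $\overline{\rho}$, each with $\omega_{\widetilde{\overline{\rho}}}(-1) = \omega_{\overline{\rho}}(-1)$. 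Running the same chain of $\mathrm{Hom}$-equivalences as in the proof of Theorem \ref{Thmdistgalinv}, and using the freeness of $\mathrm{Ind}_{\mathrm{GL}_m(\boldsymbol{l})^\tau}^{\mathrm{GL}_m(\boldsymbol{l})} \widetilde{\overline{\chi}}$ as a $\overline{\mathbb{Z}_l}$-module, the non-vanishing of $\mathrm{Hom}_{\mathrm{GL}_m(\boldsymbol{l})^\tau}(\overline{\rho},\overline{\chi})$ becomes equivalent to the existence of a lift $\widetilde{\overline{\rho}}$ distinguished by $\widetilde{\overline{\chi}}$. Proposition \ref{Proporthdist} then yields the desired equivalence with $\omega_{\overline{\rho}}(-1) = \overline{\chi}(-1)$.

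For the dimension-one statement I would follow the last part of the proof of Lemma \ref{Lemmafinunidist}. Let $V = \mathrm{Ind}_{\mathrm{GL}_m(\boldsymbol{l})^\tau}^{\mathrm{GL}_m(\boldsymbol{l})}\overline{\chi}$ and write $V = V_{\overline{\rho}} \oplus V'$ with $V_{\overline{\rho}}$ the $\overline{\rho}$-isotypic part. It suffices to prove that $\mathrm{End}_{\overline{\mathbb{F}_l}[\mathrm{GL}_m(\boldsymbol{l})]}(V_{\overline{\rho}})$ is commutative, since the Fitting-type argument of \emph{loc.\ cit.} then forces $V_{\overline{\rho}} \cong P_{\overline{\rho}}/N^{n_1}P_{\overline{\rho}}$ for a single integer $n_1$ and gives $\mathrm{Hom}_{\mathrm{GL}_m(\boldsymbol{l})}(\overline{\rho},V_{\overline{\rho}}) \cong \overline{\mathbb{F}_l}$. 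To obtain commutativity, cut out the block of $\overline{\rho}$ by the appropriate central idempotent of $\overline{\mathbb{Z}_l}[\mathrm{GL}_m(\boldsymbol{l})]$ to get a $\overline{\mathbb{Z}_l}$-lattice inside $\mathrm{Ind}_{\mathrm{GL}_m(\boldsymbol{l})^\tau}^{\mathrm{GL}_m(\boldsymbol{l})} \widetilde{\overline{\chi}}$; its $\overline{\mathbb{Q}_l}$-endomorphism algebra is commutative by Proposition \ref{Proporthdist} (each supercuspidal lift of $\overline{\rho}$ appears with multiplicity at most one), hence the $\overline{\mathbb{Z}_l}$-order of endomorphisms is commutative, and its mod-$l$ reduction identifies with $\mathrm{End}_{\overline{\mathbb{F}_l}[\mathrm{GL}_m(\boldsymbol{l})]}(V_{\overline{\rho}})$.

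The main technical obstacle will be this integral commutativity step: one has to verify that the $l$-modular block of $\overline{\rho}$ in $\overline{\mathbb{Z}_l}[\mathrm{GL}_m(\boldsymbol{l})]$ lifts to a union of $\overline{\mathbb{Q}_l}$-blocks containing only supercuspidal lifts of $\overline{\rho}$ (so that Proposition \ref{Proporthdist} controls every contributing multiplicity), and that the formation of $\mathrm{End}$-algebras commutes appropriately with base change between $\overline{\mathbb{Z}_l}$ and its fraction and residue fields. Granted the standard facts on modular blocks of finite reductive groups attached to supercuspidal simples, the argument should carry through cleanly.
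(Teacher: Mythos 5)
Your overall strategy (necessity from the central element, projective envelope plus reduction to Proposition \ref{Proporthdist}, then a commutativity argument for multiplicity one) is the same as the paper's, but two steps do not go through as written. First, in the converse direction your chain of equivalences reduces the non-vanishing of $\mathrm{Hom}_{\mathrm{GL}_m(\boldsymbol{l})^{\tau}}(\overline{\rho},\overline{\chi})$ to the existence of a supercuspidal $\overline{\mathbb{Q}_{l}}$-lift $\widetilde{\overline{\rho}}$ with $\omega_{\widetilde{\overline{\rho}}}(-1)=\widetilde{\overline{\chi}}(-1)$, and you then assert that \emph{each} lift satisfies $\omega_{\widetilde{\overline{\rho}}}(-1)=\omega_{\overline{\rho}}(-1)$. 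That identity only holds after reduction modulo $l$. For $l\neq 2$ it does determine the sign in characteristic zero, but $l=2$ is allowed here (only $l\neq p$ is assumed), and then $\pm 1$ coincide in $\overline{\mathbb{F}_{2}}$: the hypothesis $\omega_{\overline{\rho}}(-1)=\overline{\chi}(-1)$ is vacuous while the lifts of $\overline{\rho}$ may take either sign at $-1$. One must actually \emph{construct} a lift with the prescribed sign; the paper does this through the Green--James parametrization, choosing a regular character $\widetilde{\xi}$ lifting $\xi$ with $\widetilde{\xi}(-1)=\widetilde{\overline{\chi}}(-1)$. Your proposal says nothing about this case, so the ``if'' direction is incomplete for $l=2$.

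Second, your commutativity step for the dimension-one statement relies on cutting by the central block idempotent of $\overline{\mathbb{Z}_{l}}[\mathrm{GL}_m(\boldsymbol{l})]$, and you yourself note it needs the $l$-block of $\overline{\rho}$ to contain only supercuspidal lifts of $\overline{\rho}$. This is not a standard fact and is not true in general: the block of a supercuspidal $\overline{\mathbb{F}_{l}}$-representation of $\mathrm{GL}_m(\boldsymbol{l})$ can contain non-cuspidal ordinary representations, whose multiplicities in $\mathrm{Ind}_{\mathrm{GL}_m(\boldsymbol{l})^{\tau}}^{\mathrm{GL}_m(\boldsymbol{l})}\widetilde{\overline{\chi}}$ are not controlled by Proposition \ref{Proporthdist} (which only speaks of cuspidal representations); moreover the mod-$l$ reduction of the block part contains more than the $\overline{\rho}$-isotypic part $V_{\overline{\rho}}$, so the identification of endomorphism algebras you want at the end also breaks down. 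What is needed (and what the paper does) is a finer integral summand: a $\overline{\mathbb{Z}_{l}}[\mathrm{GL}_m(\boldsymbol{l})]$-decomposition of the lattice $\overline{\mathbb{Z}_{l}}$-induced from $\widetilde{\overline{\chi}}$ whose distinguished summand $\widetilde{V_{\overline{\rho}}}$ has generic fibre exactly the direct sum of the supercuspidal lifts occurring in $\widetilde{P_{\overline{\rho}}}$; this fibre is multiplicity free by Proposition \ref{Proporthdist}, hence has commutative endomorphism algebra, $\mathrm{End}_{\overline{\mathbb{Z}_{l}}[\mathrm{GL}_m(\boldsymbol{l})]}(\widetilde{V_{\overline{\rho}}})$ embeds into it, and its reduction surjects onto $\mathrm{End}_{\overline{\mathbb{F}_{l}}[\mathrm{GL}_m(\boldsymbol{l})]}(V_{\overline{\rho}})$, after which the Fitting-type argument of Lemma \ref{Lemmafinunidist} applies. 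Replacing your block-idempotent step by this summand attached to the projective envelope, and supplying the $l=2$ construction above, repairs the proof.
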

		
		\begin{proof}
			
			First we assume $R=\overline{\mathbb{F}_{l}}$. We use a similar proof to that in Lemma \ref{Lemmafinunidist}. Let $H=\mathrm{GL}_{m}(\boldsymbol{l})^{\tau}$. We choose $\widetilde{\overline{\chi}}$ to be a character of $H$ lifting $\overline{\chi}$, which is defined over $\overline{\mathbb{Z}_{l}}$ or $\overline{\mathbb{Q}_{l}}$ by abuse of notation. For $S=\overline{\mathbb{Z}_{l}}, \overline{\mathbb{Q}_{l}}$, we define $$S[H\backslash\mathrm{GL}_{m}(\boldsymbol{l})]_{\widetilde{\overline{\chi}}}:=\{f|f:\mathrm{GL}_{m}(\boldsymbol{l})\rightarrow S,\ f(hg)=\widetilde{\overline{\chi}}(h)f(g)\ \text{for any}\ h\in H, g\in \mathrm{GL}_{m}(\boldsymbol{l})\}.$$
			Especially $$\overline{\mathbb{Q}_{l}}[H\backslash\mathrm{GL}_{m}(\boldsymbol{l})]_{\widetilde{\overline{\chi}}}=\mathrm{Ind}^{\mathrm{GL}_{m}(\boldsymbol{l})}_{H}\widetilde{\overline{\chi}}$$ as a representation of $\mathrm{GL}_{m}(\boldsymbol{l})$ over $\overline{\mathbb{Q}_{l}}$, and $\overline{\mathbb{Z}_{l}}[H\backslash\mathrm{GL}_{m}(\boldsymbol{l})]_{\widetilde{\overline{\chi}}}$ is a free $\overline{\mathbb{Z}_{l}}$-module. If we further define $$\overline{\mathbb{F}_{l}}[H\backslash\mathrm{GL}_{m}(\boldsymbol{l})]_{\overline{\chi}}=\mathrm{Ind}^{\mathrm{GL}_{m}(\boldsymbol{l})}_{H}\overline{\chi},$$ then we have
			$$\overline{\mathbb{Z}_{l}}[H\backslash\mathrm{GL}_{m}(\boldsymbol{l})]_{\widetilde{\overline{\chi}}}\otimes_{\overline{\mathbb{Z}_{l}}}\overline{\mathbb{F}_{l}}=\overline{\mathbb{F}_{l}}[H\backslash\mathrm{GL}_{m}(\boldsymbol{l})]_{\overline{\chi}}$$
			and
			$$\overline{\mathbb{Z}_{l}}[H\backslash\mathrm{GL}_{m}(\boldsymbol{l})]_{\widetilde{\overline{\chi}}}\otimes_{\overline{\mathbb{Z}_{l}}}\overline{\mathbb{Q}_{l}}=\overline{\mathbb{Q}_{l}}[H\backslash\mathrm{GL}_{m}(\boldsymbol{l})]_{\widetilde{\overline{\chi}}}.$$
			
			We deduce that
			\begin{align*}
				\mathrm{Hom}_{H}(\overline{\rho},\overline{\chi})\neq 0; 
				&\Longleftrightarrow \text{There exists}\ \widetilde{\overline{\rho}}\ \text{lifting}\ \overline{\rho}\  \text{such that}\ \mathrm{Hom}_{\overline{\mathbb{Q}_{l}}[\mathrm{GL}_{m}(\boldsymbol{l})]}(\widetilde{\overline{\rho}},\overline{\mathbb{Q}_{l}}[H\backslash\mathrm{GL}_{m}(\boldsymbol{l})]_{\widetilde{\overline{\chi}}})\neq 0;\\
				&\Longleftrightarrow \text{There exists}\ \widetilde{\overline{\rho}}\ \text{lifting}\ \overline{\rho}\  \text{such that}\ \omega_{\widetilde{\overline{\rho}}}(-1)=\widetilde{\overline{\chi}}(-1);\\
				&\Longleftrightarrow \omega_{\overline{\rho}}(-1)=\overline{\chi}(-1).
			\end{align*}
			The first equivalence is of the same reason as in the proof of Lemma \ref{Lemmafinunidist}, and we use Proposition \ref{Proporthdist} for the second equivalence. For the last equivalence, the ``$\Rightarrow$" direction is trivial. For the other direction, when $l\neq 2$, we choose $\widetilde{\overline{\rho}}$ to be any supercuspidal $\overline{\mathbb{Q}_{l}}$-lift of $\overline{\rho}$. Thus we have $\omega_{\widetilde{\overline{\rho}}}(-1)=\omega_{\overline{\rho}}(-1)=\overline{\chi}(-1)=\widetilde{\overline{\chi}}(-1).$ When $l=2$, using the construction of Green and James, for $\xi$ a regular character over $\overline{\mathbb{F}_{l}}$ corresponding to $\overline{\rho}$, we may always find a $\overline{\mathbb{Q}_{l}}$-lift $\widetilde{\xi}$ which is regular and satisfies $\widetilde{\xi}(-1)=\widetilde{\overline{\chi}}(-1)$. Thus the supercuspidal representation $\widetilde{\overline{\rho}}$ corresponding to $\widetilde{\xi}$ as a lift of $\overline{\rho}$ satisfies $\omega_{\widetilde{\overline{\rho}}}(-1)=\widetilde{\overline{\chi}}(-1)$. So we finish the proof of the first part.
			
			To calculate the dimension, as in the proof of Lemma \ref{Lemmafinunidist} if we write $$\overline{\mathbb{F}_{l}}[H\backslash\mathrm{GL}_{m}(\boldsymbol{l})]_{\overline{\chi}}=V_{\overline{\rho}}\oplus V',$$
			where $V_{\overline{\rho}}$ is composed of irreducible components isomorphic to $\overline{\rho}$, and $V'$ has no irreducible component isomorphic to $\overline{\rho}$, then we only need to show that $\mathrm{End}_{\overline{\mathbb{F}_{l}}[\mathrm{GL}_{m}(\boldsymbol{l})]}(V_{\overline{\rho}})$ is commutative. We consider the following $\overline{\mathbb{Z}_{l}}[\mathrm{GL}_{m}(\boldsymbol{l})]$-module decomposition
			$$\overline{\mathbb{Z}_{l}}[H\backslash\mathrm{GL}_{m}(\boldsymbol{l})]_{\widetilde{\overline{\chi}}}=\widetilde{V_{\overline{\rho}}}\oplus \widetilde{V'},$$
			where $\widetilde{V_{\overline{\rho}}}\otimes_{\overline{\mathbb{Z}_{l}}}\overline{\mathbb{Q}_{l}}=\bigoplus_{\widetilde{\overline{\rho}}}\widetilde{\overline{\rho}}$ with the direct sum ranges over all the irreducible representations $\widetilde{\overline{\rho}}$ over $\overline{\mathbb{Q}_{l}}$ occurring in $\widetilde{P_{\overline{\rho}}}$ counting the multiplicity, and $\widetilde{V'}$ denotes a $\overline{\mathbb{Z}_{l}}[\mathrm{GL}_{m}(\boldsymbol{l})]$-complement of $\widetilde{V_{\overline{\rho}}}$, such that $\widetilde{V'}\otimes_{\overline{\mathbb{Z}_{l}}}\overline{\mathbb{Q}_{l}}$ contains no irreducible component of $\widetilde{\overline{\rho}}$. Using Proposition \ref{Proporthdist}, $\widetilde{V_{\overline{\rho}}}\otimes_{\overline{\mathbb{Z}_{l}}}\overline{\mathbb{Q}_{l}}$ is multiplicity free, which means that $\mathrm{End}_{\overline{\mathbb{Q}_{l}}[\mathrm{GL}_{m}(\boldsymbol{l})]}(\widetilde{V_{\overline{\rho}}}\otimes_{\overline{\mathbb{Z}_{l}}}\overline{\mathbb{Q}_{l}})$ is commutative. The canonical embedding from $\overline{\mathbb{Z}_{l}}[H\backslash\mathrm{GL}_{m}(\boldsymbol{l})]_{\widetilde{\overline{\chi}}}$ to $\overline{\mathbb{Q}_{l}}[H\backslash\mathrm{GL}_{m}(\boldsymbol{l})]_{\widetilde{\overline{\chi}}}$ induces the following ring monomorphism
			$$\mathrm{End}_{\overline{\mathbb{Z}_{l}}[\mathrm{GL}_{m}(\boldsymbol{l})]}(\overline{\mathbb{Z}_{l}}[H\backslash\mathrm{GL}_{m}(\boldsymbol{l})]_{\widetilde{\overline{\chi}}})\hookrightarrow\mathrm{End}_{\overline{\mathbb{Q}_{l}}[\mathrm{GL}_{m}(\boldsymbol{l})]}(\overline{\mathbb{Q}_{l}}[H\backslash\mathrm{GL}_{m}(\boldsymbol{l})]_{\overline{\chi}})$$
			given by tensoring $\overline{\mathbb{Q}_{l}}$, which leads to the ring monomorphism
			$$\mathrm{End}_{\overline{\mathbb{Z}_{l}}[\mathrm{GL}_{m}(\boldsymbol{l})]}(\widetilde{V_{\overline{\rho}}})\hookrightarrow\mathrm{End}_{\overline{\mathbb{Q}_{l}}[\mathrm{GL}_{m}(\boldsymbol{l})]}(\widetilde{V_{\overline{\rho}}}\otimes_{\overline{\mathbb{Z}_{l}}}\overline{\mathbb{Q}_{l}}).$$
			Thus $\mathrm{End}_{\overline{\mathbb{Z}_{l}}[\mathrm{GL}_{m}(\boldsymbol{l})]}(\widetilde{V_{\overline{\rho}}})$ is also commutative.
			
			The modulo $l$ map from $\overline{\mathbb{Z}_{l}}[H\backslash\mathrm{GL}_{m}(\boldsymbol{l})]_{\widetilde{\overline{\chi}}}$ to $\overline{\mathbb{F}_{l}}[H\backslash\mathrm{GL}_{m}(\boldsymbol{l})]_{\overline{\chi}}$
			induces the following ring epimorphism
			$$\mathrm{End}_{\overline{\mathbb{Z}_{l}}[\mathrm{GL}_{m}(\boldsymbol{l})]}(\overline{\mathbb{Z}_{l}}[H\backslash\mathrm{GL}_{m}(\boldsymbol{l})]_{\widetilde{\overline{\chi}}})\twoheadrightarrow\mathrm{End}_{\overline{\mathbb{F}_{l}}[\mathrm{GL}_{m}(\boldsymbol{l})]}(\overline{\mathbb{F}_{l}}[H\backslash\mathrm{GL}_{m}(\boldsymbol{l})]_{\overline{\chi}}),$$ which leads to the ring epimorphism
			$$\mathrm{End}_{\overline{\mathbb{Z}_{l}}[\mathrm{GL}_{m}(\boldsymbol{l})]}(\widetilde{V_{\overline{\rho}}})\twoheadrightarrow\mathrm{End}_{\overline{\mathbb{F}_{l}}[\mathrm{GL}_{m}(\boldsymbol{l})]}(V_{\overline{\rho}}).$$
			Since $\mathrm{End}_{\overline{\mathbb{Z}_{l}}[\mathrm{GL}_{m}(\boldsymbol{l})]}(\widetilde{V_{\overline{\rho}}})$ is commutative, $\mathrm{End}_{\overline{\mathbb{F}_{l}}[\mathrm{GL}_{m}(\boldsymbol{l})]}(V_{\overline{\rho}})$ is also commutative. Thus we may use the same proof as in Lemma \ref{Lemmafinunidist} to show that
			$$\mathrm{dim}_{\overline{\mathbb{F}_{l}}}(\mathrm{Hom}_{\mathrm{GL}_{m}(\boldsymbol{l})^{\tau}}(\overline{\rho},\overline{\chi}))=1.$$
			
			Finally for $\mathrm{char}(R)=l>0$ in general, we follow the corresponding proof in Lemma \ref{Lemmafinunidist}.
			
		\end{proof}
		
		\begin{remark}
			
			For $\mathbf{G}^{\tau}(\boldsymbol{l})$ an orthogonal group with $m\geq 2$, it is  well-known that its derived group is always a subgroup of $\mathbf{G}^{\tau0}(\boldsymbol{l})$ of index 2 (see \cite{dieudonne1963geometrie}, II. \S 8)
			, which means that there exists a character of $\mathbf{G}^{\tau}(\boldsymbol{l})$ which isn't trivial on $\mathbf{G}^{\tau0}(\boldsymbol{l})$. It means that we cannot expect $\overline{\chi}$ to be trivial on $\mathbf{G}^{\tau0}(\boldsymbol{l})$ in general. However, for those $\overline{\chi}$ occurring in the next subsection, it is highly possible that $\overline{\chi}$ is trivial on $\mathbf{G}^{\tau0}(\boldsymbol{l})$. For example, \cite{hakim2012distinguished}, Proposition 6.4 gives evidence for this in the case where $\pi$ is tame supercuspidal. However, I don't know how to prove it.
			
		\end{remark}
		
		Now we assume that $m$ is even. We write $J_{m/2}=\bigg(\begin{matrix}0 & I_{m/2} \\ -I_{m/2} & 0\end{matrix}\bigg)$ and we denote by $$\mathrm{Sp}_{m}(\boldsymbol{l})=\{x\in\mathrm{GL}_{m}(\boldsymbol{l})|\,^{t}xJ_{m/2}x=J_{m/2}\}$$ the symplectic subgroup of $\mathrm{GL}_{m}(\boldsymbol{l})$.
		
		\begin{proposition}\label{Propsympdisc}
			
			For $\overline{\rho}$ a cuspidal representation of $\mathrm{GL}_{m}(\boldsymbol{l})$, we have $\mathrm{Hom}_{\mathrm{Sp}_{m}(\boldsymbol{l})}(\overline{\rho},1)=0$.
			
		\end{proposition}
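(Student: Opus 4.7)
The plan is to first establish the statement for $R$ of characteristic zero via Deligne-Lusztig theory, and then descend to arbitrary $R$ by the projective-cover-and-lift technique already used in Proposition \ref{Proporthdistl}.

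For $R$ of characteristic zero, I would write $\mathrm{Trace}(\overline{\rho})=\epsilon R_{\mathbf{T},\xi}$ with $\epsilon=\pm 1$, $\mathbf{T}$ the elliptic maximal torus of $\mathrm{GL}_{m}/\boldsymbol{l}$ satisfying $\mathbf{T}(\boldsymbol{l})=\boldsymbol{t}^{\times}$ (where $\boldsymbol{t}/\boldsymbol{l}$ has degree $m$) and $\xi$ a regular character of $\boldsymbol{t}^{\times}$, exactly as in the preceding subsection. By Frobenius reciprocity,
\[ \dim_{R}\mathrm{Hom}_{\mathrm{Sp}_{m}(\boldsymbol{l})}(\overline{\rho},1)=\epsilon\langle R_{\mathbf{T},\xi},\mathrm{Ind}_{\mathrm{Sp}_{m}(\boldsymbol{l})}^{\mathrm{GL}_{m}(\boldsymbol{l})}1\rangle_{\mathrm{GL}_{m}(\boldsymbol{l})}. \]
The right-hand side vanishes by Klyachko's classical theorem on models for $\mathrm{GL}_{m}(\mathbb{F}_{q})$, which asserts that the symplectic permutation representation $\mathrm{Ind}_{\mathrm{Sp}_{m}(\boldsymbol{l})}^{\mathrm{GL}_{m}(\boldsymbol{l})}1$ has no cuspidal irreducible constituent when $m$ is even. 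Alternatively, one could expand this inner product directly using the Deligne-Lusztig character formula, rewriting it as a weighted sum over semisimple $\mathrm{Sp}_{m}(\boldsymbol{l})$-conjugacy classes of values of $\xi$ on the corresponding centralizers, and then exploit the regularity of $\xi$ to produce the necessary cancellation.

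For general $R$ with $\mathrm{char}(R)=l>0$, I would mirror the proof of Proposition \ref{Proporthdistl} verbatim: introduce the projective envelope $P_{\overline{\rho}}$ of $\overline{\rho}$ as a $\overline{\mathbb{Z}_{l}}[\mathrm{GL}_{m}(\boldsymbol{l})]$-module, together with its $\overline{\mathbb{Q}_{l}}$-lift $\widetilde{P_{\overline{\rho}}}=P_{\overline{\rho}}\otimes_{\overline{\mathbb{Z}_{l}}}\overline{\mathbb{Q}_{l}}$. Because $\overline{\mathbb{Z}_{l}}[\mathrm{Sp}_{m}(\boldsymbol{l})\backslash\mathrm{GL}_{m}(\boldsymbol{l})]$ is free over $\overline{\mathbb{Z}_{l}}$, the same chain of equivalences as in Proposition \ref{Proporthdistl} shows that $\mathrm{Sp}_{m}(\boldsymbol{l})$-distinction of $\overline{\rho}$ over $\overline{\mathbb{F}_{l}}$ would force some supercuspidal $\overline{\mathbb{Q}_{l}}$-lift $\widetilde{\overline{\rho}}$ of $\overline{\rho}$ to be $\mathrm{Sp}_{m}(\boldsymbol{l})$-distinguished, contradicting the characteristic-zero case. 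The case of an arbitrary algebraically closed $R$ of characteristic $l$ then reduces to $R=\overline{\mathbb{F}_{l}}$ by base change, just as at the end of Lemma \ref{Lemmafinunidist}.

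The hard part is the characteristic-zero input. In contrast to the orthogonal case of Proposition \ref{Proporthdist}, where $\mathrm{Hom}_{\mathbf{G}^{\tau}(\boldsymbol{l})}(\overline{\rho},\overline{\chi})$ is non-zero whenever a simple sign condition on central characters holds, here the answer is identically zero and one needs a genuine cancellation rather than a parity obstruction. The elliptic torus $\mathbf{T}(\boldsymbol{l})=\boldsymbol{t}^{\times}$ does meet $\mathrm{Sp}_{m}(\boldsymbol{l})$-conjugacy classes non-trivially when $m$ is even, so any direct Deligne-Lusztig proof must carefully handle the partial sum over elements of $\mathrm{Sp}_{m}(\boldsymbol{l})$ with irreducible characteristic polynomial of degree $m$; this partial sum is precisely what Klyachko's combinatorial identity accounts for, and it is the main obstacle should one wish to bypass his theorem entirely.
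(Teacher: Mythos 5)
Your proposal is correct and follows essentially the same route as the paper: the characteristic-zero statement is exactly the classical fact that cuspidal representations of $\mathrm{GL}_{m}(\boldsymbol{l})$ admit no nonzero $\mathrm{Sp}_{m}(\boldsymbol{l})$-invariant linear forms (the paper cites \cite{hakim1998supercuspidal}, Lemma 6, where you invoke Klyachko's model theorem — the same well-known input), and your descent to $\overline{\mathbb{F}_{l}}$ and then to general $R$ via the projective envelope $P_{\overline{\rho}}$, its $\overline{\mathbb{Q}_{l}}$-lift, and base change is precisely the argument the paper reuses from Lemma \ref{Lemmafinunidist} and Proposition \ref{Proporthdistl}.
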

		
		\begin{proof}

			Using \cite{klyachko1984models}, Corollary 1.4. whose proof also works for the $l$-modular case, we know that an irreducible generic representation cannot be distinguished by a symplectic subgroup. Since a cuspidal representation is generic, we finish the proof.
			
		\end{proof}
		
		\subsection{Distinction criterion in the ramified case}\label{subsectiondisccriram}
		
		Still let $\pi$ be a $\sigma$-invariant supercuspidal representation of $G$. In this subsection we prove Theorem \ref{Thmmain} and Theorem \ref{Thmmult1} in the case where $E/E_{0}$ is ramified. Using Theorem \ref{Thmdistgalinv}, we only need to show that $\pi$ is distinguished by any unitary subgroup $G^{\tau}$ to finish the proof of Theorem \ref{Thmmain}. We may change $\tau$ up to a $G$-action which doesn't change the property of being distinguished. Thus using Remark \ref{rempicond}.(4), we may assume $\tau(x)=\varepsilon\sigma(\,^{t}x^{-1})\varepsilon^{-1}$ for any $x\in G$, where $\varepsilon$ equals $I_{n}$ or $\mathrm{diag}(I_{d},...,I_{d},\epsilon)$ with $\epsilon\in \mathfrak{o}_{E_{0}}^{\times}-\mathrm{N}_{E/E_{0}}(\mathfrak{o}_{E}^{\times})$, representing the two classes of unitary involutions. We denote by $\overline{\varepsilon}$ the image of $\varepsilon$ in $\mathrm{GL}_{m}(\boldsymbol{l})$.
		
		For $(\boldsymbol{J},\Lambda)$ a simple type in Remark \ref{rempicond}.(2), we write $\Lambda\cong\boldsymbol{\kappa}\otimes\boldsymbol{\rho}$. Using Proposition \ref{propselfdual}, we may further assume $\boldsymbol{\kappa}^{\tau\vee}\cong\boldsymbol{\kappa}$. Using Lemma \ref{lemkappa} with $g=1$, there exists a quadratic character $\chi:\boldsymbol{J}\cap G^{\tau}\rightarrow R^{\times}$ such that
		\begin{equation}\label{eqkappa1}
			\mathrm{dim}_{R}\mathrm{Hom}_{\boldsymbol{J}\cap G^{\tau}}(\boldsymbol{\kappa},\chi^{-1})=1
		\end{equation}
		and
		\begin{equation}\label{eqlamkaprho}
			\mathrm{Hom}_{\boldsymbol{J}\cap G^{\tau}}(\Lambda,1)\cong\mathrm{Hom}_{\boldsymbol{J}\cap G^{\tau}}(\boldsymbol{\kappa},\chi^{-1})\otimes_{R}\mathrm{Hom}_{\boldsymbol{J}\cap G^{\tau}}(\boldsymbol{\rho},\chi).
		\end{equation}
		If we denote by $\omega_{\boldsymbol{\kappa}}$ the central character of $\boldsymbol{\kappa}$ defined on $F^{\times}$, using (\ref{eqkappa1}), we get $\omega_{\boldsymbol{\kappa}}=\chi^{-1}$ as characters of $F^{\times}\cap(\boldsymbol{J}\cap G^{\tau})$. In particular, $\omega_{\boldsymbol{\kappa}}(-1)=\chi^{-1}(-1)$. Since $\boldsymbol{\kappa}^{\tau\vee}\cong\boldsymbol{\kappa}$, we get $\omega_{\boldsymbol{\kappa}}\circ\tau=\omega_{\boldsymbol{\kappa}}^{-1}$. In particular we have $$\omega_{\boldsymbol{\kappa}}(\varpi_{F})^{-1}=\omega_{\boldsymbol{\kappa}}(\tau(\varpi_{F}))=\omega_{\boldsymbol{\kappa}}(\varpi_{F})^{-1}\omega_{\boldsymbol{\kappa}}(-1)^{-1},$$ where we use the fact that $\sigma(\varpi_{F})=-\varpi_{F}$. Thus we get $\omega_{\boldsymbol{\kappa}}(-1)=\chi(-1)=1$.
		
		Since $\Lambda$ and $\boldsymbol{\kappa}$ are $\tau$-selfdual, $\boldsymbol{\rho}$ is $\tau$-selfdual. Using the same proof as that for $\boldsymbol{\kappa}$, we get $\omega_{\boldsymbol{\rho}}(-1)=1$. Let $\overline{\rho}$ be the supercuspidal representation of $\mathrm{GL_{m}}(\boldsymbol{l})\cong J/J^{1}$ whose inflation equals $\boldsymbol{\rho}|_{J}$ and let $\overline{\chi}$ be the character of
		$$\mathbf{G}^{\tau}(\boldsymbol{l})\cong J\cap G^{\tau}/J^{1}\cap G^{\tau}$$
		whose inflation equals $\chi$, where $\tau$ naturally induces an orthogonal involution on $\mathbf{G}$ with respect to a symmetric matrix $\overline{\varepsilon}\in\mrgl_{m}(\bs{l})$. By definition and Lemma \ref{LemmaunitaryJJ0} we get $$\mathrm{Hom}_{\boldsymbol{J}\cap G^{\tau}}(\boldsymbol{\rho},\chi)\cong\mathrm{Hom}_{\mathbf{G}^{\tau}(\boldsymbol{l})}(\overline{\rho},\overline{\chi}).$$
		Since $\omega_{\overline{\rho}}(-1)=\overline{\chi}(-1)=1$, using Proposition \ref{Proporthdist} and Proposition \ref{Proporthdistl} the space above is non-zero. Thus by (\ref{eqlamkaprho}) we have $$\mathrm{Hom}_{\boldsymbol{J}\cap G^{\tau}}(\Lambda,1)\neq 0$$ which means that $\pi$ is distinguished by $G^{\tau}$,
		finishing the proof of Theorem \ref{Thmmain}. Moreover using Proposition \ref{Proporthdist}, Proposition \ref{Proporthdistl}, (\ref{eqkappa1}) and (\ref{eqlamkaprho}), we get $$\mathrm{dim}_{R}\mathrm{Hom}_{\boldsymbol{J}\cap G^{\tau}}(\Lambda,1)=1.$$
		
		Now if $m$ is even and $\varepsilon=I_{m}$, we also need to study the space $\mathrm{Hom}_{\boldsymbol{J}^{g_{1}}\cap G^{\tau}}(\Lambda^{g_{1}},1)$, where $g_{1}$ is defined in Corollary \ref{corJGtau} such that $\tau(g_{1})g_{1}^{-1}=\varpi_{E}J_{m/2}\in B^{\times}$. Using Lemma \ref{lemkappa}, there exists a quadratic character $\chi_{1}:\boldsymbol{J}^{g_{1}}\cap G^{\tau}\rightarrow R^{\times}$ such that
		\begin{equation}\label{eqkappag1}
			\mathrm{dim}_{R}\mathrm{Hom}_{\boldsymbol{J}^{g_{1}}\cap G^{\tau}}(\boldsymbol{\kappa}^{g_{1}},\chi_{1}^{-1})=1.
		\end{equation}
		and
		\begin{equation}\label{eqlamkaprhog1}
			\mathrm{Hom}_{\boldsymbol{J}^{g_{1}}\cap G^{\tau}}(\Lambda^{g_{1}},1)\cong\mathrm{Hom}_{\boldsymbol{J}^{g_{1}}\cap G^{\tau}}(\boldsymbol{\kappa}^{g_{1}},\chi_{1}^{-1})\otimes_{R}\mathrm{Hom}_{\boldsymbol{J}^{g_{1}}\cap G^{\tau}}(\boldsymbol{\rho}^{g_{1}},\chi_{1}).
		\end{equation}
		So we only need to study the space $\mathrm{Hom}_{\boldsymbol{J}^{g_{1}}\cap G^{\tau}}(\boldsymbol{\rho}^{g_{1}},\chi_{1})\cong\mathrm{Hom}_{\boldsymbol{J}\cap G^{\delta_{g_{1}}}}(\boldsymbol{\rho},\chi_{1}^{g_{1}^{-1}})$, where $$\delta_{g_{1}}(x):=(\tau(g_{1})g_{1}^{-1})^{-1}\tau(x)(\tau(g_{1})g_{1}^{-1})=(\varpi_{E}J_{m/2})^{-1}\tau(x)\varpi_{E}J_{m/2}$$ for any $x\in G$ as an involution on $G$.
		
		Let $\overline{\rho}$ be the supercuspidal representation of $\mathrm{GL_{m}}(\boldsymbol{l})\cong J/J^{1}$ whose inflation equals $\boldsymbol{\rho}|_{J}$ and let $\overline{\chi_{1}^{g_{1}^{-1}}}$ be the character of $$\mathrm{Sp}_{m}(\boldsymbol{l})\cong J\cap G^{\delta_{g_{1}}}/J^{1}\cap G^{\delta_{g_{1}}}$$
		whose inflation equals $\chi_{1}^{g_{1}^{-1}}$, then we get
		$$\mathrm{Hom}_{\boldsymbol{J}\cap G^{\delta_{g_{1}}}}(\boldsymbol{\rho},\chi_{1}^{g_{1}^{-1}})\cong\mathrm{Hom}_{\mathrm{Sp}_{m}(\boldsymbol{l})}(\overline{\rho},\overline{\chi_{1}^{g_{1}^{-1}}})=\mathrm{Hom}_{\mathrm{Sp}_{m}(\boldsymbol{l})}(\overline{\rho},1),$$
		where the last equation is because of the well-known fact that $\mathrm{Sp}_{m}(\boldsymbol{l})$ equals its derived group (\cite{dieudonne1963geometrie}, II. \S 8), thus $\overline{\chi_{1}^{g_{1}^{-1}}}|_{\mathrm{Sp}_{m}(\boldsymbol{l})}$ is trivial. Using Proposition \ref{Propsympdisc}, we get $\mathrm{Hom}_{\mathrm{Sp}_{m}(\boldsymbol{l})}(\overline{\rho},1)=0$. Thus $\mathrm{Hom}_{\boldsymbol{J}^{g_{1}}\cap G^{\tau}}(\Lambda^{g_{1}},1)=0$.
		
		Using Corollary \ref{corJGtau}, we get $$\mathrm{dim}_{R}\mathrm{Hom}_{G^{\tau}}(\pi,1)=\mathrm{dim}_{R}\mathrm{Hom}_{\boldsymbol{J}\cap G^{\tau}}(\Lambda,1)+\mathrm{dim}_{R}\mathrm{Hom}_{\boldsymbol{J}^{g_{1}}\cap G^{\tau}}(\Lambda^{g_{1}},1)=1,$$ which finishes the proof of Theorem \ref{Thmmult1} when $E/E_{0}$ is ramified.
		
		
		
		
		\subsection{Proof of Theorem \ref{Thmlift}}
		
		We finish the proof of Theorem \ref{Thmlift}. Let $\pi$ be a $\sigma$-invariant supercuspidal representation of $G$ over $\overline{\mathbb{F}_{l}}$. For $\tau$ a unitary involution, by Theorem \ref{Thmmain}, $\pi$ is distinguished by $G^{\tau}$. From the proof of Theorem \ref{Thmdistgalinv}, there exists a distinguished integral $\sigma$-invariant supercuspidal representation $\widetilde{\pi}$ of $G$ over $\overline{\mathbb{Q}_{l}}$ which lifts $\pi$.
		
		\section{A purely local proof of Theorem \ref{Thmdistgalinv}}
		
		In this section, we generalize Theorem \ref{Thmdistgalinv} to irreducible cuspidal representations, meanwhile we also give another proof of the original theorem which is purely local. Precisely, we prove the following theorem:
		
		\begin{theorem}\label{Thmdistgalinvgen}
			
			Let $\pi$ be an irreducible cuspidal representation of $G$ over $R$. If $\pi$ is distinguished by $G^{\tau}$, then $\pi$ is $\sigma$-invariant.
			
		\end{theorem}
		
		\subsection{The finite analogue}
		
		\begin{proposition}\label{propfindiscinv}
			
			Let $\bs{l}/\bs{l}_{0}$ be a quadratic extension of finite fields of characteristic $p$ and let $\overline{\rho}$ be an irreducible generic representation of $\mrgl_{m}(\bs{l})$ over $R$. If $\overline{\rho}$ is distinguished by the unitary subgroup $H$ of $\mrgl_{m}(\bs{l})$ with respect to $\bs{l}/\bs{l_{0}}$, then it is $\sigma$-invariant.
			
		\end{proposition}
		
		\begin{proof}
			
			When $\mathrm{char}(R)=0$, the proposition was proved by Gow \cite{gow1984two} for any irreducible representations. So we only consider the $l$-modular case and without loss of generality we assume $R=\overline{\mathbb{F}_{l}}$. We write $P_{\overline{\rho}}$ for the projective envelope of $\overline{\rho}$ as a $\overline{\mathbb{Z}_{l}}[\mrgl_{m}(\bs{l})]$-module. Thus $P_{\overline{\rho}}\otimes_{\overline{\mathbb{Z}_{l}}}\overline{\mathbb{F}_{l}}$ is a projective $\overline{\mathbb{F}_{l}}[\mrgl_{m}(\bs{l})]$-module, and moreover
			\begin{align*}
				\mathrm{Hom}_{\overline{\mathbb{F}_{l}}[H]}(\overline{\rho},\bfl)\cong&\mathrm{Hom}_{\overline{\mathbb{F}_{l}}[\mrgl_{m}(\bs{l})]}(\overline{\rho},\bfl[H\backslash\mrgl_{m}(\bs{l})])\neq0
			\end{align*}
			implies that
			\begin{align*}
				\mathrm{Hom}_{\overline{\mathbb{F}_{l}}[\mrgl_{m}(\bs{l})]}(P_{\overline{\rho}}\otimes_{\overline{\mathbb{Z}_{l}}}\overline{\mathbb{F}_{l}},\bfl[H\backslash\mrgl_{m}(\bs{l})])\neq0. \end{align*}
			Using the same argument as that in Lemma \ref{Lemmafinunidist}, we have
			\begin{align*}
				\mathrm{Hom}_{\overline{\mathbb{Q}_{l}}[\mrgl_{m}(\bs{l})]}(P_{\overline{\rho}}\otimes_{\overline{\mathbb{Z}_{l}}}\overline{\mathbb{Q}_{l}},\bql[H\backslash\mrgl_{m}(\bs{l})])\neq0, \end{align*}
			and thus there exists an irreducible constituent $\widetilde{\overline{\rho}}$ of $P_{\overline{\rho}}\otimes_{\overline{\mathbb{Z}_{l}}}\overline{\mathbb{Q}_{l}}$ such that
			\begin{align*}
				\mathrm{Hom}_{\overline{\mathbb{Q}_{l}}[\mrgl_{m}(\bs{l})]}(\widetilde{\overline{\rho}},\bql[H\backslash\mrgl_{m}(\bs{l})])\neq0. 
			\end{align*}
			By \cite{serre2012linear}, \S 14.5, \S 15.4, $\overline{\rho}$ is a constituent of $r_{l}(\widetilde{\overline{\rho}})$. Since $\widetilde{\overline{\rho}}$ is $H$-distinguished, it is $\sigma$-invariant and so is $r_{l}(\widetilde{\overline{\rho}})$. For $i=1,...,k$, we choose $\widetilde{\overline{\rho}}_{i}$ to be a cuspidal representations of $\mrgl_{m_{i}}(\bs{l})$ over $\overline{\mathbb{Q}_{l}}$, such that $\widetilde{\overline{\rho}}$ is a subrepresentation of the parabolic induction $\widetilde{\overline{\rho}}_{1}\times...\times\widetilde{\overline{\rho}}_{k}$, where $m_{1}+...+m_{k}=m$. For each $i$ we write $\overline{\rho}_{i}=r_{l}(\widetilde{\overline{\rho}}_{i})$ which is a cuspidal representation of $\mrgl_{m_{i}}(\bs{l})$ over $\overline{\mathbb{F}_{l}}$, then all the irreducible constituents of $r_{l}(\widetilde{\overline{\rho}})$ are subquotients of $\overline{\rho}_{1}\times...\times\overline{\rho}_{k}$, and in particular so is $\overline{\rho}$. Since $\overline{\rho}$ is generic (or non-degenerate), by \cite{vigneras1996representations}, Chapitre III, 1.10, it is the unique non-degenerate subquotient contained in $\overline{\rho}_{1}\times...\times\overline{\rho}_{k}$, thus it is the unique non-degenerate constituent in $r_{l}(\widetilde{\overline{\rho}})$. Thus it is $\sigma$-invariant.
			
		\end{proof}
		
		\subsection{The cuspidal case}
		
		In this subsection we prove Theorem \ref{Thmdistgalinvgen}. We choose $(\bs{J},\Lambda)$ to be a simple type of $\pi$, then by Frobenius reciprocity and the Mackey formula, there exists $g\in G$ such that
		\begin{equation}\label{eqtypedisc}
			\mathrm{Hom}_{\bs{J}^{g}\cap G^{\tau}}(\Lambda^{g},1)\neq0.
		\end{equation}
		Let $H^{1}$ be the corresponding subgroup of $\bs{J}$, let $\theta$ be the simple character of $H^{1}$ contained in $\Lambda$ and let $\eta$ be the Heisenberg representation of $\theta$.  Restricting (\ref{eqtypedisc}) to $H^{1g}\cap G^{\tau}$ we get $\theta^{g}|_{H^{1g}\cap G^{\tau}}=1$. Following the proof of \cite{secherre2019supercuspidal}, Lemma 6.5, we have
		\begin{equation}\label{eqthetatauint}
			(\theta\circ\tau)^{\tau(g)}|_{\tau(H^{1g})\cap H^{1g}}=\theta^{g}\circ\tau|_{\tau(H^{1g})\cap H^{1g}}=(\theta^{g})^{-1}|_{\tau(H^{1g})\cap H^{1g}},
		\end{equation}
		or in other words, $\theta\circ\tau$ intertwines with $\theta^{-1}$. Using the Intertwining Theorem (\emph{cf.} \cite{bushnell2013intertwining}), $\theta\circ\tau$ and $\theta^{-1}$ are endo-equivalent, which, from the argument of Lemma \ref{Lemmasigmaendo}, is equivalent to $\Theta^{\sigma}=\Theta$, where $\Theta$ denotes the endo-class of $\theta$.
		
		We let $\tau_{1}$ be the unitary involution corresponding to $I_{n}$, which in particular satisfies the condition of Theorem \ref{Thmendotautheta}. Since $\Theta^{\sigma}=\Theta$, by \emph{loc. cit.}, we may choose a simple stratum $[\mfa,\beta]$ and $\theta'\in \mcc(\mfa,\beta)$ with $\theta'\in \Theta$, such that
		$$\tau_{1}(\beta)=\beta^{-1},\quad \tau_{1}(\mfa)=\mfa\quad \text{and}\quad \theta'\circ\tau_{1}=\theta'^{-1}.$$
		Up to $G$-conjugacy, we may and will assume that $\bs{J}=\bs{J}(\mfa,\beta)$ and $\theta'=\theta$. We write $E=F[\beta]$ and $B\cong\mrm_{m}(E)$ for the centralizer of $E$ in $\mrm_{n}(F)$. Using Proposition \ref{propselfdual}, we write $\Lambda=\bs{\kappa}\otimes\bs{\rho}$ with $\bs{\kappa}$ an extension of the Heisenberg representation $\eta$ such that $\bs{\kappa}^{\tau_{1}}\cong\bs{\kappa}^{\vee}$. Let $\varepsilon$ be an hermitian matrix such that $\tau(x)=\varepsilon\sigma(\,^{t}x^{-1})\varepsilon^{-1}=\tau_{1}(x)^{\varepsilon^{-1}}$ for any $x\in G$. For a fixed $g\in G$, we define $\gamma=\varepsilon^{-1}\tau(g)g^{-1}=\tau_{1}(g)\varepsilon^{-1}g^{-1}$ and by direct calculation we have $\tau_{1}(\gamma)=\gamma$.
		
		\begin{proposition}
			
			Let $g\in G$ such that $\mathrm{Hom}_{\bs{J}^{g}\cap G^{\tau}}(\Lambda^{g},1)\neq 0$.
			
			(1) Changing $g$ by another representative in the same $\bs{J}$-$G^{\tau}$ double coset, we may assume $\gamma\in B^{\times}$;
			
			(2) The dimension $\mathrm{dim}_{R}\mrhom_{J^{1g}\cap G^{\tau}}(\eta^{g},1)=1$;
			
			(3) There is a unique quadratic character $\chi$ of $\bs{J}^{g}\cap G^{\tau}$ trivial on $J^{1g}\cap G^{\tau}$, such that
			$$\mathrm{Hom}_{J^{1g}\cap G^{\tau}}(\eta^{g},1)\cong\mathrm{Hom}_{\boldsymbol{J}^{g}\cap G^{\tau}}(\boldsymbol{\kappa}^{g},\chi^{-1})\cong R.$$
			Moreover 
			$$\mathrm{Hom}_{\bs{J}^{g}\cap G^{\tau}}(\Lambda^{g},1)\cong\mathrm{Hom}_{\bs{J}^{g}\cap G^{\tau}}(\bs{\kappa}^{g},\chi^{-1})\otimes\mathrm{Hom}_{\bs{J}^{g}\cap G^{\tau}}(\bs{\rho}^{g},\chi);$$
			
			(4) The element $\gamma\in\bs{J}$, thus under the assumption of (1), $\gamma\in B^{\times}\cap\bs{J}=E^{\times}\mfb^{\times}$.
			
		\end{proposition}
		
		\begin{proof}
			
			We sketch the proof which follows from that of Theorem \ref{thmtype} (actually we have the same theorem if $\tau=\tau_{1}$). Using (\ref{eqthetatauint}) and the fact that $\tau(H^{1g})=\tau_{1}(H^{1})^{\varepsilon^{-1}\tau(g)}=H^{1\varepsilon^{-1}\tau(g)}$ and $(\theta\circ\tau)^{\tau(g)}=(\theta\circ\tau_{1})^{\varepsilon^{-1}\tau(g)}=(\theta^{-1})^{\varepsilon^{-1}\tau(g)}$, we have $$(\theta^{\varepsilon^{-1}\tau(g)})^{-1}|_{H^{1\varepsilon^{-1}\tau(g)}\cap H^{1g}}=(\theta\circ\tau)^{\tau(g)}|_{\tau(H^{1g})\cap H^{1g}}=\theta^{g}\circ\tau|_{\tau(H^{1g})\cap H^{1g}}=(\theta^{g})^{-1}|_{H^{1\varepsilon^{-1}\tau(g)}\cap H^{1g}},$$
			which means that $\gamma$ intertwines $\theta$, or in other words $\gamma\in JB^{\times}J$. The following lemma follows from the same proof of Lemma \ref{lemdoucos}, once we replace $\gamma$ there with our $\gamma$ here and $\tau$ there with $\tau_{1}$.
			
			\begin{lemma}
				
				There exist $y\in J=J(\mfa,\beta)$ and $b\in B^{\times}$, such that $\gamma=\tau_{1}(y)by$.
				
			\end{lemma}
			
			Thus we change $g$ by $y^{-1}g$ and then the corresponding $\gamma=b\in B^{\times}$, which proves (1). For (2), we write
			$$\delta(x):=(\tau(g)g^{-1})^{-1}\tau(x)\tau(g)g^{-1}=\gamma^{-1}\tau_{1}(x)\gamma\quad \text{for any}\ x\in G$$
			an involution on $G$, then by definition we have
			$$\mrhom_{J^{1g}\cap G^{\tau}}(\eta^{g},1)\cong\mrhom_{J^{1}\cap G^{\delta}}(\eta,1),$$
			and
			\begin{equation*}
				\gamma\delta(\gamma)=\gamma\gamma^{-1}\tau_{1}(\gamma)\gamma=1.
			\end{equation*}
			Moreover, by direct calculation we have
			\begin{equation*}
				\delta(H^{1})=(\tau(g)g^{-1})^{-1}H^{1\varepsilon^{-1}}\tau(g)g^{-1}=H^{1\gamma}\quad\text{and}\quad\theta\circ\delta=(\theta^{-1})^{\varepsilon^{-1}\tau(g)g^{-1}}=(\theta^{-1})^{\gamma}.
			\end{equation*}
			So using Proposition \ref{Propheisdelta}, we finish the proof of (2).
			
			Using (2) and the same argument of Proposition \ref{lemkappa} we get the statement (3), except the part $\chi$ being quadratic. To finish that part, since
			$$\tau_{1}(\tau_{1}(g)\varepsilon^{-1})\varepsilon^{-1}(\tau_{1}(g)\varepsilon^{-1})^{-1}=g\varepsilon\tau_{1}(g)^{-1}=(\tau_{1}(g)\varepsilon^{-1}g^{-1})^{-1}=\gamma^{-1}\in B^{\times},$$
			we may replace $g$ with $\varepsilon^{-1}\tau(g)=\tau_{1}(g)\varepsilon^{-1}$ in the statement (3) to get a unique character $\chi'$ of $\boldsymbol{J}^{\varepsilon^{-1}\tau(g)}\cap G^{\tau}$ trivial on $J^{1\varepsilon^{-1}\tau(g)}\cap G^{\tau}$.
			Moreover, using the facts $\tau(\boldsymbol{J})=\boldsymbol{J}^{\varepsilon^{-1}}$, $\tau(J)=J^{\varepsilon^{-1}}$, $\tau(J^{1})=J^{1\varepsilon^{-1}}$ and $\tau(H^{1})=H^{1\varepsilon^{-1}}$ and Lemma \ref{LemmaunitaryJJ0}, it is easy to show that \begin{equation}\label{eqjtau=j0tau1}
				\boldsymbol{J}^{g}\cap G^{\tau}=\boldsymbol{J}^{\varepsilon^{-1}\tau(g)}\cap G^{\tau}=J^{g}\cap G^{\tau}=J^{\varepsilon^{-1}\tau(g)}\cap G^{\tau}
			\end{equation}
				As a result, $\chi$ and $\chi'$ are characters defined on the same group $\boldsymbol{J}^{g}\cap G^{\tau}=\boldsymbol{J}^{\varepsilon^{-1}\tau(g)}\cap G^{\tau}$. We have the following lemma similar to Proposition \ref{Propchichi'}:
				
				\begin{lemma}\label{lemmachichi'}
					
					We have $\chi=\chi'$.
					
				\end{lemma}
				
				\begin{proof}
					
					We write $\delta$ for the involution defined as above. By \S \ref{subsectiontypes}, we have $\gamma\in I_{G}(\eta)=I_{G}(\kappa^{0})$ and
					$$\mathrm{dim}_{R}(\mathrm{Hom}_{J\cap J^{\gamma}}(\kappa^{0\gamma},\kappa^{0}))=\mathrm{dim}_{R}(\mathrm{Hom}_{J^{1}\cap J^{1\gamma}}(\eta^{\gamma},\eta))=1,$$
					where $\kappa^{0}=\bs{\kappa}|_{J}$.
					By direct calculation, we have $J^{1}\cap G^{\delta}=J^{1\gamma}\cap G^{\delta}$ as a subgroup of $J^{1}\cap J^{1\gamma}$ and $H^{1}\cap G^{\delta}=H^{1\gamma}\cap G^{\delta}$.
					Using statement (2) for $g$ and $\varepsilon^{-1}\tau(g)$ respectively, we get $$\mathrm{dim}_{R}\mathrm{Hom}_{J^{1g}\cap G^{\tau}}(\eta^{g},1)=\mathrm{dim}_{R}\mathrm{Hom}_{J^{1\varepsilon^{-1}\tau(g)}\cap G^{\tau}}(\eta^{\varepsilon^{-1}\tau(g)},1)=1.$$
					By Proposition \ref{Propvarphibijec}, for $0\neq\varphi\in\mathrm{Hom}_{J^{1}\cap J^{1\gamma}}(\eta^{\gamma},\eta)=\mathrm{Hom}_{J^{1g}\cap J^{1\varepsilon^{-1}\tau(g)}}(\eta^{\varepsilon^{-1}\tau(g)},\eta^{g})$, the map
					\begin{align*}
						f_{\varphi}:\mathrm{Hom}_{J^{1g}\cap G^{\tau}}(\eta^{g},1)&\longrightarrow\mathrm{Hom}_{J^{1\varepsilon^{-1}\tau(g)}\cap G^{\tau}}(\eta^{\varepsilon^{-1}\tau(g)},1),\\
						\lambda\quad&\longmapsto\quad \lambda\circ\varphi,
					\end{align*}
					is bijective\footnote{Noting that $J^{1g}\cap G^{\tau}=(J^{1}\cap G^{\delta})^{g}$ and $J^{1\varepsilon^{-1}\tau(g)}\cap G^{\tau}=(J^{1\gamma}\cap G^{\delta})^{g}$, thus $\mathrm{Hom}_{J^{1g}\cap G^{\tau}}(\eta^{g},1)=\mathrm{Hom}_{J^{1}\cap G^{\delta}}(\eta,1)$ and $\mathrm{Hom}_{J^{1\varepsilon^{-1}\tau(g)}\cap G^{\tau}}(\eta^{\varepsilon^{-1}\tau(g)},1)=\mathrm{Hom}_{J^{1\gamma}\cap G^{\delta}}(\eta^{\gamma},1)$.}. 
					If we choose
					$$0\neq \lambda\in\mathrm{Hom}_{J^{1g}\cap G^{\tau}}(\eta^{g},1)\quad \text{and}\quad 0\neq \lambda':=f_{\varphi}(\lambda)=\lambda\circ\varphi\in\mathrm{Hom}_{J^{1\varepsilon^{-1}\tau(g)}\cap G^{\tau}}(\eta^{\varepsilon^{-1}\tau(g)},1),$$
					then for any $v$ in the representation space of $\eta$ and any $x\in J^{g}\cap G^{\tau}=J^{\varepsilon^{-1}\tau(g)}\cap G^{\tau}$, using the similar argument to (\ref{eqchi'lambda'chilambda}) we have
					\begin{align*}
						\chi'(x)^{-1}\lambda'(v)
						=\chi(x)^{-1}\lambda'(v).
					\end{align*}
					Since $v$ and $x\in J^{g}\cap G^{\tau}=J^{\varepsilon^{-1}\tau(g)}\cap G^{\tau}$ are arbitrary, we have $\chi'|_{J^{\varepsilon^{-1}\tau(g)}\cap G^{\tau}}=\chi|_{J^{g}\cap G^{\tau}}$. Combining with (\ref{eqjtau=j0tau1}) we finish the proof of the lemma.
					
				\end{proof}
				
				To prove that $\chi$ is quadratic, we first assume that $\mathrm{char}(R)=0$. Using the similar argument to (\ref{eqhometakappaiso}) we have the following isomorphism
				\begin{align*}
					\mathrm{Hom}_{J^{1\varepsilon^{-1}\tau(g)}\cap G^{\tau}}(\eta^{\varepsilon^{-1}\tau(g)},1)
					\cong\mathrm{Hom}_{\boldsymbol{J}^{\varepsilon^{-1}\tau(g)}\cap G^{\tau}}(\boldsymbol{\kappa}^{\varepsilon^{-1}\tau(g)},\chi\circ\tau).
				\end{align*}
				Using the above lemma and the uniqueness of $\chi'$, we have $\chi\circ\tau=\chi^{-1}$. Since $\chi$ is defined on $\boldsymbol{J}^{g}\cap G^{\tau}=J^{g}\cap G^{\tau}$ which is $\tau$-invariant, we have $\chi\circ\tau=\chi$, thus $\chi^{2}=\chi(\chi\circ\tau)=1$. When $\mathrm{char}(R)=l>0$ the same argument in Proposition \ref{propchiquad} can be used directly.
				
				Finally using (3) and the distinction of the simple type, we have $\mathrm{Hom}_{\bs{J}^{g}\cap G^{\tau}}(\bs{\rho}^{g},\chi)\neq 0$. Then the proof of (4) is the same of that in \S \ref{subsectionproofofthmdisctype}, once we replace $\gamma$ there with our $\gamma$ here.
				
			\end{proof}
			
			\begin{corollary}\label{corJGtau'}
				
				For $g\in G$ such that $\mathrm{Hom}_{\bs{J}^{g}\cap G^{\tau}}(\Lambda^{g},1)\neq 0$, we may change $g$ by anothor representative in the same $\bs{J}$-$G^{\tau}$ double coset, such that
				$$\gamma=\begin{cases}I_{m}\quad \text{or}\quad \varpi_{E}I_{m} &\quad \text{if}\ E/E_{0}\ \text{is unramified;}\\
					I_{m}\quad \text{or}\quad \mathrm{diag}(1,...,1,\epsilon)\quad \text{or}\quad \varpi_{E}J_{m/2} &\quad \text{if}\ E/E_{0}\ \text{is ramified,}\end{cases}$$
				as an element in $\mrgl_{m}(E)\cong B^{\times}\hookrightarrow G$, where $\epsilon\in\mfo_{E_{0}}^{\times}-\mrn_{E/E_{0}}(\mfo_{E}^{\times})$
				
			\end{corollary}
			
			\begin{proof}
				
				We have proved that $\gamma=\tau_{1}(g)\varepsilon^{-1} g^{-1}\in B^{\times}\cap\bs{J}=E^{\times}\mathfrak{b}^{\times}$. Changing $g$ up to multiplying by an element in $E^{\times}$ which doesn't change the double coset
				it represents, we may assume $\gamma\in\mfb^{\times}$ or $\varpi_{E}\mfb^{\times}$. Using Proposition \ref{PropJOH} and changing $g$ up to multiplying by an element in $\mfb^{\times}$ on the left, we may assume that $\gamma=\varpi_{E}^{\alpha}$, and from the uniqueness we must have $\varpi_{E}^{\alpha}=I_{m}$ or $\varpi_{E}I_{m}$ when $E/E_{0}$ is unramified, and $\varpi_{E}^{\alpha}=I_{m}$ or $\mathrm{diag}(1,...,1,\epsilon)$ or $\varpi_{E}J_{m/2}$ when $E/E_{0}$ is totally ramified.
				
			\end{proof}
			
			Thus for $g\in G$ as above, we get
			$$\mathrm{Hom}_{J\cap G^{\delta}}(\bs{\rho}|_{J},\chi^{g^{-1}})\cong\mathrm{Hom}_{\bs{J}\cap G^{\delta}}(\bs{\rho},\chi^{g^{-1}})\cong\mathrm{Hom}_{\bs{J}^{g}\cap G^{\tau}}(\bs{\rho}^{g},\chi)\neq 0.$$
			Write $H=J\cap G^{\delta}/J^{1}\cap G^{\delta}$ for the subgroup of $\mrgl_{m}(\bs{l})\cong J/J^{1}$, which, from the expression of $\gamma$ in Corollary \ref{corJGtau'}, is either a unitary subgroup, or an orthogonal subgroup, or a symplectic subgroup of $\mrgl_{m}(\bs{l})$. Moreover we have
			$$\mathrm{Hom}_{H}(\overline{\rho},\overline{\chi'})\neq 0,$$
			where $\overline{\rho}$ is a cuspidal representation of $\mrgl_{m}(\bs{l})$ whose inflation is $\bs{\rho}|_{J}$ and $\overline{\chi'}$ is a quadratic character of $H$ whose inflation is $\chi^{g^{-1}}|_{J\cap G^{\delta}}$.
			
			When $H$ is unitary which also means that $E/E_{0}$ is unramified, by Lemma \ref{Lemmachiext} (or more precisely its argument) $\overline{\chi'}$ can be extended to a quadratic character of $\mrgl_{m}(\bs{l})$. Thus $\overline{\rho}\overline{\chi'}^{-1}$ as a cuspidal representation of $\mrgl_{m}(\bs{l})$ is distinguished by $H$, and thus it is $\sigma$-invariant by Proposition \ref{propfindiscinv}. The quadratic character $\overline{\chi'}$ must be $\sigma$-invariant, thus $\overline{\rho}$ is also $\sigma$-invariant, or by Proposition \ref{PropGelKafin}, $\overline{\rho}^{\tau_{1}}\cong\overline{\rho}^{\vee}$. Thus both $\bs{\kappa}$ and $\bs{\rho}$ are $\tau_{1}$-selfdual, which means that $\Lambda$ and $\pi$ are $\tau_{1}$-selfdual. By Proposition \ref{PropGelKa}, $\pi$ is $\sigma$-invariant.
			
			When $H$ is orthogonal which also means that $E/E_{0}$ is totally ramified, comparing the central character as in \S \ref{subsectiondisccriram} we have $\overline{\rho}(-I_{m})=\mathrm{id}$. Thus $\bs{\rho}^{\tau_{1}}|_{J}=\bs{\rho}(\,^{t}\cdot^{-1})|_{J}\cong\bs{\rho}|_{J}$ by Proposition \ref{PropGelKafin} and $\bs{\rho}(\tau_{1}(\varpi_{E}))=\bs{\rho}(-\varpi_{E})=\bs{\rho}(\varpi_{E})$, which means that $\bs{\rho}$ is $\tau_{1}$-selfdual, finishing the proof as above.
			
			Finally by Proposition \ref{Propsympdisc} and the fact that $\mathrm{Sp}_{m}(\bs{l})$ equals its derived subgroup, the case where $H$ is symplectic never occurs, which ends the proof of Theorem \ref{Thmdistgalinvgen}.
			
			\begin{remark}
				
				Combining Theorem \ref{Thmdistgalinvgen} with the argument in \cite{feigon2012representations}, section 6, we may further prove that an irreducible \emph{generic} representation $\pi$ of $G$ distinguished by a unitary subgroup $G^{\tau}$ is $\sigma$-invariant.
				
			\end{remark}

			\newcommand{\etalchar}[1]{$^{#1}$}

		\end{document}